\documentclass[10pt,a4paper,plainheadsepline, headsepline,abstracton,twoside,bibliography=totoc]{scrreprt}
\pdfoutput=1

\usepackage[english]{babel}
\usepackage[utf8]{inputenc}
\usepackage{textcomp}  
\usepackage[T1]{fontenc}
\usepackage{lmodern}
\usepackage{url}

\usepackage{graphicx}
\usepackage[automark]{scrpage2}
\usepackage[a4paper,left=3.5cm, right=2.5cm,top=3cm, bottom=3cm]{geometry}

\usepackage{tikz}

%  This package prints the labels in the margin
%\usepackage[notref,notcite]{showkeys}
\usepackage{color}

\usepackage{amsthm,amsmath,amssymb,amsfonts}
\usepackage{mathrsfs}

\newtheorem{Theorem}{Theorem}[chapter]
\newtheorem*{thma}{Theorem}
\newtheorem{Lemma}[Theorem]{Lemma}
\newtheorem{Proposition}[Theorem]{Proposition}
\newtheorem{Corollary}[Theorem]{Corollary}

\theoremstyle{definition} 

\newtheorem{Remark}[Theorem]{Remark}
\newtheorem{Definition}[Theorem]{Definition}

\newtheorem{Conjecture}[Theorem]{Conjecture}

\newenvironment{Proof}{\begin{proof}[Proof]}{\end{proof}}

\newcommand{\R}{\mathbb{R}}
\newcommand{\C}{\mathbb{C}}
\newcommand{\N}{\mathbb{N}}
\newcommand{\Z}{\mathbb{Z}}
\newcommand{\hyp}{\R^n \setminus \R^l}

\newcommand{\ph}{\varphi}
\newcommand{\Om}{\Omega}
\newcommand{\eps}{\varepsilon}
\newcommand{\Sc}{{\cal S}}

\DeclareMathOperator{\rloc}{rloc}
\DeclareMathOperator{\rinf}{rinf}
\DeclareMathOperator{\tr}{tr}
\DeclareMathOperator{\Ext}{Ext}
\DeclareMathOperator{\dist}{dist}

\newcommand{\AR}[1][s]{A_{p,q}^{#1}(\R^n)}
\newcommand{\BR}[1][s]{B_{p,q}^{#1}(\R^n)}
\newcommand{\FR}[1][s]{F_{p,q}^{#1}(\R^n)}
\newcommand{\Fpp}[2]{F_{p,p}^{#1}(#2)}

\newcommand{\bspq}{\BR}
\newcommand{\fspq}{\FR}

\newcommand{\fO}[1][\Z^{\Om}]{f_{p,q}^{s}(#1)}
\newcommand{\AO}[1][\Om]{A_{p,q}^s(#1)}
\newcommand{\BO}[1][\Om]{B_{p,q}^s(#1)}
\newcommand{\FO}[1][\Om]{F_{p,q}^s(#1)}

\newcommand{\FtBar}[1][\Om]{\tilde{F\,}\!_{p,q}^s(\bar{#1})}
\newcommand{\At}[1][\Om]{\tilde{A\,\,}\!\!_{p,q}^s(#1)}
\newcommand{\Bt}[1][\Om]{\tilde{B\,}\!_{p,q}^s(#1)}
\newcommand{\Ft}[1][\Om]{\tilde{F\,}\!_{p,q}^s(#1)}

\newcommand{\Fo}[1][\Om]{\mathring{F\,\,}\!\!_{p,q}^s(#1)}

\newcommand{\Frinf}[1][\Om]{F_{p,q}^{s,\rinf}(#1)}

\newcommand{\Frloc}[1][\Om]{F_{p,q}^{s,\rloc}(#1)}

 \usetikzlibrary{patterns}

\newcommand{\sint}{\lfloor s\rfloor}
\newcommand{\srest}{\{s\}}

\newcommand{\Lint}{\lfloor L\rfloor}
\newcommand{\Lrest}{\{L\}}
\newcommand{\rint}{\lfloor \rho\rfloor}

\newcommand{\lip}[1][\sigma]{lip^{#1}(\R^n)}
\newcommand{\Kap}[1][L]{\varkappa_{#1}}
\newcommand{\hold}[1][s]{{\cal C}^{#1}(\R^n)}

\begin{document}

\pagestyle{plain}
	\ofoot[\pagemark]{\pagemark}
\automark[section]{chapter} 

\pagestyle{empty}
 	\titlepage
\begin{center}
\includegraphics[scale=0.5]{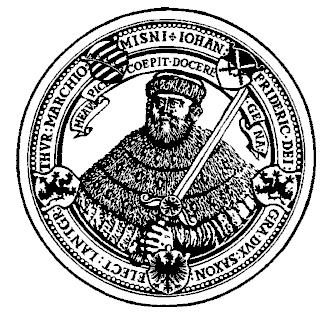}
\vspace{2.0cm}

{\huge \textbf{Wavelets in function spaces on \\[1ex] cellular domains}}\\ [15ex]
%{\huge und irgendwas}\\ [15ex]

{\LARGE \sc Dissertation}\\[5ex] 
zur Erlangung des akademischen Grades\\[2ex]
{\Large doctor rerum naturalium (Dr. rer. nat.)}\\[5ex]

\vspace{2.5cm}

{\large
	vorgelegt dem Rat der\\[2ex]
	Fakultät für Mathematik und Informatik\\[1ex]
	der Friedrich-Schiller-Universit\"at Jena
	\bigskip
	\bigskip
	
	von Dipl.-Math.\ Benjamin Scharf \\
	\smallskip
	geb.\ am 29.11.1985 in Jena\\[1ex]
	
}

\end{center}

\newpage
\begin{center}
\end{center}
\vfill
\large \textbf{Gutachter \\[3ex]
1. Prof. Dr. Hans-Jürgen Schmeißer, Jena \\[2.5ex]
2. Prof. Dr. Dr. h.c. Hans Triebel, Jena	\\[2.5ex]
3. Prof. Dr. Leszek Skrzypczak, Poznan, Polen \\[4ex]
Tag der öffentlichen Verteidigung: 14.02.2013 }

	\newpage

\newenvironment{acknowledgements}{%
  \renewcommand*{\abstractname}{Acknowledgements} \abstract}{%
  \endabstract
}

\begin{acknowledgements}
At first I would like to express my deepest gratitude to my two supervisors, Prof. Hans-J\"urgen Schmei\ss er and Prof. Hans Triebel. They both encouraged me to deal with this topic and always showed their faith in me. Whenever I had something on my mind they were there to listen and discuss with me. They also gave me the freedom to look left and right for different results than the ones we had in mind at the beginning of my PhD-education. I also thank them for helpful discussions, remarks, suggestions and the possibility to get to know a lot of people dealing with function spaces and their applications.

Furthermore, I like to thank Prof.\ Dorothee Haroske, Dr.\ Henning Kempka, Dr.\ Cornelia Schneider, Prof.\ Winfried Sickel, Prof. Leszek Skrzypczak, Dr.\ Tino Ullrich and Dr.\ Jan Vybiral for helpful comments and discussions dealing mainly with pointwise multipliers, local means and non-smooth atomic decompositions.

I would like to give thanks to my colleagues Dr.\ Markus Hansen, Prof.\ Aicke Hinrichs, Dr.\	 Maryia Kabanava, Lev Markhasin, Therese Mieth, Marcel Rosenthal, Philipp Skandera, Paolo Di Tella, Mario Ullrich and Markus Weimar for the conversations, discussions and also a lot of nice trips to conferences and summer schools. 

Moreover, I like to thank Studienstiftung des Deutschen Volkes for financial support and miscellaneous opportunities as well as the people from Wurzel e.\,V. Jena.

Last but not least I thank my family, especially my parents, and my girlfriend Janin for their unbreakable support in every situation of life. Behind the curtains of every such project there are a lot of things where their support was essential.
\end{acknowledgements}

	\newpage
	\tableofcontents
	\newpage

\chapter*{Zusammenfassung}
\addcontentsline{toc}{chapter}{Zusammenfassung}
 	Heutzutage spielt die Theorie und Anwendung von Wavelet-Zerlegungen nicht nur in der Untersuchung von Funktionenräumen (von Lebesgue-, Hardy-, Sobolev-, Besov-, Triebel-Lizorkin-Typ), sondern auch in ihren Anwendungen in Signal- und numerischer Analysis, partiellen Differentialgleichungen und Bildverarbeitung eine wichtige Rolle.

Eine Wavelet-Basis auf $\R^n$ ist im Kern eine Orthonormalbasis von $L_2(\R^n)$, die aus leicht zu beschreibenden Funktionen $\Phi=\{\Phi_r^j\}$ besteht. Dies bedeutet, dass wir eine Zerlegung
\begin{align*}
 f=\sum_{j,r} \lambda_r^j(f) \cdot \Phi_r^j
\end{align*}
mit Koeffizienten aus dem Folgenraum $l_2(\Z^n)$ finden können. Die Koeffizienten können mit Hilfe des Skalarprodukts in $L_2(\R^n)$ ausgerechnet werden mittels
\begin{align}
\label{decointrD}
 \lambda_r^j(f)=\left( f,\Phi_r^j\right).
\end{align}
Gute Einführungen in die Theorie der Wavelets kann man in \cite{Woj97} und \cite{Mal99} finden.

Allerdings haben Wavelets auf $\R^n$ noch weitere bemerkenswerte Eigenschaften in Verbindung mit Funktionenräumen vom Besov- und Triebel-Lizorkin-Typ. In \cite[Theorem 3.5]{Tri06} zeigte Triebel, dass die Daubechies Wavelet-Basis von $L_2(\R^n)$ auch eine Basis für die Besov- und Triebel-Lizorkin-Räume ist -- mit geeigneten Folgenräumen für die Koeffizienten $\lambda_r^j(f)$. Triebel wies auch auf einige Vorläufer in  \cite[Bemerkung 1.66]{Tri06} hin. Einführungen in Daubechies Wavelets findet man in \cite{Dau92} und \cite{Woj97}. Triebels Erkenntnisse über Wavelets basieren auf der Theorie der lokalen Mittel und der atomaren Zerlegungen von Funktionenräumen aus den späten 1980igern und 1990igern. In diesem Zusammenhang sollte man einen Blick in \cite[Kapitel 2]{Tri92}, \cite{Ryc99} bzw. \cite[Abschnitt 13]{Tri97}, \cite{FrJ85} und \cite{FrJ90} werfen.

Ein eher schwieriges Problem ist die Konstruktion von Wavelet-Basen auf Ge\-bie\-ten $\Om \subset \R^n$ für passende Funktionenräume. Ein Ausgangspunkt sind die Arbeiten von Ciesielski und Figiel \cite{CiF83A}, \cite{CiF83B} und \cite{Cie84}, die sich mit Spline-Basen für Räume differenzierbarer Funktionen wie auch mit klassischen Sobolev-Räumen und Besov-Räumen auf kompakten $C^{\infty}$-Mannigfaltigkeiten beschäftigen. Die grund\-sätz\-liche Idee ist die Zerlegung beliebiger Gebiete in einfache Standard-Gebiete -- das so genannte Problem der Gebietszerlegung. Allerdings ist diese Aufgabe ziemlich kompliziert. Verwandte Zugänge und Verallgemeinerungen in dieser Richtung kann man in \cite{Dah97}, \cite{DaS99}, \cite{Dah01}, \cite{Coh03}, \cite{HaS04}, \cite{JoK07} und \cite{FoG08} finden. 

Eine spezielle Zerlegung wird genau durch die Einführung von zellulären Gebieten in \cite[Definition 5.40]{Tri08} angestoßen. Ähnliche Ideen findet man in \cite[Abschnitt 9.1]{Dah97}, \cite{Dah01}, \cite{Mal99} und \cite{CDV00}. Ein zelluläres Gebiet ist eine disjunkte Vereinigung von diffeomorphen Bildern des Würfels. Das bekannteste Beispiel ist der Einheitswürfel $Q$ in $\R^n$, der eine wichtige Rolle in dieser Arbeit spielen wird. Weiterhin sind alle $C^{\infty}$-Gebiete zellulär. 

Wenn man sich Wavelets auf Gebieten anschaut, dann steht man vor einem neuen Problem -- Triebel musste die sogenannten kritischen Werte in seinen Ausführungen ausschließen. Auf der einen Seite konstruierte er (Riesz)-Wavelet-Frames für Triebel-Lizorkin-Räume $\FO[\Om]$ auf $C^{\infty}$-Gebieten $\Om$ mit den natürlichen Ausnahmewerten $s-\frac{1}{p} \in \N_0$ in \cite[Theorem 5.27]{Tri08}. Im Gegensatz zu einer Wavelet-Basis ist eine Zerlegung wie in \eqref{decointrD} für Wavelet-Frames nicht eindeutig. Triebel konnte nicht zeigen, dass es eine (Riesz)-Wavelet-Basis für allgemeine Dimensionen oder allgemeinen Glattheitsparameter $s$ für $C^{\infty}$-Gebiete gibt.

Auf der anderen Seite konstruierte er (Riesz)-Wavelet-Basen für $\FO[\Om]$, wobei $\Om$ ein $n$-dimensionales zelluläres Gebiet ist. Allerdings musste er dabei die Ausnahmewerte $s-\frac{k}{p} \notin \N_0$ für $k\in \{1,\ldots,n\}$ ausschließen, siehe \cite[Theorem 6.30]{Tri08}. Zum Beispiel ist der berühmteste Sobolevraum $W_2^1(Q)$ ein Ausnahmeraum und bis heute scheint es keine Konstruktion für eine Wavelet-Basis im Sinne Triebels auf $W_2^1(Q)$ zu geben. Einen Überblick über die Situation für klassische Räume wird in \cite[Abschnitt 5.3.1, Bemerkung 5.50]{Tri08} gegeben, mit Verweisen auf \cite{Gri85} und \cite{Gri92}.

Das Hauptziel dieser Arbeit ist die Eingliederung der Ausnahmewerte. Einen Vorschlag, wie man dies machen könnten, ist in \cite[Abschnitt 6.2.4]{Tri08} gegeben. Als Erstes betrachten wir genau wie in \cite{Tri08} die Situation für den Einheitswürfel $Q$ als das Standardbeispiel eines zellulären Gebiets. Die Idee ist, die Räume $\FO[Q]$ zu modifizieren und zusätzliche Bedingungen zu fordern -- die Funktionenräume einzuschränken. Man definiert die eingeschränkten Funktionenräume $\Frinf[Q]$ wie folgt: Man wähle ein $f \in \FO[Q]$ und für jeden kritischen Wert $l \in \{0,\ldots,n-1\}$, das heißt für jeden Wert, für den 
\begin{align*}
 s-\frac{n-l}{p} \in \N_0
\end{align*}
gilt, verlangt man von $f$, das es die Eigenschaft $R_l^{r,p}$ erfüllt. Grob gesagt sichert diese Eigenschaft einen gewissen Abfall von den Ableitungen von $f$ an den Rändern (Kanten, Ecken) der Dimension $l<n$ des Einheitswürfels. Die Konstruktion des eingeschränkten Triebel-Lizorkin Funktionenraums $\Frinf[Q]$ stellt sicher, dass im nicht-kritischen Fall, der in \cite[Theorem 6.30]{Tri08} untersucht wurde, die beiden Räume $\Frinf[Q]$ und $\FO[Q]$ die gleichen sind. Das Hauptziel dieser Dissertation ist die Konstruktion von (Riesz)-Wavelet-Basen für $\Frinf[Q]$ ohne irgendwelche Ausnahmewerte. Wir werden in Kapitel \ref{Wavelets} das folgende Theorem zeigen:
\begin{thma}
 Sei $Q$ der Einheitswürfel in $\R^n$ für $n\geq 2$. Sei
 \begin{align*}
  s>0, \quad 1\leq p <\infty \quad \text{und} \quad 1\leq q<\infty.
 \end{align*}
 Dann existiert eine oszillierende $u$-Riesz Basis für $\Frinf[Q]$  für jedes $u \in \N_0$ mit $u>s$. Der dazugehörige Folgenraum ist $\fO[\overline{Q}]$. 
\end{thma}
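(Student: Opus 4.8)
The plan is to transport the Daubechies wavelet bases of the spaces $\FR$ -- which by \cite[Theorem 3.5]{Tri06} are $u$-Riesz bases whenever the wavelet order $u$ exceeds $s$ -- onto the cube $Q$, by means of a decomposition of $Q$ adapted to its face structure, a reflection step at each face, and a quantitative analysis of how the reinforcing conditions $R_l^{r,p}$ defining $\Frinf[Q]$ interact with the boundary-adapted wavelets. First I would fix the geometric framework: by a smooth resolution of unity subordinate to a dyadic covering, $Q=(0,1)^n$ is split into an interior piece and, for every face of dimension $l\in\{0,\dots,n-1\}$, a collar neighbourhood; after flattening and reflection such a collar is modelled on $\hyp$. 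Using the decomposition theorems for the reinforced spaces established in the preceding chapters, this reduces the construction to two tasks: building oscillating $u$-Riesz bases on the ``free'' interior space -- already covered by \cite[Theorem 6.30]{Tri08} -- and on the reinforced model spaces sitting over $\hyp$, and then glueing the local bases into a global one on $Q$.

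For the one-dimensional building blocks I would start from a Daubechies system on $\R$ whose smoothness and number of vanishing moments exceed $s$, leave unchanged every wavelet whose support lies well inside $(0,1)$, and replace the finitely many wavelets meeting an endpoint by reflected substitutes, chosen so that the resulting finite modification is simultaneously a $u$-Riesz basis of the free interval space away from critical values and of the reinforced interval space at critical values. Taking tensor products of $n$ such interval systems produces the candidate family $\Phi=\{\Phi_r^j\}$ on $Q$, together with the coefficient functionals $\lambda_r^j(f)=(f,\Phi_r^j)$; the target sequence space $\fO[\overline{Q}]$ collects the admissible coefficient arrays.

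The analytic core is to prove that the synthesis operator $\{\lambda_r^j\}\mapsto\sum_{j,r}\lambda_r^j\Phi_r^j$ is bounded from $\fO[\overline{Q}]$ into $\Frinf[Q]$ and that the analysis operator $f\mapsto\{\lambda_r^j(f)\}$ is a two-sided inverse, bounded from $\Frinf[Q]$ onto $\fO[\overline{Q}]$. Boundedness of synthesis rests on the non-smooth atomic decomposition machinery: after normalisation each $\Phi_r^j$ is an atom, and the oscillation and localisation of the boundary atoms force the resulting sums to satisfy the reinforcing conditions, so that one indeed lands in $\Frinf[Q]$ and not merely in $\FO[Q]$. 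The converse estimate is where the restriction is indispensable, and I expect it to be the main obstacle. At a critical value $s-\frac{n-l}{p}\in\N_0$ an arbitrary $f\in\FO[Q]$ generates boundary coefficients that just fail to lie in $\fO[\overline{Q}]$ -- exactly the obstruction that forced \cite{Tri08} to exclude $s-\frac{k}{p}\notin\N_0$ for $k\in\{1,\dots,n\}$ -- whereas the extra decay of the derivatives of $f$ toward the $l$-faces encoded in $R_l^{r,p}$ (which amounts to the availability of a bounded extension operator $\Ext\colon\Frinf[Q]\to\FR$ at these values) restores the missing summability. Making this quantitative means translating $R_l^{r,p}$ into weighted derivative or local-mean estimates, reading off the induced decay of the wavelet coefficients near each face, and running an induction over the face dimension from $l=n-1$ down to $l=0$, peeling off one boundary stratum at a time so that the lower-dimensional faces are always handled in the already reinforced ambient configuration.

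Putting the two boundedness statements together shows that synthesis and analysis are mutually inverse isomorphisms, so $\Phi$ is an oscillating $u$-Riesz basis of $\Frinf[Q]$ with sequence space $\fO[\overline{Q}]$; the oscillation order and the admissibility of every $u>s$ are inherited from the order of the underlying Daubechies wavelet, which may be taken arbitrarily large. The hypothesis $n\geq 2$ enters only in guaranteeing a genuine hierarchy of faces $l=0,\dots,n-1$ on which the induction is run, and whenever $s,p$ are such that no critical value occurs the whole statement reduces to \cite[Theorem 6.30]{Tri08}.
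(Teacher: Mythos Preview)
Your route differs substantially from the paper's, and the part you yourself flag as ``the main obstacle'' is where it breaks down. The paper does \emph{not} build the basis by tensoring one-dimensional interval wavelets and patching reflections at the endpoints, nor does it prove analysis/synthesis bounds directly against the reinforcing conditions $R_l^{r,p}$. Instead, the whole argument is reduced to the decomposition theorem
\[
\Frinf[Q]=\Frloc[Q]\ \times\ \prod_{l=l_0}^{n-1}\Ext_{\Gamma_l}^{r^l,u}\prod_{\substack{\alpha\in\N_l^n\\|\alpha|\le r^l}}F_{p,p}^{s-\frac{n-l}{p}-|\alpha|,\rloc}(\Gamma_l)
\]
into complemented subspaces. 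Each factor on the right already has an interior $u$-Riesz basis by Theorem~\ref{rlocwavelet}, and the wavelet-friendly extension operators $\Ext_{\Gamma_l}^{r^l,u}$ turn the bases on the faces into boundary atoms on $Q$. The induction runs from the lowest face dimension $l_0$ upward (not from $n-1$ down), using at each step the identification of $\Frloc[\R^n\setminus\overline\Gamma_l]$ with a trace-zero subspace of $\Frloc[\R^n\setminus\overline\Gamma_{l-1}]$ (Lemmata~\ref{ZerlegerCube}, \ref{ZerlegercritCube}); the reinforcing conditions enter \emph{only} here, via the Hardy inequalities that make these identifications work at the critical values.

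In your sketch, the reinforcing conditions are meant to ``restore the missing summability'' of boundary coefficients, but you give no mechanism for this beyond a vague appeal to weighted estimates, and the parenthetical remark that $R_l^{r,p}$ ``amounts to the availability of a bounded extension operator $\Ext\colon\Frinf[Q]\to\FR$'' is not something the paper establishes or uses (in fact the intrinsic versus extrinsic characterisation of $\Frinf[Q]$ is left open as Conjecture~\ref{Conj}). More concretely, your tensor-product basis has no reason to split along the decomposition above, so you cannot invoke the known basis results for $\Frloc$; and the claim that a finite reflection modification at each endpoint produces a system that is \emph{simultaneously} a basis for the free and the reinforced interval spaces is asserted without argument --- this is precisely where the critical-value difficulty lives, and a reflection cannot by itself enforce the weighted $L_p$-decay that $R_l^{r,p}$ demands of the normal derivatives. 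What makes the paper's approach work is that the trace/extension projections $P_l=\Ext_{\Gamma_l}^{r^l,u}\circ\tr_{\Gamma_l}^{r^l}$ are bounded on $\Frinf$ and land in $\Frinf$ (Proposition~\ref{extwavelet}, Lemma~\ref{reinfprop}); once this is in hand, the basis is assembled from pieces that are already known.
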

Dieses Theorem ist eine Verallgemeinerung von Theorem 6.30 in \cite{Tri08} für die $F$-Räume, da für die nicht-kritischen Werte $\Frinf[Q]=\FO[Q]$ nach Konstruktion gilt. Der entscheidenden Ausgangspunkt für die Konstruktion der (Riesz)-Wavelet-Basis ist das folgende
\begin{thma}
 Sei $1\leq p <\infty$, $1\leq q<\infty$ und $0<s<u \in \N$. Sei $n \in \N$, $l_0 \in \N_0$ mit $0\leq l_0\leq n$ definiert wie in \eqref{l0def}, $r^l$ für $l_0\leq l\leq n-1$ definiert wie in \eqref{rldef} und $\overline{r}=\{r^{l_0},\ldots,r^{n-1}\}$. Dann gilt
\begin{align*}
\Frinf[Q]=\Frloc[Q] \times \Ext_{\Gamma}^{\overline{r},u} \prod_{l=l_0}^{n-1}  \prod_{\underset{|\alpha|\leq r^{l}}{\alpha \in \N_{l}^n}} F_{p,p}^{s-\frac{n-l}{p}-|\alpha|,\rloc}(\Gamma_{l}).
\end{align*}
(zueinander komplementierte Unterräume)
\end{thma}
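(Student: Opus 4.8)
The plan is to realise the splitting by a pair of mutually inverse bounded linear maps: a trace operator $T$ which records, on every boundary face $\Gamma_l$ of dimension $l$ with $l_0\le l\le n-1$, all normal derivative traces $\tr_{\Gamma_l}\partial^\alpha f$ for $\alpha\in\N_l^n$, $|\alpha|\le r^l$, and the extension operator $E:=\Ext_\Gamma^{\overline r,u}$ running the other way. First I would recall the definitions involved: $\Frinf[Q]$ is the subspace of $\FO[Q]$ of those $f$ satisfying the decay property $R_l^{r^l,p}$ at each critical level $l$ (those with $s-\frac{n-l}{p}\in\N_0$); $\Frloc[Q]$ is the analogous space built from the localised ingredients; and $F_{p,p}^{s-\frac{n-l}{p}-|\alpha|,\rloc}(\Gamma_l)$ are exactly the spaces into which the traces fall once the decay conditions are imposed --- here I would use the definitions of, and relations between, the $\rinf$- and $\rloc$-variants already available. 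The assertion then follows once $T$ and $E$ have the stated mapping properties, because $P:=E\circ T$ is then a bounded projection on $\Frinf[Q]$ whose range and kernel are the two displayed factors.

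Step 1: $T$ is well defined and bounded on $\Frinf[Q]$. Working downward from $l=n-1$ to $l=l_0$, for a non-critical level $l$ the ordinary trace theorem for Triebel--Lizorkin spaces on the cube gives $\tr_{\Gamma_l}\partial^\alpha f\in\Fpp{s-\frac{n-l}{p}-|\alpha|}{\Gamma_l}$ with the required norm estimate; at a critical level $l$ the exponent $s-\frac{n-l}{p}-|\alpha|$ lands in $\N_0$ for the borderline $\alpha$ and the naive trace is unavailable, but the property $R_l^{r^l,p}$ is tailored precisely so that the trace still exists and lies in the localised space $F_{p,p}^{s-\frac{n-l}{p}-|\alpha|,\rloc}(\Gamma_l)$. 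Traces on faces of different dimensions are linked by the usual compatibility relations on intersections (a lower-dimensional face lying in the closure of higher-dimensional ones), but as the target is a \emph{product} of spaces on the open faces carrying their own local norms, those relations are not extra constraints; they are absorbed into the construction of $E$.

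Step 2: $E$ is a bounded right inverse of $T$, and $P=E\circ T$ splits the space. Using the properties of $\Ext_\Gamma^{\overline r,u}$ established earlier, $E$ maps the product space boundedly into $\Frinf[Q]$ --- the hypothesis $u>s$, $u\in\N$, supplying enough smoothness (Taylor order) for the extension to reproduce all prescribed derivatives --- and $T\circ E=\mathrm{id}$ on the product; this is the defining feature of $E$, which is assembled face by face from dimension $l_0$ upward, each step correcting the data already extended from the lower faces so that on $\Gamma_l$ the derivatives $\partial^\alpha$, $|\alpha|\le r^l$, are matched exactly while the decay conditions arranged at the earlier critical levels are preserved. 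Hence $P:=E\circ T$ satisfies $P^2=E(TE)T=ET=P$, so it is a bounded projection on $\Frinf[Q]$; its range is the image of the product under $E$, which is isomorphic to the product with $T$ as inverse isomorphism, and its kernel is $\{f\in\Frinf[Q]:Tf=0\}$, the functions whose normal derivative traces up to the prescribed orders vanish on every face. Identifying this kernel with $\Frloc[Q]$ --- the localised building blocks of $\Frloc[Q]$ being exactly those supported away from the faces, so that the decay conditions collapse into the $\rloc$ description --- one obtains the topological direct sum of $\ker P$ and the range of $P$, which is the asserted decomposition into complemented subspaces.

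The step I expect to be the main obstacle is Step 1 at the critical levels combined with the cross-dimensional bookkeeping in Step 2: one must show that $R_l^{r^l,p}$ yields a \emph{bounded} trace into precisely the $\rloc$-space at level $l$, and simultaneously that the correction made when passing from face-dimension $l$ to $l+1$ leaves intact every decay constraint already imposed at the lower critical levels. Equivalently, the delicate point is the simultaneous compatibility of the inductive extension with all critical decay conditions at once, and the verification that $f-Ef$ genuinely belongs to $\Frloc[Q]$ and not merely to the naive space of functions with vanishing traces.
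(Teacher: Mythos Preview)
Your overall architecture matches the paper's: form the projection $P=E\circ T$ on $\Frinf[Q]$ with $E=\Ext_\Gamma^{\overline r,u}$ and $T=(\tr_{\Gamma_{l_0}}^{r^{l_0}},\ldots,\tr_{\Gamma_{n-1}}^{r^{n-1}})$, identify its range with the image of the product under $E$, and identify its kernel with $\Frloc[Q]$. The paper's proof does exactly this, pointing out that the resulting splitting coincides with the level-by-level decomposition of Theorem~\ref{Zerlegerwav}. Your final paragraph also correctly flags the two real load-bearing ingredients: preservation of the reinforce constraints under extension (Lemma~\ref{reinfprop}) and the identification $\ker P=\Frloc[Q]$ via Lemmata~\ref{ZerlegerCube} and~\ref{ZerlegercritCube}.

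There are, however, two genuine misidentifications in Step~1. First, the reinforce property $R_l^{r^l,p}$ is \emph{not} what makes the trace available at a critical level. By definition~\eqref{rldef} one has $r^l<s-\frac{n-l}{p}$ in every case (in the critical case $r^l=s-\frac{n-l}{p}-1$), so every trace $\tr_{\Gamma_l}D^\alpha f$ with $|\alpha|\le r^l$ exists for any $f\in\FO[Q]$ by the ordinary trace theorem; the borderline exponent is $1$, not $0$. The role of $R_l^{r^l,p}$ is elsewhere: it provides precisely the Hardy-type decay (for derivatives of the \emph{next} order, $|\alpha|=r^l+1$) needed in Lemma~\ref{ZerlegercritCube} so that, once $\tr_{\Gamma_l}^{r^l}f_1=0$, one can conclude $f_1\in\Frloc[\R^n\setminus\overline\Gamma_l]$. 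Second, the bare trace tuple $Tf$ does \emph{not} land in the product of $\rloc$-spaces: for $l>l_0$ a general $f\in\Frinf[Q]$ gives a trace on an $l$-face with no decay towards its $(l-1)$-subfaces (take $f\equiv 1$ near a vertex). The $\rloc$ target arises only through the upward induction you describe in Step~2: one first shows $\Frinf[\R^n\setminus\Gamma]\subset\Frloc[\R^n\setminus\overline\Gamma_{l_0-1}]$, subtracts the $\Gamma_{l_0}$-extension so the remainder lies in $\Frloc[\R^n\setminus\overline\Gamma_{l_0}]$, and only then does Proposition~\ref{Frlocdecomp} place the next trace in $F_{p,p}^{\sigma,\rloc}(\Gamma_{l_0+1})$. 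The composite $\Ext_\Gamma^{\overline r,u}$ is built (in the Remark preceding the theorem) precisely to encode this induction, and the paper's short proof works because $E\circ T$ applied to $f$ reproduces that iterative procedure verbatim. Your Step~1, arguing boundedness of $T$ into $\prod\rloc$ directly and by a downward sweep, would fail as stated; it is the \emph{corrected} traces $\tr_{\Gamma_l}^{r^l}(f-\text{lower-face extensions})$ that lie in the $\rloc$-spaces.
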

Dieses Theorem ist eine Verallgemeinerung von \cite[Theorem 6.28]{Tri08} für die $F$-Räume mit einer abgeänderten Notation. Da alle Räume auf der rechten Seite (Riesz)-Wavelet-Basen haben, können wir eine (Riesz)-Wavelet-Basis für $\Frinf[Q]$ konstruieren.

\medskip

Die Resultate aus Kapitel \ref{chapterpointwise} sind eine Art Nebenprodukt unserer Erkenntnisse für Wavelet-Basen auf zellulären Gebieten. Diese Resulte werden in \cite{Sch13} veröffentlicht.\ Wir verallgemeinern das atomare Zerlegungstheorem aus \cite{Tri92, Tri97} für Besov- und Triebel-Lizorkin-Räume $\bspq$ und $\fspq$ und präsentieren zwei Anwendungen auf punktweise Multiplikatoren und Diffeomorphismen als stetige, lineare Operatoren in $\bspq$ resp.\ $\fspq$.

Nach \cite{Tri92} gilt, dass
 \begin{align*}
 P_{\varphi}: f \mapsto \varphi \cdot f
\end{align*}
den Raum $\bspq$ in den Raum $\bspq$ abbildet, falls $s> \sigma_p$ und $\varphi \in C^k(\R^n)$ mit $k>s$. Weiterhin bildet die Superposition mit einer Vektorfunktion $\varphi: \R^n \rightarrow \R^n$
 \begin{align*}
 D_{\varphi}: f \mapsto f \circ \varphi
\end{align*}
den Raum $\bspq$ nach $\bspq$ ab, falls $\varphi$ ein $k$-Diffeomorphismus ist und $k$ groß genug ist in Abhängigkeit von $s$ und $p$. Auch für $\fspq$ gibt es ähnliche Resultate. 

Die Hauptidee, um diese beiden Aussagen auf einfache Art zu beweisen, ist das atomare Zerlegungstheorem. Hauptsächlich muss man zeigen, dass die Multiplikation eines Atoms $a_{\nu,m}$ mit einer Funktion $\varphi$ bzw.\ die Superposition mit $\varphi$ immer noch ein Atom ist mit ähnlichen Eigenschaften. Aber es gab ein Problem: Falls $s \leq \sigma_p$ bzw.\ $s \leq \sigma_{p,q}$, müssen Atome Momentenbedingungen erfüllen, d.\,h.\
\begin{align}
\label{momentoD}
\int_{\mathbb{R}^n} &x^{\beta} a(x) \ dx=0 \text{ falls }  |\beta|\leq L-1
\end{align}
für $L \in \N_0$ und $L>\sigma_p-s$ bzw.\ $L>\sigma_{p,q}-s$. Aber diese Eigenschaften bleiben nach der Multiplikation bzw.\ Superposition nicht erhalten. In \cite{Skr98} wurde diese Momentenbedingungen durch die allgemeineren Bedingungen
\begin{align*} 
\left| \int_{d \cdot Q_{\nu,m}} \psi(x) a(x) \ dx \right|\leq C \cdot  2^{-\nu\left(s+L+n\left(1-\frac{1}{p}\right)\right)} \|\psi|C^L(\R^n)\|
\end{align*}
für alle $\psi \in C^L(\R^n)$ ersetzt. Nun ist die Situation anders: Diese Bedingungen bleiben  nach Multiplikation bzw.\ Superposition erhalten. Diese Ersetzung durch allgemeinere Bedingungen ist typisch, wenn man über Wavelet-Darstellungen auf Gebieten $\Om$ nachdenkt, siehe zum Beispiel die Bemerkungen über die Momentenbedingungen in \cite[Abschnitt 3.1]{Dah01}.

In diesem Kapitel gehen wir einen Schritt weiter. Wir zeigen, dass man die üblichen $C^K(\R^n)$-Bedingungen an Atome durch H\"older-Bedingungen ersetzen kann, wie in Definition \ref{Atoms}. Dies verallgemeinert die Definition von Atomen durch Triebel, Skrzypczak und Winkelvoss aus \cite{Tri97, Skr98, TrW96}. 

Es gibt bereits weitreichende Ergebnisse, wie man die Bedingungen
\begin{align*}
 \|a(2^{-\nu}\cdot)|\hold[K]\|
\end{align*}
 weiter durch $\|a(\cdot)|B_{p,p}^{K}(\R^n)\|$ mit $K>s$ ersetzen kann, meist im Zusammenhang mit Spline-Darstellungen, siehe z.\,B.\ \cite[Teil II, Abschnitt 6.5]{KaL95}, \cite[Abschnitt 2.2]{Tri06} und das noch nicht erschienene Paper von Schneider und Vybiral \cite{ScV12}. Von diesen Referenzen werden nur im ersten Buch auch Momentenbedingungen \eqref{momentoD} betrachtet.

\medskip
Als Folgerungen aus dem atomaren Zerlegungstheorem mit diesen verallgemeinerten Atomen sind wir in der Lage, die Schlüsseltheoreme für punktweise Multiplikatoren und Diffeomorphismen aus \cite{Tri92} zu erweitern. Es ist nicht das Ziel dieser Ausführungen, die bestmöglichen Bedingungen oder sogar exakte Charakterisierungen für punktweise Multiplikatoren in Funktionenräumen $\bspq$ und $\fspq$ zu geben. Dafür verweisen wir auf  Strichartz \cite{Str67}, Peetre \cite{Pee76} sowie auf Maz'ya und Shaposhnikova \cite{MaS85, MaS09} für die klassischen Sobolevräume, während wir für $B_{p,p}^s(\R^n)$ auf Franke \cite{Fra86}, Frazier und Jawerth \cite{FrJ90}, Netrusov \cite{Net92}, Koch, Runst und Sickel \cite{RS96, Sic99a, Sic99b, KoS02} sowie Triebel \cite[Abschnitt 2.3.3]{Tri06} für allgemeine Funktionenräume $\bspq$ und $\fspq$ verweisen.

Für punktweise Multiplikatoren für $\bspq$ erhalten wir:

\begin{thma}
Sei $0<p\leq \infty$ und $\rho > \max(s,\sigma_p-s)$. Dann existiert eine positive Zahl $c$, sodass
\begin{align*}
 \|\varphi f|\bspq\| \leq c \|\varphi|\hold[\rho]\| \cdot \|f|\bspq\|
\end{align*}
für alle $\varphi \in \hold[\rho]$ und alle $f \in \bspq$.
 \end{thma}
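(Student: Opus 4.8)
\textit{Proof idea.} The plan is to reduce the estimate to the atomic decomposition theorem for $\bspq$ with the non-smooth (Hölder--Zygmund type) atoms of Definition~\ref{Atoms}, using that multiplication by $\varphi\in\hold[\rho]$ turns such an atom into a constant multiple of another one. \textbf{Setup:} fix a smoothness parameter $K>s$ and a cancellation order $L$ with $\max(0,\sigma_p-s)<L\le\rho$; the hypothesis $\rho>\max(s,\sigma_p-s)$ is exactly what makes such an $L$ available (with $L=0$, i.e.\ no cancellation, allowed when $s>\sigma_p$). Given $f\in\bspq$, the atomic decomposition theorem supplies $f=\sum_{\nu\in\N_0}\sum_{m\in\Z^n}\lambda_{\nu,m}\,a_{\nu,m}$ (convergence in $\Sc'(\R^n)$) where each $a_{\nu,m}$ is a non-smooth $K$-atom located at $Q_{\nu,m}$, supported in $d\,Q_{\nu,m}$, with $\|a_{\nu,m}(2^{-\nu}\cdot)|\hold[K]\|\le 1$ and generalised cancellation of order $L$,
\begin{align*}
\Bigl|\int_{d\,Q_{\nu,m}}\psi(x)\,a_{\nu,m}(x)\,dx\Bigr|\le 2^{-\nu\left(s+L+n\left(1-\frac{1}{p}\right)\right)}\,\|\psi|\hold[L]\|\qquad(\psi\in\hold[L]),
\end{align*}
together with the coefficient bound $\|\lambda|b_{p,q}^{s}\|\le c_1\,\|f|\bspq\|$.

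\textbf{Atoms survive multiplication by $\varphi$.} I would show that $\varphi\,a_{\nu,m}=c_2\|\varphi|\hold[\rho]\|\cdot\tilde a_{\nu,m}$ with $\tilde a_{\nu,m}$ a non-smooth $K'$-atom located at $Q_{\nu,m}$ with cancellation order $L$, where $K':=\min(K,\rho)>s$. The support and the located structure are obviously inherited. For the smoothness one uses the standard facts that $\hold[\rho]$ is a multiplication algebra, more precisely $\hold[\rho]\cdot\hold[K]\hookrightarrow\hold[K']$, that $\hold[K']$-norms are translation invariant, and that $\|\varphi(2^{-\nu}\cdot)|\hold[K']\|\le c\,\|\varphi|\hold[K']\|\le c\,\|\varphi|\hold[\rho]\|$ for $\nu\in\N_0$; combining these yields $\|(\varphi\,a_{\nu,m})(2^{-\nu}\cdot)|\hold[K']\|\le c_2\|\varphi|\hold[\rho]\|$. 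For the cancellation one writes $\int\psi\,(\varphi\,a_{\nu,m})=\int(\psi\varphi)\,a_{\nu,m}$; since $\rho\ge L$ one has $\psi\varphi\in\hold[L]$ with $\|\psi\varphi|\hold[L]\|\le c\,\|\varphi|\hold[\rho]\|\,\|\psi|\hold[L]\|$, so the displayed inequality for $a_{\nu,m}$ passes to $\varphi\,a_{\nu,m}$ with the identical power $2^{-\nu(s+L+n(1-1/p))}$ and an extra factor $c\,\|\varphi|\hold[\rho]\|$.

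\textbf{Synthesis.} Then $\varphi f=\sum_{\nu,m}\bigl(c_2\|\varphi|\hold[\rho]\|\,\lambda_{\nu,m}\bigr)\,\tilde a_{\nu,m}$, and the converse (synthesis) half of the non-smooth atomic decomposition theorem --- applicable because $K'>s$ and $L>\sigma_p-s$ --- shows that this series converges in $\Sc'(\R^n)$ to $\varphi f$ and obeys $\|\varphi f|\bspq\|\le c_3\,c_2\|\varphi|\hold[\rho]\|\,\|\lambda|b_{p,q}^{s}\|\le c\,\|\varphi|\hold[\rho]\|\,\|f|\bspq\|$, which is the assertion.

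\textbf{Main obstacle.} The decisive point is the middle step: that the generalised cancellation conditions replacing the moment conditions \eqref{momentoD} are stable under multiplication by $\varphi$, with the correct $2^{-\nu}$-scaling and the correct dependence on $\|\varphi|\hold[\rho]\|$, and that the admissible ranges for $K$ and $L$ are simultaneously nonempty --- which is precisely where $\rho>\max(s,\sigma_p-s)$ enters ($\rho\ge L>\sigma_p-s$ keeps $\psi\varphi$ in $\hold[L]$, and $\rho>s$ keeps $K'$ above $s$). With the classical $C^K$-atoms of \cite{Tri92} this cannot work once $s\le\sigma_p$, since the exact moment conditions \eqref{momentoD} are destroyed by multiplication; the non-smooth atoms of Definition~\ref{Atoms} are designed exactly to make this argument go through.
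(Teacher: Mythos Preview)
Your proposal is correct and follows essentially the same route as the paper: decompose $f$ into generalized $(s,p)_{K,L}$-atoms via Theorem~\ref{AtomicRepr}, use that $\hold[\rho]$ acts multiplicatively on such atoms (this is the paper's Lemma~\ref{ProdAtom}, argued exactly as you do via the algebra property of $\hold[\cdot]$ for both the smoothness and the generalized moment conditions), and synthesize. The only point the paper treats more carefully is the well-definedness of the product $\varphi f$ for distributional $f$ and the independence of the limit from the chosen atomic decomposition (Remark~\ref{ProdDef}): one first takes $\varphi\in C^\infty(\R^n)$, where multiplication is continuous on $\Sc'(\R^n)$, applies the Fatou property, and then passes to general $\varphi\in\hold[\rho]$ by density of $C^\infty$ in $\hold[\rho]$ with respect to the $\hold[\rho-\varepsilon]$-norm.
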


\noindent 

Für weitere hinreichende Resultate für Diffeomorphismen, die auch Charakterisierungen enthalten, verweisen wir für die klassischen Sobolevräume $W_{p}^k(\R^n)$ auf Gol'dshtein, Reshetnyak, Romanov, Ukhlov und Vodop'yanov \cite{GoR76, GoV75, GoV76, Vod89, UkV02},  \cite[Kapitel 4]{GoR90}, Markina \cite{Mar90} sowie auf Maz'ya und Shaposhnikova \cite{Maz69, MaS85}, während wir für Besov-Räume $\bspq$ mit $0<s<1$ auf Vodop'yanov, Bordaud und Sickel \cite{Vod89, BoS99} verweisen. Ein Spezialfall unseres Resultats (Lipschitz-Diffeomorphismen) kann man bei Triebel  \cite[Abschnitt 4]{Tri02} finden.

Wir werden im Falle $\bspq$ beweisen:

\begin{thma}
 Sei $0<p\leq \infty$, $\rho\geq 1$ und $\rho > \max(s,1+\sigma_p-s)$.\ Falls $\varphi$ ein $\rho$-diffeomorphismus ist, dann existiert eine Konstante $c$, sodass
\begin{align*}
 \|f(\varphi(\cdot))|\bspq\| \leq c \cdot \|f|\bspq\|.
\end{align*}
für alle $f \in \bspq$. Also bildet $D_{\varphi}$ den Raum $\bspq$ auf den Raum $\bspq$ ab.
\end{thma}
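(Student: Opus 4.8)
The plan is to reduce the claim to the non‑smooth atomic decomposition theorem proved earlier in this chapter, whose atoms are controlled only by Hölder conditions on their parabolically rescaled versions (Definition~\ref{Atoms}) and by generalized moment conditions tested against functions from $\hold[L]$. The key point is that precomposition with a $\rho$‑diffeomorphism maps such an atom, up to a fixed multiplicative constant and a harmless re‑indexing, to another admissible atom of the same type; once this is in place the theorem follows by transporting atomic decompositions and their sequence‑space norms.

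Given $f\in\bspq$, fix a Hölder smoothness $K$ with $s<K\le\rho$ (possible since $\rho>s$) and a moment order $L$ with $\sigma_p-s<L\le\rho-1$ (possible since $\rho>1+\sigma_p-s$), and use the atomic decomposition theorem to write $f=\sum_{\nu\in\N_0}\sum_{m\in\Z^n}\lambda_{\nu,m}\,a_{\nu,m}$ with $\|\lambda|b_{p,q}^s\|\le c\|f|\bspq\|$, where each $a_{\nu,m}$ is an atom supported in $d\,Q_{\nu,m}$, with $a_{\nu,m}(2^{-\nu}\cdot)$ bounded in $\hold[K]$ uniformly in $\nu,m$ and satisfying the generalized moment conditions of order $L$. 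Apply $D_\varphi$ termwise: $D_\varphi f=\sum_{\nu,m}\lambda_{\nu,m}\,(a_{\nu,m}\circ\varphi)$. Since $\varphi$ is a $\rho$‑diffeomorphism with $\rho\ge1$, both $\varphi$ and $\varphi^{-1}$ are Lipschitz, so $\operatorname{supp}(a_{\nu,m}\circ\varphi)=\varphi^{-1}(\operatorname{supp}a_{\nu,m})$ lies in a ball of radius $\sim 2^{-\nu}$ around $\varphi^{-1}(2^{-\nu}m)$; hence it is covered by a bounded number of cubes $Q_{\nu,m'}$, with overlap independent of $\nu,m$. Reassigning $a_{\nu,m}\circ\varphi$ to a nearby lattice point and enlarging the constant $d$ if necessary, the regrouped coefficient sequence $\tilde\lambda$ satisfies $\|\tilde\lambda|b_{p,q}^s\|\le c\|\lambda|b_{p,q}^s\|$.

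It remains to show that $c^{-1}(a_{\nu,m}\circ\varphi)$ is again an admissible atom for a constant $c=c(\varphi,\rho,s,p)$. For the smoothness one writes the rescaled function as $a_{\nu,m}(2^{-\nu}\cdot)$ composed with the inner map $2^{\nu}\varphi(2^{-\nu}\cdot)$, whose derivatives of order $\ge1$ are bounded uniformly in $\nu$ because $\varphi\in\hold[\rho]$; the higher‑order chain rule together with $K\le\rho$ then gives a uniform bound on the $\hold[K]$‑norm. For the cancellation, given $\psi\in\hold[L]$ we change variables $y=\varphi(x)$:
\[
 \int_{\R^n}\psi(x)\,a_{\nu,m}(\varphi(x))\,dx=\int_{\R^n}\Psi(y)\,a_{\nu,m}(y)\,dy,\qquad \Psi(y):=\psi(\varphi^{-1}(y))\,\bigl|\det D\varphi^{-1}(y)\bigr|.
\]
Here $\det D\varphi^{-1}$ is built from the first derivatives of $\varphi^{-1}\in\hold[\rho]$, hence lies in $\hold[\rho-1]$; as $L\le\rho-1$ we get $\Psi\in\hold[L]$ with $\|\Psi|\hold[L]\|\le c\|\psi|\hold[L]\|$. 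Applying the generalized moment condition for $a_{\nu,m}$ to the test function $\Psi$, and noting that $\Psi\,a_{\nu,m}$ is supported in $d\,Q_{\nu,m}$, yields $\bigl|\int\psi\,(a_{\nu,m}\circ\varphi)\bigr|\le c\,2^{-\nu(s+L+n(1-1/p))}\|\psi|\hold[L]\|$, i.e.\ the required moment condition relative to the (slightly enlarged, reassigned) cube.

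Combining, $D_\varphi f=\sum_{\nu,m'}\tilde\lambda_{\nu,m'}\,\tilde a_{\nu,m'}$ is a generalized atomic decomposition with $\|\tilde\lambda|b_{p,q}^s\|\le c\|f|\bspq\|$, so the atomic decomposition theorem gives $D_\varphi f\in\bspq$ with $\|D_\varphi f|\bspq\|\le c\|f|\bspq\|$. Since $\varphi^{-1}$ is again a $\rho$‑diffeomorphism satisfying the same hypotheses, $D_{\varphi^{-1}}$ is bounded on $\bspq$ as well, and $D_\varphi D_{\varphi^{-1}}=D_{\varphi^{-1}}D_\varphi=\operatorname{id}$, so $D_\varphi$ is in fact an isomorphism of $\bspq$ onto itself. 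The main obstacle is the cancellation step: one must track exactly how many derivatives of $\varphi$ and especially of $\varphi^{-1}$ are consumed — the Jacobian factor costs one order of smoothness, which is precisely what forces the threshold $\rho>1+\sigma_p-s$ — and check that every estimate is uniform in $\nu$ after the rescaling by $2^{\nu}$, while verifying that the re‑indexing in the support step does not deteriorate the $b_{p,q}^s$‑(quasi‑)norm.
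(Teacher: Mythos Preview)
Your proposal is correct and follows essentially the same route as the paper's proof of Theorem~\ref{Diffeo}: one transports an optimal atomic decomposition through $\varphi$, verifies that $a_{\nu,m}\circ\varphi$ is again an $(s,p)_{K,L}$-atom via the rescaling ${\cal T}_\nu(\varphi)=2^\nu\varphi(2^{-\nu}\cdot)$ for the H\"older condition and the change of variables $y=\varphi(x)$ with $\Psi=\psi\circ\varphi^{-1}\cdot|\det J(\varphi^{-1})|$ for the moment condition, and handles the re-indexing by the bi-Lipschitz property. The paper adds two points you leave implicit: the case $\rho=1$ is treated separately (then $L=0$, no moment condition is needed, and the change-of-variables step is bypassed since $\varphi$ need not be differentiable), and the well-definedness of $D_\varphi f$ as a distribution independent of the chosen atomic decomposition is addressed via a density argument in Remark~\ref{DiffDef}.
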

Diese beiden Anwendungen oder eher deren kurze Beweise sind die Hauptresultate aus Kapitel \ref{chapterpointwise}.

\bigskip

Diese Dissertation ist wie folgt angeordet: Im ersten Kapitel geben wir die nötigen Grundlagen an -- Definitionen, Standardeigenschaften und bekannte Resultate.

Im Kapitel \ref{chapterpointwise} präsentieren wir die erwähnten Resultate über atomare Zerlegungen und ihre Anwendungen auf punktweise Multiplikatoren und Diffeomorphismen für Funktionenräume von Besov- und Triebel-Lizorkin-Typ.

Im Kapitel \ref{Zerlegungsth2} werden wir uns die geforderten Extra-Bedingungen und Zerlegungen für eine feste Dimension $l$ mit $0\leq l \leq n-1$ anschauen, wobei $n$ die Dimension von $\R^n$ ist. Wir schauen uns den Funktionenraum $\FR$ und sein Verhalten in der Nähe von $l$-dimensionalen Ebenen an. Wir untersuchen die Probleme und das unterschiedliche Verhalten von $\FR$ im Ausnahmefall
\begin{align*}
 s-\frac{n-l}{p} \in \N_0
\end{align*}
und führen eingeschränkte Funktionenräume $\Frinf[\hyp]$ ein, die in den kritischen Fällen echte Teilmengen von $\FR$ sind. Weiterhin beweisen wir Hardy Ungleichungen an den $l$-dimensionalen Ebenen und zeigen Zerlegungstheoreme unter Benutzung von Spur- und Fortsetzungsoperatoren.

Im Kapitel \ref{Wavelets} nutzen wir die Resultate aus Kapitel \ref{Zerlegungsth2} und verallgemeinern die Extra-Bedingungen, nun für alle Dimensionen $l=\{0,\ldots,n-1\}$ gleichzeitig, auf den Würfel $Q$. Wir führen den eingeschränkten Funktionenraum $\Frinf[Q]$ ein und untersuchen sein Verhalten an den Rändern des Würfels. Letztendlich beweisen wir die Existenz einer (Riesz)-Wavelet-Basis für $\Frinf[Q]$ für alle Werte $s>0$. Wir konstruieren eine Wavelet-Basis, die aus inneren und Rand-Wavelets auf dem Würfel $Q$ bestehen. Am Ende betrachten wir den bekanntesten Spezialfall eines kritischen Raumes, den Sobolevraum $W_{2}^1(Q)$ und seinen eingeschränkten Raum $W_{2}^{1,\rinf}(Q).$

Im Kapitel \ref{Open} diskutieren wir die Ergebnisse und erwähnen die offenen Probleme. Wir fragen, ob die Extra-Bedingungen wirklich notwendig sind. Weiterhin untersuchen wir, ob und wie man die (Riesz)-Wavelet-Basis vom Würfel auf zelluläre Gebiete übertragen kann und beschreiben die Probleme, die dabei auftreten. Am Ende schauen wir uns die spezielle Situation für die Zerlegung des eingeschränkten Funktionenraums auf der Einheitskugel an.

\chapter*{Introduction}
\addcontentsline{toc}{chapter}{Introduction}
 	Nowadays the theory and application of wavelet decompositions plays an important role not only for the study of function spaces (of Lebesgue, Hardy, Sobolev, Besov, Triebel-Lizorkin type) but also for its applications in signal and numerical analysis, partial differential equations and image processing.

A wavelet basis on $\R^n$ is essentially an orthonormal basis of $L_2(\R^n)$ consisting of easily described functions $\Phi=\{\Phi_r^j\}$. This means, that we can decompose
\begin{align*}
 f=\sum_{j,r} \lambda_r^j(f) \cdot \Phi_r^j
\end{align*}
with coefficients $\lambda_r^j(f)$ from the sequence space $l_2(\Z^n)$. The coefficients can be calculated using the scalar product in $L_2(\R^n)$, i.\,e.\
\begin{align}
\label{decointr}
 \lambda_r^j(f)=\left( f,\Phi_r^j\right).
\end{align}
Introductions to wavelets can be found e.\,g.\ in \cite{Woj97} and \cite{Mal99}.

But wavelets on $\R^n$ have more remarkable properties in connection with function spaces of Besov and Triebel-Lizorkin type. In \cite[Theorem 3.5]{Tri06} Triebel showed that the Daubechies wavelet basis of $L_2(\R^n)$ is also a basis for the Besov and Triebel-Lizorkin spaces using suitable sequence space conditions on the coefficients $\lambda_r^j(f)$. Triebel also mentioned some forerunners in \cite[Remark 1.66]{Tri06}. Introductions to Daubechies wavelets can be found in \cite{Dau92} and \cite{Woj97}. The observations of Triebel on wavelets are based on the theory of local means and atomic decompositions of function spaces from the late 1980s and 1990s. In this connection one should have a look at \cite[Chapter 2]{Tri92}, \cite{Ryc99} resp.\ \cite[Section 13]{Tri97}, \cite{FrJ85} and \cite{FrJ90}.    

A rather tricky question is the construction of wavelet bases on domains $\Om \subset \R^n$ for suitable function spaces. One starting point are the papers of Ciesielski and Figiel \cite{CiF83A}, \cite{CiF83B} and \cite{Cie84} dealing with spline bases for spaces of differentiable functions as well as classical Sobolev and Besov spaces on compact $C^{\infty}$ manifolds. The principle idea is to decompose arbitrary domains $\Om$ into simpler standard domains -- the so-called domain (decomposition) problem. But the topic is rather involved and complex. Related approaches and extensions in this direction were given in \cite{Dah97}, \cite{DaS99}, \cite{Dah01}, \cite{Coh03}, \cite{HaS04}, \cite{JoK07} and \cite{FoG08}. 

One special decomposition is reflected by the introduction of cellular domains in \cite[Definition 5.40]{Tri08}. Similar ideas can be found in \cite[Section 9.1]{Dah97}, \cite{Dah01}, \cite{Mal99} and \cite{CDV00}. A cellular domain is a disjoint union of diffeomorphic images of a cube. The most prominent example is the unit cube $Q$ in $\R^n$ which will play a significant role in this thesis. Furthermore, all $C^{\infty}$-domains are cellular domains.

When considering wavelets on domains one faces new problems  -- Triebel had to exclude the so-called critical values from his observations: On the one hand he construced wavelet (Riesz) frames for Triebel-Lizorkin spaces $\FO[\Om]$ for $C^{\infty}$-domains $\Om$ with natural exceptional values $s-\frac{1}{p} \in \N_0$ in \cite[Theorem 5.27]{Tri08}. In contrast to a wavelet basis a decomposition as in \eqref{decointr} is not unique for a wavelet frames. Triebel was not able to show that there is a wavelet (Riesz) basis for general dimensions and general smoothness parameter $s$ for bounded $C^{\infty}$-domains. 

On the other hand he constructed wavelet (Riesz) basis for $\FO[\Om]$ where $\Om$ is an $n$-dimensional cellular domain. But he had to exclude the exceptional values $s-\frac{k}{p} \notin \N_0$ for $k\in \{1,\ldots,n\}$, see \cite[Theorem 6.30]{Tri08}. For instance, the most prominent Sobolev space $W_2^1(Q)$ is exceptional and upto now there seems to be no construction of a wavelet basis in Triebel's sense for $W_2^1(Q)$. An overview of the situation for this classical space is given in \cite[Section 5.3.1, Remark 5.50]{Tri08} also refering to \cite{Gri85} and \cite{Gri92}.

The main aim of this thesis is the incorporation of the exceptional values. A proposal how to deal with these cases is given in \cite[Section 6.2.4]{Tri08}. At first, as in \cite{Tri08} one considers the situation for the unit cube $Q$ as the standard example of a cellular domain. The idea is to modify the spaces $\FO[Q]$ and to ``reinforce them''. One defines the reinforced function spaces $\Frinf[Q]$ as follows: One takes an $f \in \FO[Q]$ and for every critical value $l \in \{0,\ldots,n-1\}$, i.\,e.\ when
\begin{align*}
 s-\frac{n-l}{p} \in \N_0,
\end{align*}
one requires $f$ to fulfil the additional reinforce property $R_l^{r,p}$. Roughly speaking, this reinforce property asks for a certain decay of the derivatives of $f$ at the faces (edges, vertices) of dimension $l<n$ of the unit cube. 
The construction of the reinforced Triebel-Lizorkin function spaces $\Frinf[Q]$ ensures that in the non-critical cases considered in \cite[Theorem 6.30]{Tri08} the spaces $\Frinf[Q]$ and $\FO[Q]$ are the same. The main aim of this thesis is the construction of wavelet (Riesz) basis for the spaces $\Frinf[Q]$ without any exceptional values. We will show in Chapter \ref{Wavelets}
\begin{thma}
 Let $Q$ be the unit cube in $\R^n$ for $n\geq 2$. Let 
 \begin{align*}
  s>0, \quad 1\leq p <\infty \quad \text{and} \quad 1\leq q<\infty.
 \end{align*}
 Then $\Frinf[Q]$ has an oscillating $u$-Riesz basis for any $u \in \N_0$ with $u>s$. The related sequence space is $\fO[\overline{Q}]$. 
\end{thma}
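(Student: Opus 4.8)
The plan is to deduce the theorem from the decomposition theorem stated above (proved in Chapter~\ref{Wavelets}) by assembling oscillating $u$-Riesz bases for each of the complemented factors on its right-hand side. First I would invoke that theorem, which exhibits $\Frinf[Q]$ as a topological direct sum of the refined localization space $\Frloc[Q]$ and the image under the extension operator $\Ext_{\Gamma}^{\overline{r},u}$ of a finite product of refined localization spaces $F_{p,p}^{s-\frac{n-l}{p}-|\alpha|,\rloc}(\Gamma_{l})$, taken over the faces $\Gamma_{l}$ of dimension $l_{0}\le l\le n-1$ and the transverse multi-indices $\alpha$ with $|\alpha|\le r^{l}$. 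Since the decomposition is into mutually complemented subspaces with equivalent norms, it then suffices to produce an oscillating $u$-Riesz basis on each factor and to check that their disjoint union, suitably reindexed, is an oscillating $u$-Riesz basis of $\Frinf[Q]$ with sequence space $\fO[\overline{Q}]$.

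For the interior factor I would use that $\Frloc[Q]$ carries an oscillating $u$-Riesz basis: because $p,q\ge 1$ we have $\sigma_{p}=\sigma_{p,q}=0$ and, crucially, no critical value $s-\frac{n-l}{p}\in\N_{0}$ is exceptional for the refined localization scale, so the $F$-analogue of Triebel's construction applies directly and yields a basis built from interior Daubechies wavelets with $u$ vanishing moments and $\hold[u]$-type smoothness supported well inside $Q$, together with finitely many boundary-adapted wavelets, whose coefficient space is the ``interior part'' of $\fO[\overline{Q}]$. Each face factor $F_{p,p}^{\sigma,\rloc}(\Gamma_{l})$ is a refined localization space on a finite union of $l$-dimensional cubes (for $l=0$ it is even finite dimensional), hence by the same result — or, if one prefers, by a downward induction on the dimension — it has an oscillating $u$-Riesz basis with the corresponding lower-dimensional sequence space; taking the finite product over $l$ and $\alpha$ produces an oscillating $u$-Riesz basis of the whole product space.

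Next I would transport that basis by $\Ext_{\Gamma}^{\overline{r},u}$. This operator is bounded and linear and admits a bounded linear left inverse, namely the associated trace operator collecting the transverse derivatives of order at most $r^{l}$ on $\Gamma_{l}$; hence it is an isomorphism onto the complemented subspace it defines and sends the product basis to a Riesz basis of that subspace. The superscript $u$ is exactly the feature that keeps the transported system oscillating: the extension in the directions normal to a face is carried out with a fixed compactly supported profile having $u$ vanishing moments, so the images of the face wavelets are genuine oscillating boundary wavelets of order $u$ on $Q$. Finally, since the two factors are complemented in $\Frinf[Q]$, the union of the interior wavelets and these boundary wavelets is a Riesz basis of $\Frinf[Q]$ all of whose elements oscillate of order $u>s$; reindexing interior and boundary wavelets jointly over the lattice points of the closed cube $\overline{Q}$ identifies the combined coefficient space with $\fO[\overline{Q}]$, the passage from $Q$ to $\overline{Q}$ being precisely the bookkeeping that absorbs the boundary wavelets together with the smoothness shifts $-\frac{n-l}{p}-|\alpha|$. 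In the non-critical cases $\Frinf[Q]$ coincides with $\FO[Q]$, so the statement also reproves the corresponding result for the unmodified $F$-spaces.

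The main obstacle is not Steps~1--3, which are essentially ``known input'' once the decomposition theorem is available, but the last two steps: arranging the extension operator so that the extended face wavelets genuinely retain the oscillation (transverse moment) conditions of order $u$, and verifying that concatenating the interior and boundary systems yields an honest Riesz basis — equivalently, that the two families of Riesz bounds combine and that the scales and locations of interior versus boundary wavelets match the structure of the sequence space $\fO[\overline{Q}]$.
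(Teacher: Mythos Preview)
Your approach is essentially the same as the paper's: invoke the decomposition Theorem~\ref{Zerlegerwav} and then assemble the $u$-Riesz basis from the interior $u$-Riesz bases of $\Frloc[Q]$ and of the face spaces $F_{p,p}^{\sigma,\rloc}(\Gamma_l)$ (both supplied by Theorem~\ref{rlocwavelet}), transporting the latter via the wavelet-friendly extension operator; the paper's proof is in fact just a one-paragraph reference to this mechanism and to Triebel's Theorem~6.30. One small correction: the basis of $\Frloc[Q]$ from Theorem~\ref{rlocwavelet} is genuinely \emph{interior} (all wavelets are supported inside $Q$ with $\dist\gtrsim 2^{-j}$ from $\Gamma$), so there are no ``boundary-adapted wavelets'' in that factor --- the boundary wavelets in the final system come exclusively from the extended face bases, and it is precisely these that force the passage from $\fO[Q]$ to $\fO[\overline{Q}]$.
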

This theorem is a generalization of Theorem 6.30 in \cite{Tri08} for the $F$-spaces since $\Frinf[Q]=\FO[Q]$ for the non-exceptional values by construction. The crucial starting point for the construction of the wavelet (Riesz) basis is the following 
\begin{thma}
 Let $1\leq p <\infty$, $1\leq q<\infty$ and $0<s<u \in \N$. Let $n \in \N$, $l_0 \in \N_0$ with $0\leq l_0\leq n$ defined as in \eqref{l0def}, $r^l$ for $l_0\leq l\leq n-1$ defined as in \eqref{rldef} and $\overline{r}=\{r^{l_0},\ldots,r^{n-1}\}$. Then it holds
\begin{align*}
\Frinf[Q]=\Frloc[Q] \times \Ext_{\Gamma}^{\overline{r},u} \prod_{l=l_0}^{n-1}  \prod_{\underset{|\alpha|\leq r^{l}}{\alpha \in \N_{l}^n}} F_{p,p}^{s-\frac{n-l}{p}-|\alpha|,\rloc}(\Gamma_{l}).
\end{align*}
(complemented subspaces)
\end{thma}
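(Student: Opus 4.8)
The plan is to realise the asserted decomposition as the splitting of $\Frinf[Q]$ into the kernel and the range of a bounded linear projection built from a trace operator onto the faces of $Q$ and from the extension operator $\Ext_{\Gamma}^{\overline{r},u}$. As a first step I would localise: cover $\overline{Q}$ by finitely many open sets, each either contained in the interior of $Q$ or a neighbourhood of (part of) an $l$-dimensional face $\Gamma_l$ on which, after permuting and reflecting coordinates, $Q$ becomes a neighbourhood of the origin in a coordinate wedge around the $l$-plane and $R_l^{r,p}$ becomes the reinforce condition near that plane treated in Chapter~\ref{Zerlegungsth2}. With a subordinate smooth partition of unity and the pointwise multiplier theorem (in its $F$-space version, so that multiplication by the cut-offs is bounded on $\Frinf[Q]$ and on every space occurring on the right-hand side), it then suffices to treat the local models: the patches lying inside $Q$ feed only into the factor $\Frloc[Q]$, and the patches at a face are handled by the single-plane decomposition for $\hyp$ of Chapter~\ref{Zerlegungsth2}.

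Secondly, I would show that for $l_0\le l\le n-1$ and $\alpha\in\N_l^n$ with $|\alpha|\le r^l$ the traces $\tr_{\Gamma_l}D^\alpha f$ are defined for $f\in\Frinf[Q]$ and that the combined trace operator $\tr\colon f\mapsto\bigl(\tr_{\Gamma_l}D^\alpha f\bigr)$ is bounded from $\Frinf[Q]$ into $\prod_{l,\alpha}F_{p,p}^{s-\frac{n-l}{p}-|\alpha|,\rloc}(\Gamma_l)$. For the non-critical $l$ (those with $s-\frac{n-l}{p}\notin\N_0$) this is the classical trace theorem for $F$-spaces on affine subspaces; there $\Frinf[Q]=\FO[Q]$ and the target spaces are the ordinary ones, so this part reproduces \cite[Theorem 6.28]{Tri08}. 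For the critical $l$ the trace is not defined on all of $\FO[Q]$, and this is exactly where the reinforce property $R_l^{r,p}$, imposed in the very definition of $\Frinf[Q]$, enters: the Hardy inequalities at the $l$-dimensional faces from Chapter~\ref{Zerlegungsth2} turn $R_l^{r,p}$ into the well-definedness and boundedness of $\tr_{\Gamma_l}D^\alpha$ on the reinforced space, and running the same estimate along $\partial\Gamma_l$ shows that the traces land in the reinforced \emph{local} space $F_{p,p}^{s-\frac{n-l}{p}-|\alpha|,\rloc}(\Gamma_l)$, not merely in a larger one.

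For the converse direction I would assemble $\Ext_{\Gamma}^{\overline{r},u}$ from the single-face extensions $\Ext^{r^{l},u}$ of Chapter~\ref{Zerlegungsth2} — each of which extends the $\Gamma_l$-data into $Q$ and reproduces the normal-derivative traces on $\Gamma_l$ up to order $r^l$ — by a recursion over the faces in which, at each step, one extends the data of the current dimension and then corrects the traces it produces on the remaining faces. The reason the right-hand side may be taken as an \emph{unconstrained} product, and the reason the corrected data stay inside the correct spaces throughout the recursion, is precisely that the data on $\Gamma_l$ are drawn from the $\rloc$-space, whose elements vanish together with the relevant derivatives on $\partial\Gamma_l$ — this is what the recursive occurrence of ``$\rloc$'' records. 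Granting this, boundedness of $\Ext_{\Gamma}^{\overline{r},u}$ and the identity $\tr\circ\Ext_{\Gamma}^{\overline{r},u}=\mathrm{id}$ on the product space follow from the single-plane result and the multiplier theorem, so $P:=\Ext_{\Gamma}^{\overline{r},u}\circ\tr$ is a bounded projection on $\Frinf[Q]$ whose range is isomorphic, via $\tr$ and $\Ext_{\Gamma}^{\overline{r},u}$, to that product. Identifying $\ker P=\{f\in\Frinf[Q]:\tr f=0\}$ with $\Frloc[Q]$ — which is how the reinforced local space on $Q$ is defined, respectively is checked against the localisation above — then gives the asserted decomposition into complemented subspaces.

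The routine parts are the interior estimates and the non-critical traces. The hard part will be the critical (exceptional) values of $l$ together with the interaction of faces of different dimensions — the ``corner'' bookkeeping: one has to check that $R_l^{r,p}$ is exactly strong enough that $\tr_{\Gamma_l}D^\alpha f$ exists, lies in the reinforced local space on $\Gamma_l$, and is reproduced by $\Ext_{\Gamma}^{\overline{r},u}$, and, simultaneously, that the recursion is consistent — that extending data from the $\rloc$-spaces on the lower faces neither destroys the reinforce property nor leaves the $\rloc$-classes on the higher faces. Both are provided by the Hardy inequalities and the single-plane decomposition of Chapter~\ref{Zerlegungsth2}, but organising them for all $l$ simultaneously is the substance of the proof.
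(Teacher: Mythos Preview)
Your projection strategy --- set $P=\Ext_{\Gamma}^{\overline{r},u}\circ\tr$, show it is a bounded projection on $\Frinf[Q]$, identify its range with the product and its kernel with $\Frloc[Q]$ --- is exactly the skeleton of the paper's proof. The recursive construction of $\Ext_{\Gamma}^{\overline{r},u}$ you describe is also what the paper does. Two points, however, need correction.

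First, the role of the reinforce property is misidentified. You write that for critical $l$ ``the trace is not defined on all of $\FO[Q]$'' and that $R_l^{r,p}$ supplies the well-definedness of $\tr_{\Gamma_l}D^\alpha$. This is not so: by the definition \eqref{rldef} one always has $r^l<s-\tfrac{n-l}{p}$, so for every $|\alpha|\le r^l$ the trace $\tr_{\Gamma_l}D^\alpha$ is perfectly well defined and bounded on all of $\FR$ by the standard trace theorem (Proposition~\ref{Tra}). The reinforce property enters elsewhere: it is precisely what is needed so that $\{f:\tr_{\Gamma_l}^{r^l}f=0\}$ coincides with the refined localisation space --- compare Theorem~\ref{Zerlegercrit} with Theorem~\ref{Zerleger}. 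In the critical case, vanishing of the admissible traces alone does \emph{not} force $f\in\Frloc$; one needs the Hardy-type decay $R_l^{r^l+1,p}$ of the order-$(r^l+1)$ derivatives in addition. So $R_l^{r,p}$ is what makes the identification $\ker P=\Frloc[Q]$ go through, not what makes $\tr$ exist.

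Second, your reduction via a partition of unity to ``single-plane'' local models does not work as stated. Near a vertex of $Q$ every open neighbourhood meets faces of \emph{all} dimensions $0,\ldots,n-1$, and the local model is an octant, not $\R^n\setminus\R^l$ for a single $l$; the single-plane theory of Chapter~\ref{Zerlegungsth2} does not apply directly there. The paper avoids this by an induction on the face dimension rather than a spatial localisation: one first shows $\Frinf[\R^n\setminus\Gamma]\subset\Frloc[\R^n\setminus\overline{\Gamma}_{l_0-1}]$, then uses Proposition~\ref{Frlocdecomp} to trace onto $\Gamma_{l_0}$ (landing in the $\rloc$-spaces automatically, because the wavelet structure of $\Frloc[\R^n\setminus\overline{\Gamma}_{l_0-1}]$ forces decay at $\partial\Gamma_{l_0}$), subtracts the extension, and lands via Lemma~\ref{ZerlegerCube}/\ref{ZerlegercritCube} in $\Frloc[\R^n\setminus\overline{\Gamma}_{l_0}]$; then one repeats. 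The preservation of the reinforce properties under each extension step (Lemma~\ref{reinfprop}) is what keeps the induction inside $\Frinf$. This dimension-by-dimension peeling is the device that replaces your partition of unity and handles the corner interactions cleanly.
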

This theorem is the generalization of \cite[Theorem 6.28]{Tri08} for the $F$-spaces -- using a different notation. Since all the spaces on the right hand side have wavelet (Riesz) basis, we can construct a wavelet (Riesz) basis for $\Frinf[Q]$ by this decomposition.

\medskip

The results from Chapter \ref{chapterpointwise} are some kind of byproduct of our observations on wavelet bases on cellular domains. These results will be published in \cite{Sch13}. We generalize the atomic decomposition theorem from  \cite{Tri92, Tri97} for Besov and Triebel-Lizorkin spaces $\bspq$ and $\fspq$ and present two applications to pointwise multipliers and diffeomorphisms as continuous linear operators in $\bspq$ resp.\ $\fspq$. 
                                                                 
According to \cite{Tri92}
 \begin{align*}
 P_{\varphi}: f \mapsto \varphi \cdot f
\end{align*}
maps $\bspq$ into $\bspq$ if $s> \sigma_p$ and $\varphi \in C^k(\R^n)$ with $k>s$. Furthermore, the superposition with a vector function $\varphi: \R^n \rightarrow \R^n$
 \begin{align*}
 D_{\varphi}: f \mapsto f \circ \varphi
\end{align*}
maps $\bspq$ to $\bspq$ if $\varphi$ is a $k$-diffeomorphism and $k$ is large enough in dependence of $s$ and $p$. There are similar results for $\fspq$. 

The main idea for an easy proof is the atomic decomposition theorem. Mainly one has to show that a multiplication of an atom $a_{\nu,m}$ with a function $\varphi$ resp.\ the superposition with $\varphi$ is still an atom with similar properties. But there was one problem: If $s \leq \sigma_p$ resp.\ $s \leq \sigma_{p,q}$, then atoms need to fulfil moment conditions, i.\,e.\
\begin{align}
\label{momento}
\int_{\mathbb{R}^n} &x^{\beta} a(x) \ dx=0 \text{ if }  |\beta|\leq L-1
\end{align}
for $L \in \N_0$ and $L>\sigma_p-s$ resp.\ $L>\sigma_{p,q}-s$. But these properties are not preserved by multiplication resp.\ superposition. In \cite{Skr98} these moment conditions were replaced by the more general assumptions
\begin{align*} 
\left| \int_{d \cdot Q_{\nu,m}} \psi(x) a(x) \ dx \right|\leq C \cdot  2^{-\nu\left(s+L+n\left(1-\frac{1}{p}\right)\right)} \|\psi|C^L(\R^n)\|
\end{align*}
for all $\psi \in C^L(\R^n)$. Now the situation changes: These conditions remain true after multiplication resp.\ superposition.

This replacement is typical when thinking of atomic, in particular wavelet representations for functions on domains $\Om$, see the remarks on the cancellation property in \cite[Section 3.1]{Dah01}. 

In this chapter we go a step further. We show that one can replace the usual $C^K(\R^n)$-conditions on atoms by H\"older conditions ($\hold[K]$-spaces) as defined in Definition \ref{Atoms}. This generalizes the known definitions of atoms from Triebel, Skrzypczak and Winkelvoss \cite{Tri97, Skr98, TrW96}. 

 There is an existing theory replacing the conditions 
 \begin{align*}
 \|a(2^{-\nu}\cdot)|\hold[K]\|
 \end{align*}
by $\|a(\cdot)|B_{p,p}^{K}(\R^n)\|$ with $K>s$, mainly in connection with spline representations. For instance, see \cite[part II, Section 6.5]{KaL95}, \cite[Section 2.2]{Tri06} and the recent paper by Schneider and Vybiral \cite{ScV12}. Of these, only the first book incorporates the usual moment conditions as in \eqref{momento}.

\medskip
As corollaries of the atomic representation theorem with these more general atoms we are able to extend the key theorems on pointwise multipliers and diffeomorphisms from \cite{Tri92}.
It is not the aim of our observations to give best conditions or even exact characterizations for pointwise multipliers in function spaces $\bspq$ and $\fspq$. For this we refer to Strichartz \cite{Str67}, Peetre \cite{Pee76} as well as to Maz'ya and Shaposhnikova \cite{MaS85, MaS09} for the classical Sobolev spaces, while for $B_{p,p}^s(\R^n)$ we refer to Franke \cite{Fra86}, Frazier and Jawerth \cite{FrJ90}, Netrusov \cite{Net92}, Koch, Runst and Sickel \cite{RS96, Sic99a, Sic99b, KoS02} as well as to Triebel \cite[Section 2.3.3]{Tri06} for general function spaces $\bspq$ and $\fspq$.

We obtain for pointwise multipliers with respect to $\bspq$:

\begin{thma}
Let $0<p\leq \infty$ and $\rho > \max(s,\sigma_p-s)$. Then there exists a positive number $c$ such that
\begin{align*}
 \|\varphi f|\bspq\| \leq c \|\varphi|\hold[\rho]\| \cdot \|f|\bspq\|
\end{align*}
for all $\varphi \in \hold[\rho]$ and all $f \in \bspq$.
 \end{thma}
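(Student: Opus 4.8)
The plan is to reduce the pointwise multiplier estimate for $\bspq$ to the generalized atomic decomposition theorem established in Chapter~\ref{chapterpointwise}. First I would fix an atomic decomposition $f=\sum_{\nu,m}\lambda_{\nu,m}\,a_{\nu,m}$ of $f\in\bspq$ with $\|\lambda|\bO[]\|\lesssim\|f|\bspq\|$, where the $a_{\nu,m}$ are (generalized) $(s,p)$-atoms supported in $d\cdot Q_{\nu,m}$, satisfying the $\hold[K]$-type smoothness bounds and—if $s\leq\sigma_p$—the generalized cancellation conditions with parameter $L>\sigma_p-s$. The key point is that $K$ and $L$ may be taken close to $\rho$ from below; concretely one needs $K>s$ and $L>\sigma_p-s$, and since $\rho>\max(s,\sigma_p-s)$ there is room to fit both. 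Then formally $\varphi f=\sum_{\nu,m}\lambda_{\nu,m}\,(\varphi a_{\nu,m})$, and it suffices to show each $\varphi a_{\nu,m}$ is, after renormalization by a fixed constant multiple of $\|\varphi|\hold[\rho]\|$, again an admissible atom with the same $s,p$ and the same support cube (up to enlarging $d$), so that the same sequence $\lambda$ works and the atomic decomposition theorem gives $\|\varphi f|\bspq\|\lesssim\|\varphi|\hold[\rho]\|\,\|\lambda|\bO[]\|\lesssim\|\varphi|\hold[\rho]\|\,\|f|\bspq\|$.

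The next step is the atom-by-atom verification. The support condition is immediate since $\varphi$ is just a multiplier. For the smoothness bound one rescales: with $b_{\nu,m}(x)=a_{\nu,m}(2^{-\nu}x)$ one has $\|b_{\nu,m}|\hold[K]\|\lesssim 2^{\nu(s-n/p)}$ roughly (the normalization convention of Definition~\ref{Atoms}), and one must control $\|(\varphi(2^{-\nu}\cdot)\,b_{\nu,m})|\hold[K]\|$. Here the scaling helps: derivatives of $\varphi(2^{-\nu}\cdot)$ pick up favourable factors $2^{-\nu|\beta|}$, so the product rule / Leibniz estimate in Hölder spaces shows the rescaled product still has the right $2^{\nu(s-n/p)}$-type bound, with constant $\lesssim\|\varphi|\hold[\rho]\|$ provided $\rho\geq K$; choosing $K\in(s,\rho]$ is then enough. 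The more delicate part is the cancellation conditions when $s\leq\sigma_p$: the ordinary moment conditions \eqref{momento} are destroyed by multiplication, but the generalized conditions from \cite{Skr98} are exactly of the form $|\int_{d\cdot Q_{\nu,m}}\psi\,(\varphi a_{\nu,m})|=|\int_{d\cdot Q_{\nu,m}}(\psi\varphi)\,a_{\nu,m}|$, and since $\psi\mapsto\psi\varphi$ maps $\hold[L](\R^n)$ to itself with norm $\lesssim\|\varphi|\hold[\rho]\|$ (again using $\rho\geq L$ and, after the natural rescaling, the dyadic factors), applying the generalized cancellation bound for $a_{\nu,m}$ to the test function $\psi\varphi$ yields the required bound for $\varphi a_{\nu,m}$ with the same parameter $L$. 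This is where the hypothesis $\rho>\sigma_p-s$ is consumed.

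I expect the main obstacle to be bookkeeping in the rescaling: making sure that the Hölder-norm Leibniz estimate for $a(2^{-\nu}\cdot)\cdot\varphi(2^{-\nu}\cdot)$ produces precisely the normalization of Definition~\ref{Atoms} with no loss in $\nu$, i.e.\ that the dyadic factors $2^{-\nu|\beta|}$ coming from differentiating $\varphi(2^{-\nu}\cdot)$ compensate exactly for the loss incurred by distributing $\lfloor K\rfloor$ derivatives and the fractional Hölder seminorm of order $\{K\}$ across the product. One has to treat the integer part and the fractional part of $K$ (and of $L$) separately and check the interpolation-type inequality $\|fg|\hold[K]\|\lesssim\|f|\hold[K]\|\,\|g|\hold[K]\|$ for compactly supported factors carefully. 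Once the single-atom claim is in place the global estimate is a one-line consequence of the atomic decomposition theorem, and the case $s>\sigma_p$ (no cancellation needed) is strictly easier. Finally, the $p=\infty$ and $q=\infty$ endpoints require no change since the sequence space $\bO[]$ and the atomic theorem are stated for the full range $0<p,q\leq\infty$.
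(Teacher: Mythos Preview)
Your approach is exactly the paper's: decompose $f$ atomically, show $\varphi a_{\nu,m}$ is again an atom (Lemma~\ref{ProdAtom}), and invoke Theorem~\ref{AtomicRepr}. Two remarks. First, the rescaling bookkeeping you flag as the ``main obstacle'' is much simpler than you suggest: since $\hold[K]$ is an algebra (Lemma~\ref{helpHoelder}) and since the contracting dilation $x\mapsto 2^{-\nu}x$ can only decrease the $\hold[K]$-norm, one gets
\[
\|(\varphi\, a)(2^{-\nu}\cdot)|\hold[K]\|\lesssim \|\varphi(2^{-\nu}\cdot)|\hold[K]\|\cdot\|a(2^{-\nu}\cdot)|\hold[K]\|\leq \|\varphi|\hold[K]\|\cdot 2^{-\nu(s-n/p)}
\]
in one line; no derivative-by-derivative tracking of $2^{-\nu|\beta|}$ factors is needed. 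Second, you gloss over a genuine technical point: for non-smooth $\varphi$ the product $\varphi f$ is not a priori defined on $\Sc'(\R^n)$, so one must check that the limit $\sum\lambda_{\nu,m}(\varphi a_{\nu,m})$ is independent of the chosen atomic decomposition of $f$. The paper handles this in Remark~\ref{ProdDef} by first treating $\varphi\in C^\infty$ (where multiplication is continuous on $\Sc'$) and then passing to general $\varphi\in\hold[\rho]$ via the density of $C^\infty$ in $\hold[\rho]$ with respect to the weaker $\hold[\rho-\varepsilon]$-norm.
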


\noindent

For further sufficient results on diffeomorphisms including characterizations for classical Sobolev spaces $W_{p}^k(\R^n)$ we refer to Gol'dshtein, Reshetnyak, Romanov, Ukhlov and Vodop'yanov \cite{GoR76, GoV75, GoV76, Vod89, UkV02},  \cite[Chapter 4]{GoR90}, Markina \cite{Mar90} as well as to Maz'ya and Shaposhnikova \cite{Maz69, MaS85}, while for Besov spaces $\bspq$ with $0<s<1$ we refer to Vodop'yanov, Bordaud and Sickel \cite{Vod89, BoS99}. A special case of our result (Lipschitz diffeomorphisms) can be found in Triebel \cite[Section 4]{Tri02}.

We will prove (in case of $\bspq$):

\begin{thma}
 Let $0<p\leq \infty$, $\rho\geq 1$ and $\rho > \max(s,1+\sigma_p-s)$. If $\varphi$ is a $\rho$-diffeomorphism, then there exists a constant $c$ such that
\begin{align*}
 \|f(\varphi(\cdot))|\bspq\| \leq c \cdot \|f|\bspq\|.
\end{align*}
for all $f \in \bspq$. Hence $D_{\varphi}$ maps $\bspq$ onto $\bspq$.
\end{thma}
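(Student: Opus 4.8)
The plan is to deduce the estimate from the (non-smooth) atomic decomposition theorem for $\bspq$ with the generalized atoms of Definition~\ref{Atoms}: it suffices to show that composition with a $\rho$-diffeomorphism turns an atom into a bounded multiple of an atom, with only a bounded-multiplicity relabelling of the underlying lattice. So fix an almost optimal atomic decomposition
\begin{align*}
 f=\sum_{\nu=0}^{\infty}\sum_{m\in\Z^{n}}\lambda_{\nu,m}\,a_{\nu,m},\qquad \big\|\lambda\,|\,b_{p,q}^{s}\big\|\le c\,\|f\,|\,\bspq\|,
\end{align*}
where each $a_{\nu,m}$ is supported in $d\,Q_{\nu,m}$, satisfies $\|a_{\nu,m}(2^{-\nu}\cdot)\,|\,\hold[K]\|\le 2^{-\nu(s-n/p)}$ for a fixed $K$ with $s<K\le\rho$ (possible since $\rho>s$), and --- when $s\le\sigma_p$ --- obeys the generalized cancellation condition tested against all $\psi\in\hold[L]$ for a (possibly non-integer) $L$ with $\sigma_p-s<L\le\rho-1$; such an $L$ exists precisely because $\rho>1+\sigma_p-s$, and when $s\le\sigma_p$ fails one simply takes $L=0$. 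One then \emph{defines} $D_{\varphi}f:=\sum_{\nu,m}\lambda_{\nu,m}(a_{\nu,m}\circ\varphi)$; well-definedness (independence of the decomposition, convergence in $\Sc'(\R^{n})$) is obtained by the standard limiting argument of \cite[Section~4]{Tri92} once the inequality is available for finite sums.

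\textbf{Composition preserves the atom structure.}
Set $b_{\nu,m}:=a_{\nu,m}\circ\varphi$. Three points must be checked. (i) \emph{Geometry.} Since $\rho\ge1$, both $\varphi$ and $\varphi^{-1}$ are Lipschitz with bounded derivatives, so $\varphi^{-1}(d\,Q_{\nu,m})$ has diameter $\lesssim2^{-\nu}$; choosing $m'=m'(\nu,m)$ as the lattice point nearest to $2^{\nu}\varphi^{-1}(2^{-\nu}m)$ gives $\operatorname{supp}b_{\nu,m}\subset d'\,Q_{\nu,m'}$ for a fixed dilation factor $d'=d'(\varphi,d,n)$, with each $Q_{\nu,m'}$ reached by at most $C(\varphi,n)$ indices $m$. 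This uses only $\rho\ge1$. (ii) \emph{Hölder smoothness.} The rescaled composition is $b_{\nu,m}(2^{-\nu}\cdot)=\bigl(a_{\nu,m}(2^{-\nu}\cdot)\bigr)\circ g_{\nu}$ with $g_{\nu}(x)=2^{\nu}\varphi(2^{-\nu}x)$; all derivatives of $g_{\nu}$ of order $\ge1$, up to Hölder order $K$, are bounded uniformly in $\nu$ by $c(\varphi)$ (those of order $\ge2$ even carry a factor $2^{-\nu}$), and on the $O(1)$-region where $a_{\nu,m}(2^{-\nu}\cdot)$ lives one may subtract a constant so that $g_{\nu}$ becomes bounded in $\hold[K]$. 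The chain-rule (composition) estimate in $\hold[K]$-spaces, valid because $\rho\ge K>s$, then yields $\|b_{\nu,m}(2^{-\nu}\cdot)\,|\,\hold[K]\|\le c(\varphi)\,2^{-\nu(s-n/p)}$. (iii) \emph{Cancellation.} For $\psi\in\hold[L]$ the substitution $y=\varphi(x)$ gives
\begin{align*}
 \int_{\R^{n}}\psi(x)\,b_{\nu,m}(x)\,dx=\int_{\R^{n}}\widetilde{\psi}(y)\,a_{\nu,m}(y)\,dy,\qquad \widetilde{\psi}:=\bigl(\psi\circ\varphi^{-1}\bigr)\cdot\bigl|\det D\varphi^{-1}\bigr|.
\end{align*}
Since $\varphi^{-1}\in\hold[\rho]$ with $\rho\ge\max(L,1)$ and $|\det D\varphi^{-1}|\in\hold[\rho-1]\subseteq\hold[L]$, and $\hold[L]$ is a multiplication algebra, we get $\widetilde{\psi}\in\hold[L]$ with $\|\widetilde{\psi}\,|\,\hold[L]\|\le c(\varphi)\,\|\psi\,|\,\hold[L]\|$; inserting $\widetilde{\psi}$ into the generalized cancellation of $a_{\nu,m}$ and using that $|\det D\varphi|$ is bounded above and below reproduces the required decay $\bigl|\int\psi\,b_{\nu,m}\bigr|\le c\,2^{-\nu(s+L+n(1-1/p))}\|\psi\,|\,\hold[L]\|$, now centred at $Q_{\nu,m'}$.

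\textbf{Where the exponent condition comes from, and the main obstacle.}
The constraint in (iii) is the existence of $L>\sigma_p-s$ with $L\le\rho-1$; this is exactly $\rho-1>\sigma_p-s$, i.e.\ $\rho>1+\sigma_p-s$, while in the easy regime $\sigma_p\le s$ one takes $L=0$ and needs $\rho\ge1$ (also used in (i), and the reason $\rho\ge1$ is kept as a standing assumption, as $\max(s,1+\sigma_p-s)$ may be $<1$). Together with $K>s$ from (ii) this is precisely $\rho\ge1$ and $\rho>\max(s,1+\sigma_p-s)$. Morally, composing with $\varphi$ "costs one derivative" in the cancellation: $\widetilde{\psi}$ carries the Jacobian factor $|\det D\varphi^{-1}|$, which is one Hölder order less smooth than $\varphi$, and $\psi\circ\varphi^{-1}$ inherits only the smoothness of $\varphi^{-1}$ --- this is exactly the phenomenon that forces the classical vanishing-moment conditions to be replaced by the generalized ones, since the latter are stable under the change of variables whereas the former are destroyed by the Jacobian. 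I expect step~(iii) combined with (ii) to be the genuinely delicate part: keeping the $2^{-\nu}$-dilation factors exactly right through the substitution and the Hölder chain rule, \emph{uniformly} in $\nu$ and $m$, and checking that $m\mapsto m'$ is lattice-compatible with bounded overlap --- the ingredient that lets the $b_{p,q}^{s}$-norm survive.

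\textbf{Conclusion.}
By (i)--(iii), grouping the at most $C$ summands falling into a common $Q_{\nu,m'}$, one writes $D_{\varphi}f=\sum_{\nu,m'}\mu_{\nu,m'}\,\widetilde{a}_{\nu,m'}$ with $\widetilde{a}_{\nu,m'}$ an $(s,p)$-atom (in the generalized sense) and $\mu_{\nu,m'}$ a sum of at most $C$ of the $|\lambda_{\nu,m}|$; hence $\|\mu\,|\,b_{p,q}^{s}\|\le c\,\|\lambda\,|\,b_{p,q}^{s}\|$, because bounded-multiplicity sums are harmless in the $\ell_{q}(\ell_{p})$-type space $b_{p,q}^{s}$. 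The atomic decomposition theorem then gives $\|f(\varphi(\cdot))\,|\,\bspq\|\le c\,\|f\,|\,\bspq\|$ for all $f\in\bspq$. Finally, $\varphi^{-1}$ is again a $\rho$-diffeomorphism, so the same estimate applied to it shows that $D_{\varphi^{-1}}$ is bounded on $\bspq$; since $D_{\varphi}D_{\varphi^{-1}}=D_{\varphi^{-1}}D_{\varphi}=\mathrm{id}$ on $\bspq$, the operator $D_{\varphi}$ is a linear isomorphism of $\bspq$ onto itself, which in particular yields the asserted surjectivity.
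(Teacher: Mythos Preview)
Your proposal is correct and follows essentially the same approach as the paper's proof of Theorem~\ref{Diffeo}: both argue via the generalized atomic decomposition Theorem~\ref{AtomicRepr}, verify that $a_{\nu,m}\circ\varphi$ is again an $(s,p)_{K,L}$-atom by checking support, the H\"older condition via the rescaled map $2^{\nu}\varphi(2^{-\nu}\cdot)$, and the generalized moment condition via the change of variables $\widetilde{\psi}=(\psi\circ\varphi^{-1})\cdot|\det J(\varphi^{-1})|$, and both identify $\rho>1+\sigma_p-s$ as arising from the loss of one H\"older order in the Jacobian. The only cosmetic differences are that the paper splits the sum into $M$ disjoint subsums (via injective maps $\Phi_{\nu,j}:M_{\nu,j}\to\Z^n$) before applying the atomic theorem, whereas you group at the end with bounded multiplicity, and the paper isolates the borderline case $\rho=1$ explicitly (where $L=0$ and step~(iii) is vacuous), while in your write-up this is implicit in taking $L=0$ when $s>\sigma_p$.
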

These two applications or rather their short proofs are the main results from Chapter \ref{chapterpointwise}.
\bigskip

This thesis is organized as follows: In the first chapter we will give the necessary preliminaries -- definitions, standard properties and known results.

In Chapter \ref{chapterpointwise} we will present the mentioned results on atomic representations and its applications to pointwise multipliers and diffeomorphisms for function spaces of Besov and Triebel-Lizorkin type. 

In Chapter \ref{Zerlegungsth2} we will consider the situation of reinforce properties and decompositions for one fixed dimension $l$ with $0\leq l \leq n-1$ where $n$ is the dimension of $\R^n$. We take a look at the function space $\FR$ and its behaviour at an $l$-dimensional plane. We examine the problems and differences for the function spaces $\FR$ in the exceptional cases 
\begin{align*}
 s-\frac{n-l}{p} \in \N_0
\end{align*}
and introduce reinforced function spaces $\Frinf[\hyp]$ which are proper subsets of $\FR$ for these exceptional values. Furthermore, we prove Hardy inequalities at the $l$-dimensional plane and show decomposition theorems using trace and extension operators.

In Chapter \ref{Wavelets} we use the observations made in Chapter \ref{Zerlegungsth2} and generalize the reinforce properties, now for arbitrary dimension $l=\{0,\ldots,n-1\}$, to the unit cube $Q$. We introduce the reinforced function spaces $\Frinf[Q]$ and observe their behaviour at the boundaries of the cube. We arrive at the main theorem - the existence of a wavelet (Riesz) basis for $\Frinf[Q]$ for all values $s>0$. We construct a wavelet basis consisting of interior and boundary wavelets on the cube $Q$. At the end we present the results for the most prominent exceptional Sobolev space $W_{2}^1(Q)$ and its reinforced space $W_{2}^{1,\rinf}(Q).$

In Chapter \ref{Open} we will discuss the results and mainly note what remains open. We discuss the necessity of reinforced properties. Furthermore, we examine the extension of the wavelet (Riesz) basis decomposition theorem from the cube to cellular domains and describe the problems which occur in this context. At the end we take a look at the situation of the decomposition for reinforced function spaces on the unit ball.

\newpage
	\pagestyle{scrheadings}
	\ofoot[\pagemark]{\pagemark}
\automark[section]{chapter} 
	\ohead[\headmark]{\headmark}

\chapter{Preliminaries}
	Let $\R^n$ be the Euclidean $n$-space, $\Z$ be the set of integers, $\N$ be the set of natural numbers, $\N_0=\N \cup \{0\}$ and $\overline{\N}_0=\N_0 \cup \{\infty\}$. By $|x|$ we denote the usual Euclidean norm of $x \in \R^n$, by $\|x|X\|$ the (quasi)-norm of an element $x$ of a (quasi)-Banach space $X$. If $S \subset \R^n$, then we denote the $n$-dimensional Lebesgue measure of $S$ by $|S|$. 

By $\Sc(\R^n)$ we mean the Schwartz space on $\R^n$, by $\Sc'(\R^n)$ its dual. The Fourier transform of $f \in \Sc'(\R^n)$ resp.\ its inverse will be denoted by $\hat{f}$ resp.\ $\check{f}$. The convolution of $f \in \Sc'(\R^n)$ and $\varphi \in \Sc(\R^n)$ will be denoted by $f * \varphi$. With $supp \ f$ we denote the support of a distribution $f \in \Sc'(\R^n)$.

By $L_p(\R^n)$ for $0<p\leq \infty$ we denote the usual quasi-Banach space of $p$-integrable complex-valued functions with respect to the Lebesgue measure $|\cdot|$ with quasi-norm
\begin{align*}
 \|f|L_p(\R^n)\|:=\left(\int_{\R^n} |f(x)|^p \ dx\right)^{\frac{1}{p}}
\end{align*}
with the usual $\sup$-norm modification for $p=\infty$.

Let $X,Y$ be quasi-Banach spaces. By the notation $X \hookrightarrow Y$ we mean that $X \subset Y$ and that the inclusion map is bounded. 

Let $B_r(x_0)=\{x \in \R^n: |x-x_0|<r\}$ be the open ball with centre $x_0$ and radius $r>0$. Furthermore, we shorten $B_r:=B_r(0)$ and $B:=B_1$. 

Throughout the text all unimportant constants will be called $c,c',C$ etc.\ or we will directly write $A \lesssim B$ which means that there is a constant $C>0$ such that $A \leq C \cdot B$. Only if extra clarity is desirable, the dependency of the parameters will be stated explicitly. The concrete value of these constants may vary in different formulas but remains the same within one chain of inequalities. By $A \sim B$ we mean that there are constants $C_1,C_2>0$ such that $ C_1 \cdot B \leq A \leq C_2 \cdot B$. 

\section{H\"older spaces of differentiable functions on $\R^n$}

Let $k \in \N_0$. Then by $C^k(\R^n)$ we denote the space of all functions $f: \R^n \rightarrow \C$ which are $k$-times continuously differentiable (continuous, if $k=0$) such that the norm
\begin{align*}
 \|f|C^k(\R^n)\|:=\sum_{|\alpha|\leq k} \sup |D^{\alpha} f(x)|
\end{align*}
is finite, where the $\sup$ is taken over $x \in \R^n$.

Furthermore, the set $C^{\infty}(\R^n)$ is defined by
\begin{align*}
 C^{\infty}(\R^n):=\bigcap_{k \in \N_0} C^k(\R^n),
\end{align*}
while $D(\R^n)$ is defined as the set of all $f \in C^{\infty}(\R^n)$ with compact support. With $D'(\R^n)$ we denote its topological dual.

\begin{Definition}
\label{Hoelder}
 Let $0<\sigma\leq 1$ and $f: \R^n \rightarrow \C$ be continuous. We define
\begin{align*}
 \|f|\lip[\sigma]\|:=\sup_{x,y\in \R^n, x\neq y} \frac{|f(x)-f(y)|}{|x-y|^{\sigma}}.
\end{align*}

If $s\in \mathbb{R}$, then there are uniquely determined $\sint \in \mathbb{Z}$ and $\srest \in (0,1]$ with $s=\sint+\srest$. 

Let $s>0$. Then the H\"older space with index $s$ is given by
\begin{align*}
 \hold&=\left\{f \in C^{\sint}(\R^n):  \|f|{\hold}\|<\infty\right\} \text{ with } \\
 \|f&|{\hold}\|:=\|f|C^{\sint}(\R^n)\|+\sum_{|\alpha|=\sint} \|D^{\alpha} f|\lip[\srest]\|.
\end{align*}
In the later observations we will also discuss the situation for $s=0$. Usually we will then consider $L_{\infty}(\R^n)$ instead of $\hold$, which is sufficient for the later statements, see e.\,g.\ Theorem \ref{AtomicRepr}.

\end{Definition}

\section{Besov and Triebel-Lizorkin function spaces on $\R^n$}
 
Let $\varphi_j$ for $j \in \mathbb{N}_0$ be elements of $\Sc(\mathbb{R}^n)$ with
\begin{align*}
 supp \ \varphi_0 &\subset  \{|\xi| \leq 2\}, \\
 supp \ \varphi_j &\subset\{2^{j-1}\leq |\xi| \leq 2^{j+1}\} \text{ for } j \in \mathbb{N}, \\
 \sum_{j=0}^{\infty} \varphi_j(\xi)&=1 \text{ for all } \xi \in \mathbb{R}^n, \\
 |D^{\alpha} \varphi_j(\xi)| &\leq c_{\alpha} 2^{-j|\alpha|} \text{ for all } \alpha \in \mathbb{N}_0^n.
\end{align*}
Then we call $\{\varphi_j\}_{j=0}^{\infty}$ a smooth dyadic resolution of unity. For instance one can choose $\Psi \in \Sc(\mathbb{R}^n)$ with $\Psi(\xi)=1$ for $|\xi|\leq 1$ and $\Psi(\xi)=0$ for $|\xi|\geq 2$ and set 
\begin{align*}
 \varphi_0(\xi):=\Psi(\xi), \quad \varphi_1(\xi):=\Psi(\xi/2)-\Psi(\xi), \quad \varphi_j(\xi):=\varphi_1(2^{-j+1}\xi) \text{ for } j \in \mathbb{N}.
\end{align*}
\begin{Definition}
 \label{GrundDefinitionB}
Let $s \in \mathbb{R}$, $0<p\leq \infty$, $0<q\leq \infty$ and $\{\varphi_j\}_{j=0}^{\infty}$ be a smooth dyadic resolution of unity. Then $\bspq$ is the collection of all  $f\in \Sc'(\mathbb{R}^n)$ such that the quasi-norm
\begin{align*}
 \|f|\bspq\|:=\left(\sum_{j=0}^{\infty} 2^{jsq}\|(\varphi_j \hat{f})\check{\ }|L_p(\R^n)\|^q\right)^{\frac{1}{q}}
\end{align*}
(modified if $q=\infty$) is finite.
\end{Definition}
\begin{Definition}
 \label{GrundDefinitionF}
Let $s \in \mathbb{R}$, $0<p<\infty$, $0<q\leq \infty$ and $\{\varphi_j\}_{j=0}^{\infty}$ be a smooth dyadic resolution of unity. Then $\fspq$ is the collection of all $f\in \Sc'(\mathbb{R}^n)$ such that the quasi-norm
\begin{align*}
 \|f|\fspq\|:=\left\|\left(\sum_{j=0}^{\infty} 2^{jsq} \left|(\varphi_j \hat{f})\check{\ }(\cdot)\right|^q \right)^{\frac{1}{q}}\big|L_p(\R^n)\right\|\end{align*}
(modified if $q=\infty$) is finite.
\end{Definition}
One can show that the introduced quasi-norms\footnote{In the following we will use the term ``norm`` even if we only have quasi-norms for $p<1$ or $q<1$.} for two different smooth dyadic resolutions of unity are equivalent for fixed $p$, $q$ and $s$, i.\,e.\ that the so defined spaces are equal. This follows from Fourier multiplier theorems, see \cite[Section 2.3.2]{Tri83}. Furthermore, the so defined spaces are (quasi)-Banach spaces.

\subsection{Basic properties of function spaces $\BR$ and $\FR$}
The following proposition is known as the Sobolev embedding of the Besov and Triebel-Lizorkin function spaces.
\begin{Proposition}[Sobolev embeddings for $\FR$]
\label{Sobolev}
Let $s_1<s_0 \in \R$, $0<p_0<p_1<\infty$ resp.\ $0<p_0<p_1<\infty$ and $0<q_0,q_1\leq \infty$. Furthermore, let
\begin{align*}
 s_0-\frac{n}{p_0} \geq s_1 - \frac{n}{p_1}.
\end{align*}
Then
\begin{align*}
 B_{p_0,q_0}^{s_0}(\R^n) &\hookrightarrow 
 B_{p_1,q_0}^{s_1}(\R^n)
\intertext{ and }
 F_{p_0,q_0}^{s_0}(\R^n) &\hookrightarrow 
 F_{p_1,q_1}^{s_1}(\R^n).
\end{align*}

\end{Proposition}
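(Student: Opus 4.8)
The plan is to reduce everything to a single Fourier-support (Nikol'skii) inequality for the dyadic pieces together with a one-line weight comparison; the Triebel-Lizorkin statement is then obtained by sandwiching the $F$-spaces between Besov spaces.

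Fix a smooth dyadic resolution of unity $\{\varphi_j\}$ and set $f_j:=(\varphi_j\widehat f)\check{\ }$, so that $\widehat{f_j}$ is supported in $\{|\xi|\le 2^{j+1}\}$. The tool is the Nikol'skii inequality: for $0<p_0\le p_1\le\infty$ one has
\begin{align*}
\|f_j|L_{p_1}(\R^n)\|\ \lesssim\ 2^{jn(1/p_0-1/p_1)}\,\|f_j|L_{p_0}(\R^n)\|
\end{align*}
uniformly in $j$ and $f$, which follows from the reproducing identity $f_j=f_j*\psi_j$ with $\psi_j=2^{jn}\psi(2^j\cdot)$, $\widehat\psi\equiv 1$ on the relevant annulus, together with a maximal-function argument when $p_0<1$; see \cite[Section 1.3]{Tri83}. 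Writing $\delta:=n(1/p_0-1/p_1)\ge 0$, the hypothesis $s_0-\tfrac{n}{p_0}\ge s_1-\tfrac{n}{p_1}$ is exactly $s_1+\delta\le s_0$, so $2^{j(s_1+\delta)}\le 2^{js_0}$ for all $j\in\N_0$; applying the Nikol'skii inequality block by block,
\begin{align*}
\|f|B_{p_1,q_0}^{s_1}(\R^n)\|&=\Big(\sum_{j\ge 0}2^{js_1q_0}\|f_j|L_{p_1}\|^{q_0}\Big)^{1/q_0}\\
&\lesssim \Big(\sum_{j\ge 0}2^{j(s_1+\delta)q_0}\|f_j|L_{p_0}\|^{q_0}\Big)^{1/q_0}\le \|f|B_{p_0,q_0}^{s_0}(\R^n)\|
\end{align*}
(obvious modification for $q_0=\infty$). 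This settles the Besov case; the secondary index is untouched and plays no role, because in the $B$-norm the $\ell_{q_0}$-sum is taken outside the $L_p$-norm.

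For the $F$-spaces this last step is unavailable, the $\ell_q$-sum and the $L_p$-integration being entangled, so I would route through the elementary sandwich embeddings $B_{p,\min(p,q)}^{\sigma}(\R^n)\hookrightarrow F_{p,q}^{\sigma}(\R^n)\hookrightarrow B_{p,\max(p,q)}^{\sigma}(\R^n)$ and the elementary lift $B_{p,u}^{\sigma}(\R^n)\hookrightarrow B_{p,v}^{\tau}(\R^n)$ (valid for all $u,v$ whenever $\tau<\sigma$). If $s_0-\tfrac{n}{p_0}>s_1-\tfrac{n}{p_1}$ strictly, set $\tilde s:=s_0+n(1/p_1-1/p_0)$, so that $s_1<\tilde s<s_0$ and $\tilde s-\tfrac{n}{p_1}=s_0-\tfrac{n}{p_0}$, and chain
\begin{align*}
F_{p_0,q_0}^{s_0}(\R^n)&\hookrightarrow B_{p_0,\infty}^{s_0}(\R^n)\hookrightarrow B_{p_1,\infty}^{\tilde s}(\R^n)\\
&\hookrightarrow B_{p_1,\min(p_1,q_1)}^{s_1}(\R^n)\hookrightarrow F_{p_1,q_1}^{s_1}(\R^n),
\end{align*}
the second arrow being the Besov embedding just proved. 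If instead $\max(p_0,q_0)\le\min(p_1,q_1)$, the sandwich closes directly without sacrificing smoothness: $F_{p_0,q_0}^{s_0}\hookrightarrow B_{p_0,u}^{s_0}\hookrightarrow B_{p_1,u}^{s_1}\hookrightarrow F_{p_1,q_1}^{s_1}$ with $u:=\max(p_0,q_0)$.

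The only case this does not reach is the limiting line $s_0-\tfrac{n}{p_0}=s_1-\tfrac{n}{p_1}$ together with non-cooperating secondary indices, and this is where I expect the real work to lie: the Besov sandwich is then too lossy. For it I would appeal to the sharp Jawerth--Franke embeddings $F_{p_0,q}^{s_0}(\R^n)\hookrightarrow B_{p_1,p_0}^{s_1}(\R^n)$ and $B_{p_0,p_1}^{s_0}(\R^n)\hookrightarrow F_{p_1,q}^{s_1}(\R^n)$, which hold on the limiting line for every $q$ and yield the claim (for $q_1\ge p_0$, resp.\ $q_0\le p_1$); rather than reprove them I would cite the Triebel-Lizorkin Sobolev embedding, see \cite[Section 2.7]{Tri83} and \cite{Fra86}. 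Alternatively the limiting $F$-to-$F$ embedding can be obtained directly from a Peetre maximal-function estimate for functions with compact spectrum combined with the Fefferman--Stein vector-valued maximal inequality. Everything else reduces to the two short computations above.
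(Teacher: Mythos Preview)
The paper states this proposition without proof, as a standard preliminary result; there is no argument in the paper to compare against. Your proposal is a correct and essentially textbook derivation: the Besov embedding via the Nikol'skii inequality is exactly the standard proof, and your sandwich route for the $F$-spaces in the strict-inequality case is clean and complete.

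One small remark on the limiting $F$-case. You note that Jawerth gives the claim when $q_1\ge p_0$ and Franke when $q_0\le p_1$, but these two ranges do not cover the configuration $q_1<p_0<p_1<q_0$. This gap is easily closed by splicing the two embeddings through an intermediate point: pick $p_0<p<p_1$ and the corresponding $s$ on the limiting line, then
\[
F_{p_0,q_0}^{s_0}\ \hookrightarrow\ B_{p,p_0}^{s}\ \hookrightarrow\ B_{p,p_1}^{s}\ \hookrightarrow\ F_{p_1,q_1}^{s_1},
\]
the first arrow being Jawerth, the last Franke, and the middle one monotonicity in the secondary index since $p_0<p_1$. Alternatively, as you say, one simply cites \cite[Section 2.7.1]{Tri83} or \cite{Fra86} for the full statement, which is precisely what the paper's treatment (a bare proposition in the preliminaries) implicitly does.
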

\begin{Definition}
 Let $s \in \R, 0<p\leq \infty, 0<q\leq \infty$ and $l \in \N_0$. Then we define
 \begin{align*}
  \sigma_p^l := l \cdot \left(\frac{1}{p}-1\right)_+ \text{ and } \sigma_{p,q}^l:= l \cdot \left(\frac{1}{\min(p,q)-1}\right)_+,
 \end{align*}
 where $a_+=\max(a,0)$. If $l$ is equal to the dimension $n$ of $\R^n$, we simply write
 \begin{align*}
  \sigma_p:=\sigma_p^n \text{ and } \sigma_{p,q}:=\sigma_{p,q}^n.
 \end{align*}
\end{Definition}
The next proposition, the so called Fatou property, is a classical observation for function spaces $\bspq$ and $\fspq$, see \cite{Fra86}.
\begin{Definition}
 Let $A$ be a quasi-Banach space with $\Sc(\R^n) \hookrightarrow A \hookrightarrow \Sc'(\R^n)$. Then we say that $A$ has the Fatou property if there exists a constant $c$ such that:
 If a sequence $\{f_n\}_{n \in \N} \subset A$ converges to $f$ with respect to the weak topology in $\Sc'(\R^n)$ and if $\|f_n|A\|\leq D$, then $f \in A$ and $\|f|A\|\leq c \cdot D$.
\end{Definition}

\begin{Proposition}[Fatou property for $\BR$ and $\FR$]
\label{Fatou}
Let $s \in \R$, $0<p\leq \infty$ resp.\ $0<p<\infty$ and $0<q\leq \infty$. Then $\bspq$ and $\fspq$ have the Fatou property.
\end{Proposition}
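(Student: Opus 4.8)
The plan is to work directly from the Fourier-analytic definition and exploit the fact that a weakly convergent sequence in $\Sc'(\R^n)$ passes its limit through the smooth dyadic building blocks. First I fix a smooth dyadic resolution of unity $\{\varphi_j\}_{j=0}^\infty$ and recall that the quasi-norms in Definitions \ref{GrundDefinitionB} and \ref{GrundDefinitionF} are independent of this choice. Suppose $\{f_m\}_{m\in\N} \subset \bspq$ with $f_m \to f$ in the weak topology of $\Sc'(\R^n)$ and $\|f_m|\bspq\| \le D$ for all $m$. The key observation is that for each fixed $j$, the operator $g \mapsto (\varphi_j \hat g)\check{\ }$ is convolution with the fixed Schwartz function $(\varphi_j)\check{\ }$, hence it is continuous from $\Sc'(\R^n)$ with its weak topology to itself; moreover, since $\varphi_j$ is compactly supported, $(\varphi_j \hat{f_m})\check{\ }$ is a smooth function of at most polynomial growth (an entire function of exponential type), and weak convergence $f_m \to f$ forces $(\varphi_j \hat{f_m})\check{\ } \to (\varphi_j \hat f)\check{\ }$ pointwise — indeed testing against $e^{-|\cdot|^2/k}$-type mollified point masses, or simply noting that for a band-limited distribution weak convergence implies locally uniform convergence of the representing functions. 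So for every fixed $x \in \R^n$ and every $j$,
\begin{align*}
 (\varphi_j \hat f)\check{\ }(x) = \lim_{m\to\infty} (\varphi_j \hat{f_m})\check{\ }(x).
\end{align*}

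With pointwise convergence of each component in hand, the Fatou property is then exactly the classical Fatou lemma applied in the mixed norm. For the $B$-case: for each fixed $j$, by the ordinary Fatou lemma in $L_p$,
\begin{align*}
 \|(\varphi_j \hat f)\check{\ }|L_p(\R^n)\| \le \liminf_{m\to\infty} \|(\varphi_j \hat{f_m})\check{\ }|L_p(\R^n)\|,
\end{align*}
and then applying Fatou's lemma once more to the counting measure on $j \in \N_0$ (i.e. to the $\ell_q(2^{js}\cdot)$ norm, using $\liminf$ of a sum $\ge$ sum of $\liminf$) yields $\|f|\bspq\| \le \liminf_m \|f_m|\bspq\| \le D$, so $f \in \bspq$ with constant $c=1$. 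For the $F$-case one argues similarly but takes the $\liminf$ inside in the right order: first form the nonnegative functions $G_m(x) = \big(\sum_j 2^{jsq}|(\varphi_j\hat{f_m})\check{\ }(x)|^q\big)^{1/q}$, observe $G(x) \le \liminf_m G_m(x)$ pointwise (Fatou for the $\ell_q$ sum at each fixed $x$, using continuity of $t\mapsto t^{1/q}$), and then Fatou in $L_p$ gives $\|G|L_p\| \le \liminf_m \|G_m|L_p\| \le D$. Either way $c=1$ works, and the embeddings $\Sc(\R^n)\hookrightarrow \bspq, \fspq \hookrightarrow \Sc'(\R^n)$ needed for the definition of the Fatou property are standard (the latter for $p<\infty$ in the $F$-scale).

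The only genuinely delicate point — the step I would spell out carefully — is the passage from weak-$\ast$ convergence $f_m \to f$ in $\Sc'$ to pointwise convergence $(\varphi_j\hat{f_m})\check{\ }(x) \to (\varphi_j\hat f)\check{\ }(x)$. The clean way is: $(\varphi_j\hat{f_m})\check{\ }(x) = \langle f_m, (\varphi_j)\check{\ }(x-\cdot)\rangle$ up to a harmless reflection/conjugation, and $y \mapsto (\varphi_j)\check{\ }(x-y)$ is a fixed Schwartz function, so this is just the definition of weak convergence evaluated at that test function. Thus the convergence is actually immediate once one writes $(\varphi_j\hat g)\check{\ }(x)$ as a pairing of $g$ with a Schwartz function, and no quantitative band-limited estimates are really needed — that remark is only a sanity check that the resulting object is genuinely a function. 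With this in place the proof is complete, and I would remark that the argument in fact shows one may take $c=1$, and that it is insensitive to $s$, to $p$, and to $q$ (including the $q=\infty$ modification, where the $\ell_q$-Fatou step is replaced by lower semicontinuity of the supremum).
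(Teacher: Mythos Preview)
Your argument is correct and is the standard one: write $(\varphi_j\hat g)\check{\ }(x)$ as the pairing $\langle g,\check\varphi_j(x-\cdot)\rangle$ so that weak-$\ast$ convergence in $\Sc'(\R^n)$ gives pointwise convergence of the building blocks, and then apply the classical Fatou lemma in the appropriate mixed norm (with the obvious $\sup$-variant when $p=\infty$ or $q=\infty$). The paper itself does not supply a proof of this proposition; it simply records it as a classical fact with a reference to \cite{Fra86}, so there is nothing to compare your approach against beyond noting that what you wrote is exactly the argument one finds in the literature, and indeed yields $c=1$.
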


\begin{Remark}
\label{HoldB}
 If $\rho>0$ and $\rho \notin \N$, then $\hold[\rho]=B_{\infty,\infty}^{\rho}(\R^n)$. This is a classical observation, for instance see \cite[Sections 1.2.2, 2.6.5]{Tri92} or for the original source \cite[Lemma 4]{Zyg45}.
\end{Remark}

\begin{Proposition}[Homogeneity property of $\FR$]
\label{homogen}
 Let $0<p<\infty$, $0<q\leq \infty$ and $s>\sigma_{p,q}$. Then for all $\lambda \in (0,1]$ and $f \in \FR$ with 
 \begin{align*}
  supp\ f \subset B_{\lambda}=\{x \in \R^n: |x|<\lambda\}
 \end{align*}
 it holds
 \begin{align*}
  \|f(\lambda \cdot)|\FR\| \sim \lambda^{s-\frac{n}{p}} \|f|\FR\|
 \end{align*}
\end{Proposition}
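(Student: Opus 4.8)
<br>

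The plan is to prove the homogeneity property by reducing everything to the well-known special case $\lambda = 2^{-j}$ for $j \in \N_0$, where dyadic scaling interacts cleanly with the smooth dyadic resolution of unity, and then bridging the gap to general $\lambda \in (0,1]$ by a covering/monotonicity argument. First I would fix a smooth dyadic resolution of unity $\{\varphi_j\}_{j=0}^\infty$ and record the key observation: if $\lambda = 2^{-j}$ and $supp\ f \subset B_\lambda$, then $supp\ \widehat{f(\lambda\cdot)} = \lambda^{-1} supp\ \widehat{f}$ (Fourier support scales inversely), but more to the point, the Littlewood--Paley pieces of $f(\lambda \cdot)$ are obtained from those of $f$ by a shift of the index by $j$ together with an $L_p$-rescaling. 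Concretely, $(\varphi_k \widehat{f(\lambda\cdot)})\check{\ }(x) = \lambda^{-n}(\varphi_{k} (\lambda^{-1}\cdot)\widehat{f})\check{\ }(\lambda^{-1} x)$ up to the usual care near $k = 0$, and since $\varphi_k(\lambda^{-1}\cdot) = \varphi_k(2^j \cdot)$ behaves like $\varphi_{k+j}$ (for $k \geq 1$ exactly, for $k = 0$ up to finitely many overlapping terms), one gets, after the change of variables $x \mapsto \lambda x$ in the $L_p(\ell_q)$-integral, precisely the factor $\lambda^{s - n/p}$. The support condition $supp\ f \subset B_\lambda$ is what lets us absorb the finitely many low-frequency terms that do not scale exactly: since $f$ is supported in a small ball, its low-frequency content is controlled by its high-frequency content via a Bernstein/Nikol'skij-type inequality, and this is exactly where the hypothesis $s > \sigma_{p,q}$ enters — it guarantees that the ``missing moments'' estimate works, i.e.\ that $\|f|\FR\| \sim$ the quasi-norm computed with only $k \geq 1$ (or with a modified $\varphi_0$).

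Next, for general $\lambda \in (0,1]$, choose $j \in \N_0$ with $2^{-j-1} < \lambda \leq 2^{-j}$, so that $\lambda \sim 2^{-j}$ with constants independent of $\lambda$. Write $f(\lambda \cdot) = g(2^{-j}\cdot)$ where $g = f(2^j \lambda \cdot)$ with $2^j\lambda \in (1/2, 1]$. Then $g$ is supported in $B_1$ (since $2^j\lambda \leq 1$ and $supp\ f \subset B_\lambda \subset B_{2^{-j}}$ gives $supp\ g \subset B_{2^j\lambda \cdot 2^{-j}} \cdot$... more carefully, $supp\ g = (2^j\lambda)^{-1} supp\ f \subset (2^j\lambda)^{-1} B_\lambda = B_{2^{-j}} \subset B_1$). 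By the dyadic case, $\|g(2^{-j}\cdot)|\FR\| \sim 2^{-j(s-n/p)}\|g|\FR\| \sim \lambda^{s-n/p}\|g|\FR\|$. It remains to see that $\|g|\FR\| \sim \|f|\FR\|$ with constants uniform in the dilation factor $2^j\lambda \in (1/2,1]$; this is a standard dilation-continuity fact for $\FR$ — dilation by a factor bounded above and below is a bounded operator with bounded inverse on $\FR$, and one can again use the compact support plus $s > \sigma_{p,q}$ to get uniform constants, or simply cite the elementary bounded-dilation lemma. Combining the two equivalences gives $\|f(\lambda\cdot)|\FR\| \sim \lambda^{s-n/p}\|f|\FR\|$.

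The main obstacle I anticipate is handling the low-frequency term $\varphi_0$ rigorously: the scaling $f \mapsto f(\lambda\cdot)$ does not commute with the Littlewood--Paley decomposition for the $0$-th piece, and one genuinely needs the support hypothesis to replace $\|f|\FR\|$ by an equivalent expression that scales homogeneously. The clean way is to prove first that for $f$ with $supp\ f \subset B_1$ and $s > \sigma_{p,q}$ one has $\|f|\FR\| \sim \|(\sum_{k\geq 1} 2^{ksq}|(\varphi_k\widehat f)\check{\ }|^q)^{1/q}|L_p\|$ (an inequality of Bernstein--Nikol'skij type, where $s > \sigma_{p,q}$ is sharp), and then the scaling argument for $k \geq 1$ is exact. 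I would either prove this auxiliary equivalence via the atomic/local-means machinery available in this setting, or — if a reference is acceptable — cite the corresponding statement from Triebel's monographs, since the homogeneity property is there stated under exactly the hypothesis $s > \sigma_{p,q}$. Everything else (the change of variables producing $\lambda^{s-n/p}$, the comparison $\varphi_k(2^j\cdot) \leftrightarrow \varphi_{k+j}$, the bounded-dilation step) is routine bookkeeping.
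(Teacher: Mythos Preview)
The paper does not give a proof here at all; it simply cites \cite[Theorem 2.11]{Tri08} and \cite[Corollary 5.16]{Tri01}. So there is nothing to compare against in the paper itself. Your sketch is a legitimate direct Fourier-analytic argument and would work, but it is worth noting that the route taken in the cited references is different: there the homogeneity is obtained via atomic decompositions --- one observes that if $a_{\nu,m}$ is an $(s,p)$-atom located at $Q_{\nu,m}$, then $a_{\nu,m}(2^{-j}\cdot)$ is, up to the factor $2^{-j(s-n/p)}$, again an $(s,p)$-atom (cf.\ Lemma~\ref{Dilation} in this paper), and the sequence space $f_{p,q}$ scales trivially. This avoids entirely the delicate handling of the $\varphi_0$-term that you flag as the main obstacle; the condition $s>\sigma_{p,q}$ enters instead as the condition under which no moment conditions are needed on the atoms, so that the rescaled atoms are automatically admissible. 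Your approach trades that for a Bernstein--Nikol'skij step, which is fine but arguably less transparent than the atomic route already available in this paper's toolkit.
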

\begin{Proof}
 This is a reformulation of \cite[Theorem 2.11]{Tri08} going back to \cite[Corollary 5.16]{Tri01}. 
\end{Proof}
\begin{Remark}
 By the Fourier analytical definition of $\FR$ in Definition \ref{GrundDefinitionF} one can also assume 
 \begin{align*}
  supp\ f \subset B_{\lambda}(y)=\{x \in \R^n: |x-y|<\lambda\}
 \end{align*}
 and the proposition is true with constants independent of $y \in \R^n$.
\end{Remark}

\subsection{Fubini property of Triebel-Lizorkin function spaces on $\R^n$}

Let $l \in \N$, $l<n$ and $1\leq j_1 < \ldots< j_l \leq n$. We set
\begin{align*}
x^{j_1,\ldots,j_l}:=(x_1,\ldots,x_{j_1-1},x_{j_1+1},\ldots,,x_{j_l-1},x_{j_l+1},\ldots,x_n) \in \R^{n-l}
\end{align*}
with obvious modifications if $j_1=1$ or $j_l=n$.
Let $ f: \R^n \rightarrow \C$. Then we define the function 
\begin{align*}
 f^{x^{j_1,\ldots,j_l}}(x_{j_1},\ldots, x_{j_l}):=f(x_1,\ldots,x_{j_1-1},x_{j_1},x_{j_1+1},\ldots,,x_{j_l-1},x_{j_l},x_{j_l+1},\ldots,x_n)
\end{align*}
as a function on $\R^l$ for a fixed $x^{j_1,\ldots,j_l} \in \R^{n-l}$.

\begin{Proposition}
 \label{Fubini}
 Let $n \geq 2$, $l \in \N$ and $l<n$. Let
 \begin{align*}
 0<p < \infty, 0< q \leq \infty \text{ and } s> \sigma_{p,q}.
 \end{align*}
Then $\FR$ has the Fubini property, i.\,e.\ for all $f \in  \FR$ it holds
 \begin{align}
 \label{Fubeq}
  \|f|\FR\| \sim \sum_{1\leq j_1 < \ldots< j_l \leq n}  \Big\| \big\| f^{x^{j_1,\ldots,j_l}} |\FO[\R^{l}] \big\| | L_p(\R^{n-l}) \Big\|  
 \end{align}
\end{Proposition}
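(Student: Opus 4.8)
The plan is to reduce everything to a characterization of $\FR$ by differences taken only along the coordinate axes, after which the claimed equivalence follows from an interchange of Lebesgue integrals (Fubini) together with a short counting argument. Fix an integer $M>s$. Since $s>\sigma_{p,q}$ — hence also $s>\sigma^l_{p,q}$, so the corresponding statement holds in dimension $l$ as well — the known characterization of $\FR$ by differences along the coordinate axes (cf.\ \cite{Tri92,Tri06}) gives, for every $f\in L_p(\R^n)$ with finite right-hand side,
\[
\|f|\FR\|\;\sim\;\|f|L_p(\R^n)\|+\sum_{j=1}^n T_j(f),\qquad T_j(f):=\left\|\Big(\int_0^1 t^{-sq}\,|\Delta_{te_j}^M f(\cdot)|^q\,\frac{dt}{t}\Big)^{1/q}\Big|L_p(\R^n)\right\|,
\]
with the usual modification (the inner integral replaced by $\sup_{0<t<1}$) when $q=\infty$; here $e_1,\dots,e_n$ is the canonical basis of $\R^n$ and $\Delta_h^M$ is the $M$-th order difference in direction $h$. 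The decisive point is that in the range $s>\sigma_{p,q}$ only the coordinate directions occur. The same equivalence in dimension $l$ will be applied to the restrictions $f^{x^{j_1,\dots,j_l}}$, which for almost every value of $x^{j_1,\dots,j_l}$ lie in $L_p(\R^l)$ because $\FR\hookrightarrow L_p(\R^n)$ for $s>\sigma_{p,q}$.

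Next I would fix a subset $\{j_1<\dots<j_l\}\subset\{1,\dots,n\}$ and split $x\in\R^n$ into the block $y:=(x_{j_1},\dots,x_{j_l})\in\R^l$ and the complementary block $x^{j_1,\dots,j_l}\in\R^{n-l}$. For $j\in\{j_1,\dots,j_l\}$ the difference $\Delta_{te_j}^M$ acts only on the $y$-variables, so $\Delta_{te_j}^M f(x)=\Delta_{te_j}^M\big(f^{x^{j_1,\dots,j_l}}\big)(y)$, the difference on the right being taken in $\R^l$. Applying Fubini's theorem to the nonnegative function $x\mapsto\int_0^1 t^{-sq}|\Delta_{te_j}^M f(x)|^q\,\frac{dt}{t}$, raised to the power $p/q$ and integrated first in $y$, rewrites $T_j(f)$ for such $j$ as the iterated norm
\[
T_j(f)=\left\|\;\left\|\Big(\int_0^1 t^{-sq}\,\big|\Delta_{te_j}^M\big(f^{x^{j_1,\dots,j_l}}\big)\big|^q\,\frac{dt}{t}\Big)^{1/q}\Big|L_p(\R^l)\right\|\;\Big|\;L_p(\R^{n-l})\right\|,
\]
and likewise $\|f|L_p(\R^n)\|=\big\|\,\|f^{x^{j_1,\dots,j_l}}|L_p(\R^l)\|\,\big|L_p(\R^{n-l})\big\|$. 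Collecting the $L_p(\R^n)$-term together with the $l$ terms $T_j(f)$ for which $j\in\{j_1,\dots,j_l\}$, and using that a finite sum of $L_p$-norms of nonnegative functions is comparable to the $L_p$-norm of their sum, the $l$-dimensional version of the characterization above identifies this partial sum with $\big\|\,\|f^{x^{j_1,\dots,j_l}}|\FO[\R^l]\|\,\big|L_p(\R^{n-l})\big\|$ — one of the terms on the right-hand side of \eqref{Fubeq}.

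Finally I would sum the resulting identity over all $\binom{n}{l}$ subsets $\{j_1<\dots<j_l\}$. Since every index $j\in\{1,\dots,n\}$ lies in exactly $\binom{n-1}{l-1}$ of these subsets, the sum equals $\binom{n}{l}\|f|L_p(\R^n)\|+\binom{n-1}{l-1}\sum_{j=1}^n T_j(f)$, which — with constants depending only on $n$, $l$ and $p$ — is comparable to $\|f|L_p(\R^n)\|+\sum_{j=1}^n T_j(f)$ and hence to $\|f|\FR\|$. This is precisely \eqref{Fubeq}, and the argument also shows that the right-hand side is finite exactly when $f\in\FR$.

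The hard part will not be the counting or the Fubini interchange, which are routine, but the axis-parallel difference characterization itself: that $\FR$ is completely described by differences in the $n$ coordinate directions, and exactly in the range $s>\sigma_{p,q}$. This is where the genuine analysis — and the restriction on $s$ — enters, and it is also the structural reason why the Fubini property fails in general for the Besov scale $\BR$, in which the $\ell_q$- and $L_p$-summations are performed in the opposite order and cannot be localized to single coordinates. Two minor points that need care throughout are the consistent treatment of the $q=\infty$ modification and the almost-everywhere meaning of the restrictions $f^{x^{j_1,\dots,j_l}}$.
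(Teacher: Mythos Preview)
Your argument is correct, but it is organized differently from the paper's. The paper never touches the axis-parallel difference characterization explicitly; instead it treats the classical Fubini property (the case of one-dimensional slices, \cite[Theorem 4.4]{Tri01}) as a black box and applies it twice: once in dimension $l$ to break $\|f^{x^{j_1,\dots,j_l}}|\FO[\R^l]\|$ into a sum of iterated norms with one-dimensional inner slices, and once in dimension $n$ to reassemble the resulting expression as $\|f|\FR\|$. What you do is to unpack that black box and go one layer deeper, to the coordinate-wise difference characterization that underlies the classical Fubini property in the first place, and then run the counting argument directly for general $l$. Your route is slightly longer but more self-contained and makes the mechanism (and the role of the restriction $s>\sigma_{p,q}$) transparent; the paper's route is shorter precisely because it hides that mechanism inside the cited result. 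The two approaches are really the same proof viewed at different levels of abstraction.
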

\begin{proof}
 The proof is an application of the classical Fubini property for $\FR$, see \cite[Theorem 4.4]{Tri01}: By the Fubini property for dimension $l$ (instead of $n$) we get 
 \begin{align*}
  \big\| f^{x^{j_1,\ldots,j_l}} |\FO[\R^{l}] \big\| \sim  \sum_{k=1}^{l} \left\|\big\| \left(f^{x^{j_1,\ldots,j_l}}\right)^{x^{k}} |\FO[\R] \big\| | L_p(\R^{l-1}) \right\|.
 \end{align*}
Obviously, for a suitable $1\leq j \leq n$ one has 
\begin{align*}
 \left(f^{x^{j_1,\ldots,j_l}}\right)^{x^{j_k}}=f^{x^j}.
\end{align*}
Putting the right-hand side into \eqref{Fubeq} gives
\begin{align*}
 \sum_{1\leq j_1 < \ldots< j_l \leq n}  \Big\| \big\| f^{x^{j_1,\ldots,j_l}} |\FO[\R^{l}] \big\| | L_p(\R^{n-l}) \Big\| \sim \sum_{k=1}^n \Big\| \big\| f^{x^k} |\FO[\R] \big\| | L_p(\R^{n-1}) \Big\|.
\end{align*}
But now, by the Fubini property for dimension $n$ the right hand side is equivalent to $\|f | \FR\|$. 
\end{proof}

\section{Atomic decompositions and local means}
At first we describe the concept of atoms as one can find it in \cite[Definition 13.3]{Tri97}, now generalized using ideas from \cite{Skr98} and \cite{TrW96}. 

In particular, this gives the possibility to omit the distinction between $\nu=0$ and $\nu \in \mathbb{N}$ and now the usual parameters $K$ and $L$ are nonnegative real numbers instead of natural numbers.

\subsection{Generalized atoms}
Let $Q_{\nu,m}:=\{x \in \mathbb{R}^n: |x_i-2^{-\nu}m_i|\leq 2^{-\nu-1}\}$ be the cube with sides parallel to the axes, with center at $2^{-\nu}m$ and side length $2^{-\nu}$ for $m \in \mathbb{Z}^n$ and $\nu \in \mathbb{N}_0$.
\begin{Definition}
\label{Atoms}
Let $s \in \mathbb{R}$, $0<p\leq\infty$, $K,L \in \R$ and $K,L\geq 0$. Furthermore let $d>1$, $C>0$, $\nu \in \mathbb{N}_0, m \in \mathbb{Z}^n$. A function $a:\R^n \rightarrow \mathbb{C}$ is called $(s,p)_{K,L}$-atom located at $Q_{\nu,m}$ if 
\begin{align}
 \label{Atom1}supp \ a &\subset d \cdot Q_{\nu,m}  \\
 \label{Atom2} \|a(2^{-\nu}\cdot)|\hold[K]\| &\leq C \cdot 2^{-\nu(s-\frac{n}{p})} \text{ for } K>0
\end{align}
and for every $\psi \in \hold[L]$ it holds 	
\begin{align}
\label{Atom3} \left| \int_{d \cdot Q_{\nu,m}} \psi(x) a(x) \ dx \right|\leq C \cdot  2^{-\nu\left(s+L+n\left(1-\frac{1}{p}\right)\right)} \|\psi|\hold[L]\|,
\end{align}
where $\hold[K]$ resp.\ $\hold[L]$ are replaced by $L_{\infty}(\R^n)$ if $K=0$ resp.\ $L=0$.
The constant in the exponent will be shortened by $\Kap:=s+L+n\left(1-\frac{1}{p}\right)$. 
\end{Definition}
\begin{Remark}
If $L=0$, then condition \eqref{Atom3} follows from $\eqref{Atom1}$ and $\eqref{Atom2}$ with $K=0$.
If $K=0$, then we only require $a$ to be suitably bounded.

Later on, we will choose one $(s,p)_{K,L}$-atom for every $\nu \in \N_0$ and $m \in \Z^n$. Then the parameter $d>1$ shall be the same for all these atoms - it describes the overlap of these atoms at one fixed level $\nu \in \N_0$.
\end{Remark}

\begin{Remark}
\label{usualform}
The usual formulation of the general derivative condition $\eqref{Atom2}$ as in \cite{Tri97} was
\begin{align}
\label{Atom21}
  |D^{\alpha} a(x)| &\leq 2^{-\nu\left(s-\frac{n}{p}\right)+|\alpha|\nu} \text{ for all } |\alpha|\leq K
\end{align}
for a $K \in \N_0$. The modification here has been suggested in \cite{TrW96}. It is easy to see that \eqref{Atom2} follows from \eqref{Atom21} if $K$ is a natural number since $C^K(\R^n) \hookrightarrow \hold[K]$.  
\end{Remark}
\begin{Remark}
\label{Ordered}
The usual formulation of the general moment condition $\eqref{Atom3}$ as in \cite{Tri97} was
\begin{align}
  \label{Atom31}\int_{\mathbb{R}^n} &x^{\beta} a(x) \ dx=0 \text{ if }  |\beta|\leq L-1
\end{align}
for $\nu \in \mathbb{N}$, so $\nu \neq 0$. The modification here was suggested in \cite[Lemma 1]{Skr98} for natural numbers $L+1$ (using $C^{L}(\R^n)$ instead of $\hold[L]$). Now we extended this definition to general positive $L$. For natural $L-1$ one can derive $\eqref{Atom3}$ from $\eqref{Atom31}$ using a Taylor expansion, see \cite[Lemma 1, (12) and (14)]{Skr98} or the upcoming Lemma \ref{Momenter}. Hence formulation $\eqref{Atom3}$ is a generalization. 

An alternative formulation of the general moment condition \eqref{Atom3} is given by
\begin{align}
 \label{Atom32}
\left| \int_{d \cdot Q_{\nu,m}} (x-2^{-\nu}m)^{\beta} a(x) \ dx \right|\leq C \cdot  2^{-\nu\Kap} \text{ if } |\beta|\leq \Lint.
\end{align}
Obviously, this condition is covered by the general moment condition \eqref{Atom3}. For the other direction see also \cite[Lemma 1, (12) and (14)]{Skr98} or the upcoming Remark \ref{MomentBem}, in particular \eqref{IntMomenter}. It is also possible to assume this condition for all $\beta\in \N^n$ since the statements for $|\beta|\geq L$ follow from the support condition \eqref{Atom1} and the boundedness condition included in \eqref{Atom2}.

This shows that both general conditions \eqref{Atom2} and \eqref{Atom3} are ordered in $K$ resp.\ $L$, i.\,e.\ the conditions get stricter for increasing $K$ resp.\ $L$.
\end{Remark}

\subsection{Sequence spaces}
We introduce the sequence spaces $b_{p,q}$ and $f_{p,q}$ adapted to $\Z^n$. For this we refer to \cite[Definition 13.5]{Tri97}.
\begin{Definition}
\label{DefSeq}
 Let $0<p\leq \infty$, $0<q\leq \infty$ and 
 \begin{align*}
  \lambda=\left\{ \lambda_{\nu,m} \in \mathbb{C}: \nu \in \mathbb{N}_0, m \in \mathbb{Z}^n\right\}.
 \end{align*}
We set
\begin{align*}
 b_{p,q}:=\left\{\lambda: \|\lambda|b_{p,q}\|=\left(\sum_{\nu=0}^{\infty} \left(\sum_{m \in \mathbb{Z}^n} |\lambda_{\nu,m}|^p\right)^{\frac{q}{p}} \right)^{\frac{1}{q}} <\infty \right\}
\end{align*}
and
\begin{align*}
 f_{p,q}:=\left\{\lambda: \|\lambda|f_{p,q}\|=\left\|\left(\sum_{\nu=0}^{\infty} \sum_{m \in \mathbb{Z}^n} |\lambda_{\nu,m}\chi_{\nu,m}^{(p)}(\cdot)|^q\right)^{\frac{1}{q}}\big|L_p(\R^n)\right\| <\infty \right\}
\end{align*}
(modified in the case $p=\infty$ or $q=\infty$), where $\chi_{\nu,m}^{(p)}$ is the $L_p(\R^n)$-normalized characteristic function of the cube $Q_{\nu,m}$, i.\,e\
\begin{align*}
 \chi_{\nu,m}^{(p)}=2^{\frac{\nu n}{p}} \text{ if } x \in Q_{\nu,m} \text{ and }  \chi_{\nu,m}^{(p)}=0 \text{ if } x \notin Q_{\nu,m}.
\end{align*}
\end{Definition}

\subsection{Local means}
\label{LocalMeans}
Let $N \in \N_0$ be given. We choose $k_0,k \in \Sc(\R^n)$ with compact support - e.\,g.\ $supp \ k_0, supp \ k \subset e \cdot Q_{0,0}$ for a suitable $e>0$ - such that
\begin{align}
\label{MeanMomenter}
D^{\alpha}\hat{k}(0)=0 \text{ if } |\alpha|< N,
\end{align}
while $\hat{k_0}(0)\neq 0$. Furthermore, let there be an $\varepsilon>0$ such that $\hat{k}(x) \neq 0$ for $0<|x|<\varepsilon$.
 
Such a choice is possible, see \cite[Section 11.2]{Tri97}. We set $k_j(x):=2^{jn} k(2^jx)$ for $j\in \N$.
\begin{Proposition}
\label{RychkovLocal}
Let $N \in \N_0$ and $N>s$.

(i) Let $0<p\leq\infty$ and $0<q\leq \infty$. Then
\begin{align*}
  \|f|\bspq\|_{k_0,k}:= \|k_0*f|L_p(\R^n)\|+\left(\sum_{j=1}^{\infty} 2^{jsq}\|k_j*f|L_p(\R^n)\|^q\right)^{\frac{1}{q}}
\end{align*}
(modified for $q=\infty$) is an equivalent norm for $\|\cdot|\bspq\|$. It holds
\begin{align*}
\bspq=\left\{f \in \Sc'(\mathbb{R}^n): 
 \|f|\bspq\|_{k_0,k}<\infty \right\}.
\end{align*}

(ii) Let $0<p<\infty$ and $0<q\leq \infty$. Then
\begin{align*}
  \|f|\fspq\|_{k_0,k}:=\|k_0*f|L_p(\R^n)\|+\left\|\left(\sum_{j=1}^{\infty} 2^{jsq} \left|(k_j*f)(\cdot)\right|^q \right)^{\frac{1}{q}}\big|L_p(\R^n)\right\|
\end{align*}
(modified for $q=\infty$) is an equivalent norm for $\|\cdot|\fspq\|$. It holds
\begin{align*}
 \fspq=\left\{f \in \Sc'(\mathbb{R}^n): 
 \|f|\fspq\|_{k_0,k}<\infty \right\}.
\end{align*}
\end{Proposition}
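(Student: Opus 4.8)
The plan is to reduce the statement to the known Fourier-analytic definition of $\bspq$ and $\fspq$ (Definitions \ref{GrundDefinitionB}, \ref{GrundDefinitionF}) via a Calderón-type reproducing formula, following Rychkov's approach (\cite{Ryc99}) or equivalently \cite[Section 2.4.6, Section 2.5]{Tri92} and \cite[Theorem 1.10]{Tri08}. First I would observe that only two inequalities need to be shown in each case, $\|f|\bspq\|_{k_0,k} \lesssim \|f|\bspq\|$ and the reverse, and symmetrically for $F$; I would concentrate on the $F$-case since the $B$-case is the same argument with the $\ell_q(L_p)$ ordering replaced by $L_p(\ell_q)$, and both reduce to vector-valued maximal inequalities.

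For the easy direction ($\|f|\fspq\|_{k_0,k} \lesssim \|f|\fspq\|$), I would fix a smooth dyadic resolution of unity $\{\varphi_j\}$, write $f=\sum_j (\varphi_j\hat f)^{\vee}$, and estimate $k_j * f$ by splitting the sum: the moment condition \eqref{MeanMomenter}, i.e.\ $D^\alpha\hat k(0)=0$ for $|\alpha|<N$, together with $N>s$, gives the decay $\|k_j * (\varphi_i\hat f)^{\vee}\|$ controlled by $2^{-(j-i)N}$ for $i\le j$ and by $2^{-(i-j)M}$ for $i>j$ (for arbitrary $M$, using that $\hat k$ is smooth and compactly supported so $\hat k$ vanishes to infinite order relative to the high-frequency pieces — more precisely one uses that $\operatorname{supp}\widehat{k_j}$ and $\operatorname{supp}\varphi_i$ interact only through Schwartz tails). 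Summing the resulting geometric series against $2^{isq}$ and applying the triangle inequality in $\ell_q$ pointwise, then taking the $L_p$-norm, yields the bound. The $j=0$ term with $k_0$ is handled the same way using $\hat{k_0}(0)\ne 0$ only for the converse direction; here one just needs boundedness.

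For the hard direction ($\|f|\fspq\| \lesssim \|f|\fspq\|_{k_0,k}$), the key tool is a \textbf{discrete Calderón reproducing formula}: since $\hat k(x)\ne 0$ for $0<|x|<\varepsilon$ and $\hat{k_0}(0)\ne 0$, one constructs (following Rychkov) auxiliary functions $\ell_0,\ell \in \Sc(\R^n)$, with $\ell$ having vanishing moments up to any prescribed order, such that $f = \ell_0 * k_0 * f + \sum_{j=1}^\infty \ell_j * k_j * f$ in $\Sc'$. Plugging this into the Fourier-analytic norm, one estimates $(\varphi_i\hat f)^{\vee} = \varphi_i^{\vee} * f$ as a sum of $\varphi_i^{\vee} * \ell_j * (k_j * f)$; the convolution kernels $\varphi_i^{\vee}*\ell_j$ again decay geometrically in $|i-j|$ because of the matching support/moment conditions. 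The crucial point — and the main obstacle — is to pass from the convolution $(\varphi_i^{\vee}*\ell_j)*(k_j * f)(x)$ to a pointwise bound by the Peetre–Fefferman–Stein maximal function of $k_j * f$, i.e.\ something like $(k_j^* f)_a(x) = \sup_y |k_j * f(x-y)|/(1+2^j|y|)^a$, and then to invoke the vector-valued maximal inequality of Fefferman–Stein for $L_p(\ell_q)$, which requires $a$ large enough (hence $p,q$ finite, $p>0$) — this is exactly where the restriction $0<p<\infty$ enters and where the technical weight of the proof lies. In the $B$-case one instead uses the scalar Hardy–Littlewood maximal inequality in $L_p$ after taking $\ell_q$ norms, which is simpler.

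Finally, I would note that all the convolution and maximal-function manipulations are justified a priori for $f \in \Sc'(\R^n)$ of finite norm (the relevant series converge in $\Sc'$ by the same geometric estimates), so the displayed set equalities $\bspq = \{f\in\Sc': \|f|\bspq\|_{k_0,k}<\infty\}$ and $\fspq = \{f\in\Sc': \|f|\fspq\|_{k_0,k}<\infty\}$ follow once the two-sided norm equivalences are established. Since this is a known result, in the actual write-up I expect the proof to consist of a precise reference to \cite{Ryc99} and \cite[Section 1.2]{Tri08}, with at most a remark indicating that the choice of $k_0,k$ in Section \ref{LocalMeans} satisfies Rychkov's hypotheses.
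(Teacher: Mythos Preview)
Your proposal is correct and your final expectation is exactly right: the paper does not give a self-contained proof but simply attributes the result to \cite{Ryc99}, with an added remark that some minor technicalities were handled in \cite[Theorem 2.1]{Sch10} (rather than \cite{Tri08} as you guessed). The detailed sketch you gave is faithful to Rychkov's actual argument, so nothing is missing.
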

\begin{Remark}
 This proposition is due to \cite{Ryc99}. Some minor technicalities of the proof where modified in the fourth step of \cite[Theorem 2.1]{Sch10} (for the more general vector-valued case).
\end{Remark}

\section{Function spaces on domains}
\label{defdom}
Let $\Om$ be a domain, i.\,e.\ non-empty open set, in $\R^n$, $\Gamma=\partial \Om$ its boundary and $\overline{\Om}$ its closure. By $D(\Om)$ we denote the set of all functions $f \in D(\R^n)$ with support in $\Om$ and by $D'(\Om)$ its usual topological dual space. 

Denote by $g|\Om \in D'(\Om)$ the restriction of $g$ to $\Om$, hence $ (g|\Om)(\ph)=g(\ph) \text{ for } \ph \in D(\Om)$. We introduce 
\begin{align*}
 \FO:=\{f \in D'(\Om): f&=g|\Om \text{ for some } g \in \FR\}, \\
     \|f|\FO\|&=\inf \|g|\FR\|,
\end{align*}
where the infimum is taken over all $g\in \FR$ with $g|\Om=f$. Moreover, let
\begin{align*}
\FtBar:=\{f \in \FR: supp \, f \in \overline{\Om} \} 
\end{align*}
with the quasi-norm from $\FR$. Then 
\begin{align*}
\Ft:=\{f \in D'(\Om): f&=g|\Om \text{ for some } g \in \FtBar\}, \\
\|f|\Ft\|&=\inf \|g|\FtBar\|,
\end{align*}
where the infimum is taken over all $g\in \FtBar$ with $g|\Omega=f$. 
\begin{Remark}
 Let $0<p<\infty$, $0<q\leq \infty$ and $s>\sigma_p$. Then by the Sobolev embedding (Proposition \ref{Sobolev}) we have
 \begin{align*}
  \FR \hookrightarrow L_{\max(1,p)}(\R^n). 
 \end{align*}
Furthermore, let $|\partial \Om|=0$ which will always be the case for the domains $\Om$ considered in this text. Hence the only function $h \in \FR$ with $supp\ h \subset \partial \Om$ is $h\equiv 0$. This shows that $\FtBar \cong \Ft$. For further explanation see \cite[Section 5.4]{Tri01}.

\end{Remark}

\section{Wavelets on $\R^n$ and on domains}
\label{Whitney}
Let $\Om$ be a domain in $\R^n$ and $\Gamma=\partial \Om$ its boundary. We start with a Whitney decomposition of $\Om$ in the same way as in \cite[Section 2.1.2]{Tri08}. For more details regarding the Whitney decomposition see \cite[Theorem 3, p.\ 16]{Ste70}. Let
\begin{align*}
 Q^0_{l,r} \subset Q^1_{l,r}, \quad l \in \N_0, r =1,\ldots,M_j \text{ with } M_j \in \overline{\N}_0
\end{align*}
be concentric (open) cubes in $\R^n$, sides parallel to the axes of coordinates, centred at $2^{-l}m^r$ for an $m^r \in \Z^n$. The sidelength of $Q^0_{l,r}$ shall be $2^{-l}$, the sidelength of $Q^1_{l,r}$ shall be $2^{-l+1}$. We call this collection of cubes a Whitney decomposition of $\Om$ if the cubes $Q^0_{l,r}$ are pairwise disjoint, if
\begin{align*}
 \Om=\bigcup_{l,r} \overline{Q}_{l,r}^{\,0}, \quad \dist(Q^1_{0,r},\Gamma) \gtrsim 1 \quad \text{and} \quad \dist(Q^1_{l,r},\Gamma) \sim 2^{-l} \text{ for } l \in \N . 
\end{align*}
By the construction in \cite[Theorem 3, p.\ 16]{Ste70} one can furthermore assume that for adjacent cubes $Q^0_{l,r}$ and $Q^0_{l',r'}$ it holds $|l-l'|\leq 1$.  
\begin{center}
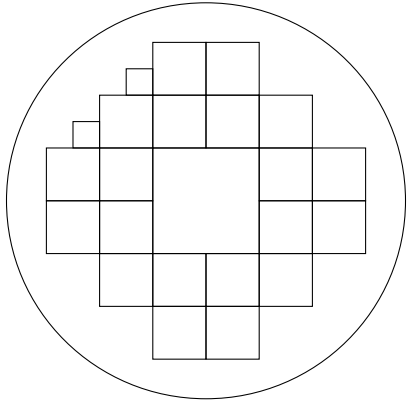

 \begin{tikzpicture}[scale=0.35]
 \draw (0,0) circle (7.5cm);
 \draw (-2,-2) -- (-2,2) -- (2,2) -- (2,-2) --cycle;
 \draw (-2,2) -- (0,2) -- (0,4) -- (-2,4) --cycle;
 \draw (0,2) -- (2,2) -- (2,4) -- (0,4) --cycle;
 \draw (-4,2) -- (-2,2) -- (-2,4) -- (-4,4) --cycle;
 \draw (-4,0) -- (-2,0) -- (-2,2) -- (-4,2) --cycle;
 \draw (-4,-2) -- (-2,-2) -- (-2,0) -- (-4,0) --cycle;
 \draw (-4,-4) -- (-2,-4) -- (-2,-2) -- (-4,-2) --cycle;
 \draw (-2,-4) -- (0,-4) -- (0,-2) -- (-2,-2) --cycle;
 \draw (0,-4) -- (2,-4) -- (2,-2) -- (0,-2) --cycle;
 \draw (2,-4) -- (4,-4) -- (4,-2) -- (2,-2) --cycle;
 \draw (2,-2) -- (4,-2) -- (4,0) -- (2,0) --cycle;
 \draw (2,0) -- (4,0) -- (4,2) -- (2,2) --cycle;
 \draw (2,2) -- (4,2) -- (4,4) -- (2,4) --cycle;

 \draw (-3,4) -- (-2,4) -- (-2,5) -- (-3,5) --cycle;
 \draw (-5,2) -- (-4,2) -- (-4,3) -- (-5,3) --cycle;
 \draw (-2,4) -- (0,4) -- (0,6) -- (-2,6) --cycle;
 \draw (0,4) -- (2,4) -- (2,6) -- (0,6) --cycle;
 \draw (-6,0) -- (-4,0) -- (-4,2) -- (-6,2) --cycle;
 \draw (-6,-2) -- (-4,-2) -- (-4,0) -- (-6,0) --cycle; 
 \draw (-2,-4) -- (0,-4) -- (0,-6) -- (-2,-6) --cycle;
 \draw (0,-4) -- (2,-4) -- (2,-6) -- (0,-6) --cycle;
 \draw (6,0) -- (4,0) -- (4,2) -- (6,2) --cycle;
 \draw (6,-2) -- (4,-2) -- (4,0) -- (6,0) --cycle;
\end{tikzpicture}
\captionof{figure}{First cubes $Q^0_{l,r}$ of the Whitney decomposition of a ball}
\end{center}

Now we introduce the interior sequence spaces for domains $\Om$. This is taken over from \cite[Section 2.1.2]{Tri08}.
\begin{Definition}[Interior sequence spaces for domains $\Om$]
\label{sequence}
Let $\Om$ be a domain with $\Om\neq \R^n$, $\Gamma=\partial \Om$ its boundary. 
Let $c_1,c_2,c_3$ be positive constants and 
\begin{align*}
 \Z^{\Om}=\{x_r^j \in \Om: j \in \N_0, r=1,\ldots,N_j \}
\end{align*}
with $N_j \in \overline{\N}_0$ such that
\begin{align*}
 |x_r^j - x_{r'}^{j}| \geq c_1 2^{-j} \text{ for } j \in \N_0 \text{ and } r \neq r' 
\end{align*}
and
\begin{align}
\label{distprop}
 \dist(B(x_r^j, c_2 2^{-j}),\Gamma) \geq c_3 2^{-j} \text{ for } j \in \N_0 \text{ and } r=1,\ldots,N_j. 
\end{align}
The interior sequence space $f_{p,q}^s(\Z^{\Om})$ adapted to $\Z^{\Om}$ is the collection of all sequences $\lambda \in \Z^{\Om}$ such that
\begin{align*}
 \|\lambda|\fO\|:=\left\|\left(\sum_{j=0}^{\infty} \sum_{r=1}^{N_j} 2^{jsq} |\lambda_{j}^r\, \chi_{j}^r|^{q}\right)^{\frac{1}{q}}|L_p(\Om)\right\|
\end{align*}
is finite. Here $\chi_{j}^r$ is the characteristic function of $B(x_r^j, c_3 2^{-j})$.
\end{Definition}
\begin{Remark}
We can also introduce a similar definition for $\Om=\R^n$. Then we just omit property \eqref{distprop}. A special admissible decomposition choice are the grids $\{2^{-j}\Z^n,$ $j=0,\ldots\}$ which are used for the usual atomic decomposition theorems for $\R^n$, see for instance Definition \ref{DefSeq}.
\end{Remark}
\begin{Remark}
\label{Whitneyatom}
 One possible way to find such a decomposition emerging from an interior wavelet decomposition (consisting of grids with distance $2^{-j}$) for $\Om \neq \R^n$  is given in \cite[Section 2.1.2]{Tri08}. To put it simply, one takes the Whitney decomposition as a starting point, then decomposes every cube $Q_{l,r}^0$ into $2^{(j-l)n}$ cubes of sidelength $2^{-j}$ and takes the centres as the points $x_j^r$. This is perfectly adapted to the construction of $u$-wavelet systems on $\Om$.
\end{Remark}
Later on we also need sequence spaces $\fO[\overline{\Om}]$ where the balls around $x_r^j$ can also be located near or on the boundary. This is taken over from \cite[Definition 5.23]{Tri08}.
\begin{Definition}[Sequence spaces for domains with values on the boundary]
\label{extsequence}

Let \\ $\Om$ with $\Om\neq \R^n$ be a domain, $\Gamma=\partial \Om$ its boundary. 
Let $c_1,c_2$ be positive constants and 
\begin{align*}
 \Z^{\overline{\Om}}=\{x_r^j \in \Om: j \in \N_0, r=1,\ldots,N_j \}
\end{align*}
with $N_j \in \overline{\N}_0$ such that
\begin{align*}
 |x_r^j - x_{r'}^{j}| \geq c_1 2^{-j} \text{ for } j \in \N_0 \text{ and } r \neq r'.
\end{align*}
The sequence space $f_{p,q}^s(\Z^{\overline{\Om}})$ adapted to $\Z^{\overline{\Om}}$ is the collection of all sequences $\lambda \in \Z^{\overline{\Om}}$ such that
\begin{align*}
 \|\lambda|\fO[\overline{\Om}]\|:=\left\|\left(\sum_{j=0}^{\infty} \sum_{r=1}^{N_j} 2^{jsq} |\lambda_{j}^r\, \chi_{j}^r|^{q}\right)^{\frac{1}{q}}|L_p(\Om)\right\|
\end{align*}
is finite. Here $\chi_{j}^r$ is the characteristic function of $B(x_r^j, c_2 2^{-j})$.
\end{Definition}

Now we can fix what we understand under a $u$-wavelet system. At first we define $u$-wavelet systems for $\R^n$. A special case of these are Daubechies wavelets as presented in \cite[Section 1.2.1]{Tri08}. Further details for Daubechies wavelets can be found in \cite{Dau92} and \cite[Section 4]{Woj97}.\ Additionally, we now allow $u=0$, which also incorporates the Haar wavelets as $0$-wavelets on $\R^n$. For more information and the definition of the Haar wavelet see the original source \cite{Haa10}, \cite[Section 1.1]{Woj97} or \cite[Section 2.5.1]{Tri08}.
\begin{Definition}[$u$-wavelet system for $\R^n$]
\label{u-waveletr}
Let $u \in \mathbb{N}_0$. Let 
\begin{align*}
 \Z^{\R^n}=\{x_r^j \in \R^n: j \in \N_0, r=1,\ldots,N_j \}
\end{align*}
be a collection of points as in Definition \ref{sequence} (and the following remark) for $\Om=\R^n$ omiting property \eqref{distprop}. Then 
\begin{align*}
\Phi=\left\{\Phi_r^j: j \in \N_0, r=1,\ldots,N_j \right\} \subset C^u (\R^n)
\end{align*}
is called an oscillating interior u-wavelet system for $\R^n$, adapted to $\Z^{\R^n}$, if it fulfils
\begin{itemize}
 \item support conditions: For some $c_1>0$ let
\begin{align*}
 supp \,  \Phi_r^j \subset B(x_{r}^j, c_1 2^{-j}), \, j \in \mathbb{N}_0,\, r=1,\ldots,N_j,
\end{align*}
 \item derivative conditions: For some $c_2>0$ and all $\alpha \in \mathbb{N}_0^n$ with $0\leq |\alpha| \leq u$ let
\begin{align*}
 \left|D^{\alpha} \Phi_{r}^j(x)\right|\leq c_2 2^{j\frac{n}{2}+j|\alpha|}, \, j \in \mathbb{N}_0,\, r=1,\ldots,N_j, x \in \R^n.
\end{align*}
\item (substitute) moment conditions: For some $c_3>0$ 
\begin{align*}
 \left|\int_{\R^n} \psi(x)\Phi_r^j(x) \ dx\right|&\leq c_3 2^{-j\frac{n}{2}-ju}\left\|\psi|C^{u}(\R^n)\right\|
\end{align*}
for all $\psi \in C^{u}(\R^n)$.
\end{itemize}
\end{Definition}
\begin{Remark}
Here we use a somehow different notation as in \cite{Tri08} since we include $\R^n$ in the definition of $u$-wavelet systems. Since wavelet bases for $\FR$ have already been constructed in \cite[Section 3]{Tri06} this is not a substantial change. Only the (substitute) moment conditions are a genuine generalization. But this will be covered by the observations in Section \ref{atomsec}.
\end{Remark}

Now we consider $u$-wavelet systems on domains $\Om \neq \R^n$ - this means $\partial \Om \neq 0$. We adopt the definition of $u$-wavelet systems from \cite[Definition 6.3]{Tri08} allowing $\Om$ now to be unbounded. There are at least three kind of definitions for $u$-wavelet bases in \cite{Tri08} - Definition 2.4, 5.25 and 6.3. In Definition 2.4 Triebel derived $u$-wavelet systems from a Daubechies wavelet basis of $L_2(\R^n)$ resp.\ of $\FR$ while Definitions 5.25 and 6.3 were more general but restricted to bounded domains. Additionally, we now allow $u=0$, which also incorporates the Haar wavelets on arbitrary domains $\Om$, described in \cite[Section 2.5.1]{Tri08},  as $0$-wavelets on $\Om$.
\begin{Definition}[$u$-wavelet system for $\Om$]
\label{u-wavelet}
Let $\Om$ be an arbitrary domain in $\R^n$ with $\Om\neq \R^n$. Let $\Gamma=\partial \Om$ and let $u \in \mathbb{N}_0$. Let 
\begin{align*}
 \Z^{\Om}=\{x_r^j \in \Om: j \in \N_0, r=1,\ldots,N_j \}
\end{align*}
be a collection of points as in Definition \ref{sequence}. 
 
(i) Then 
\begin{align*}
\Phi=\left\{\Phi_r^j: j \in \mathbb{N}_0, r=1, \ldots, N_j\right\} \subset C^u (\Om)
\end{align*}
is called a $u$-wavelet system in $\overline{\Om}$, adapted to $\Z^{\Om}$, if it fulfils
\begin{itemize}
 \item support conditions: For some $c_1>0$ let
\begin{align*}
 supp \,  \Phi_r^j \subset B(x_{r}^j, c_1 2^{-j}) \cap \overline{\Om}, \, j \in \mathbb{N}_0,\, r=1, \ldots, N_j,
\end{align*}
 \item derivative conditions: For some $c_2>0$ and all $\alpha \in \mathbb{N}_0^n$ with $0\leq |\alpha| \leq u$ let
\begin{align*}
 \left|D^{\alpha} \Phi_{r}^j(x)\right|\leq c_2 2^{j\frac{n}{2}+j|\alpha|}, \, j \in \mathbb{N}_0,\, r=1, \ldots, N_j, x \in \Om.
\end{align*}
\end{itemize}

(ii) The above $u$-wavelet system is called oscillating if it fulfils 
\begin{itemize}
\item (substitute) moment conditions: Let $c_3$ and $c_4<c_5$ be constants such that
\begin{align*}
 \dist(B(x_{r}^0,c_3),\Gamma)&\geq c_4, \text{ for } r=1, \ldots, \mathbb{N}_0 \text{ and } \\
 \left|\int_{\Om} \psi(x)\Phi_r^j(x) \ dx\right|&\leq c_3 2^{-j\frac{n}{2}-ju}\left\|\psi|C^{u}(\Om)\right\|, \psi \in C^{u}(\Om)
\end{align*}
for all $\Phi_r^j$ with $j \in \mathbb{N}$ and 
\begin{align*}
 \dist(B(x_r^j,c_1 2^{-j}),\Gamma) \notin [c_4 2^{-j},c_5 2^{-j}]. 
\end{align*}
\end{itemize}

(iii) An oscillating $u$-wavelet system is called interior if it fulfils
\begin{itemize}
\item interior support conditions, namely
\begin{align*}
 \dist(B(x_{r}^j,c_1 2^{-j}),\Gamma)\geq c_4 2^{-j}, \, j \in \mathbb{N}_0,\, r=1, \ldots, N_j.
\end{align*}
\end{itemize}
\end{Definition}

\begin{Remark}
 Condition (iii) implies that all $\Phi_r^j$ are supported inside the domain $\Om$.
\end{Remark}

We now take over the definition of a $u$-wavelet basis from Definition 2.31 in \cite{Tri08} but our definition is now more general since we use a more general definition of u-wavelet systems. We also incorporate $\Om=\R^n$ and $u=0$.
\begin{Definition}[$u$-wavelet basis]
\label{u-basis}
Let $\Om$ be an arbitrary domain in $\R^n$ and let $u \in \N_0$. Then 
\begin{align*}
 \{\Phi_r^j: j \in \N_0, r=1, \ldots, N_j \} \text{ with } N_j \in \overline{\N}_0
\end{align*}
is called an orthonormal $u$-wavelet basis in $L_2(\Om)$ if it is both an orthonormal basis in $L_2(\Om)$ and an oscillating interior u-wavelet system - according to Definition \ref{u-wavelet} in case $\Om\neq \R^n$ and according to Definition \ref{u-waveletr} in case $\Om=\R^n$. 
\end{Definition}

\begin{Theorem}
\label{orthbases}
Let $\Om$ be an arbitrary domain in $\R^n$. For any $u \in \N_0$ there are orthonormal u-wavelet bases for a suitable collection of points $\Z^{\Om}$. 
\end{Theorem}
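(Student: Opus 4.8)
The plan is to build the basis from two classical ingredients: tensor products of one-dimensional Daubechies scaling functions and wavelets (for $u=0$ the Haar system), and the Whitney decomposition of $\Om$ from Section \ref{Whitney}. I would first dispose of the case $\Om=\R^n$, where one simply takes the $n$-fold tensor product of a one-dimensional Daubechies scaling function and wavelet of order large enough that the resulting functions lie in $C^u(\R^n)$ and the wavelets have at least $u$ vanishing moments (for $u=0$, Haar); with $\Z^{\R^n}=\{2^{-j}m\}$ the support and derivative conditions of Definition \ref{u-waveletr} are the standard localisation estimates, and the substitute moment condition is the usual cancellation property (trivial at generation $j=0$, where the required decay is $2^0=1$). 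See \cite{Dau92} and \cite[Section 3]{Tri06}.

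For $\Om\neq\R^n$ I would proceed as follows. Fix a Whitney decomposition $\{Q^0_{l,r}\subset Q^1_{l,r}\}$ of $\Om$ as in Section \ref{Whitney}, with $|l-l'|\le 1$ for adjacent cubes and with $\dist(Q^1_{l,r},\Gamma)$ comparable to $2^{-l}$ with a sufficiently large implied constant (one may refine the decomposition to arrange this). Since the open cubes $Q^0_{l,r}$ are pairwise disjoint and $\Om=\bigcup_{l,r}\overline{Q^0_{l,r}}$, one has $L_2(\Om)=\bigoplus_{l,r}L_2(Q^0_{l,r})$ as an orthogonal sum, so it suffices to equip each $L_2(Q^0_{l,r})$ with an orthonormal basis and to check that the union has the required properties. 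On the reference cube I would take the $n$-fold tensor product of a boundary-adapted Daubechies orthonormal basis of $L_2([0,1])$ of Cohen--Daubechies--Vial type \cite{CDV00}, of order large enough to guarantee $C^u$ smoothness and at least $u$ vanishing moments for \emph{all} wavelets, interior and edge ones alike (for $u=0$: tensor Haar); this basis consists of finitely many scaling functions at a coarsest level together with wavelets at all finer levels, and every tensor product other than the ``all scaling functions'' ones carries at least $u$ vanishing moments. For each $Q^0_{l,r}$ I rescale this fixed basis by the obvious axis-parallel affine map from $[0,1]^n$ onto $Q^0_{l,r}$ and $L_2$-renormalise; the union over $(l,r)$ is then an orthonormal basis of $L_2(\Om)$.

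It remains to set up $\Z^{\Om}$ and to verify Definition \ref{u-wavelet}(i)--(iii). To a rescaled element arising from the $j'$-th refinement level inside a Whitney cube $Q^0_{l,r}$ of sidelength $2^{-l}$ I assign the generation $j:=l+j'$ and let $x^j_r$ be its centre; subdividing each Whitney cube into dyadic subcubes of sidelength $2^{-j}$ and aligning the subgrids of adjacent cubes (possible because $|l-l'|\le 1$) keeps the generation-$j$ centres $\sim 2^{-j}$-separated and at distance $\gtrsim 2^{-j}$ from $\Gamma$, as Definition \ref{sequence} requires. The support condition is clear: a generation-$j$ element sits in a ball of radius $\sim 2^{-j}$ inside $Q^0_{l,r}$, hence inside $\Om$, and the distance of its support to $\Gamma$ is $\gtrsim 2^{-l}\ge 2^{-j}$, giving the interior condition of (iii). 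The derivative estimate $|D^\alpha\Phi^j_r|\lesssim 2^{jn/2+j|\alpha|}$ is exactly the factor produced when one $L_2$-renormalises a fixed $C^u$ function under a dilation by $2^{-j}$. For the moment condition of (ii), an element of generation $j=l+j'$ with $j'\ge 1$ is a tensor Daubechies wavelet with at least $u$ vanishing moments and support radius $\sim 2^{-j}$, so expanding $\psi\in C^u$ in a Taylor polynomial of degree $u-1$ at the centre of the support and using $\|\Phi^j_r|L_1(\R^n)\|\lesssim 2^{-jn/2}$ yields $\big|\int_\Om\psi\,\Phi^j_r\big|\lesssim 2^{-jn/2-ju}\|\psi|C^u(\Om)\|$ (for $u=0$ this is just the Cauchy--Schwarz inequality). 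The only elements lacking vanishing moments are the ``coarse'' scaling functions of the Whitney cubes: such an element attached to a cube of sidelength $2^{-l}$ carries generation $j$ comparable to $l$, so its support ball lies at distance $\sim 2^{-l}\sim 2^{-j}$ from $\Gamma$, i.e.\ in the excluded annulus $[c_4 2^{-j},c_5 2^{-j}]$ once the constants are chosen consistently -- and Definition \ref{u-wavelet}(ii) asks nothing of these.

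The main obstacle is precisely this last point. For an \emph{interior} orthonormal basis one cannot avoid having, near $\partial\Om$, one ``coarse'' function per Whitney cube with no vanishing moments at all (the constant on a bounded cube has nonzero integral), so the construction succeeds only because the definition of an oscillating system leaves a scale-adapted annulus near $\Gamma$ free of moment requirements; the real work is choosing the Whitney decomposition and all constants $c_1,\dots,c_5$ so that exactly these functions, and no others, fall into that annulus. A secondary technicality is matching the dyadic subgrids of neighbouring Whitney cubes of different sizes so that the separation and distance conditions on $\Z^{\Om}$ survive at every generation -- this is where $|l-l'|\le 1$ enters. For bounded $\Om$ the construction is that of \cite[Section 2.1]{Tri08}; allowing $\Om$ to be unbounded only adds infinitely many Whitney cubes of sidelength $1$ ``at infinity'', which is harmless since $N_j\in\overline{\N}_0$ is permitted, and $u=0$ is covered verbatim by using Haar instead of Daubechies wavelets.
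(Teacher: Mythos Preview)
Your proposal has a genuine gap for $u\ge 1$. The Cohen--Daubechies--Vial edge scaling functions and edge wavelets on $[0,1]$ do \emph{not} vanish at the endpoints---indeed they are built to reproduce polynomials up to the boundary---so their tensor products on $[0,1]^n$, rescaled to a Whitney cube $Q^0_{l,r}$ and extended by zero to the rest of $\Om$, are discontinuous across the faces of $Q^0_{l,r}$. Those faces lie in the \emph{interior} of $\Om$, so the resulting functions fail to be in $C^u(\Om)$ (they are not even continuous there), contradicting the requirement $\Phi\subset C^u(\Om)$ in Definition~\ref{u-wavelet}. This defect hits not only the coarse scaling functions you isolate but every CDV edge wavelet at every refinement level $j'\ge 1$, so it cannot be hidden in the exempt annulus of part (ii). The orthogonal splitting $L_2(\Om)=\bigoplus_{l,r}L_2(Q^0_{l,r})$ is of course correct, but it forces each basis function to live on a single closed cube, and such a function is $C^u$ on $\Om$ only if it vanishes to order $u$ on that cube's boundary---which CDV functions do not.

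The paper itself only cites \cite[Theorem 2.33]{Tri08} for $\Om\ne\R^n$. That construction goes the other way round: one starts from the Daubechies wavelet system on $\R^n$, whose elements are globally $C^u$ with compact support, and uses the Whitney decomposition merely to \emph{select} those wavelets whose supports lie inside $\Om$ at a scale comparable to their distance to $\Gamma$; a finite orthogonalisation within each Whitney cube then turns the selected system into an orthonormal basis of $L_2(\Om)$. Because every building block is already smooth on all of $\R^n$, no regularity is lost at interior Whitney faces. Your scheme survives verbatim for $u=0$ (Haar), where the $C^0$ membership is tacitly read as boundedness, but for $u\ge 1$ one really needs globally smooth pieces as in Triebel's construction.
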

\begin{Proof}
 For the case $\Om=\R^n$ on can use Daubechies wavelets as in \cite[Section 1.2.1]{Tri08} and $\Z^{\R^n}=\Z^n$. The case $\Om\neq \R^n$ is covered by \cite[Theorem 2.33]{Tri08}. 
\end{Proof}

\begin{Definition}[$u$-Riesz basis]
\label{u-Riesz}
Let $\Om$ be an arbitrary domain in $\R^n$. An [oscillating] \{interior\} $u$-wavelet system $\Phi=\{\Phi_r^j\}$ as introduced in Definition \ref{u-wavelet} is called an [oscillating] \{interior\} u-Riesz basis for $A$ where $A \subset D'(\Om)$ is a suitable function space on $\Om$ together with a suitable sequence space $a$, if it has the following properties
\begin{enumerate}
 \item An element $f \in D'(\Om)$ belongs to $A$ if, and only if, it can be represented as
 \begin{align}
 \label{represent}
 f=\sum_{j=0}^{\infty}\sum_{r=1}^{N_j} \lambda_r^j(f) 2^{-\frac{jn}{2}}\Phi_r^j, \quad \lambda \in a
 \end{align}
 with unconditional convergence in $A$.
 \item
 The representation \eqref{represent} is unique and the coefficient mappings
 \begin{align*}
  \lambda_r^j: f \mapsto \lambda_r^j(f)
 \end{align*}
 are linear and continuous functionals on $A$. 
 \item
 Furthermore, 
 \begin{align*}
  f \mapsto \{\lambda_r^j(f)\} 
 \end{align*}
 is an isomorphic map of $A$ onto $a$. 
\end{enumerate}
\end{Definition}

\begin{Proposition}[$u$-wavelet basis for $\R^n$]
\label{waveletre}
Let $0<p<\infty$, $0<q<\infty$, $s \in \R$ and 
 \begin{align*}
 u>\max(s, \sigma_{p,q}-s). 	
\end{align*}
Then every orthonormal u-wavelet basis $\Phi$ in $L_2(\R^n)$ according to Definition \ref{u-basis} is an oscillating u-Riesz basis for $\FR$ with a suitable sequence space $\fO[\Z^{\R^n}]$ and
\begin{align*}
 \lambda_r^j(f)= 2^{jn/2} (f,\Phi_r^j).
\end{align*}

\end{Proposition}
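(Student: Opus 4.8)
The plan is to deduce the Riesz-basis statement for $\FR$ from the generalized atomic decomposition theorem (Theorem \ref{AtomicRepr} / \ref{AtomicRepr}-type result referred to in Section \ref{atomsec}) by showing that the two directions --- synthesis and analysis --- are both controlled in the sequence space $\fO[\Z^{\R^n}]$. First I would observe that an orthonormal $u$-wavelet basis $\Phi$ in $L_2(\R^n)$ consists, by Definition \ref{u-basis} together with Definition \ref{u-waveletr}, of functions $2^{-jn/2}\Phi_r^j$ that satisfy precisely the support, derivative and (substitute) moment conditions required of $(s,p)_{K,L}$-atoms located at $Q_{\nu,m}$ with $\nu=j$, once one renormalizes: the derivative bound $|D^\alpha \Phi_r^j|\lesssim 2^{jn/2+j|\alpha|}$ turns $2^{-jn/2}\Phi_r^j$ into a function whose rescaled $C^u$-norm (hence $\hold[K]$-norm for $0<K\le u$) is $\lesssim 2^{-j(s-n/p)}$ provided we absorb the right power of $2$ into the coefficient; similarly the substitute moment condition with $C^u(\R^n)$ gives exactly \eqref{Atom3} with $L=u$. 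Since $u>\max(s,\sigma_{p,q}-s)$, we may choose $K$ with $s<K\le u$ and $L=u>\sigma_{p,q}-s$, so the hypotheses of the atomic synthesis theorem are met. Hence for any $\lambda\in\fO[\Z^{\R^n}]$ the series \eqref{represent} converges in $\FR$ (unconditionally, because convergence in the $f_{p,q}$-quasinorm of the tail is unconditional in the lattice variables) and $\|f|\FR\|\lesssim\|\lambda|\fO[\Z^{\R^n}]\|$.

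For the converse I would invoke the local-means characterization, Proposition \ref{RychkovLocal}: the analysis coefficients $\lambda_r^j(f)=2^{jn/2}(f,\Phi_r^j)$ are, up to normalization, local means of $f$ against the kernels $k_j=\Phi_r^j$, which satisfy the moment condition \eqref{MeanMomenter} with $N=u>s$ because of the substitute moment condition. The standard argument (as in \cite[Section 3]{Tri06} for Daubechies wavelets, or \cite{Ryc99}) then yields $\|\{\lambda_r^j(f)\}|\fO[\Z^{\R^n}]\|\lesssim\|f|\FR\|$, so the coefficient functionals are continuous on $\FR$. Combining the two estimates gives the norm equivalence $\|f|\FR\|\sim\|\{\lambda_r^j(f)\}|\fO[\Z^{\R^n}]\|$, which is property (3) of Definition \ref{u-Riesz}. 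Uniqueness of the representation (property (2)) follows from orthonormality: pairing \eqref{represent} with $\Phi_{r'}^{j'}$ in $L_2$ --- legitimate since, after Sobolev embedding $\FR\hookrightarrow L_{\max(1,p)}$ when $s>\sigma_p$, or by the usual duality/density argument in general --- recovers $\lambda_{r'}^{j'}(f)$, so the only sequence representing $f$ is the sequence of analysis coefficients.

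A point needing care is the case $s\le\sigma_p$ (or small $p,q$), where $f\in\FR$ need not be a regular distribution and the pairing $(f,\Phi_r^j)$ must be read in the $\Sc'$--$\Sc$ sense; here I would either route through the atomic-decomposition identification of the coefficient functionals (they are a priori well-defined continuous functionals on $\FR$ by the analysis estimate, independently of any pointwise integral) or use that $\Phi_r^j\in C^u(\R^n)\subset\Sc'$-testable against $\FR$ after the kernel has $u>\sigma_{p,q}-s$ vanishing moments, which is exactly the hypothesis. The main obstacle, then, is not any single estimate but the bookkeeping that identifies the (substitute) moment / derivative conditions of Definition \ref{u-waveletr} with the atom conditions of Definition \ref{Atoms} and with the local-means kernel conditions \eqref{MeanMomenter} simultaneously, with matching normalizations of the $2^{jn/2}$ factors, and checking that $u>\max(s,\sigma_{p,q}-s)$ is exactly what makes all three sets of hypotheses hold at once. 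Once that dictionary is in place, the statement is a direct corollary of Theorem \ref{AtomicRepr} and Proposition \ref{RychkovLocal}.
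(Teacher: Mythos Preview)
Your approach is essentially the paper's own: the proof there is a reference to \cite[Theorem 1.20]{Tri08}, noting that that argument uses only that the wavelets act simultaneously as atoms (for synthesis) and as local-means kernels (for analysis), which is precisely what you spell out.

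One correction to the analysis direction: you write that the kernels $\Phi_r^j$ ``satisfy the moment condition \eqref{MeanMomenter} with $N=u>s$ because of the substitute moment condition''. This is not true --- the substitute moment condition in Definition \ref{u-waveletr} is the \emph{generalized} condition \eqref{Atom3}, and it does \emph{not} imply the classical vanishing-moments condition $D^{\alpha}\hat{k}(0)=0$ of \eqref{MeanMomenter}. Consequently Proposition \ref{RychkovLocal} does not apply as stated. What you need instead is Corollary \ref{FolgLocal}, the local-means estimate under the generalized conditions \eqref{LocalMean2}--\eqref{LocalMean3}; the paper flags exactly this point (``In Theorem 1.20 of \cite{Tri08} one used classical moment conditions \eqref{Atom31}, but now we assume more general moment conditions \eqref{Atom3}'') and defers it to the proof of Theorem \ref{AtomicRepr}. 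With that substitution your argument goes through, and the bookkeeping you identify --- matching the $2^{jn/2}$ normalizations so that the three sets of hypotheses coincide under $u>\max(s,\sigma_{p,q}-s)$ --- is indeed the only thing left to check.
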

\begin{Proof}
This is a reformulation of \cite[Theorem 1.20]{Tri08} now using a more general class of $u$-wavelet systems. The replacement of $\Sc'(\R^n)$ by $D'(\R^n)$ is neglectable since the wavelets $\Phi_r^j$ have compact support. Furthermore, considering a collection
\begin{align*}
 \Z^{\R^n}=\{x_r^j \in \R^n: j \in \N_0, r=1,\ldots,N_j \}
\end{align*}
instead of the usual grids $\{2^{-j}\Z^n,$ $j=0,\ldots\}$ does not make any problems.

Theorem 1.20 in \cite{Tri08} only deals with Daubechies wavelets which are wavelets on $\R^n$ with compact support, see \cite{Dau92}, \cite[Section 4]{Woj97} and \cite[Section 1.2.1]{Tri08}. But the proof did not use the special structure but rather the property of the wavelets to serve as atoms and local means, see Definition \ref{Atoms} and Proposition \ref{RychkovLocal} in \cite{Tri08}. But this holds true for orthonormal u-wavelet bases $\Phi$ in $L_2(\R^n)$ according to Definition \ref{u-basis}.

There is only one slight difference: In Theorem 1.20 of \cite{Tri08} one used classical moment conditions \eqref{Atom31}, but now we assume more general moment conditiions \eqref{Atom3}. This generalization is covered by the underlying proof of Theorem 1.15 of \cite{Tri08}, see also the proof of the upcoming atomic representation Theorem \ref{AtomicRepr}.
\end{Proof}

	\newpage
 	\chapter[Atomic representations]{Atomic representations in function spaces and applications to pointwise multipliers and diffeomorphisms, a new approach}
	\label{chapterpointwise}
The aim of this chapter is to generalize the atomic decomposition theorem from Triebel \cite{Tri92, Tri97} for Besov and Triebel-Lizorkin spaces $\bspq$ and $\fspq$ and to present two applications to pointwise multipliers and diffeomorphisms as continuous linear operators in $\bspq$ resp.\ $\fspq$. A detailed (historical) treatment and references are given in the introduction of this text.

\section[A general atomic representation theorem]{A general atomic representation theorem for Besov and Triebel-Lizorkin function spaces on $\R^n$}
\label{atomsec}
The main theorem of this section is Theorem \ref{AtomicRepr} where we generalize the atomic representation theorem from \cite[Theorem 13.8]{Tri97}.
\subsection{Interaction of atoms and local means}
We start with a lemma which helps us to understand the relation between moment conditions like \eqref{Atom3} and \eqref{Atom31} and which will be heavily used in the proof of the atomic representation theorem. It also shows that local means and atoms are related, see condition \eqref{MeanMomenter}.
\begin{Lemma}
 \label{Momenter}
Let $j \in \N_0$. If $k_0$ and $k_j=2^{jn}k(2^{j}\cdot)$ for $j \in \N$ are local means as in Definition \ref{LocalMeans}, then $2^{-j\left(s+n\left(1-\frac{1}{p}\right)\right)}\cdot k_j$ is an $(s,p)_{K,L}$-atom located at $Q_{j,0}$ for arbitrary $K\geq 0$ and for $L<N$ with $N$ from the moment conditions \eqref{MeanMomenter} of $k$.
\end{Lemma}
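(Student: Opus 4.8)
The plan is to check, one at a time, the three defining conditions of an $(s,p)_{K,L}$-atom located at $Q_{j,0}$ from Definition \ref{Atoms} for the function $a:=2^{-j(s+n(1-\frac1p))}k_j$ (with $k_0$ in place of $k_j$ when $j=0$), treating the exceptional level $j=0$ — where $a=k_0$ and no moment condition is really at stake — essentially by inspection. The support condition \eqref{Atom1} is disposed of first: since $\operatorname{supp}k,\ \operatorname{supp}k_0\subset e\cdot Q_{0,0}$, the dilation $k_j=2^{jn}k(2^j\cdot)$ has $\operatorname{supp}k_j\subset e\cdot Q_{j,0}$, so \eqref{Atom1} holds with any fixed $d\geq\max(e,2)$ — one value valid for all $j$, as the definition requires.

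For the derivative condition \eqref{Atom2} the key point is the exact scaling identity
\[
 a(2^{-j}\cdot)=2^{-j(s+n(1-\frac1p))}\,2^{jn}\,k=2^{-j(s-\frac np)}\,k,
\]
so that $\|a(2^{-j}\cdot)|\hold[K]\|=2^{-j(s-\frac np)}\|k|\hold[K]\|$, and $\|k|\hold[K]\|<\infty$ for every $K\geq0$ because $k$ is smooth with compact support; the case $K=0$ (mere boundedness) and the level $j=0$ (use $\|k_0|\hold[K]\|<\infty$) are immediate. This yields \eqref{Atom2} with $C=\max(\|k|\hold[K]\|,\|k_0|\hold[K]\|)$.

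The substance of the lemma — and the only place where the hypothesis $L<N$ enters — is the substitute moment condition \eqref{Atom3}. After the change of variables $y=2^jx$ one is reduced to bounding $\bigl|\int\psi(2^{-j}y)k(y)\,dy\bigr|$ for $\psi\in\hold[L]$, since $\int\psi\,a=2^{-j(s+n(1-\frac1p))}\int\psi(2^{-j}\cdot)\,k$. Here I would Taylor-expand $\psi$ about the origin to order $\Lint$: if $P$ is that Taylor polynomial, the Lagrange remainder combined with the Hölder continuity of order $\Lrest$ of the top-order derivatives $D^\beta\psi$, $|\beta|=\Lint$ (whose $\lip[\Lrest]$-seminorms are bounded by $\|\psi|\hold[L]\|$), gives the pointwise bound $|\psi(x)-P(x)|\lesssim|x|^{L}\,\|\psi|\hold[L]\|$. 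Because $L<N$ forces $\Lint\leq N-1$, the polynomial $y\mapsto P(2^{-j}y)$ has degree $\leq N-1$, hence $\int P(2^{-j}y)k(y)\,dy=0$ by the moment conditions \eqref{MeanMomenter} on $k$. What remains is
\[
 \Bigl|\int\psi(2^{-j}y)k(y)\,dy\Bigr|=\Bigl|\int\bigl[\psi(2^{-j}y)-P(2^{-j}y)\bigr]k(y)\,dy\Bigr|\lesssim 2^{-jL}\,\|\psi|\hold[L]\|\int|y|^{L}|k(y)|\,dy,
\]
and the last integral is finite; multiplying back by $2^{-j(s+n(1-\frac1p))}$ produces exactly \eqref{Atom3} with $\Kap[L]=s+L+n(1-\frac1p)$. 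For $j=0$ the condition \eqref{Atom3} is trivial, since its right-hand side carries the factor $2^{0}=1$ while $\bigl|\int\psi k_0\bigr|\leq\|k_0|L_1(\R^n)\|\,\|\psi|L_\infty(\R^n)\|\leq\|k_0|L_1(\R^n)\|\,\|\psi|\hold[L]\|$.

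I expect the only genuinely delicate point to be the bookkeeping around the convention $L=\Lint+\Lrest$ with $\Lrest\in(0,1]$: one must verify carefully that $L<N$ really does yield $\Lint\leq N-1$ — in both the integer and the non-integer cases for $L$ — so that sufficiently many moments of $k$ vanish, and that the Taylor remainder estimate is invoked with the matching degree $\Lint$ and Hölder exponent $\Lrest$. Everything else is routine dilation and change of variables.
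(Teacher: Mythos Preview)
Your proposal is correct and follows essentially the same route as the paper's proof: the support and H\"older conditions are verified by the scaling identity $a(2^{-j}\cdot)=2^{-j(s-\frac np)}k$, and the substitute moment condition \eqref{Atom3} is obtained by Taylor-expanding $\psi$ to degree $\Lint$, using \eqref{MeanMomenter} to kill the polynomial part (which needs exactly $\Lint\leq N-1$), and estimating the remainder via the H\"older continuity of the top derivatives to get the factor $|x|^{L}$. The only cosmetic difference is that you perform the change of variables $y=2^{j}x$ before the Taylor expansion, whereas the paper works directly on $d\cdot Q_{j,0}$; your explicit bookkeeping on $\Lint\leq N-1$ under the convention $\Lrest\in(0,1]$ is a welcome clarification the paper leaves implicit.
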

\begin{proof}
 For $j=0$ there is nothing to prove since the general moment condition \eqref{Atom3} follows from the support condition \eqref{Atom1} and the boundedness condition included in \eqref{Atom2}. So we can concentrate on $j \in \N$: The support condition \eqref{Atom1} follows from the compact support of $k$ with a suitable $d>0$. Furthermore, for $K>0$ 
\begin{align*}
 \|k_j(2^{-j})|\hold[K]\|=2^{jn} \|k|\hold[K]\|\leq C \, 2^{jn}
\end{align*}
since $K$ is arbitrarily often differentiable. The $L_{\infty}(\R^n)$-condition for $K=0$ follows trivially. Hence, the H\"older condition \eqref{Atom2} is shown.

Now we have to show the general moment condition \eqref{Atom3} for $j\geq 1$ and $L>0$. Hence, we can use the moment conditions \eqref{MeanMomenter}. Let $L=\Lint+\Lrest$ as in Definition \ref{Hoelder} and let $\psi \in \hold[L]$. We expand the $\Lint$-times continuously differentiable function $\psi$ into its Taylor series of order $\Lint-1$. Then there exists a $\theta \in (0,1)$ with
\begin{align*}
 \psi(x)&=\sum_{|\beta|\leq \Lint-1} \frac{1}{\beta!} \, D^{\beta}\psi(0) \cdot x^{\beta}+\sum_{|\beta|= \Lint} \frac{1}{\beta!} \, D^{\beta}\psi(\theta x) \cdot x^{\beta}. 
\end{align*}
Hence
\begin{align*}
 \Big|\psi(x)-\sum_{|\beta|\leq \Lint} \frac{1}{\beta!} \, D^{\beta}\psi(0) \cdot x^{\beta}\Big| &=\Big|\sum_{|\beta|= \Lint} \frac{1}{\beta!} \, \big(D^{\beta}\psi(\theta x)-D^{\beta}\psi(0)\big)x^{\beta}\Big| \\
&\lesssim \|\psi|\hold[L]\| \cdot |x|^{L}.
\end{align*}
Using the moment conditions \eqref{MeanMomenter} for $k_j$ and $\Lint\leq N-1$ we can insert the polynomial terms of order $|\beta|\leq \Lint$ into the integral and get
\begin{align}
\label{IntMomenter}
  \Big|\int_{d \cdot Q_{j,0}} \!\!\psi(x) k_j(x) \ dx \Big| \lesssim \ \|\psi|\hold[L]\| \int_{d \cdot Q_{j,0}} \!\!\!\!|k_j(x)| \cdot |x|^{L} \ dx \lesssim  2^{-jL} \|\psi|\hold[L]\|.
\end{align}
Hence $2^{-j\left(s+n\left(1-\frac{1}{p}\right)\right)}k_j$ fulfils the general moment condition \eqref{Atom3}. The constants in the inequalities do not depend on $j \in \N_0$.
\end{proof}
\begin{Remark}
\label{MomentBem}
 If we take a look at the proof, we see that instead of the moment condition \eqref{MeanMomenter} it suffices to have
\begin{align}
\label{MeanMomenter2}
 \Big|\int_{d \cdot Q_{j,0}} x^{\beta} k_j(x) \ dx \Big| \leq C \cdot 2^{-jL} \text{ if } |\beta|\leq \Lint\text{, i.\,e.\ }|\beta|<N. 
\end{align}
In fact, this condition is equivalent to the general condition \eqref{Atom3} for $k_j$ since $\|x^{\beta} \cdot \psi| \hold[L]\|\leq C$ if $|\beta|\leq \Lint$, where $\psi \in C^{\infty}(\R^n)$ is a cutoff function, i.\,e.\ with compact support and $\psi(x)=1$ for $x \in supp \ k$, hence for $x \in supp \ k_j$, too.
\end{Remark}

Now we will see what happens if an atom is dilated.
\begin{Lemma}
 \label{Dilation}
Let $j,\nu \in \N_0$ and $j\leq \nu$. If $a_{\nu,m}$ is an $(s,p)_{K,L}$-atom located at the cube $Q_{\nu,m}$, then $2^{j(s-\frac{n}{p})}\cdot a_{\nu,m}(2^{-j}\cdot)$ is an $(s,p)_{K,L}$-atom located at $Q_{\nu-j,m}$. 
\end{Lemma}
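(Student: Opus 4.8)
The plan is to verify the three defining properties of Definition \ref{Atoms} directly for the rescaled function $b:=2^{j(s-\frac{n}{p})}a_{\nu,m}(2^{-j}\cdot)$, tracking how each condition transforms under the dilation $x\mapsto 2^{-j}x$. Everything reduces to elementary changes of variables, so there is no genuine obstacle here, only a modest amount of bookkeeping.

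For the support condition \eqref{Atom1}: since $\operatorname{supp}a_{\nu,m}\subset d\cdot Q_{\nu,m}$ one gets $\operatorname{supp}b\subset 2^j\cdot d\cdot Q_{\nu,m}$. As $Q_{\nu,m}$ is the cube with centre $2^{-\nu}m$ and side length $2^{-\nu}$, dilating it by the factor $2^j$ produces the cube with centre $2^{-(\nu-j)}m$ and side length $2^{-(\nu-j)}$, i.e.\ $2^jQ_{\nu,m}=Q_{\nu-j,m}$. This is exactly where the hypothesis $j\leq\nu$ enters: it guarantees $\nu-j\in\N_0$, so that $Q_{\nu-j,m}$ is a legitimate dyadic cube, and it yields $\operatorname{supp}b\subset d\cdot Q_{\nu-j,m}$ with the same overlap constant $d$.

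For the H\"older condition \eqref{Atom2}: I compute $b(2^{-(\nu-j)}x)=2^{j(s-\frac{n}{p})}a_{\nu,m}(2^{-j}2^{-(\nu-j)}x)=2^{j(s-\frac{n}{p})}a_{\nu,m}(2^{-\nu}x)$, so that no rescaling of the argument survives and
\[
\|b(2^{-(\nu-j)}\cdot)|\hold[K]\|=2^{j(s-\frac{n}{p})}\,\|a_{\nu,m}(2^{-\nu}\cdot)|\hold[K]\|\leq C\,2^{j(s-\frac{n}{p})}2^{-\nu(s-\frac{n}{p})}=C\,2^{-(\nu-j)(s-\frac{n}{p})},
\]
which is \eqref{Atom2} for $b$ at level $\nu-j$. (If $K=0$ one only needs boundedness of $b$, and this is immediate from that of $a_{\nu,m}$.)

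For the moment condition \eqref{Atom3}: let $\psi\in\hold[L]$. The substitution $y=2^{-j}x$ gives $\int\psi(x)b(x)\,dx=2^{j(s-\frac{n}{p}+n)}\int\psi(2^jy)\,a_{\nu,m}(y)\,dy$, and I then apply \eqref{Atom3} for $a_{\nu,m}$ to the test function $\psi(2^j\cdot)\in\hold[L]$. The one point needing care is the norm of the dilated test function: each derivative $D^\alpha$ with $|\alpha|\leq\Lint$ contributes a factor $2^{j|\alpha|}\leq 2^{j\Lint}$, while the top-order seminorm $\|D^\alpha\psi|\lip[\Lrest]\|$ contributes exactly $2^{j(\Lint+\Lrest)}=2^{jL}$; since $j\geq 0$ this yields $\|\psi(2^j\cdot)|\hold[L]\|\leq 2^{jL}\|\psi|\hold[L]\|$. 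Collecting the powers of $2$, the resulting exponent equals $j\big(s+n(1-\tfrac{1}{p})+L\big)-\nu\Kap=-(\nu-j)\Kap$, which is precisely \eqref{Atom3} for $b$ located at $Q_{\nu-j,m}$; the case $L=0$ is trivial (or follows from the support and boundedness of $b$, as noted after Definition \ref{Atoms}). Hence $b$ is an $(s,p)_{K,L}$-atom located at $Q_{\nu-j,m}$. The only ``hard'' part is keeping the scalings straight — chiefly the identity $2^jQ_{\nu,m}=Q_{\nu-j,m}$, which is what forces $j\leq\nu$, and the observation that dilating the test function in \eqref{Atom3} costs only the favourable factor $2^{jL}$ rather than something worse.
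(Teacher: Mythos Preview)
Your proof is correct and follows the same approach as the paper: direct verification of the three atom conditions \eqref{Atom1}--\eqref{Atom3} for the rescaled function. The paper dismisses the support and H\"older conditions as ``easy to verify'' and writes out only the moment-condition computation (with the same substitution $y=2^{-j}x$ and the same estimate $\|\psi(2^j\cdot)|\hold[L]\|\leq 2^{jL}\|\psi|\hold[L]\|$), so your version is simply a more detailed rendering of the same argument.
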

\begin{proof}
 The support condition \eqref{Atom1} and the H\"older-condition \eqref{Atom2}  are easy to verify. Considering the general moment condition \eqref{Atom3} we have 
\begin{align*}
 \Big|\int_{d \cdot Q_{\nu-j,m}} \psi(x) a_{\nu,m}(2^{-j}x) \ dx \Big|&=2^{jn} \cdot \Big|\int_{d \cdot Q_{\nu,m}} \psi(2^jx) a_{\nu,m}(x) \ dx \Big| \\
& \leq C \cdot 2^{j n} \cdot 2^{-\nu\Kap} \cdot \|\psi(2^j\cdot)| \hold[L]\| \\
& \leq C \cdot 2^{jn} \cdot 2^{-\nu\Kap}  \cdot 2^{jL} \cdot \|\psi|\hold[L]\| \\
& = C \cdot 2^{-(\nu-j)\Kap} \cdot 2^{-j(s-\frac{n}{p})} \cdot \|\psi|\hold[L]\|.
\end{align*}
This is what we wanted to prove.
\end{proof}

\subsection{The proof of the general atomic representation theorem}

Now we come to the essential part - showing the atomic representation theorem. We will use an approach as in Theorem 13.8 of \cite{Tri97}. Using the more general form of the atoms we are able to simplify the proof: One has to estimate
\begin{align*}
 \int k_j (x-y) a_{\nu,m}(y) \ dy,
\end{align*}
where $k_j$ are the local means from Section \ref{LocalMeans} and $a_{\nu,m}$ are atoms located at $Q_{\nu,m}$. One has to distinguish between $j\geq \nu$ and $j<\nu$ as in the original proof - but now  both cases can be proven very similarly with our more general approach of atoms. 

At first we prove the convergence of the atomic series in ${\cal S}'(\R^n)$. 
\begin{Lemma}
\label{HarmS'-KonvAtom}
Let $s \in \R$, $0<p\leq\infty$ resp.\ $0<p<\infty$ and $0<q\leq \infty$. Let $K\geq 0$, $L\geq 0$ with $L> \sigma_p-s$. Then 
\begin{align*}
 \sum_{\nu=0}^{\infty} \sum_{m \in \Z^n} \lambda_{\nu,m} a_{\nu,m}
\end{align*}
converges unconditionally in ${\cal S}'(\R^n)$, where $a_{\nu,m}$ are $(s,p)_{K,L}$-atoms located at $Q_{\nu,m}$ and $\lambda \in b_{p,q}$ or $\lambda \in f_{p,q}$.
\end{Lemma}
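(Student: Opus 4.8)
The plan is to test the candidate sum against an arbitrary Schwartz function $\ph \in \Sc(\R^n)$ and show that the resulting double series of numbers converges absolutely, uniformly over $\ph$ in bounded subsets of $\Sc(\R^n)$, so that the partial sums define a continuous functional. First I would fix $\ph \in \Sc(\R^n)$ and estimate the individual pairing $\left|\langle \lambda_{\nu,m} a_{\nu,m}, \ph\rangle\right| = |\lambda_{\nu,m}| \cdot \left|\int_{d\cdot Q_{\nu,m}} a_{\nu,m}(x)\ph(x)\,dx\right|$. For $\nu=0$ I use only the support condition \eqref{Atom1} and the boundedness built into \eqref{Atom2} (the $K=0$ case), giving a bound $\lesssim |\lambda_{0,m}| \cdot 2^{0\cdot(s-n/p)} \cdot \sup_{d\cdot Q_{0,m}}|\ph|$, and the rapid decay of $\ph$ makes $\sum_{m}$ converge. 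For $\nu \geq 1$ the key is the moment condition \eqref{Atom3}: applying it with $\psi$ a smooth cut-off times $\ph$ (localized to $d\cdot Q_{\nu,m}$) and using that on such a small cube $\|\psi|\hold[L]\| \lesssim \sum_{|\gamma|\leq \Lint}\sup_{d\cdot Q_{\nu,m}}|D^\gamma\ph| + 2^{-\nu\Lrest}\sup|D^{\Lint}\ph|$-type quantities, we obtain
\begin{align*}
\left|\int_{d\cdot Q_{\nu,m}} a_{\nu,m}(x)\ph(x)\,dx\right| \lesssim 2^{-\nu\left(s+L+n\left(1-\frac 1p\right)\right)} \sup_{x \in d\cdot Q_{\nu,m}, |\gamma|\leq \lceil L\rceil} |D^{\gamma}\ph(x)|.
\end{align*}

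Next I would sum over $m \in \Z^n$ for fixed $\nu$. Using the rapid decay of $\ph$ and its derivatives, $\sum_{m}\sup_{x\in d\cdot Q_{\nu,m}}|D^\gamma\ph(x)|$ is controlled by a constant times $2^{\nu n}$ (roughly the number of unit-scale cubes times the sup, or more carefully by $2^{\nu n}\|D^\gamma\ph|L_\infty\|$ plus a convergent tail), while $\sum_m |\lambda_{\nu,m}|$ must be handled via Hölder's inequality from the sequence-space membership. Here I would split according to whether $p \geq 1$ or $p<1$: for $p\geq 1$, $\sum_m |\lambda_{\nu,m}| \leq$ (number of relevant $m$)$^{1-1/p}\big(\sum_m|\lambda_{\nu,m}|^p\big)^{1/p}$, but since $m$ ranges over all of $\Z^n$ I instead pair $|\lambda_{\nu,m}|$ against the decaying weight coming from $\ph$ and apply Hölder with the $\ell_p$–$\ell_{p'}$ duality, the decaying weight being $p'$-summable. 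For $p<1$ I use $\big(\sum|\lambda_{\nu,m}|\big) \leq \big(\sum|\lambda_{\nu,m}|^p\big)^{1/p}$ together with the decay of $\ph$. Either way the $m$-sum produces a factor that, combined with $2^{-\nu(s+L+n(1-1/p))}$ and the $2^{\nu n}$ from the derivatives of $\ph$, leaves $2^{-\nu(s+L-n/p)} = 2^{-\nu(L-(n/p-s))} \cdot 2^{-\nu(n\min(1,1/p)-\ldots)}$; the precise bookkeeping yields a geometric factor $2^{-\nu\eps}$ with $\eps>0$ exactly because $L > \sigma_p - s = n(1/p-1)_+ - s$, which is what guarantees $L - (\text{the loss } n(1/p-1)_+ - s) > 0$.

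Then summing the geometric series over $\nu$ gives a finite bound depending only on $\|\lambda|b_{p,q}\|$ (or $\|\lambda|f_{p,q}\|$) and finitely many Schwartz seminorms of $\ph$; in the $f_{p,q}$ case I first dominate the relevant $\ell_p$-norms of $\{\lambda_{\nu,m}\}_m$ by the $f_{p,q}$-quasinorm using that $f_{p,q} \hookrightarrow \ell_p$-in-$m$ at each level up to the $L_p$-integration, or more simply invoke the elementary embedding $f_{p,q} \hookrightarrow b_{p,\max(p,q)} \hookrightarrow b_{p,\infty}$ which reduces the $F$-case to estimates on $\sup_\nu 2^{\nu s}\big(\sum_m|\lambda_{\nu,m}|^p\big)^{1/p}$. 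This establishes that each partial sum is a tempered distribution and that the tails go to zero in the weak-$*$ topology uniformly over reorderings, giving unconditional convergence in $\Sc'(\R^n)$. The main obstacle I anticipate is the careful treatment of the $m$-summation uniformly in $\nu$ — getting the interplay between the $2^{\nu n}$ growth from $\ph$'s derivatives on small cubes, the decay of $\ph$ at spatial infinity, and the $\ell_p$ (rather than $\ell_1$) control on $\lambda$ to combine into a clean geometric factor $2^{-\nu\eps}$ — together with keeping the $p<1$ and $p\geq 1$ cases (and the $b$ versus $f$ sequence spaces) parallel rather than doing four separate arguments.
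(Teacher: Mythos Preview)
Your proposal is correct and follows essentially the same route as the paper: apply the moment condition \eqref{Atom3} to obtain the factor $2^{-\nu\Kap}$, combine with the Schwartz decay of $\varphi$ to control the $m$-sum via H\"older for $p>1$ or the embedding $\ell_p\hookrightarrow\ell_1$ for $p\leq 1$, sum the resulting geometric series in $\nu$ using $L>\sigma_p-s$, and reduce the $f_{p,q}$-case to $b_{p,\infty}$ by the elementary embedding. One minor clean-up: a smooth cutoff localizing $\varphi$ to $d\cdot Q_{\nu,m}$ has $\hold[L]$-norm growing like $2^{\nu L}$, so the displayed pointwise bound is more cleanly justified via the equivalent polynomial moment condition \eqref{Atom32} together with a Taylor expansion of $\varphi$ about $2^{-\nu}m$, which yields $\bigl|\int a_{\nu,m}\varphi\bigr|\lesssim 2^{-\nu\Kap}\sup_{|\gamma|\leq\Lint+1}\,\sup_{d\cdot Q_{\nu,m}}|D^\gamma\varphi|$ directly.
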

\begin{proof}
Let $\varphi \in {\cal S}(\R^n)$. Having in mind the support conditions \eqref{Atom1} and the moment conditions \eqref{Atom3} we obtain
\begin{align*}
 \sum_{m}\left| \int_{\R^n}  \lambda_{\nu,m} a_{\nu,m}(x)  \varphi(x) \ dx\right| \lesssim 2^{-\nu \Kap} \sum_{m} |\lambda_{\nu,m}| \cdot  \|\varphi\cdot \psi(2^{\nu} \cdot -m)|\hold[L]\|,\\ 
\end{align*}
where $\psi \in C^{\infty}(\R^n)$, $\psi(x)=1$ for $x \in d \cdot Q_{0,0}$ and $supp \ \psi \in (d+1) \cdot Q_{0,0}$.

Observing $\Kap=s+L+n\left(1-\frac{1}{p}\right)$ and $L>\sigma_p-s$ we get
\begin{align}
\label{Kap}
 \Kap > \begin{cases}
                0, & 0<p\leq 1 \\
		n\left(1-\frac{1}{p}\right), & 1<p\leq \infty.
               \end{cases}
\end{align}
Furthermore, since $\varphi \in {\cal S}(\R^n)$ we have
\begin{align*}
  \|\varphi\cdot \psi(2^{\nu} \cdot -m)|\hold[L]\| \leq C_M \cdot \left(1+|2^{-\nu}m|\right)^{-M},
\end{align*}
where $M \in \N_0$ is at our disposal and $C_M$ does not depend on $\nu$ and $m$. 

Let at first be $0<p\leq 1$. Then we choose $M=0$ and get
\begin{align*}
 \sum_{m}\left| \int_{\R^n}  \lambda_{\nu,m} a_{\nu,m}(x)  \varphi(x) \ dx\right| \lesssim 2^{-\nu\Kap} \sum_{m} |\lambda_{\nu,m}| \lesssim 2^{-\nu \Kap} \left(\sum_{m} |\lambda_{\nu,m}|^p\right)^{\frac{1}{p}}.
\end{align*}
Summing up over $\nu \in \N_0$ using $\Kap>0$ we finally arrive at
\begin{align}
\label{Summing}
 \sum_{\nu} \sum_{m}\left| \int_{\R^n}  \lambda_{\nu,m} a_{\nu,m}(x)  \varphi(x) \ dx\right| \lesssim \|\lambda|b_{p,\infty}\|. 
\end{align}
In the case $1<p\leq \infty$ we choose $M \in \N_0$ such that $Mp'>n$, where $1=\frac{1}{p}+\frac{1}{p'}$. Using H\"older's inequality we get
\begin{align*}
 \sum_{m}\left| \int_{\R^n}  \lambda_{\nu,m} a_{\nu,m}(x)  \varphi(x) \ dx\right| &\lesssim 2^{-\nu\Kap} \sum_{m} |\lambda_{\nu,m}| \cdot \left(1+|2^{-\nu}m|\right)^{-M} \\
&\lesssim2^{-\nu \Kap} \left(\sum_{m} \left(1+|2^{-\nu}m|\right)^{-Mp'}\right)^{\frac{1}{p'}} \cdot \left(\sum_{m} |\lambda_{\nu,m}|^p\right)^{\frac{1}{p}} \\
&\lesssim 2^{-\nu \Kap} \cdot 2^{\nu \frac{n}{p'}} \cdot \left(\sum_{m} |\lambda_{\nu,m}|^p\right)^{\frac{1}{p}} .
\end{align*}
By \eqref{Kap} the exponent is smaller than zero. Hence summing over $\nu \in \N_0$ gives the same result as in \eqref{Summing}. 

Since 
\begin{align*}
b_{p,q} \hookrightarrow b_{p,\infty} \text{ resp.\ } 
f_{p,q} \hookrightarrow f_{p,\infty} 
\end{align*}
we have shown the absolut and hence unconditional convergence in ${\cal S}'(\R^n)$ for all cases.

\end{proof}

\begin{Theorem}
\label{AtomicRepr}
 (i) Let $s \in \mathbb{R}$, $0<p\leq \infty$ and $0<q\leq \infty$. Let $K,L \in \R$, $K,L\geq 0$, $K>s$ and $L >\sigma_p-s$. Then $f \in {\cal S}'(\mathbb{R}^n)$ belongs to $\bspq$ if and only if it can be represented as
\begin{align*}
 f=\sum_{\nu=0}^{\infty} \sum_{m \in \Z^n}\lambda_{\nu,m}a_{\nu,m} \quad \text{with convergence in } {\cal S}'(\mathbb{R}^n).
\end{align*} 
Here $a_{\nu,m}$ are $(s,p)_{K,L}$-atoms located at $Q_{\nu,m}$ (with the same constants $d>1$ and $C>0$ in Definition \ref{Atoms} for all $\nu \in \N_0, m \in \Z$) and $\|\lambda|b_{p,q}\| < \infty$ . Furthermore, we have in the sense of equivalence of norms
\begin{align*}
 \|f|\bspq\| \sim \inf \, \|\lambda|b_{p,q}\|,
\end{align*}
where the infimum on the right-hand side is taken over all admissible representations of $f$.

(ii) Let $s \in \mathbb{R}$, $0<p<\infty$ and $0<q\leq \infty$. Let $K,L \in \R$, $K,L\geq 0$, $K>s$ and $L >\sigma_{p,q}-s$. Then $f \in {\cal S}'(\mathbb{R}^n)$ belongs to $\fspq$ if and only if it can be represented as
\begin{align*}
 f=\sum_{\nu=0}^{\infty} \sum_{m \in \Z^n}\lambda_{\nu,m}a_{\nu,m} \quad \text{with convergence in } {\cal S}'(\mathbb{R}^n).
\end{align*}
Here $a_{\nu,m}$ are $(s,p)_{K,L}$-atoms located at $Q_{\nu,m}$ (with the same constants $d>1$ and $C>0$ in Definition \ref{Atoms} for all $\nu \in \N_0, m \in \Z$) and $\|\lambda|f_{p,q}\| < \infty$. Furthermore, we have in the sense of equivalence of norms
\begin{align*}
 \|f|\fspq\| \sim \inf \, \|\lambda|f_{p,q}\|,
\end{align*}
where the infimum on the right-hand side is taken over all admissible representations of $f$.
\end{Theorem}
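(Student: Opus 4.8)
The plan is to follow the classical two-part strategy for atomic decomposition theorems, as in \cite[Theorem 13.8]{Tri97}, but exploiting the more flexible H\"older/substitute-moment formulation of atoms to streamline both directions. For the ``if'' direction (every atomic series lies in the space) the key is Proposition \ref{RychkovLocal}: I would estimate the local means $k_j * f$ of $f = \sum_{\nu,m} \lambda_{\nu,m} a_{\nu,m}$ by splitting the sum into $\nu \leq j$ and $\nu > j$. Lemma \ref{HarmS'-KonvAtom} already guarantees convergence in $\Sc'(\R^n)$, so $k_j * f = \sum_{\nu,m} \lambda_{\nu,m} (k_j * a_{\nu,m})$ pointwise. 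The inner integral $\int k_j(x-y) a_{\nu,m}(y)\,dy$ is then estimated using: in the case $j \geq \nu$, the H\"older condition \eqref{Atom2} on $a_{\nu,m}$ combined with the moment conditions \eqref{MeanMomenter} on $k_j$ (via Lemma \ref{Momenter}, which says $2^{-j(s+n(1-1/p))} k_j$ is itself an atom); in the case $j < \nu$, the moment condition \eqref{Atom3} on $a_{\nu,m}$ against the test function $\psi = k_j(x-\cdot)$, whose $\hold[L]$-norm is controlled since $k_j$ is smooth. This yields a geometric-decay factor $2^{-|j-\nu|\delta}$ for some $\delta>0$ (using $K>s$ in one regime and $L>\sigma_p-s$, resp.\ $\sigma_{p,q}-s$, in the other), after which a standard convolution/maximal-function argument — the same one used to pass from sequence-space to function-space norms in \cite[Section 13]{Tri97}, employing a Hardy-type inequality for the $\ell_q$ sums and, in the $F$-case, the Fefferman–Stein maximal inequality, which is why the stricter bound $L > \sigma_{p,q}-s$ is needed — gives $\|f|\bspq\| \lesssim \|\lambda|b_{p,q}\|$, resp.\ $\|f|\fspq\| \lesssim \|\lambda|f_{p,q}\|$.

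For the ``only if'' direction I would produce the atoms and coefficients explicitly from a smooth decomposition of $f$. Take $k_0, k$ as in Section \ref{LocalMeans} with $N$ large enough (any $N > \max(K, L, s)$ works, and since $K, L$ are now arbitrary reals we can simply pick $N$ above them), and use the Calder\'on-type reproducing identity $f = \sum_{j} k_j * (\Psi_j * f)$, or more concretely the identity underlying Proposition \ref{RychkovLocal}, to write $f = \sum_{\nu,m} \lambda_{\nu,m} a_{\nu,m}$ where $a_{\nu,m}$ is (a constant multiple of) a localized piece of $k_\nu * f$ times a smooth partition-of-unity bump on $Q_{\nu,m}$, and $\lambda_{\nu,m}$ carries the $L_p$-normalization. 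One checks the support condition \eqref{Atom1} by construction; the H\"older condition \eqref{Atom2} from the fact that $k_\nu * f$ is band-limited (Bernstein's inequality gives control of all derivatives, hence of any $\hold[K]$-norm); and the substitute moment condition \eqref{Atom3} from the moment conditions \eqref{MeanMomenter} imposed on $k$, exactly as in Lemma \ref{Momenter} and Remark \ref{MomentBem}. The bound $\|\lambda|b_{p,q}\| \lesssim \|f|\bspq\|$, resp.\ $\|\lambda|f_{p,q}\| \lesssim \|f|\fspq\|$, is then immediate from the local-means characterization in Proposition \ref{RychkovLocal}.

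The main obstacle I anticipate is the bookkeeping in the ``if'' direction: getting the convolution estimate $|(k_j * a_{\nu,m})(x)|$ with the correct power of $2^{-|j-\nu|}$ \emph{and} the correct spatial localization factor (a power of $(1 + 2^{\min(j,\nu)}|x - 2^{-\nu}m|)^{-M}$ with $M$ at our disposal) uniformly in all parameters, and then summing these against $\{\lambda_{\nu,m}\}$ to land in $b_{p,q}$ or $f_{p,q}$. The case $j < \nu$ is where the generalized moment condition \eqref{Atom3} genuinely pays off: rather than doing a Taylor expansion of $k_j(x-\cdot)$ by hand as in the classical argument, one simply feeds $\psi = k_j(x - \cdot)$ into \eqref{Atom3} and uses that $\|k_j(x-\cdot)|\hold[L]\| \lesssim 2^{jn} 2^{jL}$ together with the exponential gain from $\varkappa_L = s + L + n(1-1/p)$ — this is the simplification over \cite{Tri97} promised in the text. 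The threshold conditions $K > s$ and $L > \sigma_p - s$ (resp.\ $\sigma_{p,q}-s$) emerge precisely as the requirements that both geometric series converge after the final $L_p$-summation.
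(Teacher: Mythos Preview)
Your proposal is essentially correct and, for the hard ``if'' direction, follows the same route as the paper: view both $k_j$ and $a_{\nu,m}$ as atoms (Lemmata~\ref{Momenter} and~\ref{Dilation}) and, in each of the two regimes $j\geq\nu$ and $j<\nu$, apply the generalized moment condition \eqref{Atom3} of one factor against the H\"older condition \eqref{Atom2} of the other. The paper even performs the explicit rescalings (writing $\int k_{j-\nu}(y)\,a_{\nu,m}(x-2^{-\nu}y)\,dy$ for $j\geq\nu$ and $\int k(2^jx-y)\,a_{\nu,m}(2^{-j}y)\,dy$ for $j<\nu$) to make this symmetry manifest, and then defers to \cite{Tri97,Sch10} for the remaining maximal-function bookkeeping exactly as you sketch.

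Two points where the paper differs from your outline. First, for the ``only if'' direction the paper does not construct atoms from local means as you propose; it simply invokes the classical theorem \cite[Theorem~13.8]{Tri97} for integer parameters $K'\geq K$, $L'+1\geq L$ and observes that classical $(s,p)_{K',L'}$-atoms are automatically generalized $(s,p)_{K,L}$-atoms, since $C^{K'}\hookrightarrow\hold[K]$ and \eqref{Atom31} implies \eqref{Atom3} (Remark~\ref{Ordered}). Your explicit construction would work too, but is unnecessary here. Second, you do not mention the Fatou property (Proposition~\ref{Fatou}), which the paper uses to close the argument: one first proves the norm bound only for \emph{finite} partial sums, then uses $\Sc'$-convergence (Lemma~\ref{HarmS'-KonvAtom}) together with Fatou to pass to the full series. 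Your claim that $k_j*f=\sum_{\nu,m}\lambda_{\nu,m}(k_j*a_{\nu,m})$ pointwise from $\Sc'$-convergence alone is not immediate ($\Sc'$-convergence gives convergence of dual pairings, not of pointwise values of $k_j*f$), and the Fatou route sidesteps this cleanly.
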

\begin{proof}
 We rely on the proof of Theorem 13.8 of \cite{Tri97}, now modified keeping in mind the more general H\"older conditions \eqref{Atom2} and moment conditions \eqref{Atom3} instead of \eqref{Atom21} and \eqref{Atom31}. There are two directions we have to prove. 

At first, let us assume that $f$ from $\bspq$ or $\fspq$ is given. Then we know from Theorem 13.8 of \cite{Tri97} that $f$ can be written as an atomic decomposition, with atoms now fulfilling the standard conditions \eqref{Atom21} and \eqref{Atom31} for given natural numbers $K'>s$ and $L'+1>\sigma_{p}-s$ resp.\ $L'+1>\sigma_{p,q}-s$. Hence, because of $C^{K'}(\R^n) \subset \hold[K']$, condition \eqref{Atom2} is fulfilled for all $K\leq K'$.

The general moment conditions \eqref{Atom3} are generalizations of the classical moment conditions \eqref{Atom31} and are ordered in $L$, see Remark \ref{Ordered}. 

Thus, every classical $(s,p)_{K',L'}$-atom is an $(s,p)_{K,L}$ atom in the sense of definition \ref{Atoms} for $K\leq K'$ and $L\leq L'+1$ and this immediately shows that we find a decomposition of $f$ from $\bspq$ or $\fspq$ for arbitrary $K$ and $L$ in terms of the general atoms we introduced.

Now we come to the essential part of the proof. We have to show that, although we weakened the conditions on the atoms, a linear combination of atoms is still an element of $\bspq$ resp.\ $\fspq$. We modify the proof of Theorem 13.8 of \cite{Tri97} (or Proposition 4.7 of  \cite{Sch10} where some minor technical details are modified - for the more general vector-valued case). There one uses the equivalent characterization by local means $k_0, k_j:=2^{jn} k(2^{j}\cdot)$ with a suitably large $N$ (see Proposition \ref{RychkovLocal}) and distinguishes between the cases $j\geq \nu$ and $j<\nu$. In both cases the crucial part is the estimate of
\begin{align*}
 \int k_j (x-y) a_{\nu,m}(y) \ dy,
\end{align*}
where $a_{\nu,m}$ is an $(s,p)_{K,L}$-atom centered at $Q_{\nu,m}$.
The idea now is to use that not only $a_{\nu,m}$ but also $k_j$ can be interpreted as atoms and admit derivative resp.\ moment conditions as in \eqref{Atom2} resp.\ \eqref{Atom3}. This follows from Lemma \ref{Momenter}.

Let at first be $j\geq \nu$. The function $k$ has compact support and fulfils moment conditions \eqref{Atom31}. At first we transform the integral, having in mind the form of the derivative condition \eqref{Atom2} of $a_{\nu,m}$,
\begin{align*}
 2^{js} \int k_j (y) a_{\nu,m}(x-y) \ dy= 2^{js} \int k_{j-\nu}(y)a_{\nu,m}(x-2^{-\nu}y) \ dy. 
\end{align*}
Surely, this integral vanishes for $x \notin c \cdot Q_{\nu,m}$ for a suitable $c>0$ because of $j\geq \nu$. So we concentrate on $x\in c \cdot Q_{\nu,m}$: By Lemmata \ref{Momenter} and \ref{Dilation} the function 
\begin{align*}
2^{-(j-\nu)\left(s+n\left(1-\frac{1}{p}\right)\right)} \cdot k_{j-\nu}=2^{-(j-\nu)\left(s+n\left(1-\frac{1}{p}\right)\right)} \cdot 2^{-\nu n} \cdot k_{j}(2^{-\nu}\cdot)
\end{align*}
is an $(s,p)_{M,N}$-atom located at $Q_{j-\nu,0}$ for $M$ arbitrarily large and $N$ from \eqref{MeanMomenter}, so that also $N$ may be arbitrarily large, but fixed. Now we will use the moment condition \eqref{Atom3} for $k_{j-\nu}$ and the H\"older condition \eqref{Atom2} for $a_{\nu,m}$. Hence, with $\psi(y)=a_{\nu,m}(x-2^{-\nu}y)$ and $N \geq K$ we have
\begin{align*}
 2^{js} \Big| \int k_{j-\nu}(y) a_{\nu,m}(x-2^{-\nu}y) \ dy \Big| &\leq C \cdot 2^{js} \cdot 2^{-(j-\nu)K} \cdot \|a_{\nu,m}(x-2^{-\nu}\cdot)|\hold[K]\| \\
&= C \cdot 2^{js} \cdot 2^{-(j-\nu)K} \cdot \|a_{\nu,m}(2^{-\nu}\cdot)|\hold[K]\| \\
&\leq C \cdot 2^{js} \cdot 2^{-(j-\nu)K} \cdot 2^{-\nu(s-\frac{n}{p})} \\
&= C' \cdot 2^{-(j-\nu)(K-s)} \cdot \chi^{(p)}(c\cdot Q_{\nu,m}),
\end{align*}
where $\chi^{(p)}(c\cdot Q_{\nu,m})$ is the $L_p(\R^n)$-normalized characteristic function of $c \cdot Q_{\nu,m}$. This inequality is certainly also true for $x \notin c \cdot Q_{\nu,m}$. Hence (13.37) in \cite{Tri97} is shown.

Now let $j<\nu$. We will interchange the roles of $k_j$ and $a_{\nu,m}$ using the atomic condition \eqref{Atom2} now for $k_j$ and \eqref{Atom3} for $a_{\nu,m}$. Hence we start with
\begin{align*}
 2^{js} \int k_j (x-y) a_{\nu,m}(y) \ dy =2^{js} \int k(2^{j}x-y)a_{\nu,m}(2^{-j}y) \ dy.
\end{align*}
Surely, this integral vanishes for $x \notin c\cdot 2^{\nu-j}\cdot Q_{\nu,m}$. So we concentrate on $x\in c \cdot 2^{\nu-j}\cdot Q_{\nu,m}$: By Lemma \ref{Dilation} we know that $2^{j(s-\frac{n}{p})} \cdot a_{\nu,m}(2^{-j}\cdot)$ is an $(s,p)_{K,L}$-atom located at $Q_{\nu-j,m}$ while $k$ is an $(s,p)_{M,N}$-atom located at $Q_{0,0}$. Thus, using \eqref{Atom2} for $k$ with $M\geq L$, we get
\begin{align*}
 2^{js} \Big|\int k(2^{j}x-y)a_{\nu,m}(2^{-j}y) \ dy \Big| &\leq c \cdot 2^{js} \cdot 2^{-(\nu-j)\Kap} \cdot 2^{-j(s-\frac{n}{p})} \cdot \|k(2^{j}x-\cdot)|\hold[L]\| \\
&\leq c \cdot 2^{-(\nu-j)(L+s)} \cdot 2^{\nu\frac{n}{p}} \cdot 2^{-(\nu-j)n}  \\
&= c \cdot 2^{-(\nu-j)(L+s)} \cdot 2^{\nu\frac{n}{p}} \cdot 2^{-(\nu-j)n}\cdot \chi(c \cdot 2^{\nu-j}\cdot Q_{\nu,m}).
\end{align*}
where $\chi(c \cdot 2^{\nu-j}\cdot Q_{\nu,m})$ is the characteristic function of $c \cdot 2^{\nu-j}\cdot Q_{\nu,m}$. This estimate is the same as (13.41) combined with (13.42) in \cite{Tri97} or (72) and (73) in \cite{Sch10}, observing that we use $L$ instead of $L+1$ in the atomic representation theorem. 

Starting with these two estimates we can follow the steps in \cite{Tri97} or \cite{Sch10} and finish the proof, since $K>s$ and $L>\sigma_p-s$ resp.\ $L>\sigma_{p,q}-s$. To be precise, we arrive (in the $\bspq$-case) at
\begin{align*}
 \Big\|\sum_{\nu\leq \nu_0} \sum_{|m|\leq m_0}\lambda_{\nu,m} a_{\nu,m} \big|\bspq\Big\|\leq C \cdot \|\lambda|b_{p,q}\|
\end{align*}
for all $\nu_0,m_0 \in \N_0$ with a constant $C$ independent of $\nu_0$ and $m_0$. Using Lemma \ref{HarmS'-KonvAtom} (the convergence of the atomic series in $\in {\cal S}'(\R^n)$) and the Fatou property of the spaces $\bspq$ resp.\ $\fspq$ (see Proposition \ref{Fatou}) we are finally done, i.\,e.\
\begin{align*}
 \Big\|\sum_{\nu\in \N_0} \sum_{m \in \Z^n}\lambda_{\nu,m} a_{\nu,m} \big|\bspq\Big\|\leq C \cdot \|\lambda|b_{p,q}\|.
\end{align*}
\end{proof}

\begin{Remark}
 The atomic conditions $\eqref{Atom2}$ and $\eqref{Atom3}$ for the atomic representation theorem can be slightly modified: If $K>0$, then it is possible to replace $\|\cdot|\hold[K]\|$ by $\|\cdot|B_{\infty,\infty}^{K}(\R^n)\|$ in condition $\eqref{Atom2}$. This is clear for $K \notin \N$, see Remark \ref{HoldB}. If $K \in \N$, this follows from 
\begin{align*}
\hold[K]  \hookrightarrow B_{\infty,\infty}^{K}(\R^n) \hookrightarrow \hold[K-\varepsilon]
\end{align*}
for $\varepsilon>0$. 

A similar result holds true for $L>0$, $L \notin \N$ and condition \eqref{Atom3} by trivial means. If $L \in \N$, then $\|\cdot|\hold[L]\|$ can be replaced by $\|\cdot|C^L(\R^n)\|$, where the condition needs to be true for all $\psi \in C^L(\R^n)$. This follows from the fact, that both conditions imply the polynomial condition \eqref{Atom32}. Hence they are equivalent. 

It is not clear to the author whether $\|\cdot|\hold[L]\|$ can be replaced by $\|\cdot|B_{\infty,\infty}^{L}(\R^n)\|$ for $L \in \N$. Since $\hold[L] \subsetneq B_{\infty,\infty}^{L}(\R^n)$ (for example see \cite{Zyg45}) this would be a stronger condition.

\end{Remark}

\subsection{A local mean theorem as a corollary}
 In the proof of Theorem \ref{AtomicRepr} we assumed that the local means $k_j$ are arbitrarily often differentiable and fulfil as many moment conditions as we wanted. But if we take a look into the proof, we see that we did not use the specific structure $k_j=2^{jn}k(2^j\cdot)$. It is sufficient to know that there are constants $c$ and $C$ such that for all $j\in \N_0$ it holds $supp \ k_j \subset c\cdot Q_{j,0}$, that
\begin{align}
 \label{LocalMean2}
\|k_j(2^{-j}\cdot)|\hold[M]\| &\leq C \cdot  2^{jn}
\end{align}
with $M \geq L$ and that for every $\psi \in \hold[N]$ it holds 	
\begin{align}
\label{LocalMean3} \left| \int_{d \cdot Q_{j,0}} \psi(x) k_j(x) \ dx \right|\leq C \cdot  2^{-jN} \cdot \|\psi|\hold[N]\|
\end{align}
with $N \geq K$ because the atomic conditions \eqref{Atom2} and \eqref{Atom3} are ordered in $N$ and $M$, see Remark \ref{Ordered}. As before, condition \eqref{LocalMean3} can be strengthened by
\begin{align*}
 \int x^{\beta}k_j(x) \ dx = 0 \text{ for all } |\beta|< N.
\end{align*}
Through these considerations the idea arises how to prove a counterpart of Theorem \ref{AtomicRepr}  for the local mean characterization in \cite[Theorem 1.15]{Tri08} without further substantial efforts. This is done in the following corollary, including some technical issues concerning the definition of a dual pairing (see \cite[Remark 1.14]{Tri08}). It will be obvious that the original version of Theorem 1.15 in \cite{Tri08} is just some kind of modification of this corollary. 
\begin{Corollary}
\label{FolgLocal}
(i) Let $0<p\leq \infty$, $0<q\leq \infty$ and $s \in \mathbb{R}$. Let $M,N\in \R$, $M,N\geq 0$, $M>\sigma_p-s$ and $N>s$. Assume that for all $j \in \N_0$ it holds that $k_j \in \hold[M]$, $supp \ k_j \subset c\cdot Q_{j,0}$ and $k_j$ fulfils  \eqref{LocalMean2} and \eqref{LocalMean3}. Then there is a constant c such that
\begin{align*}
  \|f|\bspq\|_k:=\|k_0*f|L_p(\R^n)\|+\left(\sum_{j=1}^{\infty} 2^{jsq}\|k_j*f|L_p(\R^n)\|^q\right)^{\frac{1}{q}} \leq c \cdot \|f|\bspq\|
\end{align*}
(modified for $q=\infty$) for all $f \in \bspq$.

(ii) Let $0<p< \infty$, $0<q\leq \infty$ and $s \in \mathbb{R}$. Let $M,N\in \R$, $M,N\geq 0$, $M>\sigma_{p,q}-s$ and $N>s$. Assume that for all $j \in \N_0$ it holds that  $k_j \in \hold[M]$, $supp \ k_j \subset c\cdot Q_{j,0}$ and $k_j$ fulfils \eqref{LocalMean2} and \eqref{LocalMean3}. Then there is a constant c such that
\begin{align*}
  \|f|\fspq\|_k:&=\|k_0*f|L_p(\R^n)\|+\bigg\|\Big(\sum_{j=1}^{\infty} 2^{jsq} \left|(k_j*f)(\cdot)\right|^q \Big)^{\frac{1}{q}} \Big|L_p(\R^n)\bigg\| \\
 &\leq c \cdot \|f|\fspq\|
\end{align*}
(modified for $q=\infty$) for all $f \in \fspq$.
\end{Corollary}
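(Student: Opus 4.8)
The plan is to deduce the corollary from the atomic representation theorem, Theorem~\ref{AtomicRepr}, exploiting the remark that precedes the statement: the finite smoothness $M$ and the finite moment order $N$ of the family $\{k_j\}$ already suffice to let the $k_j$ take over the role of the local means in the second half of the proof of Theorem~\ref{AtomicRepr}. That proof never used the self-similar structure $k_j=2^{jn}k(2^j\cdot)$; it only used that $k_j$ has support in $c\cdot Q_{j,0}$, that a suitable rescaling of it is an $(s,p)_{M,N}$-atom located at $Q_{j,0}$ (the content of Lemma~\ref{Momenter}, rewritten with $k_j$ in place of $2^{jn}k(2^j\cdot)$), and that both atoms and local means then obey conditions of the form \eqref{Atom2} and \eqref{Atom3}.

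First I would fix $f\in\bspq$ (resp.\ $f\in\fspq$) and choose the atom parameters $K:=N$ and $L:=M$. Then $K>s$ and $L>\sigma_p-s$ (resp.\ $L>\sigma_{p,q}-s$), so Theorem~\ref{AtomicRepr} yields a representation $f=\sum_{\nu,m}\lambda_{\nu,m}a_{\nu,m}$ converging in $\Sc'(\R^n)$, with $(s,p)_{K,L}$-atoms $a_{\nu,m}$ located at $Q_{\nu,m}$ and $\|\lambda|b_{p,q}\|\lesssim\|f|\bspq\|$ (resp.\ $\|\lambda|f_{p,q}\|\lesssim\|f|\fspq\|$). Writing $k_j*f=\sum_{\nu,m}\lambda_{\nu,m}\,(k_j*a_{\nu,m})$, I would estimate the single integrals $\int k_j(x-y)a_{\nu,m}(y)\,dy$ exactly as in the proof of Theorem~\ref{AtomicRepr}, distinguishing the two cases: for $j\ge\nu$ one pairs the moment condition \eqref{LocalMean3} of $k_j$ against the H\"older bound \eqref{Atom2} of $a_{\nu,m}$ (after the substitution $y\mapsto x-y$ and the scaling identity $\|a_{\nu,m}|\hold[K]\|\lesssim 2^{\nu K}2^{-\nu(s-n/p)}$); for $j<\nu$ one pairs the moment condition \eqref{Atom3} of $a_{\nu,m}$ against the H\"older bound \eqref{LocalMean2} of $k_j$ (using $\|k_j|\hold[L]\|\lesssim 2^{j(L+n)}$). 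This produces the pointwise estimates
\begin{align*}
 2^{js}\,\big|(k_j*a_{\nu,m})(x)\big| &\lesssim 2^{-(j-\nu)(K-s)}\,\chi^{(p)}(c\cdot Q_{\nu,m})(x) &&\text{for } j\ge\nu,\\
 2^{js}\,\big|(k_j*a_{\nu,m})(x)\big| &\lesssim 2^{-(\nu-j)(L+s)}\,2^{\nu n/p}\,2^{-(\nu-j)n}\,\chi(c\cdot 2^{\nu-j}\cdot Q_{\nu,m})(x) &&\text{for } j<\nu,
\end{align*}
which are nothing but (13.37) and (13.41) combined with (13.42) of \cite{Tri97} (with $L$ in place of $L+1$).

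With these two estimates in hand, the $j$-, $\nu$- and $m$-summations leading to the $L_p$- resp.\ $L_p(\ell_q)$-bound are carried out verbatim as in \cite[Section~13]{Tri97} (or \cite[Proposition~4.7]{Sch10}); the conditions $K>s$ and $L>\sigma_p-s$ (resp.\ $L>\sigma_{p,q}-s$) are precisely what makes these sums converge. To pass from partial sums to the full series one invokes the unconditional convergence of the atomic series in $\Sc'(\R^n)$ (Lemma~\ref{HarmS'-KonvAtom}) together with a Fatou-type argument, exactly as at the end of the proof of Theorem~\ref{AtomicRepr}. One then arrives at $\|f|\bspq\|_k\lesssim\|\lambda|b_{p,q}\|\lesssim\|f|\bspq\|$, and likewise for $\fspq$, which is the assertion.

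The one point that genuinely needs care — and the reason I would spell out the reduction rather than merely cite Theorem~\ref{AtomicRepr} — is the meaning of $k_j*f$ itself: since $k_j$ is only $\hold[M]$-smooth with compact support, the pairing $\langle f,k_j(x-\cdot)\rangle$ defining $(k_j*f)(x)$ has to be read in the sense of the dual pairing discussed in \cite[Remark~1.14]{Tri08}, and one must check that it is compatible with the atomic decomposition, i.e.\ that $k_j*f=\sum_{\nu,m}\lambda_{\nu,m}\,k_j*a_{\nu,m}$ with the series converging absolutely and locally uniformly; this follows from the two displayed estimates and from $\lambda\in b_{p,q}$ (resp.\ $f_{p,q}$). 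Apart from this bookkeeping the argument is a transcription of the second half of the proof of Theorem~\ref{AtomicRepr}, so I do not expect any genuinely new difficulty.
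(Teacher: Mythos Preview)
Your proposal is correct and follows essentially the same route as the paper: represent $f$ by an optimal $(s,p)_{N,M}$-atomic decomposition via Theorem~\ref{AtomicRepr}, reuse the two pointwise estimates from the second half of that proof with the finitely smooth kernels $k_j$ in place of the Schwartz local means, and sum as in \cite{Tri97}. The only point the paper treats more explicitly than you do is the well-definedness of the pairing $(k_j*f)(x)$: it shows via \eqref{KonvB} and a density argument ($C^\infty$ dense in $\hold[M]$ for the $\hold[M-\varepsilon]$-norm) that the value of $\sum_{\nu,m}\lambda_{\nu,m}\int a_{\nu,m}(y)k_j(x-y)\,dy$ is independent of the chosen atomic decomposition, and then passes to the limit using the ordinary (measure-theoretic) Fatou lemma rather than the Fatou property of the function space --- but this is exactly the bookkeeping you flag in your last paragraph.
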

\begin{proof}
 There is nearly nothing left to prove because the crucial steps were done in the proof before: Let $f \in \bspq$ (analogously for $f \in \fspq$) be given. By Theorem \ref{AtomicRepr} we can represent $f \in \bspq$ by an ''optimal'' atomic decomposition 
\begin{align*}
 f=\sum_{\nu=0}^{\infty} \sum_{m \in \mathbb{Z}^n}\lambda_{\nu,m}a_{\nu,m}, 
\end{align*}
where $a_{\nu,m}$ is an $(s,p)_{N,M}$-atom located at $Q_{\nu,m}$ and $\|f|\bspq\| \sim \|\lambda|b_{p,q}\|$ (with constants independent of $f$).

But, by the second step of the proof of Theorem \ref{AtomicRepr} and the considerations in the succeeding remark we have

 \begin{align}
\label{AbschAtom}
 \Big\|\sum_{\nu\leq \nu_0} \sum_{|m|\leq m_0}\lambda_{\nu,m} a_{\nu,m} \big|\bspq \Big\|_k\leq C \cdot \|\lambda|b_{p,q}\| \sim \|f|\bspq\|
\end{align}
for all $\nu_0,m_0 \in \N_0$ with a constant $C$ independent of $\nu_0$ and $m_0$. 

Finally, we use a similar duality argument as in \cite[Remark 1.14]{Tri08} or \cite[Section 5.1.7]{Tri06} to justify the dual pairing of $k_j$ and $f$. Looking into the proof of Lemma \ref{HarmS'-KonvAtom}, we see that
\begin{align}
\label{KonvB}
 \sum_{\nu} \sum_{m}\left| \int_{\R^n}  \lambda_{\nu,m} a_{\nu,m}(x) \varphi(x) \ dx\right| \leq C' \cdot \|\varphi|\hold[M-\varepsilon]\|\cdot \|\lambda|b_{p,\infty}\|
\end{align}
for $\varphi \in \hold[M]$ with compact support, $M-\varepsilon\geq 0$ and $M-\varepsilon>\sigma_p-s$, where $C'$ depends on the support of $\varphi$. This includes the functions $k_j$ for $j \in \N_0$. Because of this absolut convergence the dual pairing of $f$ and $\varphi$ is given by
\begin{align*}
 \lim_{m_0,\nu_0 \rightarrow \infty} \, \sum_{\nu\leq \nu_0} \sum_{|m|\leq m_0 } \int_{\R^n}  \lambda_{\nu,m} a_{\nu,m}(x) \varphi(x) \ dx .
\end{align*}

Furthermore, for two different atomic decompositions of $f$ these limits are the same: By definition of a distribution $f \in {\cal S}'(\R^n)$ and Lemma \ref{HarmS'-KonvAtom} this is valid for $\varphi \in {\cal S}(\R^n)$. For arbitrary $\varphi \in \hold[M]$ with compact support this follows by \eqref{KonvB} and density arguments because $C^{\infty}(\R^n)$ is dense in $\hold[M]$ with respect to the norm of $\hold[M-\varepsilon]$. For instance, this can be seen using
\begin{align*}
 \hold[M] \hookrightarrow B_{\infty,\infty}^M(\R^n) \hookrightarrow B_{\infty,q}^{M-\varepsilon}(\R^n) \hookrightarrow  B_{\infty,\infty}^{M-\varepsilon}=\hold[M-\varepsilon]
\end{align*}
for $M-\varepsilon \notin \N_0$ and the fact that $C^{\infty}(\R^n)$ is dense in $\bspq$ if $q<\infty$. 

Hence we have 
\begin{align*}
 \sum_{\nu\leq \nu_0} \sum_{|m|\leq m_0}\lambda_{\nu,m} \left(a_{\nu,m} * k_j\right)(x) \rightarrow (f * k_j) (x) \text{ for } \nu_0, m_0 \rightarrow \infty 
\end{align*}
for all $x \in \R^n$. Using the standard Fatou lemma and \eqref{AbschAtom} we finally get 
\begin{align*}
 \|f|\bspq\|_k\leq C \cdot \|\lambda|b_{p,q}\| \sim \|f|\bspq\|.
\end{align*}
\end{proof}

\section{Pointwise multipliers}
\label{PointwiseSect}
Triebel proved in \cite[Section 4.2]{Tri92} the following assertion.
\begin{Theorem}
\label{pointwise}
 Let $s \in \mathbb{R}$ and $0<q\leq \infty$.

(i) Let $0<p\leq \infty$ and $\rho > \max(s,\sigma_p-s)$. Then there exists a positive number $c$ such that
\begin{align*}
 \|\varphi f|\bspq\| \leq c \|\varphi|\hold[\rho]\| \cdot \|f|\bspq\|
\end{align*}
for all $\varphi \in \hold[\rho]$ and all $f \in \bspq$.

(ii) Let $0<p<\infty$ and $\rho > \max(s,\sigma_{p,q}-s)$. Then there exists a positive number $c$ such that
\begin{align*}
 \|\varphi f|\fspq\| \leq c \|\varphi|\hold[\rho]\| \cdot \|f|\fspq\|
\end{align*}
for all $\varphi \in \hold[\rho]$ and all $f \in \fspq$.
\end{Theorem}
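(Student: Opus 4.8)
The plan is to deduce Theorem~\ref{pointwise} from the atomic representation theorem (Theorem~\ref{AtomicRepr}) by showing that multiplication by $\varphi$ sends atoms to atoms, up to the factor $\|\varphi|\hold[\rho]\|$. I will treat case (i); case (ii) is identical with $\sigma_p$ replaced by $\sigma_{p,q}$ throughout. Since $\rho>\max(s,\sigma_p-s)$, I first fix real numbers $K,L\ge 0$ with $s<K\le\rho$, $L>\sigma_p-s$ and $L\le\rho$; such a choice exists precisely because of the hypothesis on $\rho$. Given $f\in\bspq$, Theorem~\ref{AtomicRepr}(i) provides a near-optimal decomposition $f=\sum_{\nu,m}\lambda_{\nu,m}a_{\nu,m}$ into $(s,p)_{K,L}$-atoms located at the $Q_{\nu,m}$ with a common $d>1$ and $\|\lambda|b_{p,q}\|\lesssim\|f|\bspq\|$, and I would \emph{define} $\varphi f:=\sum_{\nu,m}\lambda_{\nu,m}\,\varphi a_{\nu,m}$.

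The core step is the claim that there is a constant $c>0$, independent of $\nu$ and $m$, such that $c^{-1}\|\varphi|\hold[\rho]\|^{-1}\,\varphi a_{\nu,m}$ is again an $(s,p)_{K,L}$-atom located at $Q_{\nu,m}$ with the same $d$. The support condition \eqref{Atom1} is clear. For the H\"older condition \eqref{Atom2} I write $(\varphi a_{\nu,m})(2^{-\nu}\cdot)=\varphi(2^{-\nu}\cdot)\cdot a_{\nu,m}(2^{-\nu}\cdot)$ and use three routine facts: $\hold[K]$ is a multiplication algebra; dilation by the factor $2^{-\nu}\le 1$ does not increase the $\hold[K]$-norm (derivatives of order $|\alpha|\le\Kint$ acquire a factor $2^{-\nu|\alpha|}\le 1$ and the top H\"older seminorm a factor $2^{-\nu K}\le 1$); and $\hold[\rho]\hookrightarrow\hold[K]$. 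Together with \eqref{Atom2} for $a_{\nu,m}$ this gives $\|(\varphi a_{\nu,m})(2^{-\nu}\cdot)|\hold[K]\|\lesssim\|\varphi|\hold[\rho]\|\,2^{-\nu(s-\frac{n}{p})}$. For the moment condition \eqref{Atom3}, given $\psi\in\hold[L]$ the product $\varphi\psi$ lies in $\hold[L]$ with $\|\varphi\psi|\hold[L]\|\lesssim\|\varphi|\hold[\rho]\|\,\|\psi|\hold[L]\|$, so \eqref{Atom3} for $a_{\nu,m}$ applied to the test function $\varphi\psi$ yields
\[
 \Big|\int_{d\cdot Q_{\nu,m}}\psi(x)\,\varphi(x)\,a_{\nu,m}(x)\,dx\Big|\lesssim 2^{-\nu\Kap}\,\|\varphi\psi|\hold[L]\|\lesssim \|\varphi|\hold[\rho]\|\,2^{-\nu\Kap}\,\|\psi|\hold[L]\|.
\]
This is exactly the place where the generalized atoms of Definition~\ref{Atoms} are indispensable: the classical vanishing-moment condition \eqref{Atom31} is destroyed by multiplication with $\varphi$, whereas \eqref{Atom3} is stable under multiplication by $\hold[L]$-functions.

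Granting the claim, $\varphi f=c\|\varphi|\hold[\rho]\|\sum_{\nu,m}\lambda_{\nu,m}\widetilde a_{\nu,m}$ with genuine $(s,p)_{K,L}$-atoms $\widetilde a_{\nu,m}$, so the ``if''-direction of Theorem~\ref{AtomicRepr}(i), valid since $K>s$ and $L>\sigma_p-s$, gives $\|\varphi f|\bspq\|\le c\|\varphi|\hold[\rho]\|\,\|\lambda|b_{p,q}\|\lesssim\|\varphi|\hold[\rho]\|\,\|f|\bspq\|$. The remaining point, and what I expect to be the only real obstacle, is bookkeeping rather than analysis: one must check that $\varphi f$ as defined is independent of the chosen atomic decomposition of $f$ and coincides with the pointwise product whenever $f$ is regular. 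The series converges in $\Sc'(\R^n)$ by Lemma~\ref{HarmS'-KonvAtom} (applied to the $\widetilde a_{\nu,m}$, using $L>\sigma_p-s$); independence and consistency can then be obtained as in \cite[Section~4.2]{Tri92}, or alternatively by first proving the estimate for $\varphi\in C^\infty(\R^n)$, where $\varphi f$ is unambiguous, and passing to general $\varphi\in\hold[\rho]$ by mollification together with the Fatou property of $\bspq$ (Proposition~\ref{Fatou}).
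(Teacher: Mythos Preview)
Your proposal is correct and follows essentially the same route as the paper: the core step is exactly Lemma~\ref{ProdAtom} (products of atoms with $\varphi$ are atoms, using Lemma~\ref{helpHoelder} for \eqref{Atom2} and the stability of \eqref{Atom3} under multiplication for the moment condition), combined with Theorem~\ref{AtomicRepr}. Your handling of the well-definedness issue also matches Remark~\ref{ProdDef}: first treat $\varphi\in C^\infty(\R^n)$ where the product is unambiguous and invoke the Fatou property, then pass to general $\varphi\in\hold[\rho]$ by approximation---the paper does this last step via density of $C^\infty$ in $\hold[\rho]$ with respect to the $\hold[\rho-\varepsilon]$-norm rather than a second use of Fatou, but the difference is cosmetic.
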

He excluded the cases $\rho \in \N$. This is not necessary in our considerations.

The very first idea to prove this result is to take an atomic decomposition of $f$, to multiply it by $\varphi$ and to prove that the resulting sum is again a sum of atoms. Hence one has to check whether a product of an $(s,p)_{K,L}$-atom and a function $\varphi$ is still an $(s,p)_{K,L}$-atom. 

But there was a problem: Moment conditions like $\eqref{Atom31}$ are (in general) destroyed by multiplication with $\varphi$. So the atomic approach in \cite{Tri92} only worked when no moment conditions were required, hence if $s>\sigma_p$ resp.\ $s>\sigma_{p,q}$, and the full generality of Theorem \ref{pointwise} had to be obtained by an approach via local means. Looking at the more general moment condition $\eqref{Atom3}$ instead the situation when multiplying by $\varphi$ is now different. 

Furthermore, the atomic approach only worked for $\varphi \in C^{k}(\R^n)$ with $k \in \N$ and $k>s$ having in mind the derivative condition \eqref{Atom21}. Now we are able to give a new proof based on our more general version of atoms, using the H\"older condition \eqref{Atom2}.

We start with a first standard analytical observation.
\begin{Lemma}
\label{helpHoelder}
 Let $s>0$. There exists a constant $c>0$ such that for all $f,g \in \hold$ the product $f\cdot g$ belongs to $\hold$ and it holds
\begin{align*}
  \|f\cdot g|\hold\| \leq  c \cdot \|f|\hold\| \cdot  \|g|\hold\|. 
\end{align*}
The same result holds true for $L_{\infty}(\R^n)$ instead of $\hold$.
\end{Lemma}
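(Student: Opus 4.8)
The plan is to prove the multiplicativity of H\"older spaces $\hold[s]$ for $s>0$ directly from the definition, splitting into the integer-order part and the Lipschitz part. Write $s=\sint+\srest$ as in Definition \ref{Hoelder}. First I would recall that $\hold[s]$ carries the norm $\|f|\hold[s]\|=\|f|C^{\sint}(\R^n)\|+\sum_{|\alpha|=\sint}\|D^\alpha f|\lip[\srest]\|$, and that for $|\alpha|\le\sint$ the Leibniz rule gives $D^\alpha(fg)=\sum_{\beta\le\alpha}\binom{\alpha}{\beta}D^\beta f\,D^{\alpha-\beta}g$, so each summand is a product of a derivative of $f$ of order $\le\sint$ and a derivative of $g$ of order $\le\sint$; since both $C^{\sint}$-norms are finite, every such product is bounded by $\|f|C^{\sint}(\R^n)\|\cdot\|g|C^{\sint}(\R^n)\|$ up to the binomial constant. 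This already handles the $C^{\sint}(\R^n)$ part of the norm of $fg$ and shows $fg\in C^{\sint}(\R^n)$.

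Next I would estimate the seminorm $\|D^\alpha(fg)|\lip[\srest]\|$ for $|\alpha|=\sint$. Again by Leibniz it suffices to bound, for each pair $\beta\le\alpha$, the quantity $\|(D^\beta f)(D^{\alpha-\beta}g)|\lip[\srest]\|$. Here the standard trick is the identity
\begin{align*}
 u(x)v(x)-u(y)v(y)=\big(u(x)-u(y)\big)v(x)+u(y)\big(v(x)-v(y)\big),
\end{align*}
which yields $\|uv|\lip[\srest]\|\le \|u|\lip[\srest]\|\,\|v|L_\infty\|+\|u|L_\infty\|\,\|v|\lip[\srest]\|$. I now apply this with $u=D^\beta f$, $v=D^{\alpha-\beta}g$. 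The terms with $|\beta|=\sint$ (so $\alpha-\beta=0$) and $|\beta|=0$ are immediate: $\|D^\alpha f|\lip[\srest]\|\,\|g|L_\infty\|$ and $\|f|L_\infty\|\,\|D^\alpha g|\lip[\srest]\|$ are both $\lesssim\|f|\hold[s]\|\|g|\hold[s]\|$. For the mixed terms with $0<|\beta|<\sint$, the derivatives $D^\beta f$ and $D^{\alpha-\beta}g$ have order strictly less than $\sint$, hence are not merely bounded but Lipschitz (they are $C^1$ with bounded derivative, since $|\beta|+1\le\sint$ and $|\alpha-\beta|+1\le\sint$); in particular a Lipschitz function on $\R^n$ satisfies $|w(x)-w(y)|\le C|x-y|$, and combined with boundedness this gives $|w(x)-w(y)|\le C'|x-y|^{\srest}$ for $|x-y|\le1$ while for $|x-y|\ge1$ one uses $|w(x)-w(y)|\le 2\|w|L_\infty\|\le 2\|w|L_\infty\||x-y|^{\srest}$. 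So $\|D^\beta f|\lip[\srest]\|\lesssim\|f|C^{\sint}(\R^n)\|$ and similarly for $g$, and the product estimate above closes the bound.

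Collecting the two parts and absorbing all combinatorial constants into a single $c=c(n,s)$ gives $\|fg|\hold[s]\|\le c\,\|f|\hold[s]\|\,\|g|\hold[s]\|$. Finally, the $L_\infty(\R^n)$ statement is trivial since $\|fg|L_\infty(\R^n)\|\le\|f|L_\infty(\R^n)\|\,\|g|L_\infty(\R^n)\|$ with constant $1$. I do not expect a genuine obstacle here; the only mildly delicate point is the treatment of the mixed Leibniz terms, where one must observe that a lower-order derivative is globally Lipschitz and therefore controls a $\srest$-H\"older seminorm (with the split into $|x-y|\le1$ and $|x-y|>1$), rather than trying to bound a $\srest$-seminorm of a product of two functions that are individually only bounded. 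An alternative, shorter route would be to invoke Remark \ref{HoldB} to identify $\hold[s]=B_{\infty,\infty}^s(\R^n)$ when $s\notin\N$ and use a known paramultiplication estimate, but that fails exactly at $s\in\N$, so the elementary argument above is preferable for full generality.
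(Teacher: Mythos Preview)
Your proof is correct and follows exactly the route the paper indicates: the paper's proof consists of the single sentence ``This can be proven using standard arguments, in particular Leibniz' formula,'' and you have simply supplied those standard arguments in full. The only content beyond what the paper states explicitly is your careful handling of the mixed Leibniz terms via the Lipschitz-to-H\"older interpolation, which is precisely the detail one would expect to fill in.
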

\begin{proof}
This can be proven using standard arguments, in particular Leibniz' formula.
\end{proof}
Now we are ready to prove Theorem \ref{pointwise}. This is done by the following lemma together with the atomic representation Theorem \ref{AtomicRepr} using the mentioned technique of atomic decompositions. For some further technicalities see the upcoming Remark \ref{ProdDef} or \cite[4.2.2, Remark 1] {Tri92}. This covers also the well-definedness of the product.

\begin{Lemma}
\label{ProdAtom}
 There exists a constant $c$ with the following property: For all $\nu \in \N_0$, $m \in \Z$, all $(s,p)_{K,L}$-atoms $a_{\nu,m}$ with support in $d \cdot Q_{\nu,m}$ and all $\varphi \in \hold[\rho]$ with $\rho \geq \max(K,L)$ the product
\begin{align*}
 c \cdot \|\varphi|\hold[\rho]\|^{-1} \cdot \varphi \cdot a_{\nu,m} 
\end{align*}
is an $(s,p)_{K,L}$-atom with support in $d\cdot Q_{\nu,m}$.
\end{Lemma}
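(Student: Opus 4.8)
The plan is to verify, for a suitable absolute constant $c$, each of the three defining properties of an $(s,p)_{K,L}$-atom from Definition \ref{Atoms} for the normalized product $g_{\nu,m}:=c\,\|\varphi|\hold[\rho]\|^{-1}\,\varphi\cdot a_{\nu,m}$; the recurring tool will be the multiplicativity of the Hölder (and $L_\infty$) norms provided by Lemma \ref{helpHoelder}. The support condition \eqref{Atom1} needs no work: multiplication by $\varphi$ cannot enlarge the support, so $g_{\nu,m}$ is still supported in $d\cdot Q_{\nu,m}$, with the same $d$.

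For the derivative condition \eqref{Atom2} (the case $K=0$ being handled identically with $L_\infty(\R^n)$ in place of $\hold[K]$) I would dilate and split, writing $(\varphi\cdot a_{\nu,m})(2^{-\nu}\cdot)=\varphi(2^{-\nu}\cdot)\cdot a_{\nu,m}(2^{-\nu}\cdot)$; Lemma \ref{helpHoelder} then bounds its $\hold[K]$-norm by a constant times $\|\varphi(2^{-\nu}\cdot)|\hold[K]\|\cdot\|a_{\nu,m}(2^{-\nu}\cdot)|\hold[K]\|$. The second factor is $\le C\,2^{-\nu(s-\frac{n}{p})}$ by \eqref{Atom2} for $a_{\nu,m}$. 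For the first factor, since $\nu\ge 0$ every dilation factor occurring when differentiating or forming difference quotients of $\varphi(2^{-\nu}\cdot)$ is a power $2^{-\nu(\cdot)}$ with nonnegative exponent, hence at most $1$; therefore $\|\varphi(2^{-\nu}\cdot)|\hold[K]\|\le\|\varphi|\hold[K]\|\lesssim\|\varphi|\hold[\rho]\|$, the final step being the standard embedding $\hold[\rho]\hookrightarrow\hold[K]$ valid because $\rho\ge K$. Multiplying out and choosing $c$ small enough to absorb the constants yields \eqref{Atom2} for $g_{\nu,m}$.

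The moment condition \eqref{Atom3} is the step that genuinely exploits the \emph{generalized} atoms, and it is short. For $L=0$ it follows from \eqref{Atom1} and \eqref{Atom2} (see the remark after Definition \ref{Atoms}); for $L>0$ and any $\psi\in\hold[L]$ one simply absorbs $\varphi$ into the test function. Indeed $\psi\varphi\in\hold[L]$ by Lemma \ref{helpHoelder} (here $\varphi\in\hold[\rho]\hookrightarrow\hold[L]$ because $\rho\ge L$), with $\|\psi\varphi|\hold[L]\|\lesssim\|\psi|\hold[L]\|\cdot\|\varphi|\hold[\rho]\|$, so applying the moment condition \eqref{Atom3} of $a_{\nu,m}$ to the admissible test function $\psi\varphi$ gives
\[
\Big|\int_{d\cdot Q_{\nu,m}}\psi(x)\,\varphi(x)\,a_{\nu,m}(x)\,dx\Big| \le C\,2^{-\nu\Kap}\,\|\psi\varphi|\hold[L]\| \lesssim C\,2^{-\nu\Kap}\,\|\varphi|\hold[\rho]\|\,\|\psi|\hold[L]\|,
\]
which is exactly \eqref{Atom3} for $g_{\nu,m}$. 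Taking $c$ to be the reciprocal of the largest of the finitely many absolute constants that appeared — all coming from Lemma \ref{helpHoelder} and the Hölder embeddings, hence independent of $\nu$, $m$, $\varphi$ and of the particular atom — makes $g_{\nu,m}$ an $(s,p)_{K,L}$-atom with the same $d$ and $C$ as $a_{\nu,m}$.

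If there is a main obstacle it is conceptual rather than computational: the lemma is really the observation that the generalized moment condition \eqref{Atom3}, unlike the classical vanishing-moment condition \eqref{Atom31}, survives multiplication, and this becomes transparent as soon as one moves $\varphi$ onto the test function. The only points needing a little care are the uniformity of all constants in $\nu$ and $m$, the degenerate cases $K=0$ and $L=0$, and the well-definedness of the pointwise product $\varphi\cdot a_{\nu,m}$, for which I would refer to Remark \ref{ProdDef}.
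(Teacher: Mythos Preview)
Your proof is correct and follows essentially the same approach as the paper: apply Lemma \ref{helpHoelder} after dilation to obtain \eqref{Atom2}, and absorb $\varphi$ into the test function $\psi$ to preserve the generalized moment condition \eqref{Atom3}. The paper's version is terser (it omits the support condition and the degenerate cases $K=0$, $L=0$ that you spell out), but the argument is the same.
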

\begin{proof}
Regarding the H\"older conditions \eqref{Atom2} Lemma \ref{helpHoelder} gives (for $K>0$)
\begin{align*}
 \|(\varphi \cdot a)(2^{-\nu}\cdot)|\hold[K]\| &\leq  c \cdot \|\varphi(2^{-\nu}\cdot)|\hold[K]\| \cdot \|a(2^{-\nu}\cdot)|\hold[K]\|  \\
&\leq c' \cdot \|\varphi|\hold[K]\| \cdot 2^{-\nu(s-\frac{n}{p})}. 
\end{align*}
Now we come to the preservation of the general moment conditions \eqref{Atom3}. As before we can assume $L>0$. By our assumptions there exists a constant $C>0$ such that for every $\psi \in \hold[L]$ it holds 	
\begin{align*}
\left| \int_{d \cdot Q_{\nu,m}} \psi(x) a(x) \ dx \right|\leq C \cdot  2^{-\nu\Kap} \|\psi|\hold[L]\|.
\end{align*}
Using this inequality now for $\psi \cdot \varphi$ instead of $\psi$ together with Lemma \ref{helpHoelder} it follows
\begin{align*}
\left| \int_{d \cdot Q_{\nu,m}} \psi(x) \big(\varphi(x)\cdot a(x)\big) \ dx \right|&=\left| \int_{d \cdot Q_{\nu,m}} \big(\psi(x) \cdot \varphi(x)\big)a(x) \ dx \right| \\
&\leq C \cdot 2^{-\nu\Kap} \|\psi\cdot\varphi|\hold[L]\| \\
&\leq C' \cdot 2^{-\nu\Kap} \|\psi|\hold[L]\| \cdot \|\varphi|\hold[L]\|.
\end{align*}
Hence our lemma is shown.
\end{proof}
\begin{Remark}
 This is the more general version of part 1 of Lemma 1 in \cite{Skr98} using now the wider atomic approach from Definition \ref{Atoms} which yields a stronger result than in \cite{Skr98}.  
\end{Remark}

\begin{Remark}
 \label{ProdDef}
As at the end of Corollary \ref{FolgLocal} we have to deal with some technicalities. We concentrate on the $\bspq$-case, the $\fspq$-case is nearly the same. In principle, Lemma \ref{ProdAtom} together with Lemma \ref{HarmS'-KonvAtom} show that
\begin{align}
\label{AtomKonv}
 \sum_{\nu} \sum_{m}\lambda_{\nu,m} (a_{\nu,m} \cdot \varphi) 
\end{align}
converges unconditionally in ${\cal S}'(\R^n)$ where
\begin{align*}
 f=\sum_{\nu} \sum_{m}\lambda_{\nu,m} a_{\nu,m} \text{ in } {\cal S}'(\R^n)
\end{align*}
and the limit belongs to $\bspq$ if $f$ belongs to $\bspq$. 

To define the product of $\varphi$ and $f$ as this limit, we have to show that the limit does not depend on the atomic decomposition we chose for $f$. 

Hence we are pretty much in the same situation as at the end of Corollary \ref{FolgLocal}: Let at first be $\varphi \in C^{\infty}(\R^n)$. Then the multiplication with $\varphi$ is a continuous operator mapping  ${\cal S}'(\R^n)$ to ${\cal S}'(\R^n)$. So \eqref{AtomKonv} converges to $\varphi \cdot f$ for all choices of atomic decompositions of $f$. Using Lemma \ref{ProdAtom} and the Fatou property \ref{Fatou} of $\bspq$ we get
\begin{align*}
 \|\varphi \cdot f|\bspq\| \leq c \cdot \|\varphi|\hold[\rho]\| \cdot \|f|\bspq\|
\end{align*}
for all $f \in \bspq$ and $\varphi \in C^{\infty}(\R^n)$. 

For arbitrary $\varphi \in \hold[\rho]$ we use a density argument similar to that at the end of Corollary \ref{FolgLocal}. We know 
\begin{align*}
 \|\varphi^* \cdot f|\bspq\| \leq c \cdot \|\varphi^*|\hold[\rho-\varepsilon]\| \cdot \|f|\bspq\|
\end{align*}
for $\varphi^* \in C^{\infty}(\R^n)$, $\rho$ as in Lemma \ref{ProdAtom} and $\varepsilon$ small enough. Now using the density of $C^{\infty}(\R^n)$ in $\hold[\rho]$ with respect to the norm of $\hold[\rho-\varepsilon]$ the uniqueness of the product and 
\begin{align*}
 \|\varphi \cdot f|\bspq\| \leq c \cdot \|\varphi|\hold[\rho-\varepsilon]\| \cdot \|f|\bspq\| \leq  c \cdot \|\varphi|\hold[\rho]\| \cdot \|f|\bspq\|
\end{align*}
follows.

\end{Remark}

\begin{Remark}
 Since   
\begin{align*}
 \hold[L] \hookrightarrow B_{\infty,\infty}^{L}(\R^n) \hookrightarrow \hold[L-\varepsilon]
\end{align*}
for $L-\varepsilon \geq 0$, we can replace $\|\varphi|\hold[\rho]\|$ by $\|\varphi|B_{\infty,\infty}^{\rho}(\R^n)\|$, even by $\|\varphi|B_{\infty,q}^{\rho}(\R^n)\|$ for arbitrary $0<q \leq \infty$, in Lemma \ref{pointwise}.

The condition $\rho > \max(s,\sigma_{p,q}-s)$ for the $\fspq$-spaces in Theorem \ref{pointwise} can be replaced by $\rho > \max(s,\sigma_{p}-s)$. This is a matter of complex interpolation, see the proof of the corollary in Section 4.2.2 of \cite{Tri92}.

\end{Remark}

\begin{Remark} 
Our Theorem \ref{pointwise} is a special case of Theorem 4.7.1 in \cite{RS96}: By Remark \ref{HoldB} it holds $\hold[\rho]=B_{\infty,\infty}^{\rho}(\R^n)$ for $\rho>0$ and $\rho \notin \N$. So, let $f \in \bspq$ or $f \in \fspq$ as well as $\varphi \in \hold[\rho]$ with $\rho>s$. Then $\varphi \in B_{\infty,\infty}^{\rho'}(\R^n)$ for $s<\rho'<\rho$. By Theorem 4.7.1 
of \cite{RS96} it holds 
\begin{align*}
 \bspq \cdot B_{\infty,\infty}^{\rho'}(\R^n) \hookrightarrow \bspq \quad \text{resp.}\quad \fspq \cdot B_{\infty,\infty}^{\rho'}(\R^n) \hookrightarrow \fspq
\end{align*}
if
\begin{align*}
 \rho'>s \quad \text{and} \quad s+\rho'> \sigma_p \quad \Leftrightarrow \quad \rho'>s \quad \text{and} \quad \rho'>\sigma_p-s.
\end{align*}
In case of $\bspq$ these are the same conditions as in Theorem \ref{pointwise} - in case of $\fspq$ these are even better (no dependency on $q$).

It was not the idea to give such a detailed and comprehensive treatment as in the book \cite{RS96} by Runst and Sickel but to show an application of the more general atomic decompositions where the proof is easy to follow (see Triebel \cite[Section 4.1]{Tri92}). 
\end{Remark}

\section{Diffeomorphisms}
We want to study the behaviour of the mapping
\begin{align*}
 D_{\varphi}: f \mapsto f(\varphi(\cdot)), 
\end{align*}
where $f$ is an element of the function space $\bspq$ resp.\ $\fspq$ and $\varphi: \R^n \rightarrow \R^n$ is a suitably smooth map. 

One would like to deal with this problem analogously to the pointwise multiplier problem in Section \ref{PointwiseSect}. Hence we start with an atomic decomposition of $f$ and compose with $\varphi$. Then we are confronted with functions of the form $a_{\nu,m}\circ \varphi$ originating from the atoms $a_{\nu,m}$. This was the idea of \cite[Section 4.3.1]{Tri92}. But in general, moment conditions of type \eqref{Atom31} are destroyed by this operator. So $s>\sigma_p$ resp.\ $s>\sigma_{p,q}$ was necessary. As we will see, general moment conditions like \eqref{Atom3} behave more friendly under diffeomorphisms. 
 
Furthermore, we are confronted with more difficulties than in Section \ref{PointwiseSect} because the support of an atom changes remarkably. In particular, after composing with $\varphi$ two or more atoms can be associated with the same cube $Q_{\nu,m}$ which is not possible in the atomic representation Theorem \ref{AtomicRepr}. This has not been considered in detail in \cite[Section 4.3.1]{Tri92} while there is some work done in \cite[Lemma 3]{Skr98}.

The special case of bi-Lipschitzian maps, also called Lipschitz diffeomorphisms, is treated in \cite[Section 4.3]{Tri02}. The main theorem there is used to obtain results for characteristic functions of Lipschitz domains as pointwise multipliers in $\bspq$ and $\fspq$.

\subsection{Introduction of $\rho$-diffeomorphisms and basic properties}

\begin{Definition}
\label{LipDef}
Let $\rho\geq 1$. 

(i) Let $\rho=1$. We say that the map $\varphi: \R^n \rightarrow \R^n$ is a $\rho$-diffeomorphism if $\varphi$ is a bi-Lipschitzian map, i.\,e.\ that there are constants $c_1,c_2>0$ such that
\begin{align}
 \label{biLip}
 c_1\leq \frac{|\varphi(x)-\varphi(y)|}{|x-y|} \leq c_2. 
\end{align}
for all $x,y \in \R^n$ with $0<|x-y|\leq 1$. 

(ii) Let $\rho>1$. We say that the one-to-one map $\varphi: \R^n \rightarrow \R^n$ is a $\rho$-diffeomorphism if the components $\varphi_i$ of $\varphi(x)=(\varphi_1(x),\ldots,\varphi_n(x))$ have classical derivatives up to order $\rint$ with $\frac{\partial \varphi_i}{\partial x_j} \ \in \hold[\rho-1]$ for all $i,j\in \{1,\ldots,n\}$ and if $|\det J(\varphi)(x)| \geq c$ for some $c>0$ and all $x \in \R^n$. Here $J(\varphi)(x)$ stands for the Jacobian matrix of $\varphi$ at the point $x \in \R^n$. 
\end{Definition}
% \begin{Remark}
%  Obviously, it suffices to have condition \eqref{biLip} for all $x,y \in \R^n$ with $0<|x-y|<c$ for a constant $c>0$. 
% \end{Remark}
\begin{Remark}
 It does not matter, whether we assume $\eqref{biLip}$ for all $x,y \in \R^n$ with $x\neq y$ or for all $x,y \in \R^n$ with $0<|x-y|<c$ for a constant $c>0$. This is obvious for the upper bound. For the lower bound we have to use the upper bound of the bi-Lipschitzian property of the inverse $\varphi^{-1}$ of $\varphi$. Its existence independent of the given exact definition of a bi-Lipschitzian map is shown in the following lemma.
\end{Remark}

\begin{Lemma}
\label{DiffRem}
 Let $\rho\geq 1$.

(i) If $\varphi$ is a $1$-diffeomorphism, then $\varphi$ is bijective and $\varphi^{-1}$ is a $1$-diffeomorphism, too.

(ii) Let $\rho>1$. If $\varphi$ is a $\rho$-diffeomorphism, then its inverse $\varphi^{-1}$ is a $\rho$-dif\-feo\-mor\-phism as well. 

(iii) If $\varphi$ is a $\rho$-diffeomorphism, then $\varphi$ is a $\rho'$-diffeomorphism for $1\leq \rho' \leq \rho$. Hence $\varphi$ is a bi-Lipschitzian map.
\end{Lemma}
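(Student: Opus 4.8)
The plan is to prove the three parts in the order (iii), then (i), then (ii), since (i) uses the bi‑Lipschitz property supplied by (iii) and (ii) uses both. For (iii) the cases $\rho'=\rho$ and $1<\rho'<\rho$ are soft: the latter is just the embedding $\hold[\rho-1]\hookrightarrow\hold[\rho'-1]$ of H\"older spaces (split into $|x-y|\le 1$ and $|x-y|>1$) together with $\rint\ge\lfloor\rho'\rfloor$, the determinant condition being untouched. The substance is $\rho'=1$. The upper bi‑Lipschitz constant is immediate, $c_2=\sup_x\|J(\varphi)(x)\|<\infty$, since the entries of $J(\varphi)$ lie in $\hold[\rho-1]\subset L_\infty(\R^n)$, by integrating $\nabla\varphi_i$ along segments. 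For the lower bound I would observe that $|\det J(\varphi)|\ge c>0$ and $\|J(\varphi)\|\le c_2$ force, through the singular values of $J(\varphi)(x)$, a uniform bound $\|J(\varphi)(x)^{-1}\|\le c_2^{\,n-1}/c=:K$; then $\varphi$ is a $C^1$ local diffeomorphism of $\R^n$ with uniformly bounded inverse differential, a Hadamard--L\'evy type global inversion theorem makes $\varphi$ a bijection of $\R^n$, and following the straight segment from $\varphi(x)$ to $\varphi(y)$ pulled back through $J(\varphi)^{-1}$ (an ODE with bounded right‑hand side) gives $|\varphi(x)-\varphi(y)|\ge K^{-1}|x-y|$ for all $x,y$, so $c_1=K^{-1}$ works. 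Alternatively, to stay self‑contained, uniform continuity of $J(\varphi)$ gives a lower Lipschitz bound on balls of some fixed radius, after which the degree/covering argument of part (i) applies.

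For (i), let $\varphi$ be a $1$‑diffeomorphism with constants $c_1,c_2$. From the lower bound $\varphi$ is injective on every set of diameter $\le 1$, hence on each ball of radius $\tfrac12$, and with invariance of domain $\varphi$ is open. I would sharpen this by a degree computation: for $0<r\le\tfrac12$, $\varphi(\partial B_r(x_0))$ avoids $B_{c_1r}(\varphi(x_0))$ and $\varphi$ is injective on $\overline{B_r(x_0)}$, so $\deg(\varphi,B_r(x_0),y)=\pm1$ for every $y\in B_{c_1r}(\varphi(x_0))$, i.e.
\begin{align*}
\varphi\bigl(B_r(x_0)\bigr)\supseteq B_{c_1r}(\varphi(x_0)),\qquad 0<r\le\tfrac12 .
\end{align*}
This uniform openness makes $\varphi(\R^n)$ closed as well as open, hence equal to $\R^n$; and using local injectivity together with the displayed inclusion one checks that every ball $B_{c_1/4}(y)$ is evenly covered, so $\varphi$ is a covering map of the simply connected, connected space $\R^n$, therefore a homeomorphism -- in particular injective. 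That $\varphi^{-1}$ is again a $1$‑diffeomorphism then follows by elementary estimates: $|u-v|<c_1/2$ forces $|\varphi^{-1}(u)-\varphi^{-1}(v)|<1$ by the inclusion, and on such pairs the bi‑Lipschitz bounds for $\varphi$ transfer to $\varphi^{-1}$ with constants $c_1^{-1}$ and $c_2^{-1}$; the general range $0<|u-v|\le 1$ follows by subdividing $[u,v]$ into short pieces (upper bound) and from the global inequality $|u-v|\le c_2|\varphi^{-1}(u)-\varphi^{-1}(v)|$ (lower bound).

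For (ii), by (iii) $\varphi$ is bi‑Lipschitzian, so by (i) it is a bijection of $\R^n$ with Lipschitz inverse; since $\varphi\in C^1$ and $\det J(\varphi)$ never vanishes, the classical inverse function theorem gives $\varphi^{-1}\in C^1$ with $J(\varphi^{-1})(y)=[J(\varphi)]^{-1}(\varphi^{-1}(y))$, whence $|\det J(\varphi^{-1})|=|\det J(\varphi)\circ\varphi^{-1}|^{-1}\ge(\sup|\det J(\varphi)|)^{-1}>0$. It remains to place the entries of $J(\varphi^{-1})$ in $\hold[\rho-1]$. Writing $[J(\varphi)]^{-1}=(\det J(\varphi))^{-1}\operatorname{adj}J(\varphi)$, its entries are polynomials in the entries of $J(\varphi)\in\hold[\rho-1]$ divided by the $\hold[\rho-1]$‑function $\det J(\varphi)$, which is bounded away from $0$; by Lemma \ref{helpHoelder} and the routine fact that inverting a $\hold[\rho-1]$‑function valued in $[c,\infty)$ stays in $\hold[\rho-1]$, the entries of $[J(\varphi)]^{-1}$ are $\hold[\rho-1]$ in $x$. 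For $1<\rho\le 2$ composition with the Lipschitz map $\varphi^{-1}$ preserves this (H\"older exponent $\le1$); for $\rho>2$ I would induct on $\rint$, the case just treated showing $\varphi^{-1}$ is a $2$‑diffeomorphism and the inductive step applying the higher chain rule (the Fa\`a di Bruno formula) to $[J(\varphi)]^{-1}\circ\varphi^{-1}$, together with Lemma \ref{helpHoelder} and H\"older embeddings, to put the derivatives of order $\lfloor\rho-1\rfloor$ into $\hold[\{\rho-1\}]$. Hence $\varphi^{-1}$ is a $\rho$‑diffeomorphism.

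The hard part is the global bijectivity underlying (i), and equivalently the lower bi‑Lipschitz bound in (iii): one must rule out that a uniformly locally bi‑Lipschitz -- or uniformly locally invertible $C^1$ -- self‑map of $\R^n$ folds over itself, which is exactly where degree theory and the simple connectedness of $\R^n$ (or a Hadamard--L\'evy type theorem) are genuinely needed. Everything else -- the H\"older embeddings, the transfer of bi‑Lipschitz constants to the inverse, and the regularity bookkeeping for $\varphi^{-1}$ when $\rho>2$ -- is routine, the last being the most computational and resting on Lemma \ref{helpHoelder} and an induction.
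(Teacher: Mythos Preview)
Your proof is correct. The overall logical structure---establishing (iii), then (i), then bootstrapping (ii) by induction through Lemma~\ref{HoelderDiff}---matches the paper's, and your treatment of (ii) is essentially identical to the paper's (the paper phrases the induction as ``$\varphi^{-1}$ is an $l$-diffeomorphism $\Rightarrow$ $\min(l+1,\rho)$-diffeomorphism'', using that $A\mapsto A^{-1}$ is smooth together with Lemma~\ref{HoelderDiff}).

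The genuine differences are in (i) and in the $\rho'=1$ case of (iii). For (i) the paper argues more briefly: invariance of domain makes $\varphi$ open, and a direct Cauchy-sequence argument using \eqref{biLip} makes $\varphi(\R^n)$ closed, hence $\varphi$ is surjective; injectivity is taken for granted. Your degree/covering-space argument is heavier machinery but it is also more careful about the local-to-global passage: since Definition~\ref{LipDef}(i) only postulates \eqref{biLip} for $0<|x-y|\le 1$, global injectivity and the convergence of preimage sequences are not immediate, and your uniform openness estimate $\varphi(B_r(x_0))\supset B_{c_1r}(\varphi(x_0))$ plus simple connectedness of $\R^n$ closes this gap cleanly. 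For the lower bi-Lipschitz bound in (iii) the paper exploits that a $\rho$-diffeomorphism with $\rho>1$ is one-to-one by definition, so $\varphi^{-1}$ exists on $\varphi(\R^n)$ with bounded Jacobian via the adjugate formula, and the mean value theorem gives the bound; your Hadamard--L\'evy route (or your alternative via uniform continuity of $J(\varphi)$, which is really the same estimate in integral form) yields the same conclusion without needing the segment $[\varphi(x),\varphi(y)]$ to lie in $\varphi(\R^n)$. In short: the paper's proof is shorter and leans on the built-in injectivity, while yours trades brevity for a self-contained treatment of the global topology.
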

\begin{proof}
To prove part (i) we use Brouwer's invariance of domain theorem (see \cite{Bro12}): Let $U$ be an open set in $\R^n$. Since $\varphi: \R^n \rightarrow \R^n$ is continuous and injective, the image $\varphi(U)$ of $U$ is also an open set by this theorem.

On the other hand, if $U$ is a closed set, then also $\varphi(U)$ is closed: If $\varphi(x_n) \rightarrow y$ with $x_n \in U$, then $x_n$ converges to some $x \in U$ by \eqref{biLip} and hence $\varphi(x_n) \rightarrow \varphi(x)=y$. Thus $\varphi$ maps $\R^n$ to $\R^n$. The inverse $\varphi^{-1}$ is automatically a bi-Lipschitzian map, see \eqref{biLip}.

The proof of observation (iii) for $\rho'>1$ is trivial. Hence, we have to show that every $\rho$-diffeomorphism is a bi-Lipschitzian map for $\rho>1$. The estimate
\begin{align*}
 \frac{|\varphi(x)-\varphi(y)|}{|x-y|} \leq c_2
\end{align*}
follows from the fact that the derivatives $\frac{\partial \varphi_i}{\partial x_j}$ are bounded for all $i,j\in \{1,\ldots,n\}$. The formula
\begin{align}
\label{Det}
 J(\varphi^{-1})(\varphi(x))=\left(J(\varphi)(x)\right)^{-1}
\end{align}
and $|\det J(\varphi)(x)| \geq c$ together show that the derivatives of the inverse $\frac{\partial (\varphi^{-1})_i}{\partial x_j}$ are bounded for all $i,j\in \{1,\ldots,n\}$, for instance using the adjugate matrix formula. By the mean value theorem there exists a $c'>0$ such that
\begin{align*}
 \frac{|\varphi^{-1}(x)-\varphi^{-1}(y)|}{|x-y|} \leq c'
\end{align*}
and so part (iii) is shown.

Finally, for (ii) we have to show that $\frac{\partial (\varphi^{-1})_i}{\partial x_j} \ \in \hold[\rho-1]$ and $|\det J(\varphi^{-1})(x)| \geq c$ for $\rho>1$. The latter part follows from $\eqref{Det}$ and the boundedness of $\frac{\partial \varphi_i}{\partial x_j}$. For the first we have to argue inductively in the same way as in the inverse function theorem, starting with
\begin{align*}
 J(\varphi^{-1})(x)=\left(J(\varphi)(\varphi^{-1}(x))\right)^{-1}.
\end{align*}
It is well known that
\begin{align*}
 A \rightarrow A^{-1}
\end{align*}
is a $C^{\infty}(\R^{n\times n})$-mapping for invertible $A$. Together with the upcoming Lemma \ref{HoelderDiff} this shows: If the components of $J(\varphi)$ belong to $\hold[\rho-1]$ and $\varphi^{-1}$ is  an  $l$-diffeomorphism, then the components of $J(\varphi^{-1})$ belong to $\hold[\min(\rho-1,l)]$ and hence $\varphi^{-1}$ is a $\min(l+1,\rho)$-diffeomorphism. This inductive argument and the induction starting point that $\varphi^{-1}$ is a $1$-diffeomorphism (by part (i) and (iii)) prove  that $\varphi^{-1}$ is a $\rho$-diffeomorphism. Thus the lemma is shown.  
\end{proof}

\subsection{Diffeomorphisms for H\"older and Lebesgue spaces}

We continue with two standard analytical observations which pave the way for our diffeomorphism theorem for function spaces on $\R^n$.
\begin{Lemma}
 \label{HoelderDiff}
Let $\varphi$ be a $\rho$-diffeomorphism and let $\max(1,s)\leq \rho$. Then there exists a constant $C$ depending on $\rho$ such that for all $f \in \hold$ it holds
\begin{align*}
 \|f\circ \varphi|\hold\|\leq C_{\varphi} \cdot \|f|\hold\|.
\end{align*}
\end{Lemma}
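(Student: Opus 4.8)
The plan is to reduce the statement to the multivariate chain rule applied repeatedly (the Fa\`a di Bruno formula), the product estimate of Lemma \ref{helpHoelder}, and the elementary observation that composition with a bi-Lipschitz map does not increase a H\"older seminorm. First I would fix notation as in Definition \ref{Hoelder}, writing $s=\sint+\srest$ with $\srest\in(0,1]$. Since $\srest>0$ we have $\rho\geq\max(1,s)>\sint$, so $\varphi$ possesses classical derivatives up to order $\sint$, and by Lemma \ref{DiffRem}(iii) together with the remark preceding it, $\varphi$ is globally bi-Lipschitz, say
\[
 c_1\,|x-y|\leq|\varphi(x)-\varphi(y)|\leq c_2\,|x-y|\qquad\text{for all }x,y\in\R^n .
\]
For a multi-index $\alpha$ with $|\alpha|\leq\sint$, the higher-order chain rule writes $D^\alpha(f\circ\varphi)$ as a finite linear combination, with absolute coefficients, of terms
\[
 \big((D^\beta f)\circ\varphi\big)\cdot\prod_{k=1}^{|\beta|}D^{\gamma_k}\varphi_{i_k},
\]
where $1\leq|\gamma_k|$, $\sum_k|\gamma_k|=|\alpha|$ and $|\beta|\leq|\alpha|\leq\sint$; in particular each $|\gamma_k|\leq\sint$.

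Next I would bound each factor of such a term in $\hold[\srest]$. For the outer factor: since $|\beta|\leq\sint$ we have $D^\beta f\in\hold[s-|\beta|]$ with $s-|\beta|\geq s-\sint=\srest$, hence $D^\beta f\in\hold[\srest]$ and $\|D^\beta f|\hold[\srest]\|\lesssim\|f|\hold\|$ — here I use the embedding $\hold[a]\hookrightarrow\hold[b]$ for $a\geq b>0$, which on $\R^n$ follows from boundedness of $D^\beta f$ together with its extra H\"older/differentiability regularity. Composing with the bi-Lipschitz map $\varphi$ only multiplies the $\lip[\srest]$-seminorm by $c_2^{\srest}$ and leaves the sup-norm unchanged, so $\|(D^\beta f)\circ\varphi|\hold[\srest]\|\lesssim_{\varphi}\|f|\hold\|$. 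For an inner factor, write $D^{\gamma_k}\varphi_{i_k}=D^{\gamma_k-e_j}\big(\partial_j\varphi_{i_k}\big)$, where $e_j$ is a coordinate unit vector and $|\gamma_k-e_j|=|\gamma_k|-1\leq\sint-1$; since $\partial_j\varphi_{i_k}\in\hold[\rho-1]$ this yields $D^{\gamma_k}\varphi_{i_k}\in\hold[\rho-|\gamma_k|]\hookrightarrow\hold[\srest]$, because $\rho-|\gamma_k|\geq\rho-\sint\geq s-\sint=\srest>0$, with norm bounded by a constant depending only on $\rho$ and $\varphi$.

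Finally I would assemble the estimate: by an iteration of Lemma \ref{helpHoelder} each of the finitely many terms above lies in $\hold[\srest]$ with norm $\lesssim_{\varphi}\|f|\hold\|$, so $D^\alpha(f\circ\varphi)\in\hold[\srest]$ with the same bound for every $|\alpha|\leq\sint$. In particular $D^\alpha(f\circ\varphi)$ is bounded for all $|\alpha|\leq\sint$, which gives $\|f\circ\varphi|C^{\sint}(\R^n)\|\lesssim_{\varphi}\|f|\hold\|$, while for $|\alpha|=\sint$ it gives $\|D^\alpha(f\circ\varphi)|\lip[\srest]\|\lesssim_{\varphi}\|f|\hold\|$; summing over $|\alpha|=\sint$ and adding the two contributions yields $\|f\circ\varphi|\hold\|\leq C_{\varphi}\|f|\hold\|$. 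The degenerate case $\sint=0$ (i.e.\ $0<s\leq1$) is just the term $\alpha=0$ of this scheme: $f\circ\varphi$ has the same sup-norm as $f$ and $\|f\circ\varphi|\lip[s]\|\leq c_2^{s}\|f|\lip[s]\|$. The one point that needs genuine care is the bookkeeping in the chain-rule expansion — checking that in every monomial the derivatives hitting $\varphi$ have total order exactly $|\alpha|$ and the derivative hitting $f$ has order at most $|\alpha|$ — since this is precisely what makes the H\"older exponents $s-|\beta|$ and $\rho-|\gamma_k|$ at least $\srest$, so that Lemma \ref{helpHoelder} applies; everything else is routine.
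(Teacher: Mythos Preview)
Your proof is correct and follows exactly the approach indicated in the paper: the paper's proof consists of two sentences pointing to ``the chain rule and Leibniz rule for spaces of differentiable functions and for H\"older spaces $\hold$,'' and your argument is precisely a detailed execution of this via the Fa\`a di Bruno expansion together with Lemma~\ref{helpHoelder} and the bi-Lipschitz property from Lemma~\ref{DiffRem}(iii). The only places where you supply genuine content beyond the paper are the bookkeeping that $\rho-|\gamma_k|\geq\srest$ and the (valid) remark that the embedding $\hold[a]\hookrightarrow\hold[b]$ for $a\geq b>0$ holds on $\R^n$ because the norm includes the sup-norms of lower-order derivatives.
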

\begin{proof}
By definition
\begin{align*}
 \|f \circ \varphi |\hold\|= \|f \circ \varphi |C^{\sint}(\R^n)\|+\sum_{|\alpha|=\sint} \|D^{\alpha} \left[f \circ \varphi\right]|\lip[\srest]\|.
\end{align*}
% Now the lemma follows by using chain and Leibniz rule for spaces $C^s(\R^n)$ and H\"older spaces $\hold$.
% \begin{align*}
%  \|f \circ \varphi |\hold\| \leq c \ \|f|C^{\sint}(\R^n)\|+ \left\|\sum_{|\beta|\leq\sint} \left(\left|(D^{\beta} f) \circ \varphi\right| \cdot \left(\sum_{j=1}^n\sum_{0<|\gamma|\leq \sint} |D^{\gamma} \varphi_j|\right)^{|\beta|} \right)|\lip[\srest]\right\|.
% \end{align*}
The lemma follows now by using the chain rule and Leibniz rule for spaces of differentiable functions and for H\"older spaces $\hold$.
\end{proof}
\begin{Remark}
\label{DiffUni} 
As one can easily see, the constant in Lemma \ref{HoelderDiff} depends on $\sum\limits_{i=1}^n \sum\limits_{j=1}^n \left\|\frac{\partial \varphi_i}{\partial x_j}|\hold[\rho-1]\right\|$. If we have a sequence of functions $\{\varphi^m\}_{m \in \N}$ and 
\begin{align*}
\sup_{m \in \N}\sum_{i=1}^n\sum_{j=1}^n  \left\|\frac{\partial \varphi_i^m}{\partial x_j}\big|\hold[\rho-1]\right\|< \infty,
\end{align*} 
then there is a universal constant $C$ with $C_{\varphi_m}\leq C$, i.\,e.\ for all $m \in \N$ it holds
\begin{align*}
 \|f\circ \varphi_m|\hold\|\leq C \cdot \|f|\hold\|.
\end{align*}
\end{Remark}

\begin{Lemma}
\label{Lpdiff}
 Let $0< p \leq \infty$. Let $\varphi: \R^n \rightarrow \R^n$ be bijective and let there be a constant $c>0$ such that
\begin{align}
\label{Lip2}
 c \leq \frac{|\varphi(x)-\varphi(y)|}{|x-y|}
\end{align}
for $x,y \in \R^n$ with $x\neq y$.
Then there is a constant $C>0$ such that
\begin{align}
\label{DiffLp}
 \|f \circ \varphi|L_p(\R^n) \| \leq C \cdot \|f\|L_p(\R^n)\|.
\end{align}
\end{Lemma}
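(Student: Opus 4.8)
The plan is to reduce everything to two facts about the inverse map $\psi:=\varphi^{-1}$: that $\psi$ is Lipschitz, and that a Lipschitz map distorts Lebesgue measure by at most a fixed factor. Indeed, rewriting \eqref{Lip2} with $u=\varphi(x)$, $v=\varphi(y)$ and using that $\varphi$ is onto, we get $|\psi(u)-\psi(v)|=|x-y|\le c^{-1}|u-v|$ for all $u,v\in\R^n$, so $\psi$ is Lipschitz with constant $c^{-1}$ (in particular continuous). The case $p=\infty$ is then immediate: since $\varphi$ is a bijection and both $\varphi$ and $\psi$ send sets of measure zero to sets of measure zero, one has $\operatorname{ess\,sup}_x|f(\varphi(x))|=\operatorname{ess\,sup}_y|f(y)|$, so \eqref{DiffLp} holds with $C=1$. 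From now on I would assume $0<p<\infty$.

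Next I would record the measure-distortion estimate: if $h\colon\R^n\to\R^n$ is Lipschitz with constant $\ell$, then the image of a cube of sidelength $r$ is contained in a ball of radius $\ell r\sqrt n$ and hence has outer measure at most $a_n\ell^n r^n$, where $a_n$ depends only on $n$; covering a measurable set $A$ by countably many cubes of total measure arbitrarily close to $|A|$ gives $|h(A)|\le a_n\ell^n|A|$. Applied to $h=\psi$ this yields $|\psi(A)|\le a_n c^{-n}|A|$ for every measurable $A$. The same continuity of $\psi$ shows that $f\circ\varphi$ is measurable whenever $f$ is: $\{x:|f(\varphi(x))|>\lambda\}=\psi(\{|f|>\lambda\})$, and $\psi$ maps the Borel part of $\{|f|>\lambda\}$ to an analytic (hence Lebesgue measurable) set and its null part to a null set.

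Finally I would prove the change-of-variables inequality $\int_{\R^n} g(\varphi(x))\,dx\le a_n c^{-n}\int_{\R^n} g(y)\,dy$ for every measurable $g\ge 0$. For $g=\chi_A$ with $|A|<\infty$ one has $g\circ\varphi=\chi_{\psi(A)}$, so $\int g(\varphi(x))\,dx=|\psi(A)|\le a_n c^{-n}|A|=a_n c^{-n}\int g\,dy$; by linearity this passes to nonnegative simple functions and, by monotone convergence, to all nonnegative measurable $g$. Taking $g=|f|^p$ and extracting $p$-th roots gives $\|f\circ\varphi|L_p(\R^n)\|\le (a_n c^{-n})^{1/p}\,\|f|L_p(\R^n)\|$, which is \eqref{DiffLp} with $C=(a_n c^{-n})^{1/p}$ (and $C=1$ in the case $p=\infty$). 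There is no genuine obstacle here; the only point that needs care is the Lipschitz measure-distortion estimate, and one could alternatively short-circuit the whole argument by invoking the change-of-variables formula for bi-Lipschitz maps directly (with $|{\det J\psi}|\le c^{-n}$ a.e.), at the cost of quoting Rademacher's theorem and the area formula.
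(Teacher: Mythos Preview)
Your proof is correct and follows essentially the same strategy as the paper: both arguments hinge on the fact that $\psi=\varphi^{-1}$ is Lipschitz with constant $c^{-1}$, and hence satisfies the measure-distortion estimate $|\psi(A)|\le C|A|$, from which the $L_p$-bound follows by reducing to characteristic (or simple) functions. The only notable difference is technical: the paper establishes the distortion estimate first on balls via $\psi(B_r(x_0))\subset B_{r/c}(\psi(x_0))$ and then passes to rectangles using a Vitali covering lemma, whereas you cover an arbitrary measurable set directly by countably many cubes and use that the Lipschitz image of each cube sits in a ball of controlled radius---this is a bit more direct and avoids Vitali. One small overreach: for $p=\infty$ you claim equality of essential suprema, but only the inequality $\le$ (which is what the lemma asserts) follows from $\psi$ preserving null sets; the reverse would require $\varphi$ to preserve null sets, which is not part of the hypothesis.
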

\begin{proof}
If $p<\infty$, it suffices to prove \eqref{DiffLp} for 
\begin{align*}
 f=\sum_{j=1}^N a_j \chi_{A_j},
\end{align*}
where $a_j \in \C$, $A_j$ are pairwise disjoint rectangles in $\R^n$ and $\chi_{A_j}$ is the characteristic function of $A_j$. We have
\begin{align*}
 \int |(f \circ \varphi)(x)|^p \ dx = 
 \int \Big|\sum_{j=1}^N a_j \chi_{\varphi^{-1}(A_j)}(x)\Big|^p \ dx= \sum_{j=1}^N |a_j|^p \mu(\varphi^{-1}(A_j))
\end{align*}
because the preimages $\varphi^{-1}(A_j)$ are also pairwise disjoint.
Hence we have to show:

\textit{
There is a constant $C>0$ such that for all rectangles $A$ it holds
\begin{align}
\label{DiffMu}
 \mu(\varphi^{-1}(A)) \leq C \cdot \mu(A).
\end{align}
}
To prove this let $B_r(x_0)=\{x \in \R^n: |x-x_0|<r\}$ be the open ball around $x_0 \in \R^n$ with radius $r>0$. Then by \eqref{Lip2} we have
\begin{align}
\label{Balls}
 \varphi^{-1}(B_r(x_0)) \subset B_{\frac{r}{c}} (\varphi^{-1}(x_0)).
\end{align}
Hence there is a constant $C>0$ such that
\begin{align*}
 \mu(\varphi^{-1}(B_r(x_0)))< C \cdot \mu(B_r(x_0))
\end{align*}
for all $x_0 \in \R^n$, $r>0$.

Now, we cover a given rectangle $A$ with finitely many open balls $\{B_j\}_{j=1}^M$ such that
\begin{align}
\label{Covering}
 \mu\left(\bigcup_{j=1}^M B_j\right) \leq 2\mu(A). 
\end{align}
Afterwards we make use of the following Vitali covering lemma: There exists a subcollection $B_{j_1},\ldots,B_{j_m}$ of these balls which are pairwise disjoint and satisfy
\begin{align*}
  \bigcup_{j=1}^M B_j \subset \bigcup_{k=1}^m 3\cdot B_{j_k}.
\end{align*}
Using this, \eqref{Covering} and \eqref{Balls} for the balls $3 \cdot B_{j_k}$ finally gives
\begin{align*}
 \mu(\varphi^{-1}(A)) &\leq \mu\left(\varphi^{-1}\left(\bigcup_{j=1}^M B_j\right)\right)\leq \mu\left(\varphi^{-1}\left(\bigcup_{k=1}^M 3\cdot B_{j_k}\right)\right)
=\mu\left(\bigcup_{k=1}^M \varphi^{-1}(3\cdot B_{j_k})\right) \\
&=\sum_{k=1}^M \mu(\varphi^{-1}(3\cdot B_{j_k})) \leq C \cdot \sum_{k=1}^M \mu(3\cdot B_{j_k})\leq C \cdot 3^n  \cdot \sum_{k=1}^M \mu(B_{j_k})\\
& \leq 2C \cdot  3^n \cdot \mu(A).
\end{align*}
This proves the result for $0<p<\infty$.

For $p=\infty$ we have to show 
\begin{align*}
 \|f\circ \varphi|L_{\infty}(\R^n)\| \leq \|f|L_{\infty}(\R^n)\|.
\end{align*}
This follows from: If $\mu(\{x \in \R^n: |f(x)|>a)\})=0$, then also $\mu(\{x \in \R^n: |f(\varphi(x))|>a\})=0$,
which is a consequence of \eqref{DiffMu}:

\textit{Let $M$ be a measurable set with $\mu(M)=0$. Then also $\mu(\varphi^{-1}(M))=0$.}

Hence the lemma is shown for $p=\infty$, too.

\end{proof}

\begin{Remark}
 A proof of a more general observation using the Radon-Nikodym derivative and the Lebesgue point theorem can be found in Corollary 1.3 and Theorem 1.4 of \cite{Vod89} - but here we wanted to give a direct, more instructive proof for our special situation. 
\end{Remark}

\begin{Remark}
 By the previous proof it is obvious that the measure theoretical Condition \eqref{DiffMu} is equivalent to the boundedness of the diffeomorphism \eqref{DiffLp} for $0<p<\infty$. Condition \eqref{DiffMu} does not depend on $p$. For Condition \eqref{DiffMu} it is necessary that the measure $m$ with $m(A):=\mu(\varphi^{-1}(A))$ is absolutely continuous with respect to the Lebesgue measure $\mu$. In case of $p=\infty$ this condition is also sufficient for \eqref{DiffMu} by the previous proof.
\end{Remark}

\subsection{Diffeomorphisms for function spaces on $\R^n$}

Now we are ready for the main theorem of this section.
\begin{Theorem}
\label{Diffeo}
 Let $s \in \mathbb{R}$, $0<q\leq \infty$ and $\rho\geq 1$.

(i) Let $0<p\leq \infty$ and $\rho > \max(s,1+\sigma_p-s)$. If $\varphi$ is a $\rho$-diffeomorphism, then there exists a constant $c$ such that
\begin{align*}
 \|f(\varphi(\cdot))|\bspq\| \leq c \cdot \|f|\bspq\|
\end{align*}
for all $f \in \bspq$. Hence $D_{\varphi}$ maps $\bspq$ onto $\bspq$.

(ii) Let $0<p<\infty$ and $\rho > \max(s,1+\sigma_{p,q}-s)$. If $\varphi$ is a $\rho$-diffeomorphism, then there exists a constant $c$ such that
\begin{align*}
 \|f(\varphi(\cdot))|\fspq\| \leq c \cdot \|f|\fspq\|
\end{align*}
for all $f \in \fspq$. Hence $D_{\varphi}$ maps $\fspq$ onto $\fspq$.
\end{Theorem}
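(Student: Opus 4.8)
The plan is to transplant the atomic representation theorem through $\varphi$: decompose $f$ into atoms, compose each atom with $\varphi$, show the result is again an atom (this is where the generalized H\"older and moment conditions \eqref{Atom2}--\eqref{Atom3} pay off), and reassemble. I treat $\bspq$; the $\fspq$ case is parallel, with $\sigma_{p,q}$ in place of $\sigma_p$. Since $\rho>\max(s,1+\sigma_p-s)$ I can fix $K,L\in\R$ with $0\le K$, $0\le L$, $s<K\le\rho$ and $\sigma_p-s<L\le\rho-1$, and by Theorem \ref{AtomicRepr} write $f=\sum_{\nu,m}\lambda_{\nu,m}a_{\nu,m}$ with $(s,p)_{K,L}$-atoms $a_{\nu,m}$ located at $Q_{\nu,m}$ and $\|\lambda|b_{p,q}\|\sim\|f|\bspq\|$. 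Then formally $f\circ\varphi=\sum_{\nu,m}\lambda_{\nu,m}(a_{\nu,m}\circ\varphi)$; that this defines a distribution independent of the chosen decomposition would be justified exactly as the product in Remark \ref{ProdDef} (first for smooth $f$, where $D_\varphi$ is $\Sc'$-continuous, then by density using the a priori bound below with a slightly smaller H\"older exponent for $\varphi$).

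The first key step is: there is a constant $c=c(\varphi,n,s,p,\rho)>0$ such that $c\cdot(a_{\nu,m}\circ\varphi)$ is an $(s,p)_{K,L}$-atom located at $Q_{\nu,m'}$, where $m'=m'(\nu,m)$ indexes the generation-$\nu$ dyadic cube containing $\varphi^{-1}(2^{-\nu}m)$. Support: by the bi-Lipschitz estimates of Lemma \ref{DiffRem}(iii), $\varphi^{-1}(d\cdot Q_{\nu,m})$ lies in a ball of radius $\lesssim 2^{-\nu}$ about $\varphi^{-1}(2^{-\nu}m)$, hence in $d'\cdot Q_{\nu,m'}$ for a fixed $d'$. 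Derivative ($K>0$): write $(a_{\nu,m}\circ\varphi)(2^{-\nu}\cdot)=b\circ\psi_\nu$ with $b=a_{\nu,m}(2^{-\nu}\cdot)$ and $\psi_\nu(\cdot)=2^\nu\varphi(2^{-\nu}\cdot)$; the $\psi_\nu$ are $\rho$-diffeomorphisms whose derivatives have $\hold[\rho-1]$-norms bounded uniformly in $\nu$, so Lemma \ref{HoelderDiff} and Remark \ref{DiffUni} give $\|(a_{\nu,m}\circ\varphi)(2^{-\nu}\cdot)|\hold[K]\|\lesssim\|b|\hold[K]\|\lesssim 2^{-\nu(s-n/p)}$. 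Moment ($L>0$): substituting $x=\varphi^{-1}(y)$ turns the integral in \eqref{Atom3} into $\int\psi(\varphi^{-1}(y))\,|\det J(\varphi^{-1})(y)|\,a_{\nu,m}(y)\,dy$; the entries of $J(\varphi^{-1})$ lie in $\hold[\rho-1]$ by Lemma \ref{DiffRem}(ii), hence so does $|\det J(\varphi^{-1})|$, being a polynomial in them while $\hold[\rho-1]$ is a multiplication algebra (Lemma \ref{helpHoelder}); also $\psi\circ\varphi^{-1}\in\hold[L]$ with $\|\psi\circ\varphi^{-1}|\hold[L]\|\lesssim\|\psi|\hold[L]\|$ by Lemma \ref{HoelderDiff} (as $L\le\rho$). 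Therefore $\psi\circ\varphi^{-1}\cdot|\det J(\varphi^{-1})|\in\hold[L]$ (here $L\le\rho-1$ is used) with norm $\lesssim\|\psi|\hold[L]\|$, and feeding it into the moment condition of $a_{\nu,m}$ yields \eqref{Atom3} for $a_{\nu,m}\circ\varphi$ with the same $\Kap[L]$. (For $K=0$, resp.\ $L=0$, only boundedness is required, which is trivially preserved.)

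The main obstacle is the reassembly: several transported atoms may be located at the same dyadic cube, so Theorem \ref{AtomicRepr} does not apply verbatim -- this is precisely the difficulty that forced $s>\sigma_p$ in \cite[Section 4.3.1]{Tri92} (the moment part is now harmless). But the multiplicity is uniformly bounded: if $\varphi^{-1}(2^{-\nu}m)\in Q_{\nu,m'}$ then $2^{-\nu}m\in\varphi(Q_{\nu,m'})$, a set of diameter $\lesssim 2^{-\nu}$, so at most $N_0$ indices $m$ give a fixed $m'$, with $N_0$ independent of $\nu$. I would colour $\Z^n$ with $N_0$ colours $I_1^\nu,\dots,I_{N_0}^\nu$ so that $m\mapsto m'(\nu,m)$ is injective on each $I_i^\nu$; then for each $i$ the partial sum $\sum_\nu\sum_{m\in I_i^\nu}\lambda_{\nu,m}(a_{\nu,m}\circ\varphi)$, reindexed by the cubes $Q_{\nu,m'}$, is a genuine atomic decomposition, and Theorem \ref{AtomicRepr} bounds its norm by the $b_{p,q}$- (resp.\ $f_{p,q}$-) norm of the reindexed coefficients. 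For $\bspq$ that norm is $\le\|\lambda|b_{p,q}\|$ trivially; for $\fspq$ one compares $\chi_{Q_{\nu,m'}}$ with $\chi_{C\cdot Q_{\nu,m}}\circ\varphi$ (bi-Lipschitz again), invokes the $L_p$-boundedness of $D_\varphi$ from Lemma \ref{Lpdiff}, and absorbs the cube-fattening by a standard maximal-function argument to get $\lesssim\|\lambda|f_{p,q}\|$. Summing over the $N_0$ colours and using $\|\lambda\|\sim\|f\|$ gives $\|f\circ\varphi|\bspq\|\lesssim\|f|\bspq\|$. Finally, since $\varphi^{-1}$ is again a $\rho$-diffeomorphism (Lemma \ref{DiffRem}), $D_{\varphi^{-1}}$ is bounded as well, and $D_\varphi D_{\varphi^{-1}}=\mathrm{id}$ shows that $D_\varphi$ is onto. (Alternatively, one could first record a version of Theorem \ref{AtomicRepr} for atoms located at an admissible family of balls $B(x_r^\nu,c2^{-\nu})$ -- the setting already needed for the wavelet bases of Section \ref{Whitney} -- which makes the transported system directly admissible and avoids the colouring, the $\fspq$ sequence-norm comparison being unchanged.)
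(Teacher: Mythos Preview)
Your proposal is correct and follows essentially the same route as the paper: split the transported atoms into $N_0$ colour classes so that $m\mapsto m'$ is injective on each (the paper's $M$ sums with maps $\Phi_{\nu,j}$), verify \eqref{Atom2} via the uniformly bounded rescaled diffeomorphisms $\psi_\nu=2^\nu\varphi(2^{-\nu}\cdot)$ and Lemma \ref{HoelderDiff}/Remark \ref{DiffUni}, verify \eqref{Atom3} via the change of variables with $|\det J(\varphi^{-1})|\in\hold[\rho-1]$, and compare the $f_{p,q}$-norms through Lemma \ref{Lpdiff} plus a maximal-function cube-fattening. The only point where the paper differs slightly is the well-definedness of $f\circ\varphi$ (Remark \ref{DiffDef}): rather than approximating $\varphi$ by smooth diffeomorphisms, it pushes the change of variables to the test-function side of the pairing and invokes the density argument from the end of Corollary \ref{FolgLocal}.
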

\begin{proof}
At first, beside the two atomic conditions \eqref{Atom2} and \eqref{Atom3} we need to take a closer look at the centres and supports of the atoms. Briefly speaking, the decisive local properties of the set of atoms $a_{\nu,m}$ are maintained by a superposition with the diffeomorphism $\varphi$.

To be more specific: Let $M_{\nu}=\left\{x \in \R^n: x=2^{-\nu}m, m \in \mathbb{Z}^n \right\}$. Having in mind Lemma \ref{DiffRem} there is a $c_2>0$ with
\begin{align}
\label{DiffLip}
  |x-y| \leq c_2 |\varphi^{-1}(x)-\varphi^{-1}(y)|.  
\end{align}
 By a simple volume argument for $Q_{\nu,m}$ and by $|2^{-\nu}m-2^{-\nu}m'|\geq c \cdot 2^{-\nu}$ for $m\neq m'$ there is a constant $M \sim c_2^n$  such that 
\begin{align*}
  |\varphi^{-1}(M_{\nu}) \cap Q_{\nu,m}| \leq M
\end{align*}
for all $\nu \in \mathbb{N}_0,m \in \mathbb{Z}$. Hence we can take our atomic decomposition and split it into $M$ disjunct sums, i.\,e.\
\begin{align*}
 f = \sum_{j=1}^M \sum_{\nu \in \N_0} \sum_{m \in M_{\nu,j}} \lambda_{\nu,m} a_{\nu,m} 
\end{align*}
with
\begin{align*}
 \bigcup_{j=1}^M M_{\nu,j}= \Z^n, \quad M_{\nu,j} \cap M_{\nu,j'} = \emptyset \text{ for } j\neq j'
\end{align*}
so that for all $\nu \in \N_0$, $m \in \Z^n$ and $j \in \{1,\ldots,M\}$
\begin{align}
\label{injective}
  |\left\{m' \in \Z^n: m' \in M_{\nu,j} \text{ and } \varphi^{-1}(2^{-\nu}m') \in Q_{\nu,m}\right\}|\leq 1.
\end{align}
Therefore, not more than one function $a_{\nu',m'}\circ \varphi$ is located at the cube $Q_{\nu,m}$ for each of the $M$ sums.

The support of a function $a_{\nu,m}\circ \varphi$ is contained in $\varphi^{-1}(d \cdot Q_{\nu,m})$ by \eqref{Atom1}. By Lemma \ref{DiffRem} there exists a $c_1>0$ with
\begin{align*}
 |\varphi^{-1}(x)-\varphi^{-1}(y)| \leq \frac{1}{c_1} |x-y|.
\end{align*}
Hence we get
\begin{align*}
  \varphi^{-1}(d \cdot Q_{\nu,m}) \subset c \cdot  \frac{d}{c_1} \cdot B_{2^{-\nu}}(\varphi^{-1}(2^{-\nu}m)), 
\end{align*}
where $B_r(x_0)=\left\{ x \in \R^n: |x-x_0|\leq r \right\}$. Hence, together with \eqref{injective} it follows: There is a constant $d'$ depending on $c_1$ such that for every $\nu \in \N_0$ and every $j \in \{1,\ldots,M\}$ there is an injective map $\Phi_{\nu,j}: M_{\nu,j} \rightarrow \Z^n$ with
\begin{align}
\label{DiffSupp}
 supp \ (a_{\nu,m}\circ \varphi) \subset d' \cdot Q_{\nu,\Phi_{\nu,j}(m)}.
\end{align}
for all $m \in M_{\nu,j}$. The constant $d'$ does not depend on $\nu$ or $m$.

Thus, if we take the derivative conditions \eqref{Atom2} and the general moment conditions \eqref{Atom3} for $a_{\nu,m}\circ \varphi$ now for granted (which will be shown later), then 
\begin{align*}
  f_{j} \circ \varphi=\sum_{\nu \in \N_0} \sum_{m \in M_{\nu,j}} \lambda_{\nu,m} (a_{\nu,m} \circ \varphi)
\end{align*}
is an atomic decomposition of the function $f_j \circ \varphi$. Finally, we have to look at the sequence space norms, see Definition \ref{DefSeq}. 

We will concentrate on the $\fspq$-case since the $\bspq$-case is easier because it does not matter if one changes the order of summation over $m$. By the atomic representation theorem and \eqref{DiffSupp} we will have
\begin{align*}
 \|f_j \circ \varphi|\fspq\| \lesssim \left\|\left(\sum_{\nu=0}^{\infty} \sum_{m \in M_{\nu,j}} |\lambda_{\nu,m}\chi_{\nu,\Phi_{\nu,j}(m)}^{(p)}(\cdot)|^q\right)^{\frac{1}{q}}\big|L_p(\R^n)\right\|.
\end{align*}
To transfer this into the usual sequence space norm we make use of
\begin{align}
\label{MapLip}
  Q_{\nu,\Phi_{\nu,j}(m)} \subset \varphi^{-1} (c \cdot Q_{\nu,m}) 
\end{align}
with a constant $c$ depending on $c_2$ from \eqref{DiffLip}, but independent of $\nu$ and $m$. This follows from $\varphi^{-1} (2^{-\nu}m) \in Q_{\nu,\Phi_{\nu,j}(m)}$. Hence assuming that $a_{\nu,m}\circ \varphi$ fulfil the atomic conditions \eqref{Atom2} and \eqref{Atom3} we obtain
\begin{align*}
 \|f_{j} \circ \varphi|\fspq\| &\lesssim \left\|\left(\sum_{\nu=0}^{\infty} \sum_{m \in M_{\nu,j}} |\lambda_{\nu,m}\chi_{\nu,m}^{(p)}(\varphi (\cdot))|^q\right)^{\frac{1}{q}}\big|L_p(\R^n)\right\| \\
	  &\lesssim \left\|\left(\sum_{\nu=0}^{\infty} \sum_{m \in M_{\nu,j}} |\lambda_{\nu,m}\chi_{\nu,m}^{(p)}(\cdot)|^q\right)^{\frac{1}{q}}\big|L_p(\R^n)\right\| \\
&\lesssim \left\|\left(\sum_{\nu=0}^{\infty} \sum_{m \in \Z^n} |\lambda_{\nu,m}\chi_{\nu,m}^{(p)}(\cdot)|^q\right)^{\frac{1}{q}}\big|L_p(\R^n)\right\| \\
	  &\lesssim \|f|\fspq\|.
\end{align*} 
In the first step we used \eqref{MapLip}, in the second step we used Lemma \ref{Lpdiff} and part (iii) of Lemma \ref{DiffRem} and in the last step we applied the atomic decomposition theorem for $f$. As done in the first step, one can replace the characteristic function of $c \cdot Q_{\nu,m}$ by the characteristic function of $Q_{\nu,m}$ in the sequence space norm getting equivalent norms, see \cite[Section 1.5.3]{Tri08}. This can be proven using the Hardy-Littlewood maximal function.

Finally, we have to take a look at the derivative conditions \eqref{Atom2} and the general moment conditions \eqref{Atom3}. The latter part is also considered in \cite[Lemma 5]{Skr98} using the atomic approach with derivative condition \eqref{Atom21}. 

Let $a_{\nu,m}$ be an $(s,p)_{K,L}$-atom and let $\rho\geq \max(K,L+1)$. If we can show that $\varphi \circ a_{\nu,m}$ is an $(s,p)_{K,L}$-atom as well, we are done with the proof since we can choose $K$ and $L$ suitably small enough by the atomic decomposition theorem \ref{AtomicRepr}. Let $T_{\nu}(x):=2^{-\nu}x$ and ${\cal T}_{\nu}(\varphi)= T_{\nu}^{-1} \circ \varphi \circ T_{\nu}$. Then
\begin{align*}
 \|\left(a_{\nu,m}\circ \varphi\right)(2^{-\nu}\cdot)|\hold[K]\|&=  \|a_{\nu,m}\circ \varphi \circ T_{\nu}|\hold[K]\| \\
&= \|a_{\nu,m}\circ T_{\nu} \circ {\cal T}_{\nu}(\varphi)|\hold[K]\|.
\end{align*}
By a simple dilation argument for the H\"older spaces $\hold[\rho-1]$ with $\rho\geq 1$ it holds 
\begin{align*}
  \left\|\frac{\partial \left({\cal T}_{\nu}(\varphi)\right)_i}{\partial x_j}\big|\hold[\rho-1]\right\|\leq \left\|\frac{\partial \varphi_i}{\partial x_j}\big|\hold[\rho-1]\right\|
\end{align*}
for all $i,j\in \{1,\ldots,n\}$ and $\nu \in \N_0$. Hence by Lemma \ref{HoelderDiff} and Remark \ref{DiffUni} we find a constant $C$ independent of $\nu$ and $m$ such that
\begin{align*}
  \|\!\left(a_{\nu,m}\circ \varphi\right)(2^{-\nu}\cdot)|\hold[K]\|= \|a_{\nu,m}\circ T_{\nu} \circ {\cal T}_{\nu}(\varphi)|\hold[K]\| \leq \! C\!	 \cdot 
 \|a_{\nu,m}(2^{-\nu}\cdot)|\hold[K]\|
\end{align*}
So the derivative condition \eqref{Atom2} is shown. 

Regarding the general moment condition \eqref{Atom3} of $a_{\nu,m}\circ \varphi$ we consider two cases: At first, let $\varphi$ be a $\rho$-diffeomorphism with $\rho>1$. Then $\varphi$ and $\varphi^{-1}$ are differentiable. We use the general moment condition \eqref{Atom3} of $a_{\nu,m}$ itself and Lemma \ref{HoelderDiff} to get
\begin{align*}
\left| \, \int\limits_{d' \cdot Q_{\nu,\Phi_{\nu,j}(m)}} \psi(x) \cdot a(\varphi(x)) \ dx \right| &=\left|\,\int\limits_{\varphi^{-1}\left(d \cdot Q_{\nu,m}\right)} \psi(x) \cdot a(\varphi(x)) \ dx \right| \\
&=\left| \, \int\limits_{d \cdot Q_{\nu,m}} \psi\left(\varphi^{-1}(x)\right) \cdot |\det \varphi^{-1}|(x) \cdot a(x) \ dx \right| \\
&\leq C  \cdot 2^{-\nu\Kap} \cdot \||\det \varphi^{-1}(x)|\cdot \left(\psi\circ \varphi^{-1}\right)|\hold[L]\| \\
&\leq C' \cdot 2^{-\nu\Kap} \cdot \|\psi|\hold[L]\|.
\end{align*}
We used the transformation formula for integrals and 
\begin{align*}
  \det\ J\left(\varphi^{-1}\right)\in \hold[L]
\end{align*}
since $\varphi$ is a $\rho$-diffeomorphism with $\rho \geq L+1$. Furthermore, the sign of $\det J\left(\varphi^{-1}\right)$ is constant by Definition \ref{LipDef}. 

If $\rho=1$, then $L=0$ by our choice of $\rho$. This means, that no moment conditions are needed. Hence we have nothing to prove. The choice of $\rho=1$ is only allowed if $\sigma_p<s<1$ resp.\ $\sigma_{p,q}<s<1$. 

For some further technicalities similar as in Remark \ref{ProdDef} see Remark \ref{DiffDef}.
\end{proof}

\begin{Remark}
 This has been proven (in a sketchy way) in \cite[Lemma 3]{Skr98} for the more special atomic definition there. 
\end{Remark}

\begin{Remark}
 If $\sigma_p<s<1$ resp.\ $\sigma_{p,q}<s<1$, then the choice of $\rho=1$ is possible for these values of $s$. This gives the same result as in \cite[Proposition 4.1]{Tri02}, where the notation of Lipschitz diffeomorphisms as in Definition \ref{LipDef} is used. This results in
\begin{Theorem}
Let $0<q\leq \infty$.

(i) Let $0<p\leq \infty$ and $\sigma_p<s<1$. If $\varphi:\R^n \rightarrow \R^n$ is a bi-Lipschitzian map, then there exists a constant $c$ such that
\begin{align*}
 \|f(\varphi(\cdot))|\bspq\| \leq c \cdot \|f|\bspq\|.
\end{align*}
for all $f \in \bspq$. Hence $D_{\varphi}$ maps $\bspq$ onto $\bspq$.

(ii) Let $0<p<\infty$ and $\sigma_{p,q}<s<1$. If $\varphi:\R^n \rightarrow \R^n$ is a bi-Lipschitzian map, then there exists a constant $c$ such that
\begin{align*}
 \|f(\varphi(\cdot))|\fspq\| \leq c \cdot \|f|\fspq\|.
\end{align*}
for all $f \in \fspq$. Hence $D_{\varphi}$ maps $\fspq$ onto $\fspq$.
\end{Theorem}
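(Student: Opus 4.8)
The plan is to derive this theorem as the special case $\rho=1$ of Theorem \ref{Diffeo}. First I would note that, by Definition \ref{LipDef}(i), a bi-Lipschitzian map $\varphi:\R^n\to\R^n$ is exactly a $1$-diffeomorphism, so it is an admissible input for Theorem \ref{Diffeo}. Next I would check that the parameter restriction there collapses correctly when $\rho=1$: the condition $\rho>\max(s,1+\sigma_p-s)$ becomes $1>s$ together with $1>1+\sigma_p-s$, i.e.\ $\sigma_p<s<1$; and likewise $1>\max(s,1+\sigma_{p,q}-s)$ is equivalent to $\sigma_{p,q}<s<1$. Hence the hypotheses of the present theorem are precisely those of Theorem \ref{Diffeo} with $\rho=1$, and the estimate $\|f(\varphi(\cdot))|\bspq\|\leq c\,\|f|\bspq\|$ (resp.\ the corresponding one for $\fspq$) follows at once. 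This observation --- the arithmetic equivalences $1>\max(s,1+\sigma_p-s)\Leftrightarrow \sigma_p<s<1$ and its $\sigma_{p,q}$-analogue --- is essentially the only thing that needs checking, so there is no genuine obstacle here.

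For completeness I would also recall why the argument in the proof of Theorem \ref{Diffeo} simplifies at $\rho=1$. Since $s<1$ one may pick the atomic smoothness parameter $K$ with $s<K<1$, and since $s>\sigma_p$ (resp.\ $s>\sigma_{p,q}$) one may take the moment parameter $L=0$, which is admissible in the atomic decomposition Theorem \ref{AtomicRepr} because $0>\sigma_p-s$ (resp.\ $0>\sigma_{p,q}-s$). With $L=0$ there are no moment/cancellation conditions to propagate, so from the proof of Theorem \ref{Diffeo} one only needs: the support bookkeeping for the functions $a_{\nu,m}\circ\varphi$ (which uses only the bi-Lipschitz bounds and Lemma \ref{DiffRem}), the change-of-variables estimate of Lemma \ref{Lpdiff} (whose hypothesis \eqref{Lip2} is part of the bi-Lipschitz condition), and the H\"older condition \eqref{Atom2} for $a_{\nu,m}\circ\varphi$, which is exactly Lemma \ref{HoelderDiff} applied with exponent $K<1\leq\rho=1$. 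The well-definedness of $D_\varphi f$ for $f\in\bspq$ is settled by the same density argument as in Remark \ref{ProdDef}.

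Finally, for the surjectivity assertion I would invoke Lemma \ref{DiffRem}(i): the inverse $\varphi^{-1}$ is again a bi-Lipschitzian map with the same constants $c_1,c_2$, hence $D_{\varphi^{-1}}$ is also bounded on $\bspq$ (resp.\ $\fspq$) by the estimate just proved. Since $D_\varphi D_{\varphi^{-1}}f=f\circ\varphi^{-1}\circ\varphi=f$ and $D_{\varphi^{-1}}D_\varphi f=f$ for all $f$, the operator $D_\varphi$ is a topological isomorphism of $\bspq$ (resp.\ $\fspq$) onto itself, with inverse $D_{\varphi^{-1}}$. This completes the plan.
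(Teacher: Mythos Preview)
Your proposal is correct and matches the paper's approach exactly: the paper derives this theorem as the special case $\rho=1$ of Theorem \ref{Diffeo}, noting precisely the arithmetic equivalence you spell out. One small point: for the well-definedness of $D_\varphi f$ when $\rho=1$, the paper invokes Remark \ref{DiffDef} (which for bi-Lipschitz $\varphi$ uses that $s>\sigma_p$ forces $f$ to be a regular distribution and argues via $L_p$-convergence of the atomic series) rather than the density argument of Remark \ref{ProdDef}, but this does not affect the correctness of your plan.
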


\end{Remark}

\begin{Remark}
\label{DiffDef} 
We have to deal with some technicalities of the proof of Theorem \ref{Diffeo}. We concentrate on the $\bspq$-case, the $\fspq$-case is nearly the same.

Let at first be $\rho>1$. In principle, Theorem \ref{Diffeo} and Lemma \ref{HarmS'-KonvAtom} show that
\begin{align}
\label{AtomKonv2}
 \sum_{\nu} \sum_{m}\lambda_{\nu,m} (a_{\nu,m} \circ \varphi) 
\end{align}
converges unconditionally in ${\cal S}'(\R^n)$, where
\begin{align*}
 f=\sum_{\nu} \sum_{m}\lambda_{\nu,m} a_{\nu,m} \text{ in } {\cal S}'(\R^n),
\end{align*}
and the limit belongs to $\bspq$ if $f$ belongs to $\bspq$. 

To define the superposition of $f$ and $\varphi$ as this limit, we have to show that the limit does not depend on the atomic decomposition we chose for $f$. Let $\psi \in \Sc(\R^n)$ with compact support be given. Then
\begin{multline*}
 \sum_{\nu} \sum_{m}\left| \int_{\R^n}  \lambda_{\nu,m} \left(a_{\nu,m}\circ \varphi \right)(x) \psi(x) \ dx\right| \\
 =\sum_{\nu} \sum_{m}\left| \int_{\R^n}  \lambda_{\nu,m} a_{\nu,m}(x) \left[ \psi\left(\varphi^{-1}(x)\right) \cdot |\det \varphi^{-1}(x)| \right] \ dx\right|
\end{multline*}
makes sense, see \eqref{KonvB}, because by Lemma \ref{HoelderDiff} the function $\psi\left(\varphi^{-1}(x)\right) \cdot |\det \varphi^{-1}(x)|$ has compact support and belongs to $\hold[M]$ for a suitable $M>0$ with $M>\sigma_p-s$. Now the achievements at the end of Corollary \ref{FolgLocal} show that this integral limit does not depend on the choice of the atomic decomposition for $f$. Hence we obtain that the limit in \eqref{AtomKonv2} (considered as an element in ${\cal S}'(\R^n)$) is the same for all choices of atomic decompositions.  

If the choice of $\rho=1$ is allowed, then automatically $s>\sigma_p$ and $\bspq$ consists of regular distributions by Sobolev's embedding inProposition \ref{Sobolev}. Hence the superposition of $f \in \bspq \subset L_p(\R^n)$ for $1 \leq p \leq \infty$ resp.\ $f \in \bspq \subset L_1(\R^n)$ for $0<p\leq 1$  with a $1$-diffeomorphism $\varphi$ is defined as the superposition of a regular distribution with a $1$-diffeomorphism and is continuous as an operator from $L_p(\R^n)$ resp.\ $L_1(\R^n)$ into $L_p(\R^n)$ resp.\ $L_1(\R^n)$ by Lemma \ref{Lpdiff}.

If $p<\infty$, then atomic decompositions of $f \in \bspq$ converge to $f$ with respect to the norm of $L_p(\R^n)$ for $1\leq p < \infty$ resp.\ with respect to the norm of $L_1(\R^n)$ for $0<p<1$, see \cite[Section 2.12]{Tri06}. Hence the limit does not depend on the choice of the atomic decomposition and is equal to the usual definition of the superposition of a regular distribution $f$ and the $1$-diffeomorphism $\varphi$. 

If $p=\infty$, we use the local convergence of the atomic decompositions of $f$ in $L_{\infty}(\R^n)$, i.\,e.\ we restrict $f$ and its atomic decomposition to a compact subset $K$ of $\R^n$. Then this restricted atomic decomposition converges to the restricted $f$ with respect to the norm of $L_{\infty}(K)$. This suffices to prove uniqueness of the limit which is an $L_{\infty}(\R^n)$-function.
\end{Remark}

\begin{Remark}
For fixed $s,p$ and $q$ the constant $c$ in Theorem \ref{Diffeo} depends on the $\rho$-diffeomorphism $\varphi$. Looking into the proof of Theorem \ref{Diffeo} and Remark \ref{DiffUni} the following definition is useful: 
\end{Remark}

\begin{Definition}
 Let $\rho\geq 1$. We call $\{\varphi^m\}_{m \in \N}$ a bounded sequence of $\rho$-dif\-feo\-mor\-phisms if every $\varphi^m$ is a $\rho$-dif\-feo\-mor\-phism and if there are universal constants $c_1,c_2>0$ with
\begin{align*}
 c_1\leq \frac{|\varphi^m(x)-\varphi^m(y)|}{|x-y|} \leq c_2 
\end{align*}
for $m \in \N$, $x,y\in \R^n$ with $0<|x-y|\leq 1$ and if - for $\rho>1$ - there is a universal constant $c$ with   
\begin{align*}
\sum_{i=1}^n\sum_{j=1}^n  \left\|\frac{\partial \varphi_i^m}{\partial x_j}\big|\hold[\rho-1]\right\|< c.
\end{align*}
for $m \in \N$.
\end{Definition}

\begin{Remark}
 If $\{\varphi^m\}_{m \in \N}$ is a bounded sequence of $\rho$-diffeomorphisms, then $(\varphi^m)^{-1}$ exists for all $m \in \N$ and $\{(\varphi^m)^{-1} \}_{m \in \N}$ is a bounded sequence of $\rho$-diffeomorphisms, too. This follows by the arguments of Lemma \ref{DiffRem}.
\end{Remark}
Now, by going through the proof of Theorem \ref{Diffeo} and Remark \ref{DiffUni} it follows
\begin{Corollary}
 Let $s \in \mathbb{R}$, $0<q\leq \infty$ and $\rho\geq 1$.

(i) Let $0<p\leq \infty$ and $\rho > \max(s,1+\sigma_p-s)$. If $\{\varphi^m\}_{m \in \N}$ is a bounded sequence of $\rho$-diffeomorphisms, then there exists a constant $C$ such that
\begin{align*}
 \|f(\varphi^m(\cdot))|\bspq\| \leq C \cdot \|f|\bspq\|
\end{align*}
for all $f \in \bspq$ and $m \in \N$.

(ii) Let $0<p<\infty$ and $\rho > \max(s,1+\sigma_{p,q}-s)$. If $\{\varphi^m\}_{m \in \N}$ is a bounded sequence of $\rho$-diffeomorphisms, then there exists a constant $C$ such that
\begin{align*}
 \|f(\varphi^m(\cdot))|\fspq\| \leq C \cdot \|f|\fspq\|
\end{align*}
for all $f \in \fspq$ and $m\in \N$.
\end{Corollary}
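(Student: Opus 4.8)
The plan is to revisit the proof of Theorem \ref{Diffeo} and verify that every constant appearing there depends on the $\rho$-diffeomorphism $\varphi$ only through quantities that are uniformly controlled for a bounded sequence $\{\varphi^m\}_{m\in\N}$. Fix $f \in \bspq$ (the $\fspq$-case is analogous) and, by the atomic representation theorem (Theorem \ref{AtomicRepr}), choose an optimal atomic decomposition $f=\sum_{\nu,m}\lambda_{\nu,m}a_{\nu,m}$ with $(s,p)_{K,L}$-atoms, $K>s$, $L>\sigma_p-s$, and $\|\lambda|b_{p,q}\|\sim\|f|\bspq\|$. We may assume $\rho\geq\max(K,L+1)$.

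First I would go through the geometric bookkeeping. The number $M$ of disjoint subfamilies into which the decomposition is split satisfies $M\sim c_2^n$, and the enlarged support constant $d'$ in \eqref{DiffSupp} as well as the constant $c$ in the inclusion \eqref{MapLip} depend only on the bi-Lipschitz constants $c_1,c_2$ of $\varphi$; for a bounded sequence these are the universal constants from the definition, so $M$, $d'$ and $c$ can be chosen independently of $m$. The passage to the usual sequence-space norm also invokes Lemma \ref{Lpdiff} (with a constant depending only on the lower Lipschitz bound) and the maximal-function argument replacing $c\cdot Q_{\nu,m}$ by $Q_{\nu,m}$, which does not involve $\varphi$ at all.

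Next I would check the two atomic conditions for $a_{\nu,m}\circ\varphi$. For the derivative condition \eqref{Atom2}, the proof of Theorem \ref{Diffeo} reduces it via the rescaled maps $\mathcal{T}_\nu(\varphi)$ to Lemma \ref{HoelderDiff} together with Remark \ref{DiffUni}, whose constant depends only on $\sup_m\sum_{i,j}\|\tfrac{\partial\varphi_i^m}{\partial x_j}|\hold[\rho-1]\|$, finite by the definition of a bounded sequence. For the moment condition \eqref{Atom3} (case $\rho>1$) one uses the transformation formula and then Lemma \ref{HoelderDiff} applied to $|\det J(\varphi^{-1})|\cdot(\psi\circ\varphi^{-1})$; here the relevant constant depends on $\|\det J(\varphi^{-1})|\hold[L]\|$ and on the $\rho$-diffeomorphism constants of $\varphi^{-1}$. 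By the Remark preceding this Corollary, $\{(\varphi^m)^{-1}\}_{m\in\N}$ is again a bounded sequence of $\rho$-diffeomorphisms, so the Hölder norms of the entries of $J((\varphi^m)^{-1})$ — and hence of their determinant — are uniformly bounded, again by Lemma \ref{HoelderDiff}/Remark \ref{DiffUni} and the smoothness of $A\mapsto A^{-1}$ and $A\mapsto\det A$. In the case $\rho=1$ one has $L=0$ and there is nothing to prove. The technicalities of well-definedness are unchanged from Remark \ref{DiffDef}.

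Assembling these observations, each atom $a_{\nu,m}\circ\varphi^m$ is (up to a fixed factor independent of $\nu,m,m$) an $(s,p)_{K,L}$-atom, so by Theorem \ref{AtomicRepr} the decomposition of $f\circ\varphi^m$ obtained by composing yields $\|f\circ\varphi^m|\bspq\|\leq C\|\lambda|b_{p,q}\|\sim C\|f|\bspq\|$ with $C$ independent of $m$ and of $f$, which is the claim. The only place that needs genuine care — the \emph{main obstacle} — is the uniform bound on $\|\det J((\varphi^m)^{-1})|\hold[L]\|$, since this is where the inverse maps enter; this is exactly handled by the remark that a bounded sequence of $\rho$-diffeomorphisms has bounded inverses, which in turn relies on the inductive argument of Lemma \ref{DiffRem} carried out with uniform constants.
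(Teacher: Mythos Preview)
Your proposal is correct and follows exactly the approach indicated by the paper, which simply states that the corollary follows ``by going through the proof of Theorem \ref{Diffeo} and Remark \ref{DiffUni}.'' You have carried out precisely this verification in detail, correctly identifying that the bi-Lipschitz constants $c_1,c_2$ control the geometric bookkeeping, that Remark \ref{DiffUni} handles the derivative condition uniformly, and that the uniform boundedness of the inverses (the Remark immediately preceding the Corollary) takes care of the moment condition via $\|\det J((\varphi^m)^{-1})|\hold[L]\|$.
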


	\newpage
 	\chapter{Decomposition theorems for function spaces on domains}
	\label{Zerlegungsth2}
\section{Basic notation}
\label{Zerlegungsth}
Let $n \in \N$ with $l<n$. Let $\R^n=\R^{l} \times \R^{n-l}$ and $x=(y,z) \in \R^n$,
\begin{align*}
 y=(y_1,\ldots,y_l) \in \R^l, z=(z_1,\ldots,z_{n-l}) \in \R^{n-l}.
\end{align*}
We identify $\R^l$ with the hyperplane $\{z=0\} \subset \R^n$. Hence, in our understanding 
\begin{align*}
\hyp=\left\{x=(y,z) \in \R^n: z \neq 0\right\}.   
\end{align*}
Furthermore, let 
\begin{align*}
 Q_l = \{ x=(y,z) \in \R^n: z=0, 0<y_m<1, m=1, \ldots, l\} \subset  \R^l
\end{align*}
be the unit cube in this hyperplane and let
\begin{align*}
 Q_l^n = \{ x=(y,z) \in \R^n: (y,0) \in Q_l, z \in \R^{n-l}\}	
\end{align*}
be the related cylindrical domain in $\R^n$, see \cite[Section 6.1.3]{Tri08}.

Let
\begin{align*}
 \N_l^n=\left\{ \alpha=(\alpha_1,\ldots,\alpha_n) \in \N_0^n: \alpha_1=\ldots=\alpha_l=0\right\}.
\end{align*}
Then by 
\begin{align*}
 D^{\alpha}f = \frac{\partial^{|\alpha|} f}{\partial z_1^{\alpha_{l+1}} \ldots \partial z_n^{\alpha_{n}}}, \alpha \in \N_l^n
\end{align*}
we denote the derivatives perpendicular to $\R^l$.

\section{Reinforced spaces for $\R^n \setminus \R^l$}
\label{Reinf}
In \cite[Section 6.1.4]{Tri08} Triebel showed the following, crucial property which paved the way to the wavelet characterization for the cube $Q$:
\begin{Proposition}[Triebel]
\label{decomptri}
 Let $l \in \N$ and $l<n$. Let
\begin{align*}
1\leq p < \infty, 0<q<\infty, 0<s-\frac{n-l}{p} \notin \N. 
\end{align*}
Then $D(Q_l^n \setminus Q_l)$ is dense in 
\begin{align*}
 \left\{f \in \At[Q_l^n]: \tr_l^r f=0 \right\}
\end{align*}
with
\begin{align*}
 r=\lfloor s-\frac{n-l}{p}\rfloor
\end{align*}
with $\lfloor \cdot \rfloor$ as defined in Definition \ref{Hoelder}.
\end{Proposition}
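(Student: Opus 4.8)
This is an approximation statement, so the plan is: given an arbitrary $f$ in the subspace $V:=\{f\in\At[Q_l^n]:\tr_l^r f=0\}$, construct a sequence in $D(Q_l^n\setminus Q_l)$ converging to it in $\AR[s]$. It is convenient to identify $\At[Q_l^n]$ with $\AtBar[Q_l^n]=\{g\in\AR[s]:\operatorname{supp}g\subset\overline{Q_l^n}\}$ — legitimate here since $s>0=\sigma_p$ (because $\sigma:=s-\frac{n-l}{p}>0$) and $|\partial Q_l^n|=0$, cf.\ the remark in Section \ref{defdom} — and to note that $Q_l^n=Q_l\times\R^{n-l}$ is the cylinder over the open unit cube $Q_l\subset\R^l$, that $Q_l^n\setminus Q_l$ removes the ``spine'' $\overline{Q_l}\times\{0\}$, and that $\sigma$ satisfies $0<\sigma\notin\N$, so $r=\lfloor\sigma\rfloor<\sigma$. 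This slack is precisely what makes the statement true. The construction has three stages: (1) cut $f$ away from the spine; (2) shrink the resulting support strictly into the cylinder in the $y$-directions and mollify; (3) cut off in the $z$-directions to obtain compact support.

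\textbf{Stage 1 (cutting away from the spine).} Fix $\psi\in C^\infty(\R)$ with $\psi(t)=0$ for $t\le1$ and $\psi(t)=1$ for $t\ge2$, put $\psi_j(z):=\psi(2^j|z|)$ and $f_j:=\psi_j f$. Then $f_j\in\AtBar[Q_l^n]$ vanishes for $|z|<2^{-j}$, hence has support at distance $\ge2^{-j}$ from the spine. The key point is $f_j\to f$ in $\AR[s]$. This cannot be read off the pointwise multiplier theorem (Theorem \ref{pointwise}) directly, since $\|1-\psi_j\,|\,\hold[\rho]\|\sim2^{j\rho}$ blows up; instead one estimates the correction $(1-\psi_j)f$, which is supported in the thin slab $\{|z|\le2^{-j+1}\}$. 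Expanding by a Leibniz/paraproduct argument, the terms in which $1-\psi_j$ is differentiated carry the factors $2^{j|\alpha|}$ from derivatives in the $z$-directions, and these are exactly absorbed by Hardy inequalities at the $l$-dimensional plane of the type $\||z|^{-\sigma}D^{\alpha}f\,|\,L_p(\R^n)\|\lesssim\|f\,|\,\AR[s]\|$ (together with the corresponding lower-order versions), which hold because $\tr_l^r f=0$ and $\sigma$ is not an integer. Since $1-\psi_j$ lives on a slab of width $\sim2^{-j}$ about $\{z=0\}$, this produces a vanishing factor and yields $\|(1-\psi_j)f\,|\,\AR[s]\|\to0$.

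\textbf{Stages 2 and 3 (mollification and compact support).} Now $g:=f_j$ is a fixed element of $\AtBar[Q_l^n]$ supported in $\overline{Q_l}\times\{z\in\R^{n-l}:|z|\ge2^{-j}\}$. Let $c=(\tfrac12,\dots,\tfrac12)$ be the centre of $Q_l$ and, for $\lambda\in(0,1)$, set $g_\lambda(y,z):=g(c+\lambda(y-c),z)$. This is $g$ composed with an affine diffeomorphism, so $g_\lambda\in\AR[s]$ with norm bounded uniformly in $\lambda$ (cf.\ Theorem \ref{Diffeo}), its $y$-support lies at distance $\gtrsim1-\lambda$ from $\partial Q_l$, and $g_\lambda\to g$ in $\AR[s]$ as $\lambda\to1$ — here $p,q<\infty$ enters, via density of $\mathcal{S}(\R^n)$ together with the uniform bound. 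Mollifying, $g_{\lambda,\eps}:=g_\lambda*\rho_\eps$: for $\eps$ small compared to both $1-\lambda$ and $2^{-j}$, the function $g_{\lambda,\eps}$ is $C^\infty$, supported in $Q_l\times\R^{n-l}$, and vanishes for $|z|<2^{-j-1}$, with $g_{\lambda,\eps}\to g_\lambda$ in $\AR[s]$ as $\eps\to0$. Finally fix $\eta\in C^\infty(\R^{n-l})$ with $\eta=1$ on $|z|\le1$ and $\eta=0$ on $|z|\ge2$; then $\eta(z/R)g_{\lambda,\eps}\in D(Q_l^n\setminus Q_l)$ for every $R>0$, and $\eta(\cdot/R)g_{\lambda,\eps}\to g_{\lambda,\eps}$ in $\AR[s]$ as $R\to\infty$ (the multipliers $\eta(\cdot/R)$ are uniformly bounded on $\AR[s]$ by Theorem \ref{pointwise} and converge to the identity, a standard tail estimate again using $p,q<\infty$). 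Letting $R\to\infty$, then $\eps\to0$, then $\lambda\to1$, then $j\to\infty$, and extracting a diagonal sequence yields elements of $D(Q_l^n\setminus Q_l)$ converging to $f$ in $\AR[s]$.

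\textbf{Main obstacle.} The decisive step is Stage 1 — forcing the cut-off near the spine to converge — and it hinges entirely on the Hardy inequality with exponent $\sigma$ for functions with vanishing trace $\tr_l^r f$. This is exactly where $\sigma=s-\frac{n-l}{p}\notin\N$ is indispensable: at integer (critical) values the Hardy inequality with this exponent fails, the cut-off argument collapses, and $D(Q_l^n\setminus Q_l)$ is strictly smaller than $V$ — the phenomenon that the reinforced spaces introduced afterwards are built to repair. Stages 2 and 3 are routine once the finiteness conditions $p<\infty$ and $q<\infty$ are available (needed for density of smooth functions), the cube $Q_l$ being star-shaped with respect to its centre so that the affine shrinking handles all of $\overline{Q_l}$, edges and corners included.
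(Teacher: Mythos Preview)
The paper does not prove this proposition: it is stated as Triebel's result and simply cited from \cite[Section 6.1.4]{Tri08}, so there is no in-paper argument to hold yours against.

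Your three-stage construction is the classical route and is essentially how Triebel argues in the cited reference. You have correctly identified Stage~1 as the heart of the matter and correctly located the role of the hypothesis $\sigma=s-\tfrac{n-l}{p}\notin\N$ in the Hardy inequality. The sketch of Stage~1 is, however, quite compressed: ``Leibniz/paraproduct plus Hardy'' is the right slogan, but making $\|(1-\psi_j)f\,|\,\AR\|\to 0$ rigorous requires first lifting the norm through derivatives (choose $M=r+1$ so that $s-M<\tfrac{n-l}{p}$), then expanding each $D^{\alpha}\bigl((1-\psi_j)f\bigr)$ by Leibniz, controlling the cut-off pieces by the pointwise-multiplier theorem, and only then invoking the Hardy estimate (in the form of Lemma~\ref{HardyZerleger}/Corollary~\ref{HardyZerlegerC} together with Proposition~\ref{Hardysubcrit}) on the slab to extract the decay. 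None of this is wrong in your outline, just more work than the single sentence suggests. Stages~2 and~3 are routine, and your use of the star-shapedness of $Q_l$ about its centre to pull the support strictly inside is the right device.

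For comparison, the paper's own substitute result, Theorem~\ref{Zerleger} (the analogue with $\R^n\setminus\R^l$ in place of $Q_l^n$ and $\Frloc$ in place of $\At$), is proved by a genuinely different mechanism: rather than building approximants, it characterises the target space intrinsically via the Hardy-type norm of Proposition~\ref{rlocequi}, then uses the Fubini property (Proposition~\ref{Fubini}) to freeze the tangential variables and reduce to a one-dimensional boundary problem where Proposition~\ref{inftydom} is already known. That detour through refined-localisation spaces sidesteps the cut-off estimate entirely, at the price of importing more structure; your direct approach is more self-contained but demands the explicit norm computation in Stage~1.
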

Here $\tr_l^r f$ is the trace operator onto $Q_l$, see the introduction of traces in front of the upcoming Proposition \ref{Tra}. 

However, when $s-\frac{n-l}{p} \in \N$, Proposition \ref{decomptri} cannot be proven in this way and should not be true in general. As suggested in \cite[Section 6.2.3]{Tri08} we have to ``reinforce'' the function spaces $\FR$. Furthermore, for our substitute of Proposition \ref{decomptri} we replace $Q_l$ by $\R^n \setminus \R^l$ and $Q_l^n$ by $\R^n$. We start with some basic observations regarding Hardy inequalities for function spaces on $\R^n$. 
 
\subsection{Hardy inequalities at $l$-dimensional planes}

\begin{Definition}
Let $x$ in $\Om$ and
\begin{align*}
 d(x)=\dist(x,\Gamma)= \inf\{|x-y|: y \in \Gamma \}
\end{align*}
be the distance of $x$ to $\Gamma=\partial \Om$. 

Let 
\begin{align*}
 \Om_{\eps}=\left\{x \in \Om: d(x)<\eps\right\}
\end{align*}
for $\eps>0$ sufficiently small.
\end{Definition}
Let $\Om=\R^n\setminus \R^l$ and $x=(x',x'') \in \R^n=\R^l \times \R^{n-l}$. Then in our special situation we have $d(x)=|x''|$, where $|\cdot|$ is the Euclidean distance in $\R^{n-l}$.

\begin{Proposition}[Sharp Hardy inequalities - the critical case]
 \label{Hardycrit}
 Let $0<\eps<1$, $1<p<\infty$ and $0<q \leq \infty$. Let $\varkappa$ be a positive monotonically decreasing function on $(0,\eps)$. Then
\begin{align*}
 \int_{(\hyp)_{\eps}} \left|\frac{\varkappa(d(x))f(x)}{\log d(x) } \right|^p \frac{ dx}{d^{n-l}(x)} \leq c \left\|f| \FR[\frac{n-l}{p}] \right\|^p
\end{align*}
for some $c>0$ and all $f \in \FR[\frac{n-l}{p}]$ 
\begin{align*}
\text{if and only if $\varkappa$ is bounded.}
\end{align*}
\end{Proposition}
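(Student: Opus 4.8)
The plan is to treat the two implications separately after a common reformulation. Writing $\Om=\hyp$ and $x=(x',x'')\in\R^l\times\R^{n-l}$ so that $d(x)=|x''|$, I would cover $\Om_\eps$ (up to a null set) by the dyadic shells $S_j=\{x\in\Om:\,2^{-j-1}\le|x''|<2^{-j}\}$, $j\ge j_0$ with $2^{-j_0}\sim\eps$, on which $d(x)\sim2^{-j}$ and $|\log d(x)|\sim j$; then the left-hand side of the asserted inequality is comparable to $\sum_{j\ge j_0}\varkappa(2^{-j})^p\,j^{-p}\,2^{j(n-l)}\int_{S_j}|f|^p\,dx$. Since $\varkappa$ is monotonically decreasing, ``$\varkappa$ bounded'' means $\varkappa(0^+):=\sup_{(0,\eps)}\varkappa<\infty$, and ``$\varkappa$ unbounded'' means $\varkappa(t)\to\infty$ as $t\to0^+$, i.e. $\varkappa(2^{-j})\to\infty$.

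For ``$\varkappa$ bounded $\Rightarrow$ inequality'' I would replace $\varkappa$ by $\varkappa(0^+)$, i.e. assume $\varkappa\equiv1$, and first pass to $q=\infty$ via $\FR[\frac{n-l}{p}]\hookrightarrow F^{(n-l)/p}_{p,\infty}(\R^n)$ (this only enlarges the admissible class of $f$). For $q=\infty$ one has $\sigma_{p,q}=0<\frac{n-l}{p}$, so the Fubini property (Proposition \ref{Fubini}) applies and separates the $x''$-variables: the integral becomes $\int_{\R^l}\bigl(\text{the }l=0\text{ expression for }f^{x''}\bigr)\,dx'$, while $\|f|\FR[\frac{n-l}{p}]\|\gtrsim\bigl\|\,\|f^{x''}|F^{(n-l)/p}_{p,\infty}(\R^{n-l})\|\,|\,L_p(\R^l)\bigr\|$. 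So everything reduces to the sharp Hardy inequality at a point: with $d:=n-l\ge1$ and $g\in F^{d/p}_{p,\infty}(\R^d)$,
\begin{align*}
 \int_{0<|z|<\eps}\Bigl|\frac{g(z)}{\log|z|}\Bigr|^{p}\frac{dz}{|z|^{d}}\;\lesssim\;\bigl\|g\,|F^{d/p}_{p,\infty}(\R^d)\bigr\|^{p}.
\end{align*}
This is the classical limiting Hardy inequality at the critical smoothness $s=d/p$, contained in an equivalent localised form in \cite{Tri01} (cf. also \cite[Section~6.1]{Tri08} for the plane version); I would reproduce its proof through the dyadic shells combined with the local-means characterisation (Proposition \ref{RychkovLocal}): on a cube $Q$ of side $2^{-j}$ the mean value $|Q|^{-1}\int_Q|g|^p$ is controlled by the local means of $g$ at the $\sim j$ scales $i\le j$ that resolve $Q$, plus a rapidly decaying fine-scale tail, and summing these $\sim j$ contributions against the square-function structure of the norm (H\"older in the $\ell_q$--$\ell_{q'}$ duality, with the bounded geometric factor $\sum_{i\le j}2^{-i\,d\,q'/p}=O(1)$ available precisely because $d/p>0$) produces exactly one factor $j$ per shell, which the weight $j^{-p}$ absorbs. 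This is the step I expect to be the main obstacle: a heavier weight $|\log d(x)|^{-p-\delta}$ makes the estimate soft and a lighter one makes it false, so recovering exactly one logarithm — the ``critical case'' — is the entire point.

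For the converse I would argue by contradiction: if $\varkappa$ is unbounded then $\varkappa(t)\to\infty$, and I want $f\in\FR[\frac{n-l}{p}]$ with the left-hand side $+\infty$. I would take $f$ radial in $x''$ over a fixed cube in the $x'$-variables, $f(x)=\psi(x')\,G(|x''|)$ with $\psi\in D(\R^l)$, $\psi\not\equiv0$, and $G(r)=\bigl(\log\tfrac1r\bigr)^{1/p'}\omega\bigl(\log\tfrac1r\bigr)$ a smooth decreasing profile supported in $(0,\eps)$, where $\omega\downarrow0$ slowly. Reducing to $\R^{n-l}$ (again by a Fubini/tensor argument) and using the sharpness of the limiting embedding of $F^{d/p}_{p,q}$ — for which $(\log\frac1r)^{1/p'}$ is exactly the extremal growth — one gets that $f\in\FR[\frac{n-l}{p}]$ is equivalent to an integrability condition of the shape $\int^{\infty}\omega(u)^p\,\tfrac{du}{u}<\infty$, whereas a direct computation gives that the weighted integral equals, up to constants, $\int^{\infty}\varkappa(e^{-u})^p\,\omega(u)^p\,\tfrac{du}{u}$. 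Since $\varkappa(e^{-u})^p$ is unbounded as $u\to\infty$, one can choose $\omega$ making the first integral finite and the second infinite — an elementary fact about unbounded nonnegative functions — so $f\in\FR[\frac{n-l}{p}]$ while the left-hand side is $+\infty$, contradicting the inequality for every $c>0$.

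A word on where care is needed. In the converse a single rescaled bump does not suffice when $\varkappa$ grows slowly (say like $\log\log\tfrac1t$), and one genuinely needs the near-extremal functions of the critical (Br\'ezis--Wainger / Trudinger--Moser type) embedding of $F^{d/p}_{p,q}$; after that it is the elementary sequence-space manipulation above. In the forward direction the only fussy bookkeeping is the reduction to $q=\infty$ (to make the Fubini property available) and the separate handling of the few cubes straddling $\R^l$; the real difficulty is, as said, extracting the single power of the logarithm.
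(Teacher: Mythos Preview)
Your forward direction (``$\varkappa$ bounded $\Rightarrow$ inequality'') matches the paper's argument: reduce to $q$ large enough for the Fubini property to apply, freeze $x'\in\R^l$, and invoke the known $(n-l)$-dimensional critical Hardy inequality from \cite[Section~16]{Tri01}; the paper does $1<q\le\infty$ first and then uses monotonicity in $q$, while you pass to $q=\infty$ first --- same thing.

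For the converse you take a genuinely different route, and there is a gap. The paper does \emph{not} build a single function with infinite left-hand side. Instead it produces, for each $J\in\N$, an explicit atomic test function
\[
 f_J(x)=J^{-1/p}\sum_{j=1}^{J}\sum_{k=1}^{C_j}2^{-jl/p}\bigl[2^{jl/p}\psi\bigl(2^{j-1}(x-x^{j,k})\bigr)\bigr],
\]
a superposition of $\sim 2^{jl}$ normalised bumps on each shell $|x''|\sim 2^{-j}$, $|x'|<1$. By the atomic representation theorem the bracketed functions are correctly normalised atoms in $F^{(n-l)/p}_{p,q}(\R^n)$, so $\|f_J\|\lesssim J^{-1/p}\bigl(\sum_{j=1}^J\sum_k 2^{-jl}\bigr)^{1/p}\sim 1$ uniformly in $J$; and since every bump equals $1$ on a set covering $S_J^l=\{|x'|<1,\ |x''|<2^{-J}\}$, one has $f_J\ge J^{1/p'}$ there. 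Evaluating the left-hand side on $f_J$ over the single shell $S_J^{l,*}=S_J^l\setminus S_{J+1}^l$ gives $\gtrsim \varkappa(2^{-J})^p$, so the assumed inequality forces $\sup_J\varkappa(2^{-J})<\infty$. The point is that the norm bound $\|f_J\|\lesssim 1$ comes \emph{for free} from the atomic theorem --- no analysis of critical-space membership is needed.

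Your proposal replaces this family by a single profile $G(r)=(\log\tfrac1r)^{1/p'}\omega(\log\tfrac1r)$ and asserts that $f=\psi(x')G(|x''|)\in F^{(n-l)/p}_{p,q}(\R^n)$ is \emph{equivalent} to $\int^\infty\omega(u)^p\,\tfrac{du}{u}<\infty$. This is the step that is not justified: it is not a standard citable statement, and proving even the ``if'' direction you actually need (membership when the integral is finite) amounts to decomposing $G(|x''|)$ into atoms and estimating the sequence norm --- which is precisely what the paper's construction does, only cleaner and without the smooth profile. Your computation of the weighted integral and the final sequence-space trick (choose $\omega$ so that $\int\omega^p\,du/u<\infty$ but $\int\varkappa(e^{-u})^p\omega^p\,du/u=\infty$) are correct, but they sit on top of this unproved membership claim. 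If you want to keep your single-function approach, the honest fix is to build $f$ atomically from the start, i.e.\ take $\omega$ piecewise constant on dyadic scales and write $f$ as a superposition of bumps exactly as above --- at which point you have essentially recovered the paper's proof.
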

\begin{Proof}
 The proof is a generalization of the discussion in \cite[Section 16.6]{Tri01}. There the case $l=n-1$ is considered. One uses the one-dimensional version of the Hardy inequality (16.8) in \cite{Tri01}.
 
For the ``if-part'' let at first $1<q \leq \infty$. We now use the $(n-l)$-dimensional version of (16.8) in \cite{Tri01}. Let $x=(x',x'') \in \R^n=\R^l \times \R^{n-l}$. We fix $x' \in \R^l$ and get
\begin{align*}
 \int_{|x''|<\eps} \left|\frac{\varkappa(|x''|)|f(x',x'')}{\log |x''|} \right|^p \frac{ dx''}{|x''|^{n-l}} \lesssim \left\|f| F_{p,q}^{\frac{n-l}{p}}(\R^{n-l}) \right\|^p.
\end{align*}
We now integrate over $x' \in \R^l$ and make use of the Fubini property of $\FR$, see Proposition \ref{Fubini}. Using $d(x)=|x''|$ this shows
\begin{align}
\begin{split}
 \label{Hardyl}
 \int_{(\hyp)_{\eps}} \left|\frac{\varkappa(d(x))f(x)}{\log d(x) } \right|^p \frac{ dx}{d^{n-l}(x)} &\lesssim \left\|\big\|f| F_{p,q}^{\frac{n-l}{p}}(\R^{n-l}) \big\||L_p(\R^{l})\right\| \\
& \lesssim \|f | \FR\|.
\end{split}
\end{align}
Since for fixed $p$ with $1<p<\infty$ the spaces $F_{p,q}^{\frac{n-l}{p}}(\R^{n-l})$ are monotonic with respect to $q$, inequality \eqref{Hardyl} holds for all $1<p<\infty$ and $0<q \leq \infty$.

For the ``only if-part'' we have to show that $\varkappa$ must be bounded. The proof is a generalization of the discussion in \cite[Section 16.6]{Tri01} for dimension $l=n-1$. We consider the set
\begin{align*}
 S_J^l=\{x=(x',x'') \in \R^{l} \times \R^{n-l}: |x'|<1, |x''|<2^{-J} \}, \ J \in \N
\end{align*}
and
\begin{align*}
 S_J^{l,*}=S_J^l \setminus S_{J+1}^l.
\end{align*}
We will construct an $(n-l)$-dimensional substitute of $f_J$ from (16.29) in \cite{Tri01}, such that $f_J \in F_{p,q}^{n-l}(\R^n)$, 
\begin{align}
\label{fJ}
 f_J(x) = J^{\frac{1}{p'}} \text{ for } x \in S_J^l \text{ and } \|f_J|\FR[\frac{n-l}{p}]\| \lesssim 1.
\end{align}
Remember $d(x)=|x''|$ in our setting. Then we will have
\begin{align*}
 \int_{(\hyp)_{\eps}} \left|\frac{\varkappa(d(x))f_J(x)}{\log d(x) } \right|^p \frac{ dx}{d^{n-l}(x)} &\gtrsim  \varkappa(2^{-J})^p J^{1-p}  \int_{S_J^{l,*}} \left|\frac{1}{\log d(x) } \right|^p \frac{ dx}{d^{n-l}(x)} \\
&\sim\varkappa(2^{-J})^p J^{1-p}\int_{2^{-J-1}}^{2^{-J}} r^{n-l-1} \frac{1}{r^{n-l}|\log r|^p} \ dr \\
&\sim \varkappa(2^{-J})^p J^{1-p} \int_{2^{-J-1}}^{2^{-J}} \frac{1}{r|\log r|^p} \ dr \\
&\gtrsim \varkappa(2^{-J})^p J^{1-p} J^{p-1} \\
&= \varkappa(2^{-J})^p
\end{align*}
using $(n-l)$-dimensional spherical coordinates and $p>1$. Since the constants do not depend on $J \in \N$, this shows $\varkappa \lesssim 1$ keeping in mind $\|f_J|\FR[\frac{n-l}{p}]\| \lesssim 1$. Hence the proof will be finished after construction of such a series of functions $f_J$ with $\eqref{fJ}$. We can define them in the following way: For every $j \in \N$ we choose lattice points $x^{j,k} \in S_j^{l,*}$ for $k \in \{1,\ldots, C_j\}$ such that
\begin{align}
\label{overlap2}
 S_j^{l,*} \subset \bigcup_{k=1}^{C_j} B_{2^{-j}}(x^{j,k})
\end{align}
and $|x^{j,k}-x^{j,k'}| \geq 2^{-j}$ for $k \neq k'$. By a simple volume argument we have 
\begin{align}
\label{overlap1}
C_j \sim \frac{|S_j^{l,*}|}{2^{-jn}} \sim 2^{jl}.
\end{align}
Let $\psi \in \Sc(\R^n)$ be non-negative, $\psi(x)=1$ for $|x|\leq\frac{1}{2}$ and $\psi(x)=0$ for $|x|\geq 1$. We set 
\begin{align*}
 f_J(x):=J^{-\frac{1}{p}} \sum_{j=1}^{J} \sum_{k=1}^{C_j}2^{-j \frac{l}{p}} \left[2^{j \frac{l}{p}} \psi(2^{j-1}(x-x^{j,k}))\right].
\end{align*}
At least when $q\geq 1$ and no moment conditions are necessary the functions 
\begin{align*}
\left[2^{j \frac{l}{p}} \psi(2^{j-1}(x-x^{j,k}))\right]
\end{align*}
are correctly normalized atoms in $\FR[\frac{n-l}{p}]$ by Definition \ref{Atoms}. Furthermore, by the support properties we can use the arguments in \cite[Section 2.15]{Tri01} about a modification of the sequence space. The slight overlapping of the functions $\psi(2^{j-1}(x-x^{j,k}))$ for different $j$ can be neglected. Hence by the atomic representation Theorem \ref{AtomicRepr} and $\eqref{overlap1}$ we have
\begin{align*}
 \|f_J|\FR[\frac{n-l}{p}]\| \lesssim J^{-\frac{1}{p}} \left(\sum_{j=1}^{J}\sum_{k=1}^{2^{jl}} \left(2^{-j \frac{l}{p}}\right)^p \right)^{\frac{1}{p}} \sim 1.
\end{align*}
On the other hand using $\eqref{overlap2}$ and the support properties of $\psi$ we get
\begin{align}
\label{overlap3}
 f_J(x) \geq J^{-\frac{1}{p}} \sum_{j=1}^J 1 = J^{\frac{1}{p'}} \text{ for } x \in S_J^l
\end{align}
Hence we have constructed such a function for $q\geq 1$. For $0<q<1$ one has to modify the functions $f_J$ to get moment conditions. These modifications are described in Step 5 of the proof of Theorem 13.2 in \cite{Tri01}. Then one has to define \eqref{overlap2} such that the functions $\psi(2^{j-1}(x-x^{j,k}))$ have disjoint support for fixed $j$ and different $k$. Then they cannot satisfy \eqref{overlap3}. But this is not necessary - it suffices to have $f_J(x)\geq J^{\frac{1}{p'}}$ on a set $A_J^l \subset S_J^l$ with $|A_J^l| \sim |S_J^l|$. This is possible.
\end{Proof}

\begin{Proposition}[Sharp Hardy inequalities - the subcritical case]
 \label{Hardysubcrit}
Let $0<\eps<1$, $1 \leq p<\infty$ and $0<q \leq \infty$. Let 
\begin{align*}
 0<s<\frac{n-l}{p}
\end{align*}
and $\varkappa$ be a positive monotonically decreasing function on $(0,\eps)$. Then
\begin{align*}
 \int_{(\hyp)_{\eps}} \left|\varkappa(d(x))f(x) \right|^p \frac{ dx}{d^{sp}(x)} \leq c \left\|f| \FR \right\|^p
\end{align*}
for some $c>0$ and all $f \in \FR$ 
\begin{align*}
\text{if and only if $\varkappa$ is bounded.}
\end{align*}
\end{Proposition}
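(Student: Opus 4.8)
The plan is to follow the proof of Proposition \ref{Hardycrit}, which here is technically lighter because no logarithmic factor intervenes; the ``if''- and ``only if''-parts are handled separately. For the ``if''-part I would assume $\varkappa$ is bounded, so that $\varkappa(d(x))^p\le\|\varkappa|L_\infty(0,\eps)\|^p$ may be pulled out of the integral and it suffices to estimate $\int_{(\hyp)_\eps}d(x)^{-sp}|f(x)|^p\,dx$. First I settle the range $q\ge 1$: writing $x=(x',x'')\in\R^l\times\R^{n-l}$, so that $d(x)=|x''|$, I freeze $x'$ and apply the classical \emph{subcritical} Hardy inequality in the $n-l$ transversal variables -- legitimate since $0<s<\frac{n-l}{p}$ -- namely
\begin{align*}
\int_{|x''|<\eps}\frac{|f(x',x'')|^p}{|x''|^{sp}}\,dx''\lesssim\big\|f|F_{p,q}^{s}(\R^{n-l})\big\|^p,
\end{align*}
the $(n-l)$-dimensional subcritical counterpart of (16.8) in \cite{Tri01}. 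Integrating over $x'\in\R^l$ and invoking the Fubini property of $\FR$ (Proposition \ref{Fubini}, applicable since $s>0=\sigma_{p,q}$ for $p,q\ge 1$) gives
\begin{align*}
\int_{(\hyp)_\eps}\frac{|f(x)|^p}{d(x)^{sp}}\,dx\lesssim\left\|\big\|f|F_{p,q}^{s}(\R^{n-l})\big\||L_p(\R^l)\right\|^p\lesssim\|f|\FR\|^p,
\end{align*}
which is the assertion for $q\ge 1$. For $0<q<1$ the claim then follows from the elementary monotonicity $\FR\hookrightarrow F_{p,1}^{s}(\R^n)$, reducing to the case $q=1$ just settled.

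For the ``only if''-part I would argue as in Proposition \ref{Hardycrit}, but now a single rescaled bump per dyadic scale suffices, since there is no logarithmic divergence to manufacture by summation. Fix a non-negative $\psi\in\Sc(\R^n)$ with $\psi=1$ near $0$ and $supp\,\psi\subset B$, and for each large $J\in\N$ take a translated and rescaled copy $\psi_J$ of $\psi$ whose support is a ball of radius $\sim 2^{-J}$ contained in the dyadic shell $\{x:2^{-J-1}<d(x)<2^{-J}\}$, with $\psi_J\equiv 1$ on a concentric ball of measure $\sim 2^{-Jn}$; set $f_J:=2^{-J\left(s-\frac{n}{p}\right)}\psi_J$. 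By Definition \ref{Atoms} and Theorem \ref{AtomicRepr} this is, up to a fixed multiplicative constant, an atom located at a cube $Q_{J,m}$, so $\|f_J|\FR\|\lesssim 1$; for $0<q<1$ one first modifies $\psi_J$ to carry the necessary moment conditions, exactly as in Step 5 of the proof of Theorem 13.2 in \cite{Tri01}, which as there leaves the lower bound below intact on a subset of comparable measure. On $supp\,\psi_J$ one has $2^{-J-1}<d(x)<2^{-J}$, hence $\varkappa(d(x))\ge\varkappa(2^{-J})$, and therefore
\begin{align*}
\int_{(\hyp)_\eps}|\varkappa(d(x))f_J(x)|^p\,\frac{dx}{d^{sp}(x)}\gtrsim\varkappa(2^{-J})^p\cdot 2^{-Jp\left(s-\frac{n}{p}\right)}\cdot 2^{Jsp}\cdot 2^{-Jn}=\varkappa(2^{-J})^p.
\end{align*}
Combined with the assumed inequality and $\|f_J|\FR\|\lesssim 1$, this forces $\varkappa(2^{-J})\lesssim 1$ uniformly in $J$; since $\varkappa$ is monotonically decreasing, $\sup_{(0,\eps)}\varkappa=\lim_{J\to\infty}\varkappa(2^{-J})\lesssim 1$, i.e.\ $\varkappa$ is bounded.

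The only genuinely non-routine ingredient is importing the correct classical subcritical Hardy inequality on $\R^{n-l}$ in precisely this $L_p$-normalization (and, for $0<q<1$, grafting moment conditions onto the test bumps as in the critical case); granted this and a rerun of the critical-case construction, everything else is bookkeeping.
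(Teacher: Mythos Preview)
Your proof is correct and follows essentially the same approach as the paper's. For the ``if''-part you both reduce to an $(n-l)$-dimensional subcritical Hardy inequality via the Fubini property and handle small $q$ by monotonicity; for the ``only if''-part you both test against a single rescaled bump at each dyadic scale with $\dist(\cdot,\R^l)\sim 2^{-J}$ and uniformly bounded $\FR$-norm---the paper uses wavelets from a $u$-Riesz basis (which carry moment conditions automatically) where you use Schwartz bumps and add moments by hand for $q<1$, but the resulting estimate is identical.
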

\begin{Proof}
The ``if-part'' can be handled in the same way as in Proposition \ref{Hardycrit} before. Now we use the $(n-l)$-dimensional version of $(16.15)$ in \cite{Tri01} having in mind $s-\frac{n-l}{p}=-\frac{n}{r}$. Here $p=1$ is allowed. Then we integrate over $x' \in \R^l$ and make use of the Fubini property \ref{Fubini} to get the desired result, using $d(x)=|x''|$.

For the ``only if-part'' we make a similar approach as in (15.11) of \cite{Tri01}. We take
\begin{align*}
 f_j := 2^{j(-s+\frac{n}{p}-\frac{n}{2})} \cdot \Phi_{r}^j
\end{align*}
for $j \in \N$ with a wavelet $\Phi_{r}^j$ choosen from an oscillating $u$-Riesz basis, see Proposition \ref{waveletre}, such that
\begin{align}
\label{wavedist}
 \dist(supp \ \Phi_{r}^j, \R^l) \sim 2^{-j},
\end{align}
for instance choose $m=(0,\ldots,0,1,1,\ldots,1)$ where the first $l$ coordinates are $0$. Obviously,
\begin{align*}
 f_j(x) = 2^{-j(s-\frac{n}{p})} \Phi_{r'}^0(2^j x)
\end{align*}
for a suitable $r' \in \Z^n$. Then by the atomic representation Theorem \ref{AtomicRepr}
\begin{align*}
 \|f_j|\FR\| \sim 1 
\end{align*}
with constants independent of $j \in \N$. As before, it holds $d(x)=|x''|$ with $x=(x',x'') \in \R^n=\R^l \times \R^{n-l}$. By \eqref{wavedist} we have for large $j$ (in dependence of $\eps$)
\begin{align*}
 \int_{(\hyp)_{\eps}} \left|f_j(x) \right|^p \frac{ dx}{d^{sp}(x)} &\gtrsim 2^{-jp(s-\frac{n}{p})} \cdot \int_{supp \ \Phi_{r}^j} \frac{ dx}{d^{sp}(x)} \\
&\gtrsim 2^{-jp(s-\frac{n}{p})} \cdot 2^{-jn} \cdot 2^{jsp} \\
&\gtrsim 1. 
\end{align*}
Hence $\varkappa(t)$ must be bounded for $t\rightarrow 0$, otherwise we would obtain a contradiction.
\end{Proof}

\subsection{Definition of reinforced function spaces $\Frinf[\hyp]$}

Propositions \ref{Hardycrit} and \ref{Hardysubcrit} describe the different behaviour of the spaces $\FR[\frac{n-l}{p}]$ and $\FR$ for $0<s<\frac{n-l}{p}$ in terms of Hardy inequalities. For the space $\FR[\frac{n-l}{p}]$ we have a weaker inequality - this leads to the following definition of the reinforced spaces for $\Om=\hyp$ with $\partial \Om=\R^l$.

\begin{Definition}
\label{reinforcedhyp}
Let $1\leq p < \infty$, $0<q \leq \infty$ and $s> 0$. 

(i) Let $s-\frac{n-l}{p} \notin \N_0$. Then
\begin{align*}
 \Frinf[\hyp] :=\FR.
\end{align*}

(ii) Let $s-\frac{n-l}{p}=r \in \N_0$. Then
\begin{multline*}
  \Frinf[\hyp]\\
:=\left\{ f \in \FR: d^{-\frac{n-l}{p}} \cdot  D^{\alpha} f \in L_p((\hyp)_{\eps}) \text{ for all } \alpha \in \N_l^n, |\alpha|=r \right\}.
\end{multline*}
 \end{Definition}
\begin{Remark}
 For $s-\frac{n-l}{p}=r \in \N_0$ this space can be normed by
\begin{align*}
 \|f|\Frinf[\hyp]\|&:= \|f|\FR\| + \sum_{\underset{|\alpha|=r}{\alpha \in \N_l^n}}  \left(\int_{(\hyp)_{\eps}}  |D^{\alpha} f(x)|^p  \frac{ dx}{d^{n-l}(x)} \right)^{\frac{1}{p}} \\
&=\|f|\FR\| + \sum_{\underset{|\alpha|=r}{\alpha \in \N_l^n}}  \left(\int_{(\hyp)_{\eps}}  |D^{\alpha} f(x)|^p  \frac{ dx}{d^{(s-r)p}(x)} \right)^{\frac{1}{p}} 
\end{align*}

\end{Remark}

\begin{Remark}
\label{indeps}
 The space $\Frinf[\hyp]$ does not depend on the choice of $\eps$ in the sense of equivalent norms since 
for $|\alpha|=r$ we have $s-r>0$ and hence
\begin{align*}
 D^{\alpha} f \in F_{p,q}^{s-r}(\R^n) \subset L_p(\R^n). 
\end{align*}
Furthermore, we can replace $d(x)$ by $\delta(x)=\min(d(x),1))$.
\end{Remark}
\begin{Remark}
 This definition is adapted by Definition 6.44 in \cite{Tri08}, where the case of a $C^{\infty}$-domain $\Om$ is considered and in this sense $l=n-1$. Then there is only one direction of derivatives to be treated - the normal derivative at the boundary $\Gamma$.  
\end{Remark}
\begin{Remark}
Let $s-\frac{n-l}{p} \notin \N_0$. Then $\Frinf[\hyp]=\FR$ by definition. Let $f \in \FR$, $r:=\lfloor s-\frac{n-l}{p}\rfloor+1$ and additionally assume $s-r>0$: By classical properties of $\FR$ it holds
\begin{align*}
 D^{\alpha} f \in \FR[s-r] \text{ for } |\alpha|=r.
\end{align*}
Using the Hardy inequality from Proposition \ref{Hardysubcrit} and $s-r<\frac{n-l}{p}$ we automatically have 
\begin{align*}
 \int_{(\hyp)_{\eps}} \left|D^{\alpha} f(x) \right|^p \frac{ dx}{d^{(s-r)p}(x)} \lesssim \left\|D^{\alpha}f| \FR[s-r] \right\|^p \lesssim c \left\|f| \FR \right\|^p. 
\end{align*}
The counterpart of this Hardy inequality for $s-\frac{n-l}{p} \in \N_0$ is given by Proposition \ref{Hardycrit} (at least for $1<p<\infty$) and differs from the version for $0<s-r<\frac{n-l}{p}$ -- an extra $\log$-term comes in. This explains why it is somehow natural to require stricter Hardy inequalities for $s-\frac{n-l}{p} \in \N_0$ in the definition of $\Frinf[\hyp]$. 

\end{Remark}

\begin{Remark}
\label{Frinfsmaller}
For the Triebel-Lizorkin spaces $\FR$ we always have
\begin{align*}
F_{p,q}^{s+\sigma}(\R^n)   \hookrightarrow \FR
\end{align*}
for $\sigma>0$. We cannot transfer such an embedding from $\FR$ to $\Frinf[\hyp]$: For incorporating the critical cases ($s-\frac{n-l}{p} \in \N_0$) we would have to show
\begin{align*}
 \|d^{-\frac{n-l}{p}} f|L_p((\hyp)_{\eps})\| \lesssim \|f|F_{p,q}^{\frac{n-l}{p}+\sigma}(\R^n)\|
\end{align*}
for $1\leq p <\infty$ and $\sigma>0$.

Let $x=(x',x'') \in \R^l \times \R^{n-l}$ as before. If we take a function $\psi \in D(\R^n)$ with $\psi(x)=1$ with $|x'|\leq 1, |x''|\leq 1$, then 
\begin{align*}
 \|d^{-\frac{n-l}{p}} \psi |L_p((\hyp)_{1})\|^p \geq  \int_{|x'|\leq 1} \int_{|x''|\leq 1} |x''|^{-(n-l)} \ dx'' \ dx'=\infty.
\end{align*}
But $\psi \in \FR$ for all $s>0$. This shows
\begin{align*}
 F_{p,q}^{\frac{n-l}{p}+\sigma}(\R^n)\lhook\joinrel\not\rightarrow F_{p,q}^{\frac{n-l}{p},\rinf}(\hyp)
\end{align*}
for $0<\sigma<1$ and also	
\begin{align*}
  F_{p,q}^{\frac{n-l}{p},\rinf}(\hyp) \subsetneq F_{p,q}^{\frac{n-l}{p}}(\R^n). 
\end{align*}
If we take a function $\psi^{r} \in D(\R^n)$ with $\psi^{r}(x)=x_n^r$ for $|x'|\leq 1, |x''|\leq 1$, where $x_n$ is the $n$-th coordinate of $x$, then $\left(D^{\alpha} \psi^r\right)(x) =1$ for $|x'|\leq 1, |x''|\leq 1$ and $\alpha=(0,\ldots,0,r)$ -  the $r$-th derivative in $x_n$-direction. By the previous steps we have
\begin{align*}
 \|d^{-\frac{n-l}{p}} D^{\alpha}\psi^r |L_p((\hyp)_{1})\|^p=\infty.
\end{align*}
On the other hand surely $\psi^{r} \in \FR$ for all $s>0$. This shows
\begin{align*}
 F_{p,q}^{r+\frac{n-l}{p}+\sigma}(\R^n) \lhook\joinrel\not\rightarrow F_{p,q}^{r+\frac{n-l}{p},\rinf}(\hyp)
\end{align*}
for $0<\sigma<1$ and also
\begin{align*}
 F_{p,q}^{r+\frac{n-l}{p},\rinf}(\hyp) \subsetneq F_{p,q}^{r+\frac{n-l}{p}}(\R^n).
\end{align*}
In Corollary \ref{embed_rinf} we will prove a weaker version under additional properties. It holds
\begin{align*}
 f \in F_{p,q}^{r+\frac{n-l}{p}+\sigma}(\R^n) \text{ belongs to } F_{p,q}^{r+\frac{n-l}{p},\rinf}(\hyp) 
\end{align*}
for $0<\sigma<1$ if
\begin{align*}
\tr_l D^{\alpha}f= 0 \text{ for all } \alpha \in \N_l^n \text{ with } |\alpha|=r.
\end{align*}

\end{Remark}

\section{Refined localization spaces}
\label{section:refined}
\subsection{Definition of refined localization spaces}
In Section \ref{defdom} we introduced the spaces $\Ft$ which where used in Proposition \ref{decomptri}. However, if we look at the domains $\hyp$, we need to find a substitute. Actually, we have
\begin{align*}
 \Ft[\hyp] \cong \FR \text{ since } \overline{\Om}=\R^n \text{ and } \Ft[\partial \Om]=\{0\}
\end{align*}
at least when $s>0$ and hence for $1\leq p \leq \infty$ 
\begin{align*}
 \FR \hookrightarrow L_p(\R^n).
\end{align*}

So we take over the definition from \cite[Section 2.2.3]{Tri08}: Let $\Om$ be an arbitrary open domain and let $Q_{j,r}^0,Q_{j,r}^1$ be the Whitney cubes according to Section \ref{Whitney}. Let $\varrho=\{\varrho_{j,r}\}$ be a suitable resolution of unity, i.\,e.\
\begin{align}
\label{resun}
 supp \ \varrho_{j,r} \subset Q_{j,r}^1, \quad \|D^{\alpha} \varrho_{j,r} (x)\| \leq c_{\alpha} 2^{j|\alpha|}, x \in \Om, \alpha \in \N_0^n
\end{align}
for some $c_{\alpha}>0$ independent of $x,j,r$ and 
\begin{align*}
 \sum_{j=0}^{\infty} \sum_{r=1}^{M_j} \varrho_{j,r}(x)=1 \text{ if } x \in \Om.
\end{align*}

\begin{Definition}
\label{FrlocDef}
Let $\Om$ be an arbitrary open domain with $\Om\neq \R^n$ and let $\varrho=\{\varrho_{j,r}\}$ be the above introduced resolution of unity. Let $0\leq p < \infty$, $0<q\leq\infty$ and $s>\sigma_{p,q}$. Then
\begin{align*}
 \Frloc:=\left\{ f \in D'(\Om): \|f| \Frloc\|_{\varrho}< \infty\right\} 
\end{align*}
with
\begin{align*}
 \|f| \Frloc\|_{\varrho}:=\left(\sum_{j=0}^{\infty}\sum_{r=1}^{M_j}\| \varrho_{j,r} f|\FR\|^p\right)^{\frac{1}{p}}. 
\end{align*}
\end{Definition}
\begin{Remark}
 The definition of $\Frloc$ is independent of the choice of the re\-so\-lution of unity $\varrho=\{\varrho_{l,r}\}$, see \cite[Theorem 2.16]{Tri08}. There is also a definition given for $\Frloc$ when $s<0$ and $s=0$, but this is not necessary for our later considerations. 
\end{Remark}
\begin{Remark}
\label{rlocDense}
 The space $D(\Om)$ is dense in $\Frloc$ if $0<p<\infty, 0<q<\infty$. This can be seen as follows: Let $f \in \Frloc$ and take $J,R \in \N_0$ sufficiently large such that
\begin{align*}
 \left(\sum_{j\geq J}\sum_{r=1}^{M_j}\| \varrho_{j,r} f|\FR\|^p\right)^{\frac{1}{p}} < \eps
\end{align*}
and (if some $M_j=\infty$, this means if $\Om$ is unbounded)
\begin{align*}
 \left(\sum_{j< J}\sum_{r>R} \| \varrho_{j,r} f|\FR\|^p\right)^{\frac{1}{p}} < \eps.
\end{align*}
It holds
\begin{align}
\label{ressupp}
 \dist(supp \, \varrho_{j,r}, \Gamma) \gtrsim 2^{-j}
\end{align}
for all $j\leq J$ and $r \in \{1,\ldots,N_j\}	$. For $j\leq J$ and $r \leq R$ we take a sequence of functions $\{g_{j,r}^{n}\}_{n\in \N} \subset D(\R^n)$ approximating $\varrho_{j,r} f$ in $\|\cdot|\FR\|$. By \eqref{ressupp} and a pointwise multiplier argument, using Theorem \ref{pointwise}, we may assume that 
\begin{align*}
 \dist(x,\Gamma) \gtrsim 2^{-j} \text{ for } x \in supp \, g_{j,r}^{n} \text{ as well as } supp \, g_{j,r}^{n} \subset \Om,
\end{align*}
hence $g_{j,r}^{n} \in D(\Om)$.

Now we sum up:
\begin{align*}
 f- \sum_{j\leq J}\sum_{r=1}^{N_j} g_{j,r}^{n} = \sum_{j> J}\sum_{r=1}^{N_j} \varrho_{j,r}f + \sum_{j\leq J}\sum_{r > R} \varrho_{j,r}f+ \sum_{j\leq J}\sum_{r=1}^{R} \left(\varrho_{j,r}f-g_{j,r}^{n}\right). 
\end{align*}
The $\Frloc$-norm estimation of the three sums is a technical matter, using \eqref{resun} and pointwise multiplier observations similar to the concept in the proof of Theorem 2.16 (using Theorem 2.13) in \cite{Tri08}.
\end{Remark}

\subsection{Atomic decompositions and wavelet bases for refined localization spaces}

The following theorem gives an alternative approach to define $\Frloc$ which is nowadays maybe the more common way.
\begin{Theorem}[Wavelet basis for $\Frloc$]
\label{rlocwavelet}
Let $\Om$ be an arbitrary domain in $\R^n$ with $\Om\neq \R^n$. Let 
\begin{align*}
 0<p<\infty, 0<q<\infty, s>\sigma_{p,q} \text{ and } u>s. 
\end{align*}
Then there is an orthonormal u-wavelet basis 
\begin{align*}
\Phi=\left\{\Phi_r^j: j \in \mathbb{N}_0, r=1, \ldots, N_j\right\} \subset C^u (\Om)
\end{align*}
in $L_2(\Om)$ according to Definition \ref{u-basis} which is an interior u-Riesz basis (according to Definition \ref{u-Riesz}) for $\Frloc$ with the sequence space $\fO$. It holds
\begin{align*}
 \lambda_r^j(f)= 2^{jn/2} (f,\Phi_r^j).
\end{align*}
\end{Theorem}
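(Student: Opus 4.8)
The plan is to reduce the statement to the already-established wavelet characterizations on $\R^n$ (Proposition \ref{waveletre}) together with the atomic/local means machinery of Section \ref{atomsec}, transplanted to the domain $\Om$ via the Whitney decomposition. First I would fix a Whitney decomposition $\{Q_{j,r}^0\subset Q_{j,r}^1\}$ of $\Om$ as in Section \ref{Whitney}, and following Remark \ref{Whitneyatom} subdivide each cube $Q_{l,r}^0$ into dyadic subcubes of sidelength $2^{-j}$ for $j\geq l$, taking the centres as the points $x_r^j$ forming an admissible collection $\Z^\Om$ in the sense of Definition \ref{sequence} (in particular the distance property \eqref{distprop} holds by the Whitney condition $\dist(Q_{l,r}^1,\Gamma)\sim 2^{-l}$). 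On each such fixed cube one uses a Daubechies $u$-wavelet basis of $L_2$ of $\R^n$ rescaled to the cube, multiplied by a smooth cut-off subordinate to the slightly larger Whitney cube; orthonormality across different Whitney cubes at the same generation is arranged by the disjointness of the $Q_{j,r}^0$, and the construction is exactly the one in \cite[Section 2.1.2]{Tri08}. This yields the candidate system $\Phi$, and one checks directly that the support, derivative and (substitute) moment conditions of Definition \ref{u-wavelet}(i)--(iii) hold with the interior support condition, using $u>s>\sigma_{p,q}$; the moment conditions are inherited from the Daubechies wavelets since multiplication by a fixed smooth cut-off preserves them in the generalized sense \eqref{Atom3} by Lemma \ref{ProdAtom}.

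Next I would prove the two-sided norm equivalence. For the "synthesis" direction, given $\lambda\in \fO$, each $2^{-jn/2}\Phi_r^j$ is (up to a harmless constant and normalization) an $(s,p)_{K,L}$-atom located at $Q(x_r^j,2^{-j})$ for any $K$ and $L$ we please, since the wavelets are $C^u$ with $u>s$ and satisfy the moment conditions; hence by the atomic representation Theorem \ref{AtomicRepr} applied on $\R^n$, the extension $\sum_{j,r}\lambda_r^j(f)2^{-jn/2}\Phi_r^j$ converges in $\Sc'(\R^n)$ (Lemma \ref{HarmS'-KonvAtom}) to an element of $\FR$ with norm $\lesssim \|\lambda|\fspq\|$ — here I would note that the sequence-space norm $\fspq$ on $\R^n$ restricted to the subcollection $\Z^\Om$ and counted with Whitney multiplicity is comparable to $\|\lambda|\fO\|$, because the balls $B(x_r^j,c_32^{-j})$ have bounded overlap at each generation and, crucially, the summands are supported in $\Om$ so the $L_p$-norm may be taken over $\Om$. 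Since all $\Phi_r^j$ are supported in $\Om$, the restriction of this function to $\Om$ lies in $\Frloc$, and one estimates $\|\cdot|\Frloc\|_\varrho$ directly: $\varrho_{k,s}$ times the sum only sees finitely many (boundedly many, by the Whitney geometry and $|l-l'|\leq 1$ for adjacent cubes) generations $j\sim k$, so the localized pieces reassemble into the $\fO$-norm. For the "analysis" direction, given $f\in \Frloc$, one applies the $\R^n$-wavelet expansion of Proposition \ref{waveletre} to each $\varrho_{k,s}f\in \FR$ (using Theorem \ref{pointwise} to know $\varrho_{k,s}f\in\FR$), collects the coefficients, and sums in $\ell_p$ over $(k,s)$; the bounded-overlap property of the Whitney cubes and of the wavelet supports controls the cross-terms and gives $\|\lambda|\fO\|\lesssim \|f|\Frloc\|_\varrho$. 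Uniqueness of the coefficients and continuity of $\lambda_r^j$ follow since $\Phi$ is orthonormal in $L_2(\Om)$, giving $\lambda_r^j(f)=2^{jn/2}(f,\Phi_r^j)$ by duality (the pairing makes sense by the dual-pairing argument at the end of Corollary \ref{FolgLocal}), and unconditional convergence in $\Frloc$ follows from the absolute convergence of the corresponding $\fO$-series together with the Fatou property of $\FR$ (Proposition \ref{Fatou}).

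The main obstacle I anticipate is matching the global $\R^n$-atomic estimates with the \emph{localized} $\ell_p(L_p)$-structure of $\Frloc$: one must show the two sums — the $\fspq$-type sum that Theorem \ref{AtomicRepr} produces on $\R^n$ and the $\sum_{j,r}\|\varrho_{j,r}(\cdot)|\FR\|^p$-type sum defining $\Frloc$ — are genuinely equivalent, which hinges on the geometric facts that (a) $\dist(Q_{j,r}^1,\Gamma)\sim 2^{-j}$ so a wavelet supported near $x_r^j$ only interacts with Whitney cubes of comparable generation, and (b) adjacent Whitney cubes differ in generation by at most one. Controlling the overlap constants uniformly, and handling the two boundary regimes ($j=0$ cubes away from $\Gamma$ versus cubes accumulating at $\Gamma$) without losing the $p$-th power summability, is the delicate point; everything else is a bookkeeping adaptation of the already-proved $\R^n$ theorems, and in fact this is essentially \cite[Theorem 2.33 and Section 2.2.3]{Tri08} which I would cite for the parts that are verbatim the same, indicating only the (minor) changes needed to allow $u=0$ and unbounded $\Om$.
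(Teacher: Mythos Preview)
Your proposal ends up in the same place as the paper, but the paper's proof is far shorter: it simply observes that the statement is a reformulation of \cite[Theorem 2.38]{Tri08} (built on the wavelet construction of \cite[Theorem 2.33]{Tri08}), and then adds two small remarks --- first, that Triebel's a priori assumption $f\in L_v(\Om)$ with $\max(1,p)<v\leq\infty$, $s-n/p>-n/v$ can be dropped because any wavelet expansion with $\lambda\in\fO$ automatically lies in $\FR$ by the atomic representation Theorem \ref{AtomicRepr}, and second, that conversely $f\in\Frloc$ already implies $f\in L_v(\Om)$ via the Sobolev embedding (Proposition \ref{Sobolev}). You do not mention this $L_v$ point, and it is the only nontrivial content beyond the citation.

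One genuine inaccuracy in your sketch: you describe the basis as Daubechies wavelets rescaled to a Whitney cube and then ``multiplied by a smooth cut-off subordinate to the slightly larger Whitney cube'', with orthonormality across cubes coming from disjointness of the $Q_{j,r}^0$. Multiplying by a smooth cut-off destroys orthonormality already \emph{within} a single cube, so this cannot be the mechanism. Triebel's actual construction in \cite[Section 2.1.2, Theorem 2.33]{Tri08} is more delicate --- it exploits the compact support of the Daubechies generators so that, after suitable translation and scaling, the resulting functions are already supported in the Whitney cubes without any cutting, and orthonormality is inherited directly from the $\R^n$ system. Since you explicitly defer to \cite{Tri08} for the construction this does not break your argument, but the description you give of it is misleading. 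Once you correct that and add the observation about dispensing with the $L_v$ hypothesis, your proposal and the paper's proof coincide.
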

\begin{Proof}
 This is a reformulation of \cite[Theorem 2.38]{Tri08}. The theorem is proven for the special $u$-wavelet basis constructed in \cite[Theorem 2.33]{Tri08} - emerging from Daubechies wavelets for function spaces on $\R^n$. The sequence space $\fO$ perfectly suits the construction there.  

 Furthermore, in \cite[Theorem 2.38]{Tri08} Triebel assumed $f \in L_{v}(\Om)$ with 
\begin{align*}
 \max(1,p)<v \leq \infty, s-\frac{n}{p}> -\frac{n}{v}
\end{align*}
but this is not necessary since automatically $f \in \FR$ if $f$ has a wavelet decomposition by the atomic decomposition Theorem  \ref{AtomicRepr}.

On the other hand, if $f$ belongs to $\Frloc$, then $f \in L_v(\R^n)$ by the Sobolev embedding from Proposition \ref{Sobolev} and $F_{v,2}^{0}(\R^n)=L_v(\R^n)$. 
\end{Proof}

\begin{Definition}[Atomic decompositions for $\Frloc$]
\label{Atomsrloc}
 Let $\Z^{\Om}$ be the interior collection as in Definition $\ref{sequence}$ and $x_r^j$ be the elements of $Z^{\Om}$, see also Section \ref{Whitney} and Remark \ref{Whitneyatom} for a possible construction. Let $K\geq 0$. A function $a_{j,r}:\R^n \rightarrow \mathbb{C}$ is called $K$-atom located at $x_r^j \in \Z^{\Om}$ if 
\begin{align*}
supp \ a_{j,r} & \subset B(x_r^j, C_1 \cdot 2^{-j})  \text{ and }\\
 \|a_{j,r} (2^{-j}\cdot)|\hold[K]\| &\leq C_2
\end{align*}
for suitable constants $C_1,C_2>0$ independent of $j,r$ such that 
\begin{align*}
 supp \ a_{j,r}  \subset \Om \text{ and } \dist(supp \ a_{j,r}, \partial \Om) \gtrsim 2^{-j}.
\end{align*}
\end{Definition}

\begin{Remark}
 This is the adaption of Definition \ref{Atoms} to domains $\Om$ and $\Frloc$. Because we always assume $s>\sigma_{p,q}$, we don't need moment conditions \eqref{Atom3}, but we could assume them, if we want. The constants $2^{-j(s-\frac{n}{p})}$ are now incorporated in the sequence spaces $f_{p,q}^s(\Z^{\Om})$, not within the atoms. 
\end{Remark}

\begin{Theorem}
\label{AtomicReprrloc}
 Let $0<p< \infty$, $0<q\leq \infty$, $s>\sigma_{p,q}$, $K>s$ and $\Z^{\Om}$ as in Definition \ref{sequence}. Then there is a $v \in (1,\infty)$ such that 
\begin{align}
\label{Lv}
 \max(1,p)<v \leq \infty, s-\frac{n}{p}> -\frac{n}{v}.
\end{align}
Then $f \in L_v(\Om)$ belongs to $\Frloc$ if 
\begin{align}
\label{repres}
 f=\sum_{j=0}^{\infty} \sum_{r=1}^{N_j} \lambda_{j,r} a_{j,r} \quad \text{with convergence in }  L_v(\Om)
\end{align}
and it holds
\begin{align*}
 \|f|\Frloc\| \lesssim \|\lambda|\fO\|.
\end{align*}
Here $a_{j,r}$ are $K$-atoms located at $x_{j,r} \in \Z^{\Om}$. Furthermore, there is a suitable $\Z^{\Om}$ such that for every $f \in \Frloc$ there exists a representation \eqref{repres}
with  
\begin{align*}
 \|\lambda|\fO\| \lesssim \|f|\Frloc\|.
\end{align*}
\end{Theorem}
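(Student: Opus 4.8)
The plan is to deduce both implications from machinery that is already in place, in exactly the spirit of Remark \ref{rlocDense} and of the proof of Theorem \ref{Diffeo}: for the last (converse) assertion use the wavelet Riesz basis for $\Frloc$ from Theorem \ref{rlocwavelet}, whose elements are automatically $K$--atoms; for the first assertion use the localisation built into Definition \ref{FrlocDef}, multiply the given atomic sum by a resolution of unity subordinate to the Whitney cubes, observe that the pieces become genuine atoms on $\R^n$, and control everything with the $\R^n$--atomic representation Theorem \ref{AtomicRepr}. Throughout, $v$ is chosen as in \eqref{Lv}, so that $\Frloc\hookrightarrow L_v(\Om)$ by the Sobolev embedding (Proposition \ref{Sobolev}).

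For the converse part I would fix $u=\lceil K\rceil\in\N$ (so $u\geq K>s$) and invoke Theorem \ref{rlocwavelet}: there is an interior $u$--wavelet Riesz basis $\Phi=\{\Phi_r^j\}\subset C^u(\Om)$ for $\Frloc$, adapted to a suitable collection $\Z^{\Om}$ as in Definition \ref{sequence}, with $f=\sum_{j,r}\lambda_r^j(f)\,2^{-jn/2}\Phi_r^j$, $\|\lambda|\fO\|\sim\|f|\Frloc\|$, and unconditional convergence in $\Frloc$, hence in $L_v(\Om)$. Because $\Phi$ is interior, each $\Phi_r^j$ is supported in $B(x_r^j,c_12^{-j})$ with $\dist(\mathrm{supp}\,\Phi_r^j,\Gamma)\gtrsim 2^{-j}$, and the derivative conditions of Definition \ref{u-wavelet} together with $C^u(\R^n)\hookrightarrow\hold[K]$ give $\|(2^{-jn/2}\Phi_r^j)(2^{-j}\cdot)|\hold[K]\|\lesssim 1$. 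Hence $a_{j,r}:=c\,2^{-jn/2}\Phi_r^j$ is, for a fixed $c>0$, a $K$--atom located at $x_r^j$ in the sense of Definition \ref{Atomsrloc}, and $f=\sum_{j,r}(c^{-1}\lambda_r^j(f))\,a_{j,r}$ is the desired representation \eqref{repres} with sequence norm $\sim\|f|\Frloc\|$.

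For the first assertion I would let $f=\sum_{j,r}\lambda_{j,r}a_{j,r}$ converge in $L_v(\Om)$ with $K$--atoms $a_{j,r}$ and $\|\lambda|\fO\|<\infty$, and pick a resolution of unity $\varrho=\{\varrho_{k,\ell}\}$ subordinate to the Whitney cubes as in \eqref{resun}. Then $\varrho_{k,\ell}f=\sum_{j,r}\lambda_{j,r}(\varrho_{k,\ell}a_{j,r})$ with convergence in $L_v$. Since $\dist(\mathrm{supp}\,a_{j,r},\Gamma)\sim 2^{-j}$ while $\dist(Q^1_{k,\ell},\Gamma)\sim 2^{-k}$, the product $\varrho_{k,\ell}a_{j,r}$ vanishes unless $|j-k|\leq c$, and for each such $j$ the separation of the $x_r^j$ leaves only boundedly many admissible $r$; call this finite index set $I_{k,\ell}$. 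For $j\sim k$ the scaling in \eqref{resun} gives $\|\varrho_{k,\ell}(2^{-j}\cdot)|\hold[K]\|\lesssim 1$, so by Lemma \ref{helpHoelder} the function $2^{-j(s-n/p)}\varrho_{k,\ell}a_{j,r}$ is, up to a uniform constant, an $(s,p)_{K,0}$--atom on $\R^n$ located at the dyadic cube $Q_{j,m(j,r)}\ni x_r^j$ (no moment conditions are needed since $s>\sigma_{p,q}$); after splitting $I_{k,\ell}$ into a bounded number of subfamilies carrying at most one index per cube, as in the proof of Theorem \ref{Diffeo}, Theorem \ref{AtomicRepr}(ii) applies to the finite sum $\varrho_{k,\ell}f$ and, with $\mu_{j,r}=2^{j(s-n/p)}\lambda_{j,r}$ and the usual replacement of $\chi_{j,m}^{(p)}$ by $\chi_j^r$ via the Fefferman--Stein maximal inequality (\cite[Section 1.5.3]{Tri08}), yields
\begin{align*}
 \|\varrho_{k,\ell}f|\FR\|^p\lesssim \Big\|\Big(\sum_{(j,r)\in I_{k,\ell}}2^{jsq}|\lambda_{j,r}\chi_j^r|^q\Big)^{1/q}\Big|L_p(\R^n)\Big\|^p .
\end{align*}
The integrand on the right is supported near $Q^0_{k,\ell}$, and these neighbourhoods cover $\R^n$ with bounded overlap, so enlarging the index set to all of $\Z^{\Om}$ and summing $p$--th powers over $(k,\ell)$ gives
\begin{align*}
 \|f|\Frloc\|_{\varrho}^p=\sum_{k,\ell}\|\varrho_{k,\ell}f|\FR\|^p\lesssim \Big\|\Big(\sum_{j,r}2^{jsq}|\lambda_{j,r}\chi_j^r|^q\Big)^{1/q}\Big|L_p(\Om)\Big\|^p=\|\lambda|\fO\|^p .
\end{align*}

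The hard part will be the bookkeeping in this last step: checking that localisation by $\varrho_{k,\ell}$ really produces $\R^n$--atoms with constants uniform in $(k,\ell)$ and $(j,r)$ (this is exactly where the matching $j\sim k$ and Lemma \ref{helpHoelder} are used), and that passing from $\sum_{k,\ell}\|\varrho_{k,\ell}f|\FR\|^p$ to $\|\lambda|\fO\|^p$ is not lossy. The decisive point is to keep each localised estimate as an $L_p$--integral of an $\ell_q$--mixed function rather than collapsing it to an $\ell_p$--sum of coefficients: the $\Frloc$--norm is an $\ell_p$--sum over Whitney cubes while the $\fO$--norm is an $L_p$--integral that decomposes along the same Whitney cubes, so the bounded overlap of the localisation regions alone closes the estimate for every admissible $q$, including $q>p$.
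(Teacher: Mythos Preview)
Your overall strategy matches the paper's: the converse direction via the wavelet basis of Theorem~\ref{rlocwavelet}, and the forward direction by localising with the Whitney resolution $\{\varrho_{k,\ell}\}$, recognising $\varrho_{k,\ell}a_{j,r}$ as $(s,p)_{K,0}$--atoms on $\R^n$, and invoking Theorem~\ref{AtomicRepr}. The summing argument in your last paragraph is also correct and is exactly the bounded--overlap reasoning the paper uses.

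There is, however, a concrete error in the middle. You claim that $\varrho_{k,\ell}a_{j,r}$ vanishes unless $|j-k|\leq c$, and hence that $I_{k,\ell}$ is finite. This is false: Definition~\ref{Atomsrloc} (and Definition~\ref{sequence}) only give the \emph{lower} bound $\dist(\mathrm{supp}\,a_{j,r},\Gamma)\gtrsim 2^{-j}$, not $\sim 2^{-j}$. Atoms at any level $j\geq k-c$ may sit inside $Q^1_{k,\ell}$; there is no upper bound on $j$, and $I_{k,\ell}$ is infinite. The correct statement is the one--sided bound $j\geq k-L'$ (this is exactly what the paper writes). Fortunately the fix is cheap: for $j\geq k$ the dilation $\varrho_{k,\ell}(2^{-j}\cdot)$ is at least as smooth as $\varrho_{k,\ell}(2^{-k}\cdot)$, so $\|\varrho_{k,\ell}(2^{-j}\cdot)|\hold[K]\|\lesssim 1$ holds for \emph{all} $j\geq k-L'$, not just $j\sim k$; Theorem~\ref{AtomicRepr} applies to the resulting infinite atomic sum; and your bounded--overlap summing argument goes through unchanged because it never actually uses finiteness of $I_{k,\ell}$, only that the integrand is supported near $Q^0_{k,\ell}$.

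For comparison, the paper handles the same localisation slightly differently: it dilates first, working with $(\varrho_{j,r}f)(2^{-j}\cdot)$, applies Theorem~\ref{AtomicRepr} on the unit scale, and then uses the homogeneity property (Proposition~\ref{homogen}) to return to the original scale. Your direct (undilated) route is equally valid once the one--sided restriction on $j$ is corrected.
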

\begin{proof}
 At first, we prove the existence of such a decomposition 
 \begin{align*} 
 f=\sum_{j=0}^{\infty} \sum_{r=1}^{N_j} \lambda_{j,r} a_{j,r} 
\end{align*}
with
\begin{align*}
  \|\lambda|\fO\| \lesssim \|f|\Frloc\|.
\end{align*}
But this has already been done in Theorem \ref{rlocwavelet}, which is a reference to \cite[Theorem 2.38]{Tri08}. Triebel proved wavelet decompositions of $\Frloc$ with the same sequence space $\fO$. He used a special grid $\Z^{\Om}$ covered by Definition \ref{sequence}. The wavelet blocks can be choosen as functions in $\hold[K]$ for an arbitrary $K>0$, but not as functions in $C^{\infty}(\R^n)$. Furthermore, if $f \in \Frloc$, then $f \in L_v(\Om)$ by the Sobolev embedding from Proposition \ref{Sobolev} and $F_{v,2}^{0}(\R^n)=L_v(\R^n)$.

For the second part of the proof we have to show that for every atomic decomposition 
\begin{align*}
  f=\sum_{j=0}^{\infty} \sum_{r=1}^{N_j} \lambda_{j,r} a_{j,r} 
\end{align*}
it holds
\begin{align*}
\|f|\Frloc\| \lesssim  \|\lambda|\fO\|. 
\end{align*}
The $L_v(\Om)$-convergence follows automatically.

Let $\{\varrho_{j,r}\}$ be a suitable resolution of unity adapted to the Whitney cubes $Q_{j,r}^0,Q_{j,r}^1$, hence
\begin{align*}
 supp \ \varrho_{j,r} \subset Q_{j,r}^1, \quad \|D^{\alpha} \varrho_{j,r} (x)\| \leq c_{\alpha} 2^{j|\alpha|}, x \in \Om, \alpha \in \N_0^n
\end{align*}
for some $c_{\alpha}>0$ independent of $x,j,r$ and 
\begin{align*}
 \sum_{j=0}^{\infty} \sum_{r=1}^{M_j} \varrho_{j,r}(x)=1 \text{ if } x \in \Om.
\end{align*}
Then an atomic decomposition of $f \in \Frloc$ by $K$-atoms results in an atomic decomposition of $(\varrho_{j,r} f)(2^{-j}\cdot) \in \FR$. We have
\begin{align*}
  \left(\varrho_{j,r} f \right)(2^{-j}\cdot)=\sum_{j'=j-L'}^{\infty} \sum_{r'=1}^{N_{j'}} \lambda_{j',r'} \varrho_{j,r}(2^{-j}\cdot) a_{j',r'}(2^{-j}\cdot)
\end{align*}
for a constant $L'>0$ independent of $j$ and $r$ resulting from the support properties of $\varrho_{j,r}$ and $a_{j',r'}$.
The functions $a_{j',r'}(2^{-j}\cdot)$ are atoms located at $x_{j'}^{r'}$ as well (see Lemma \ref{Dilation}) - but with a support of length $\sim 2^{-j+j'}\lesssim 1$. By construction of the resolution of unity we have
\begin{align*}
 \|\varrho_{j,r}(2^{-j}\cdot)|\hold[K]\|\lesssim 1
\end{align*}
independent of $j$ and $r$. Using Lemma \ref{ProdAtom} it holds that $\left(\varrho_{j,r}a_{j',r'}\right)(2^{-j}\cdot)$ is a $K$-atom located at $x_{j'}^{r'}$, but with bigger support (resulting in a constant $2^{j(s-\frac{n}{p})}$ in the sequence space norm). We have
\begin{align*}
\|\left(\varrho_{j,r}a_{j',r'}\right)(2^{-j}\cdot)|\hold[K]\| \lesssim 1 .
\end{align*}
Putting this together and using the atomic decomposition theorem for $\FR$ (see Theorem \ref{AtomicRepr}) we obtain
\begin{align*}
 \| \left(\varrho_{j,r} f \right)(2^{-j}\cdot)|\FR\| \lesssim 2^{j(s-\frac{n}{p})} \|\lambda^{j,r}|\fO\|,
\end{align*}
where $\lambda^{j,r}$ consists only of the $\lambda_{j',r'}$ such that the support of $a_{j',r'}$ and $\varrho_{j,r}$ overlap. Using the homogeneity Property \ref{homogen} we have
\begin{align*}
 \| \left(\varrho_{j,r} f \right)|\FR\| \lesssim \|\lambda^{j,r}|\fO\|
\end{align*}
with constants independend of $j$ and $r$. 

Furthermore, by the support condition of $\varrho_{j,r}$ and the construction of the atoms only finitely many of the supports of $\varrho_{j,r}$ overlap (in the parameters $j,r$) - and hence for every $a_{j',r'}$ there are only finitely many $\varrho_{j,r}$ where the support of $\varrho_{j,r}$ overlaps with $a_{j',r'}$. 

Hence we have
\begin{align*}
 \|f|\Frloc\|&= \left(\sum_{j=0}^{\infty}\sum_{r=1}^{M_j}\| \varrho_{j,r} f|\FR\|^p\right)^{\frac{1}{p}} \\ 
  &\lesssim \left(\sum_{j=0}^{\infty}\sum_{r=1}^{M_j} \|\lambda^{r,j}|\fO\|^p\right)^{\frac{1}{p}} \\
  &\lesssim \|\lambda|\fO\|.
\end{align*}

\end{proof}

\subsection{Properties and alternative characterizations of refined localization spaces}

\begin{Definition}
Let $\Om$ be a domain with $\Om\neq \R^n$, $\Gamma=\partial \Om$, $d(x)=\dist(x,\Gamma)$. Then we define
\begin{align*}
 \delta(x)=\min(d(x),1).
\end{align*}
\end{Definition}
\begin{Proposition}
 \label{rlocDiff}
Let $\Om$ be an arbitrary domain in $\R^n$. Let 
\begin{align*}
 0<p<\infty, 0<q<\infty, s-r>\sigma_{p,q}, \alpha \in \N^n   \text{ with }  |\alpha|=r. 
\end{align*}
It holds: If $f$ belongs to $\Frloc$, then $D^{\alpha} f$ belongs to $F_{p,q}^{s-r,\rloc}(\Om)$ with 
\begin{align*}
 \|D^{\alpha} f|F_{p,q}^{s-r,\rloc}(\Om)\|\lesssim \|f|\Frloc\| 
\end{align*}
independently of $f$.
\end{Proposition}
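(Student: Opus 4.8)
The plan is to reduce the statement about refined localization spaces on $\Om$ to the corresponding statement for $\FR$ on $\R^n$, using the wavelet/atomic characterization from Theorem~\ref{AtomicReprrloc} (equivalently Theorem~\ref{rlocwavelet}) together with the definition of $\Frloc$ via a resolution of unity subordinate to the Whitney decomposition. First I would take $f \in \Frloc$ and fix a resolution of unity $\{\varrho_{j,r}\}$ adapted to the Whitney cubes $Q_{j,r}^0, Q_{j,r}^1$ as in \eqref{resun}, so that $\|f|\Frloc\|_{\varrho}^p = \sum_{j,r} \|\varrho_{j,r} f|\FR\|^p$. The key local observation is that $D^{\alpha}$ roughly commutes with $\varrho_{j,r}$ up to Leibniz terms: $\varrho_{j,r} D^{\alpha} f = \sum_{\beta \leq \alpha} \binom{\alpha}{\beta} (-1)^{|\beta|} D^{\beta}\!\big((D^{\alpha-\beta}\varrho_{j,r}) f\big) + \text{(lower order)}$, so up to finitely many terms it suffices to control $\|D^{\beta}(\varrho_{j,r}' f)|F_{p,q}^{s-r}(\R^n)\|$ where $\varrho_{j,r}'$ is another function with the same support and derivative bounds \eqref{resun} (with a slightly larger Whitney cube); here one uses $\|D^{\gamma}g|F_{p,q}^{s-r}(\R^n)\| \lesssim \|g|F_{p,q}^{s-r+|\gamma|}(\R^n)\|$, which holds since differentiation is a lift on $\R^n$-scales.

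The crucial scaling step is then handled exactly as in the proof of Theorem~\ref{AtomicReprrloc}: pass to the dilated functions $(\varrho_{j,r} f)(2^{-j}\cdot)$, which lie in $\FR$ with support of controlled size, and use the homogeneity Proposition~\ref{homogen} to relate $\|\varrho_{j,r} D^{\alpha}f|F_{p,q}^{s-r}(\R^n)\|$ to $2^{\text{(something)}\cdot j}\|(\varrho_{j,r} D^{\alpha}f)(2^{-j}\cdot)|F_{p,q}^{s-r}(\R^n)\|$; because the smoothness drops by exactly $r$ and the normalization factor $2^{-jn/p}$ absorbs the $|\alpha|=r$ derivative hits on $\varrho_{j,r}$ (each costing $2^j$) precisely when summed with the shifted smoothness exponent, the local norms match up to uniform constants. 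Concretely I expect to get $\|\varrho_{j,r} D^{\alpha} f|F_{p,q}^{s-r}(\R^n)\| \lesssim \sum_{(j',r') \sim (j,r)} \|\varrho_{j',r'} f|\FR\|$, where the sum is over the finitely many Whitney cubes neighbouring $Q_{j,r}^1$ (using $|l-l'|\le 1$ for adjacent cubes and the pointwise-multiplier Theorem~\ref{pointwise} to move $\varrho_{j,r}$ onto a $\sum_{(j',r')\sim(j,r)}\varrho_{j',r'}$-decomposition of $f$ near the support of $\varrho_{j,r}$). Taking $p$-th powers and summing over $j,r$, the finite-overlap property of the Whitney cubes gives $\sum_{j,r}\|\varrho_{j,r} D^{\alpha}f|F_{p,q}^{s-r}(\R^n)\|^p \lesssim \sum_{j',r'}\|\varrho_{j',r'} f|\FR\|^p = \|f|\Frloc\|^p$, which is exactly $\|D^{\alpha}f|F_{p,q}^{s-r,\rloc}(\Om)\| \lesssim \|f|\Frloc\|$.

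The main obstacle I anticipate is bookkeeping the interaction between the Leibniz terms and the dilation/homogeneity argument: after applying Leibniz, the ``good'' term $D^{\alpha}(\varrho_{j,r} f)$ is not literally of the form (cutoff)$\cdot$(function) but has all $r$ derivatives on the product, so one must either (a) write $D^{\alpha}(\varrho_{j,r}f)$ back out via Leibniz into pieces $(D^{\alpha-\beta}\varrho_{j,r})(D^{\beta}f)$ and handle each with the pointwise-multiplier theorem applied to $D^{\alpha-\beta}\varrho_{j,r}$ (whose $\hold[\rho]$-norm is $\lesssim 2^{j(|\alpha-\beta|+\rho)}$, again absorbed by the homogeneity scaling because the smoothness index of $D^{\beta}f$ is $s-|\beta| = (s-r)+|\alpha-\beta|$), or (b) use the standard fact that $\Frloc$ is stable under differentiation-after-localization, which is essentially the content we want. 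I would choose route (a), since all the needed tools — Theorem~\ref{pointwise}, Proposition~\ref{homogen}, Theorem~\ref{AtomicRepr}, and the finite-overlap of Whitney cubes — are already available, and the estimate $\|D^{\alpha-\beta}\varrho_{j,r}(2^{-j}\cdot)|\hold[\rho]\| \lesssim 1$ uniformly in $j,r$ makes the pointwise-multiplier constants uniform. A secondary technical point is that one needs $s-r > \sigma_{p,q}$ (which is assumed) to ensure all the spaces $F_{p,q}^{s-r,\rloc}(\Om)$ and the dilation/homogeneity arguments are meaningful, and that the auxiliary cutoffs $\varrho_{j,r}'$ supported on slightly enlarged Whitney cubes still form a system with the finite-overlap property — both follow from the construction in Section~\ref{Whitney}.
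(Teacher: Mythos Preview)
Your proposal is correct and is essentially the paper's \emph{second} proof of this proposition: use the resolution of unity $\{\varrho_{j,r}\}$, apply Leibniz to relate $\varrho_{j,r}D^{\alpha}f$ to $D^{\alpha}(\varrho_{j,r}f)$ and commutator terms, then insert $\sum_{(j',r')\sim(j,r)}\varrho_{j',r'}$ on the support, dilate by $2^{-j}$ via Proposition~\ref{homogen}, apply the pointwise-multiplier Theorem~\ref{pointwise} to the rescaled cutoffs (whose $\hold[\rho]$-norms are uniformly bounded), and sum using finite overlap. The paper avoids the Leibniz bookkeeping you flag as the main obstacle by proving only the case $|\alpha|=1$ and then inducting; this makes the commutator term a single product $(D^{\alpha}\varrho_{j,r})\cdot f$ and the argument clean.

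The paper also gives a \emph{first}, shorter proof that you did not propose: take an atomic (wavelet) decomposition $f=\sum_{j,r}\lambda_{j,r}a_{j,r}$ from Theorem~\ref{AtomicReprrloc}, differentiate termwise, observe that $2^{-j}D^{\alpha}a_{j,r}$ is a $(K-1)$-atom located at the same point, and note that $\|2^{j}\lambda_{j,r}|f_{p,q}^{s-1}(\Z^{\Om})\|\sim\|\lambda_{j,r}|f_{p,q}^{s}(\Z^{\Om})\|$. This bypasses all the homogeneity/multiplier/overlap machinery entirely; your approach has the advantage of working directly from the definition of $\Frloc$ without invoking the atomic characterization, but at the cost of more bookkeeping.
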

\begin{proof}
It suffices to prove the Proposition for $|\alpha|=1$ -- the general assertion follows by induction. We will give two different proofs - one using atomic decompositions of $\Frloc$, see Theorem \ref{AtomicReprrloc}, and one direct proof, using the homogeneity Property \ref{homogen}.

\textbf{First proof:} Let $|\alpha|=1$. If $f \in \Frloc$, then there is an atomic representation of $f$ by Theorem \ref{AtomicReprrloc} with
\begin{align*}
 f=\sum_{j=0}^{\infty} \sum_{r=1}^{N_j} \lambda_{j,r} a_{j,r}
\end{align*}
and $\lambda \in \fO$. Here $a_{j,r}$ are $K$-atoms (with a suitably large $K$) located at $x_{j}^r \in \Z^{\Om}$. So we find an atomic decomposition for $D^{\alpha}f$ through
\begin{align*}
 D^{\alpha} f=\sum_{j=0}^{\infty} \sum_{r=1}^{N_j} \lambda_{j,r} 2^{j} 2^{-j} D^{\alpha} a_{j,r}.
\end{align*}
By the Definition \ref{AtomicReprrloc} of atoms we observe that $2^{-j} D^{\alpha} a_{j,r}$ are $(K-1)$-atoms located at $x_{j}^r$. The coefficients in front of the atoms are $\lambda_{j,r}':=2^{j}\lambda_{j,r}$. Hence by the atomic decomposition Theorem \ref{AtomicReprrloc} we have
\begin{align*}
 \|D^{\alpha}f|F_{p,q}^{s-1,\rloc}(\Om)\|\lesssim \|2^{j}\lambda_{j,r}|f_{p,q}^{s-1}(\Z^{\Om})\| \sim 
 \|\lambda_{j,r}|\fO\| \lesssim \|f|\Frloc\|.
\end{align*}
Here we needed $s-1>\sigma_{p,q}$, hence in general we need $s-r>\sigma_{p,q}$.

\textbf{Second proof:} Let $|\alpha|=1$. If $f \in \Frloc$, then $\varrho_{j,r} f \in \FR$ for $j \in \N_0, r\in \{1,\ldots,M_j\}$ and 
\begin{align*}
 D^{\alpha} (\varrho_{j,r} f)=(D^{\alpha} \varrho_{j,r}) \cdot f+\varrho_{j,r}\cdot D^{\alpha}f  \in \FR[s-1].
\end{align*}
By triangle inequality and classical differentation properties of $\FR$ we get
\begin{align*}
 \|\varrho_{j,r} D^{\alpha} f|\FR[s-1]\| \lesssim \|(D^{\alpha} \varrho_{j,r}) \cdot f|\FR[s-1]\| + \|\varrho_{j,r} f|\FR\|.
\end{align*}
To prove the proposition, it suffices to estimate the $p$-sum of the first terms on the RHS by $\|f|\Frloc\|$. It holds
\begin{align*}
 (D^{\alpha} \varrho_{j,r}) \cdot f =  (D^{\alpha} \varrho_{j,r}) \cdot \sum_{|j-j'|\leq c}\sum_{r'} (\varrho_{j',r'} f),
\end{align*}
where $c$ is a constant independent of $j$ and $r$. Furthermore, the number of summands in the sum over $r'$ can also be estimated by a constant independent of $j$ and $r$. This follows from the construction of the resolution of unity and the Whitney decomposition, see \eqref{resun}.   

Now we make use of the homogeneity property, see Proposition \ref{homogen}, and our theorem on pointwise multipliers, see Theorem \ref{pointwise}. Let $\rho > \max(s,\sigma_{p,q}-s)$. We get
\begin{align*}
 \|(D^{\alpha} &\varrho_{j,r}) \cdot f|\FR[s-1]\|  \\
&\lesssim  \sum_{|j-j'|\leq c}\sum_{r'} \|(D^{\alpha} \varrho_{j,r}) \cdot (\varrho_{j',r'} f)|\FR[s-1]\| \\
& \sim 2^{j(s-1-\frac{n}{p})} \sum_{|j-j'|\leq c}\sum_{r'} \|(D^{\alpha} \varrho_{j,r})(2^{-j}\cdot) \cdot (\varrho_{j',r'} f)(2^{-j}\cdot)|\FR[s-1]\| \\
& \sim 2^{j(s-\frac{n}{p})} \sum_{|j-j'|\leq c}\sum_{r'} \|D^{\alpha} \left(\varrho_{j,r}(2^{-j}\cdot)\right) \cdot (\varrho_{j',r'} f)(2^{-j}\cdot)|\FR[s-1]\| \\
& \lesssim 2^{j(s-\frac{n}{p})} \|D^{\alpha} \left(\varrho_{j,r}(2^{-j}\cdot)\right)|\hold[\rho]\| \cdot \sum_{|j-j'|\leq c}\sum_{r'} \|(\varrho_{j',r'} f)(2^{-j}\cdot)|\FR[s-1]\| \\
& \lesssim 2^{j(s-\frac{n}{p})} \sum_{|j-j'|\leq c}\sum_{r'} \|(\varrho_{j',r'} f)(2^{-j}\cdot)|\FR[s-1]\| \\
& \lesssim 2^{j(s-\frac{n}{p})} \sum_{|j-j'|\leq c}\sum_{r'} \|(\varrho_{j',r'} f)(2^{-j}\cdot)|\FR[s]\| \\
& \sim \sum_{|j-j'|\leq c}\sum_{r'} \|\varrho_{j',r'} f|\FR[s]\|,
\end{align*}
where the constants do not depend on $r$ or $j$, using property \eqref{resun} of the resolution of unity. Summing up over $j$ and $r$ gives
\begin{align*}
 \sum_{j=0}^{\infty}\sum_{r=1}^{M_j}\|(D^{\alpha} \varrho_{j,r}) \cdot f|\FR[s-1]\|^p \leq c \sum_{j=0}^{\infty}\sum_{r=1}^{M_j}\|\varrho_{j,r} \cdot f|\FR[s]\|^p=\|f|\Frloc\|^p.
\end{align*}
This is what we wanted to prove.

\end{proof}
\begin{Remark}
 For $\Om=\R^n$ there is the converse inequality
\begin{align*}
 \|f|\FR\| \leq c \sum_{|\alpha|\leq r} \|D^{\alpha} f|\FR[s-r]\|. 
\end{align*}
Such an inequality cannot hold for $\Frloc$ on arbitrary domains $\Om$: Consider a $C^{\infty}$-domain $\Om$. Let $1\leq p< \infty, 1\leq q <\infty$ and $0<s<\frac{1}{p}$. Then by \cite[Section 5.24]{Tri01} and \cite[Proposition 3.10]{Tri08} we have
\begin{align}
\label{rlocsmall}
 \FO=\Ft=\Frloc.
\end{align}
On the other hand $s+1>\frac{1}{p}$. Then $\tr f|_{\partial \Om} = 0$ for $f \in F_{p,q}^{s+1,\rloc}(\Om)=\tilde{F\,}\!_{p,q}^{s+1}(\Om)$. Hence if $f \in F_{p,q}^{s+1,\rloc}(\Om)$, then $f+c \notin F_{p,q}^{s+1,\rloc}(\Om)$ for a constant $c\neq 0$. 

But $D^{\alpha} f \in \Frloc$ if, and only if, $D^{\alpha} (f+c) \in \Frloc$ for $|\alpha|\leq 1$: For $|\alpha|=1$ this is obvious and for $|\alpha|=0$ this follows from \eqref{rlocsmall} considering the definition of $\FO$.  

\end{Remark}

\begin{Proposition}
\label{rlocequi}
 Let $\Om$ be an arbitrary domain in $\R^n$ with $\Om\neq \R^n$, let 
\begin{align*}
0<p<\infty, 0< q < \infty, s>\sigma_{p,q}.
 \end{align*}
Then $f \in \Frloc$ if, and only if,
\begin{align*}
 \|f|\FO\| + \| \delta^{-s}(\cdot)f|L_p(\Om)\| < \infty 
\end{align*}
(equivalent norms).
\end{Proposition}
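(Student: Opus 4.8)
The plan is to prove the two inclusions separately, combining the atomic/wavelet characterization of $\Frloc$ from Theorem~\ref{AtomicReprrloc} (and Theorem~\ref{rlocwavelet}) with the Whitney decomposition and the homogeneity property (Proposition~\ref{homogen}). The point is that on each Whitney cube $Q_{j,r}^1$ one has $\delta(x)\sim 2^{-j}$, so the weighted $L_p$-term decouples across the Whitney scales in exactly the same way the $\Frloc$-norm does.

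\textbf{Step 1: the easy direction, $\Frloc \hookrightarrow \{f\in\FO:\ \delta^{-s}f\in L_p(\Om)\}$.} First I would note $\Frloc\hookrightarrow\FO$: this is immediate from the definition of $\FO$ as a restriction space together with subadditivity of $\|\cdot|\FR\|^{\min(1,p,q)}$ applied to $\sum_{j,r}\varrho_{j,r}f$, or one can simply cite that the identity on $D(\Om)$ extends boundedly (this uses $s>\sigma_{p,q}$ and the Fubini-type localization). For the weighted term, take a resolution of unity $\{\varrho_{j,r}\}$ as in \eqref{resun}. Since $\delta(x)\sim 2^{-j}$ on $\mathrm{supp}\,\varrho_{j,r}$, I would estimate, using $p<\infty$ and the bounded overlap of the supports,
\begin{align*}
 \|\delta^{-s}f|L_p(\Om)\|^p \lesssim \sum_{j,r} \|\delta^{-s}\varrho_{j,r}f|L_p(\R^n)\|^p \sim \sum_{j,r} 2^{jsp}\|\varrho_{j,r}f|L_p(\R^n)\|^p.
\end{align*}
Then $\|\varrho_{j,r}f|L_p(\R^n)\| = 2^{-j(s-\frac{n}{p})}\|(\varrho_{j,r}f)(2^{-j}\cdot)|L_p(\R^n)\| \lesssim 2^{-j(s-\frac{n}{p})}\|(\varrho_{j,r}f)(2^{-j}\cdot)|\FR\|$ by $\FR\hookrightarrow L_p$ for $s>\sigma_{p,q}\geq\sigma_p$, and the homogeneity property (Proposition~\ref{homogen}), applicable since $(\varrho_{j,r}f)(2^{-j}\cdot)$ is supported in a ball of radius $\sim 1$ after rescaling — more precisely one rescales so the support has radius $\le 1$ — gives $2^{-j(s-\frac{n}{p})}\|(\varrho_{j,r}f)(2^{-j}\cdot)|\FR\| \sim \|\varrho_{j,r}f|\FR\|$. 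Summing over $j,r$ yields $\|\delta^{-s}f|L_p(\Om)\| \lesssim \|f|\Frloc\|$.

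\textbf{Step 2: the converse, $\{f\in\FO:\ \delta^{-s}f\in L_p(\Om)\} \hookrightarrow \Frloc$.} Given such an $f$, I would take an extension $g\in\FR$ with $\|g|\FR\|\sim\|f|\FO\|$ and an optimal atomic (or wavelet) decomposition $g=\sum_{\nu,m}\lambda_{\nu,m}a_{\nu,m}$ from Theorem~\ref{AtomicRepr}, with $\|\lambda|f_{p,q}\|\sim\|g|\FR\|$. Restricting to $\Om$ and reorganizing the atoms by their distance to $\Gamma$, the atoms $a_{\nu,m}$ with $\mathrm{dist}(Q_{\nu,m},\Gamma)\gtrsim 2^{-\nu}$ already assemble, after multiplying by a resolution of unity adapted to the Whitney decomposition, into an admissible $K$-atom decomposition for $\Frloc$ in the sense of Definition~\ref{Atomsrloc}; by Theorem~\ref{AtomicReprrloc} their contribution is controlled by $\|\lambda|f_{p,q}\| \lesssim \|f|\FO\|$. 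The remaining atoms are those concentrated in the boundary strip $\Om_{c2^{-\nu}}$; for these I would use the weight: on $\mathrm{supp}\,a_{\nu,m}$ one has $\delta(x)\sim 2^{-\nu}$, hence $\|\delta^{-s}f|L_p\|$ controls a weighted $\ell_p$-sum of $2^{\nu s}\|\lambda_{\nu,m}a_{\nu,m}|L_p\|\sim\|\lambda_{\nu,m}\chi_{\nu,m}^{(p)}\|$-type quantities, which is exactly the $f_{p,q}$-norm (with $q$ replaced by $p$, harmless since the $q$-sum only makes things smaller when $q\le p$, and when $q>p$ a short maximal-function argument handles it) of the boundary part. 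Feeding this back into Theorem~\ref{AtomicReprrloc} gives the $\Frloc$-bound for the boundary part by $\|\delta^{-s}f|L_p(\Om)\|$. Adding the two parts, $\|f|\Frloc\| \lesssim \|f|\FO\| + \|\delta^{-s}f|L_p(\Om)\|$.

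\textbf{Main obstacle.} The delicate point is Step~2: correctly splitting a \emph{global} $\R^n$-atomic decomposition of an extension $g$ into an "interior" part that is genuinely a $\Frloc$-atomic decomposition (Definition~\ref{Atomsrloc} requires $\mathrm{dist}(\mathrm{supp}\,a_{j,r},\partial\Om)\gtrsim 2^{-j}$, so one must absorb near-boundary interior atoms, multiply by $\varrho_{j,r}$, invoke Lemma~\ref{ProdAtom} and the homogeneity property to restore the atom normalization) and a "boundary" part whose sequence norm is dominated by the weighted $L_p$-quantity rather than by $\|g|\FR\|$ — since the latter need not be finite in a way that controls the weight, this reorganization, together with bookkeeping of the at-most-finite overlaps and the passage between the $q$-sum in $f_{p,q}$ and the $p$-power in the $\Frloc$-norm, is where the real work lies. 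One should also double-check independence of $\eps$ and of the chosen resolution of unity, both of which follow as in Remark~\ref{indeps} and \cite[Theorem~2.16]{Tri08}.
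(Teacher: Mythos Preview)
Your Step~1 is essentially the paper's first step: resolution of unity, $\delta(x)\sim 2^{-j}$ on $\mathrm{supp}\,\varrho_{j,r}$, embedding $\FR\hookrightarrow L_p$, homogeneity (Proposition~\ref{homogen}), and summation. No issue there.

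Step~2, however, has a genuine gap. You take a general atomic decomposition of an extension $g\in\FR$ and split off the atoms with $\dist(Q_{\nu,m},\Gamma)\lesssim 2^{-\nu}$ as the ``boundary part''. Two things go wrong. First, for such an atom one only has $\delta(x)\lesssim 2^{-\nu}$ on its support, not $\delta(x)\sim 2^{-\nu}$: the cube can touch (or even cross) $\Gamma$, so $\delta$ can be arbitrarily small there. Second, even granting $\sim$, recovering the individual coefficients $|\lambda_{\nu,m}|$ from $\|\delta^{-s}f|L_p\|$ is a reverse triangle inequality and requires bounded overlap of the boundary atoms across scales. But at a point $x$ with $\delta(x)=2^{-\nu_0}$, boundary atoms at \emph{every} level $\nu\lesssim\nu_0$ may contain $x$, so the overlap is of order $\nu_0\sim|\log\delta(x)|$, not $O(1)$. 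Your parenthetical about ``at-most-finite overlaps'' is exactly where the argument breaks.

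The paper avoids both problems by using not an arbitrary atomic decomposition but the \emph{specific} $u$-wavelet basis in $L_2(\Om)$ from \cite[Theorem 2.36]{Tri08}, built from the Whitney decomposition. In that basis the ``boundary wavelets'' are by construction those with $\dist(\mathrm{supp}\,\Phi_r^{j,1},\Gamma)\sim 2^{-j}$ (two-sided), so at each point only $O(1)$ of them are present; this is what gives $\|\lambda^{j,1}|f_{p,q}^s\|\sim\|2^{js}\lambda^{j,1}|f_{p,p}^0\|\sim\|\delta^{-s}f_1|L_p(\Om)\|$. The interior coefficients are controlled not by sequence-space bookkeeping but by \emph{orthogonality}: one computes $\lambda_r^{j,2}(f)=2^{jn/2}(f,\Phi_r^{j,2})=2^{jn/2}(\tilde f,\Phi_r^{j,2})$ for any extension $\tilde f$, and the local mean theorem (\cite[Theorem~1.15]{Tri08}) bounds this by $\|\tilde f|\FR\|$, hence by $\|f|\FO\|$. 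Only then does one subtract (Step~1 applied to $f_2$) to get $\|\delta^{-s}f_1|L_p\|\lesssim\|f|\FO\|+\|\delta^{-s}f|L_p\|$. Your proposal should replace the generic $\R^n$-atomic decomposition by this Whitney-adapted orthonormal system and use the subtraction argument; without that, the boundary estimate does not go through.
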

\begin{proof}

\textit{First step:} Let $f \in \Frloc$. Then there is a wavelet characterization of $f$ by Theorem \ref{rlocwavelet}. This results in an atomic decomposition of $f \in \FR$. Hence by Theorem \ref{AtomicRepr} we have
\begin{align*}
 \|f|\FO\| \lesssim \|f|\Frloc\|.
\end{align*}
Furthermore, let $\varrho=\{\varrho_{j,r}\}$ be the resolution of unity adapted to the Whitney cubes $Q_{j,r}^1$ as introduced at the beginning of Section \ref{section:refined}. Since $s>\sigma_{p,q}\geq \sigma_p$ we have
\begin{align*}
 \FR \hookrightarrow L_p(\R^n) \cap L_{\max(1,p)}(\R^n).
\end{align*}
We use the homogeneity property from Proposition \ref{homogen} to get
\begin{align*}
\|\delta^{-s} \varrho_{j,r} f|L_p(\R^n)\|&\sim 2^{js} \|\varrho_{j,r} f\|L_p(\R^n)\| \\
&\sim 2^{j(s-\frac{n}{p})} \|\left(\varrho_{j,r} f\right)(2^{-j}\cdot)\|L_p(\R^n)\| \\
&\lesssim 2^{j(s-\frac{n}{p})}  \|\left(\varrho_{j,r} f\right)(2^{-j}\cdot)\|\FR\| \\	
&\sim \|\varrho_{j,r} f\|\FR\|  
\end{align*}
with constants independent of $j$ and $r$. Here we used
\begin{align*}
 d(x) \sim 2^{-j} \text{ for } x \in supp \ \varrho_{j,r}
\end{align*}
for $j \in \N$ as well as $r \in \{1,\ldots,M_j\}$ and
\begin{align*}
 d(x) \gtrsim 1 \text{ for } x \in supp \ \varrho_{0,r}
\end{align*}
for $r \in \{1,\ldots,M_j\}.$
 
Thus we arrive at
\begin{align*}
 \|\delta^{-s}f|L_p(\R^n)\| &\sim \left(\sum_{j=0}^{\infty}\sum_{r=1}^{M_j}\| \delta^{-s} \varrho_{j,r} f|L_p(\R^n)\|^p\right)^{\frac{1}{p}} \\
&\lesssim \left(\sum_{j=0}^{\infty}\sum_{r=1}^{M_j}\| \varrho_{j,r} f|\FR\|^p\right)^{\frac{1}{p}} \\
&=\|f|\Frloc\|.
\end{align*}

\textit{Second step:} Let $f \in \FO$ with $\delta^{-s}(\cdot)f \in L_p(\Om)$. Then $f \in L_v(\Om)$ with $v$ as in \eqref{Lv}. Hence we can decompose it by Theorem 2.36 in \cite{Tri08} into 
\begin{align*}
f=\sum_{j=0}^{\infty}\sum_{r=1}^{N_j} \lambda_r^j(f) 2^{-\frac{jn}{2}}\Phi_r^j
\end{align*}
with $\lambda_r^j(f) \in f_{v,2}^{0}(\Z^{\Om})$.
We split $f$ into
\begin{align*}
 f=f_1+f_2
\end{align*}
where $f_1$ collects the boundary wavelets (without moment conditions) with  
\begin{align}
\label{f1dist}
\dist(supp\ \Phi_r^{j,1},\Gamma) \sim 2^{-j}
\end{align}
and $f_2$ collects the interior wavelets (with moment conditions) with
\begin{align*}
\dist(supp\ \Phi_r^{j,2},\Gamma) \gtrsim 2^{-j},
\end{align*}
so
\begin{align*}
f_2=\sum_{j=0}^{\infty}\sum_{r=1}^{N_j} \lambda_r^{j,2}(f) 2^{-\frac{jn}{2}}\Phi_r^{j,2}.
\end{align*}
The wavelets $\Phi_r^{j,2}$ fulfil the appropriate derivative and moment conditions \eqref{Atom2} and \eqref{Atom3}. Furthermore, $\Phi_r^{j,2}$ and $\Phi_{r'}^{j',1}$ are orthogonal to each other.
Hence by the local mean Theorem 1.15 from \cite{Tri08} used for the orthogonal wavelets $\Phi_r^{j,2}$ we get
\begin{align*}
 \|\lambda_r^{j,2}(f)|\fO\|&=2^{jn/2} \| (f,\Phi_{r}^{j,2})|\fO\|=2^{jn/2} \| (\tilde{f},\Phi_{r}^{j,2})|\fO\| \\
&\lesssim \|\tilde{f}|\FR\|,
\end{align*}
where $\tilde{f}$ is an arbitrary extension of $f$ from $\Om$ to $\R^n$ (the values outside of $\Om$ do not matter for $(f,\Phi_{r}^{j,2})$). Taking the infimum over the $\FR$-norms of the extensions of $f$, we get the first desired result
\begin{align*}
 \|\lambda_r^{j,2}(f)|\fO\| \lesssim \|f|\FO\|.
\end{align*}
Hence $f_2 \in \Frloc$ by the wavelet Theorem \ref{rlocwavelet} for $\Frloc$. By the first step this shows
\begin{align*}
 \|\delta^{-s}f_2|L_p(\Om)\| \lesssim \|f|\FO\|.  
\end{align*}
Using triangle inequality this leads to
\begin{align}
\label{Lpd}
 \|\delta^{-s}f_1|L_p(\Om)\| \lesssim \|f|\FO\| + \|\delta^{-s}f|L_p(\Om)\|.
\end{align}
Furthermore, $\|2^{js} \lambda_r^{j,1}|f_{p,q}^{0}(\Z^{\Om})\|$ is independent of $q$ since boundary wavelets do not overlap too much - there is a constant $C>0$ such that for all $x \in \Om$ not more than $C$ boundary wavelets are supported at $x$. This argument was also used in the proof of Theorem 2.28 in \cite{Tri08} with a reference to \cite[Remark 2.25]{Tri08}. We have 
\begin{align*}
  \|\lambda_r^{j,1}|f_{p,q}^{s}(\Z^{\Om})\|
&\sim \|2^{js} \lambda_r^{j,1}|f_{p,q}^{0}(\Z^{\Om})\| \\
&\sim \|2^{js} \lambda_r^{j,1}|f_{p,p}^{0}(\Z^{\Om})\| \\
  &\sim \|\delta^{-s}f_1|L_p(\Om)\|
  \end{align*}
by direct calculation of the $L_p(\Om)$-norm and \eqref{f1dist}. Now, using \eqref{Lpd} we have shown
\begin{align*}
  \|\lambda_r^{j,1}|f_{p,q}^{s}(\Z^{\Om})\| \lesssim \|f|\FO\| + \|\delta^{-s}f|L_p(\Om)\|,
\end{align*}
which proves that also $f_1 \in \Frloc$ by the wavelet Theorem \ref{rlocwavelet}. Hence $f=f_1+f_2 \in \Frloc$.
\end{proof}

For special bounded domains we have Proposition 3.10 from \cite{Tri08} in contrast to the observations at the beginning of this section. 
\begin{Proposition}
\label{specialrloc}
 Let $\Om$ be an $E$-thick domain in $\R^n$. Let
\begin{align*}
 0<p<\infty, 0<	q < \infty, s>\sigma_{p,q}.  
\end{align*}
Then
\begin{align*}
 \Frloc=\Ft.
\end{align*}
\end{Proposition}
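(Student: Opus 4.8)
The plan is to establish the two continuous inclusions separately; only the second of them will use $E$-thickness.

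\textbf{Step 1: $\Frloc \hookrightarrow \Ft$ (for any domain $\Om\neq\R^n$).} Fix $u\in\N_0$ with $u>s$. By Theorem \ref{rlocwavelet} every $f\in\Frloc$ can be written as $f=\sum_{j,r}\lambda_r^j(f)\,2^{-jn/2}\Phi_r^j$ with convergence in $\Frloc$, where $\{\Phi_r^j\}$ is an \emph{interior} $u$-wavelet system (each $\Phi_r^j$ supported strictly inside $\Om$) and $\lambda(f)\in\fO$ with $\|\lambda(f)|\fO\|\sim\|f|\Frloc\|$. I would then interpret the very same series as a series on $\R^n$: the functions $2^{-jn/2}\Phi_r^j$ are admissible atoms located at the points $x_r^j$ (compare Definition \ref{Atoms} and the discussion in the proof of Proposition \ref{waveletre}; no moment conditions are needed since $s>\sigma_{p,q}$), so Theorem \ref{AtomicRepr} produces $g\in\FR$ with $\|g|\FR\|\lesssim\|\lambda(f)|\fO\|\sim\|f|\Frloc\|$. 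Since each $\Phi_r^j$ has support in $\Om$, one has $\langle g,\varphi\rangle=0$ for every $\varphi\in D(\R^n)$ with $\mathrm{supp}\,\varphi\cap\overline{\Om}=\emptyset$; hence $\mathrm{supp}\,g\subset\overline{\Om}$, i.e.\ $g\in\FtBar$, and clearly $g|\Om=f$. Therefore $f\in\Ft$ with $\|f|\Ft\|\leq\|g|\FR\|\lesssim\|f|\Frloc\|$.

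\textbf{Step 2: $\Ft \hookrightarrow \Frloc$.} Let $f\in\Ft$ and choose $g\in\FtBar$ with $g|\Om=f$ and $\|g|\FR\|\leq 2\|f|\Ft\|$. By Proposition \ref{rlocequi} it suffices to bound $\|f|\FO\|+\|\delta^{-s}f|L_p(\Om)\|$ by $\|g|\FR\|$. The term $\|f|\FO\|\leq\|g|\FR\|$ is immediate from the definition of $\FO$, so everything reduces to the Hardy inequality
\begin{align*}
 \|\delta^{-s}f|L_p(\Om)\| \lesssim \|g|\FR\|, \qquad g\in\FtBar,\ f=g|\Om.
\end{align*}
To prove it I would decompose $g$ into its compactly supported wavelet series on $\R^n$ (Proposition \ref{waveletre}); because $g$ vanishes off $\overline{\Om}$, only wavelets whose support meets $\overline{\Om}$ carry nonzero coefficients. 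Then I would split $\|\delta^{-s}f|L_p(\Om)\|^p$ over the interior Whitney cubes $Q^0_{j,r}$ of $\Om$ (on which $\delta\sim 2^{-j}$), estimate $2^{jsp}\int_{Q^0_{j,r}}|f|^p$ through the local means / atoms interplay (Corollary \ref{FolgLocal} and Proposition \ref{RychkovLocal}), and at this point invoke $E$-thickness: it furnishes, for each interior Whitney cube near $\Gamma$, an exterior cube of comparable sidelength at comparable distance to $\Gamma$ on which $g\equiv 0$, which is exactly what forces the boundary-layer contributions to obey the required $2^{-jsp}$-type decay. Summation over $j$ and $r$ then yields the inequality, whence $f\in\Frloc$ with $\|f|\Frloc\|\lesssim\|f|\Ft\|$.

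\textbf{Main obstacle.} The crux is the boundary-layer Hardy inequality of Step 2: this is the only place where $E$-thickness is genuinely used (through the matching family of exterior cubes), and it is the technical heart of \cite[Proposition 3.10]{Tri08}, of which the present statement is essentially a reformulation adapted to our notation.
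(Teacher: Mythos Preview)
The paper does not actually prove this proposition; it records it as a quotation of \cite[Proposition 3.10]{Tri08} and moves on. So there is no ``paper's own proof'' to compare against, only the referenced source.

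Your outline is a reasonable sketch of how the argument in \cite{Tri08} goes. Step~1 is correct and holds for arbitrary domains: the interior $u$-wavelet expansion from Theorem~\ref{rlocwavelet} furnishes, term by term, an atomic decomposition in $\FR$ (no moment conditions are needed because $s>\sigma_{p,q}$), and since every wavelet has support in $\Om$ the resulting limit lies in $\FtBar$. Step~2 correctly identifies Proposition~\ref{rlocequi} as the bridge and reduces matters to the weighted inequality $\|\delta^{-s}f|L_p(\Om)\|\lesssim\|g|\FR\|$ for $g\in\FtBar$; this is indeed where $E$-thickness enters, via the matching exterior cubes that force the boundary-layer wavelet coefficients of $g$ to decay appropriately. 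Your sketch of that last point is schematic rather than a proof---the actual execution (as in \cite{Tri08}) requires a careful splitting of the $\R^n$-wavelet coefficients of $g$ according to the position of their support relative to $\Gamma$ and a pointwise multiplier / characteristic-function argument on the exterior cubes---but you have correctly located the one nontrivial ingredient and the mechanism by which $E$-thickness supplies it.
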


\section{Reinforced function spaces: Traces and wavelet-friendly extension oper\-a\-tors}
\label{traces}
As stated earlier Proposition \ref{decomptri} cannot hold when $r=s-\frac{n-l}{p} \in \N$. The aim of the following sections is to find a substitute, where we replace $Q_l$ by $\R^n \setminus \R^l$ and $Q_l^n$ by $\R^n$ for convenience. We have to care about traces and wavelet-friendly extension operators now from the point of our newly introduced function spaces $\Frinf[\hyp]$ instead of $\FR$.

As before, let
\begin{align*}
\N_l^n=\left\{ \alpha=(\alpha_1,\ldots,\alpha_n) \in \N_0^n: \alpha_1=\ldots=\alpha_l=0\right\}
\end{align*}
and $x=(y,z) \in \R^l \times \R^{n-l}$.

Let $\tr_l$ be the trace operator 
\begin{align*}
 \tr_l: f(x) \mapsto f(y,0), \text{ for } f \in \AR,
\end{align*}
on $\R^l$ (if it exists) and
\begin{align*}
  \tr_{l}^r: f \mapsto \left\{\tr_l D^{\alpha} f: \alpha \in \N_l^n, |\alpha|\leq r \right\}
\end{align*}
which is the composite map of all traces of derivatives of $f$ onto $\R^l$ with order not bigger than $r$ and derivatives only perpendicular to $\R^l$. For further informations on traces and the historical background see \cite[Section 5.11]{Tri08} or {\cite[Section 4.4]{Tri92}.

\begin{Proposition}[Traces]
\label{Tra}
 Let $l \in \N_0$, $n \in \N$ with $l<n$ and $r \in \N_0$. Let $1 \leq p<\infty, 0<q<\infty$ and
\begin{align*}
s>r + \frac{n-l}{p}.
\end{align*}
Then
\begin{align*}
   \tr_l^r:& \Frinf[\R^n] \rightarrow \prod_{\underset{|\alpha|\leq r}{\alpha \in \N_l^n}} F_{p,p}^{s-\frac{n-l}{p}-|\alpha|}(\R^l).
\end{align*}
\end{Proposition}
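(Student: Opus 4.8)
The plan is to reduce the trace statement for the reinforced space $\Frinf[\R^n]$ to the classical trace theorem for Triebel-Lizorkin spaces on $\R^n$, using the fact that in the range $s>r+\frac{n-l}{p}$ the reinforcement either changes nothing or only strengthens the space. First I would recall the classical trace result (as in \cite[Section 4.4]{Tri92} or \cite[Section 5.11]{Tri08}): for $1\le p<\infty$, $0<q<\infty$ and $s>\frac{n-l}{p}$ one has a bounded linear surjection $\tr_l:\FR\to F_{p,p}^{s-\frac{n-l}{p}}(\R^l)$, and more generally, differentiating first in the directions perpendicular to $\R^l$, the map $\tr_l^r$ sends $\FR$ boundedly into $\prod_{\alpha\in\N_l^n,|\alpha|\le r}F_{p,p}^{s-\frac{n-l}{p}-|\alpha|}(\R^l)$ provided $s>r+\frac{n-l}{p}$, since then each $D^\alpha f\in F_{p,q}^{s-|\alpha|}(\R^n)$ with $s-|\alpha|>\frac{n-l}{p}$, so its trace lands in $F_{p,p}^{s-|\alpha|-\frac{n-l}{p}}(\R^l)$.

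Next I would split into the two cases of Definition \ref{reinforcedhyp}. If $s-\frac{n-l}{p}\notin\N_0$, then by definition $\Frinf[\hyp]=\FR$ and the statement is literally the classical trace theorem just quoted; nothing else is needed. If $s-\frac{n-l}{p}=r_0\in\N_0$, then $\Frinf[\hyp]$ is the subspace of $\FR$ carrying the extra norm term $\sum_{|\alpha|=r_0,\alpha\in\N_l^n}\big(\int_{(\hyp)_\eps}|D^\alpha f|^p\,d^{-(n-l)}\,dx\big)^{1/p}$; in particular the embedding $\Frinf[\hyp]\hookrightarrow\FR$ is bounded. Composing this embedding with the classical $\tr_l^r:\FR\to\prod_{|\alpha|\le r}F_{p,p}^{s-\frac{n-l}{p}-|\alpha|}(\R^l)$ immediately yields the claimed boundedness, since the codomain in the proposition is exactly the classical trace target (the reinforcement only lives on the domain side, not the target side). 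One should note that the hypothesis $s>r+\frac{n-l}{p}$ is precisely what guarantees $s-\frac{n-l}{p}-|\alpha|>0$ for all $|\alpha|\le r$, so every trace space $F_{p,p}^{s-\frac{n-l}{p}-|\alpha|}(\R^l)$ is a genuine function space and the classical trace operator for each derivative $D^\alpha f$ is well defined and bounded.

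The technical care needed is mainly in justifying that $\tr_l^r$ is well defined as a pointwise-defined operator on all of $\Frinf[\R^n]$ (equivalently $\FR$ here, as the notation $\Frinf[\R^n]$ presumably means just $\FR$ since $\R^n$ has no relevant boundary in this role — though one may want to double-check the indexing conventions): this uses $s>\frac{n-l}{p}$ together with the embedding $\FR\hookrightarrow C^r$ in the perpendicular directions after restriction, or more precisely the standard fact that $F_{p,q}^{\sigma}(\R^n)$ with $\sigma>\frac{n-l}{p}$ has a well-defined restriction to $\R^l$ independent of the approximation. The only genuinely delicate point, which I expect to be the main (though still mild) obstacle, is bookkeeping the indices $\alpha\in\N_l^n$ and confirming that taking $D^\alpha$ with $\alpha$ perpendicular to $\R^l$ commutes appropriately with the restriction and preserves the $F$-scale with the correct loss of smoothness $|\alpha|$; this is routine but must be stated carefully. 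Everything else is a direct consequence of the classical trace theorem plus the definitional embedding $\Frinf[\hyp]\hookrightarrow\FR$, so no new analysis beyond what is already available in the excerpt is required.
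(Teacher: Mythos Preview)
Your approach is correct and essentially identical to the paper's proof, which reads in full: ``This follows from $\Frinf[\R^n] \hookrightarrow \FR$, Proposition 6.17 in \cite{Tri08} and $F_{p,p}^{s}(\R^l)=B_{p,p}^{s}(\R^l)$. The replacement of $Q_l$ by $\R_l$ is immaterial.'' In other words, the paper also simply composes the definitional embedding of the reinforced space into $\FR$ with the classical trace result; your case split and bookkeeping remarks are just an expanded version of this one-line argument.
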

\begin{Proof}
 This follows from
\begin{align*}
 \Frinf[\R^n] \hookrightarrow \FR, 
\end{align*}
Proposition 6.17 in \cite{Tri08} and $F_{p,p}^{s}(\R^l)=B_{p,p}^{s}(\R^l)$. The replacement of $Q_l$ by $\R_l$ is immaterial. For the proof one uses atomic decompositions of $\FR$.
\end{Proof}
We now take over the introduction of the wavelet-friendly extension operator after Remark 6.18 of \cite{Tri08}, adopted to our situation - we replace the sets $Q_l$ by $\R^l$ and use wavelet bases of $L_2(\R^l)$ instead of $L_2(Q_l)$. The following observations are crucial to what follows. In a special case this was considered in Step 3 of the proof of Theorem 6.46 in \cite{Tri08} and we will take over the idea.

Let $u \in \N$ and let
\begin{align*}
 \{ \Phi_m^j: j \in \N_0, m \in \Z^l \} \subset C^u(\R^l)
\end{align*}
be a $u$-wavelet basis in $L_2(\R^l)$ according to Theorem \ref{orthbases} which is an interior u-Riesz basis for $\FO[\R^l]$ by Proposition \ref{waveletre}. Hence, every $g \in \FO[\R^l]$ can be represented as a wavelet expansion, namely
\begin{align*}
 g= \sum_{j=0}^{\infty}\sum_{m \in \Z^l} \lambda_m^j(g) 2^{-\frac{jl}{2}}\Phi_m^j
\end{align*}
with
\begin{align*}
  \lambda_m^j(g)= 2^{jl/2} \int_{\R^l} g(y) \Phi_m^j(y) \ dy
\end{align*}
and an isomorphic map
\begin{align*}
 g \mapsto \{\lambda_m^j(g)\} 
\end{align*}
of $\FO[\R^l]$ onto $\fO[\Z^l]$. Let 
\begin{align*}
 \chi^* \in D(\R), \quad supp \ \chi^* \subset \{z \in \R: |z|\leq 2 \}, \quad \chi^*(z)=1 \text{ if } |z|\leq 1
\end{align*}
and
\begin{align}
 \label{chi}
\chi(z)=\chi^*(z_1) \cdot \ldots \cdot \chi^*(z_{n-l})
\end{align}
for all $z \in \R^{n-l}$. Furthermore, it is possible to choose $\chi$ such that it fulfils as many moment conditions
\begin{align}
\label{momenter}
 \int_{\R^{n-l}} \chi(z) z^{\beta} \ dz = 0 \text{ if } |\beta|\leq L 
\end{align}
as we want. Now we define $n$-dimensional functions (extensions) by
\begin{align*}
 \Phi_m^{j,\alpha}(x) = 2^{j|\alpha|} z^{\alpha} \chi(2^j z)\,  2^{(n-l)j/2} \, \Phi_m^j(y) \quad \text{for } \alpha \in \N_l^n.
\end{align*}
It is easy to see that
\begin{align}
\label{extid}
 \tr_l D^{\alpha}  \Phi_m^{j,\alpha}(y) = 2^{j|\alpha|} \cdot \alpha! \cdot 2^{(n-l)j/2} \Phi_m^j(y)= c_{j,\alpha} \Phi_m^j(y)
\end{align}
and
\begin{align}
\label{extid2}
 \tr_l D^{\alpha}  \Phi_m^{j,\beta}(y) = 0 \quad \text{ for } \alpha \neq \beta \in \N_{l,0}^n
\end{align}
for $y \in \R^l$ having in mind the factor $z^{\alpha}$. This is the crucial property giving the possibility to construct the extension operator by
\begin{align}
\label{Ext}
\begin{split}
 g&= \Ext_l^{r,u} \{g_{\alpha}: \alpha \in \N_l^n, |\alpha|\leq r \} (x) \\
  &= \sum_{\underset{|\alpha|\leq r}{\alpha \in \N_l^n}} \sum_{j=0}^{\infty}\sum_{m \in \Z^l} \frac{1}{\alpha!} \lambda_m^j(g_{\alpha}) 2^{-j|\alpha|} 2^{-\frac{jn}{2}}\cdot\Phi_m^{j,\alpha}(x)
\end{split}
\end{align}
In the following Proposition, we will consider extensions from $\R^l$ to $\R^n$ and consider Hardy inequalities (reinforce properties) at a boundary $\R^{l_1}$ with $l\leq l_1<n$. This is always to be understood as
\begin{align*}
 \R^l&=\{(x_1,\ldots,x_n) \in \R^n:x_{l+1}=\ldots=x_{n}=0\} \\
\subset \R^{l_1}&=\{(x_1,\ldots,x_n) \in \R^n:x_{l_1+1}=\ldots=x_{n}=0\}.
\end{align*}

\begin{Proposition}[Wavelet-friendly extension operators]
\label{extwavelet}
Let $n \in \N, l, r \in \N_0$ and $l<n$. Let $\{\Phi_r^j\}$ be an orthonormal u-wavelet basis in $L_2(\R^l)$. Let $\chi$ be as in \eqref{chi} with $L \in \N_0$ sufficiently large in dependence of $q$. Let $\Ext_l^{r,u}$ be given by \eqref{Ext} and
\begin{align*}
 1\leq p < \infty, 1\leq q\leq \infty \text{ and } u>s>r+\frac{n-l}{p} 
\end{align*}
Then
\begin{align*}
 \Ext_l^{r,u}: \{ g_{\alpha}: \alpha \in \N_l^n, |\alpha|\leq r \} \mapsto g
\end{align*}
is an extension operator  
\begin{align*}
 \Ext_l^{r,u} : \prod_{\underset{|\alpha|\leq r}{\alpha \in \N_l^n}} F_{p,p}^{s-\frac{n-l}{p}-|\alpha|}(\R^l) \hookrightarrow \Frinf[\R^n \setminus \R^{l}]
\end{align*}
with 
\begin{align}
\label{idprop}
 \tr_l^r \circ \Ext_l^{r,u} =id, \text{ identity on } \prod_{\underset{|\alpha|\leq r}{\alpha \in \N_l^n}} F_{p,p}^{s-\frac{n-l}{p}-|\alpha|}(\R^l).
\end{align}
Furthermore, let $l_1 \in \N_0$ with $l \leq l_1 < n$ and 
\begin{align*}
 r_1:=s-\frac{n-l_1}{p} \in \N.
\end{align*}
Then 
\begin{align}
\label{distext}
d_{l_1}^{-\frac{n-l_1}{p}} \cdot  D^{\beta} \Ext_l^{r,u} g \in L_p((\R^n \setminus \R^{l_1})_{\eps}) \text{ for all } \beta \in \N_{l_1}^n \text{ with } |\beta|=r_1,
\end{align}
(with a suitable norm estimate) where $d_{l_1}(x)$ is the distance of $x \in \R^n$ to $\R^{l_1}$. 
\end{Proposition}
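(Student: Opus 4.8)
\section*{Proof proposal}

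The plan is to expand $\Ext_l^{r,u}g$ into elementary building blocks, to recognise these (after renormalisation) as atoms in the sense of Definition~\ref{Atoms} sitting on cubes that touch $\R^l$, and then to run the atomic machinery of Theorem~\ref{AtomicRepr}. Writing $x=(y,z)\in\R^l\times\R^{n-l}$ and inserting the definition of $\Phi_m^{j,\alpha}$ into \eqref{Ext} one gets
\begin{align*}
 \Ext_l^{r,u}g=\sum_{\underset{|\alpha|\leq r}{\alpha\in\N_l^n}}\ \sum_{j=0}^\infty\ \sum_{m\in\Z^l}\ \frac{1}{\alpha!}\,\lambda_m^j(g_\alpha)\,2^{-jl/2}\,z^\alpha\chi(2^jz)\,\Phi_m^j(y).
\end{align*}
The block $b_m^{j,\alpha}(x):=z^\alpha\chi(2^jz)\Phi_m^j(y)$ is supported in a fixed dilate of a cube $Q_{j,\tilde m}$ centred near $(2^{-j}m,0)$; since $z^\alpha\chi$ is a fixed compactly supported $C^\infty$ function and the wavelets obey $|D^\gamma\Phi_m^j|\lesssim 2^{jl/2+j|\gamma|}$ for $|\gamma|\le u$, a dilation computation yields $\|b_m^{j,\alpha}(2^{-j}\cdot)|\hold[K]\|\lesssim 2^{-j|\alpha|}2^{jl/2}$ for every $0\le K\le u$, and the moment conditions \eqref{momenter} of $\chi$ together with the oscillation of the $\Phi_m^j$ ($j\ge1$) make $b_m^{j,\alpha}$ satisfy moment conditions of arbitrarily high order $L$ when $\chi$ is chosen accordingly. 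Hence $a_m^{j,\alpha}:=2^{j|\alpha|-jl/2-j(s-n/p)}b_m^{j,\alpha}$ is an $(s,p)_{K,L}$-atom with $K=u>s$ and $L$ as large as needed (in particular $L>\sigma_{p,q}-s=-s$), and $\Ext_l^{r,u}g=\sum_{\alpha,j,m}\mu_m^{j,\alpha}a_m^{j,\alpha}$ with $|\mu_m^{j,\alpha}|\sim|\lambda_m^j(g_\alpha)|\,2^{j(s-|\alpha|-n/p)}$. The identity \eqref{idprop} then follows at once: by \eqref{extid}, \eqref{extid2} and the continuity of $\tr_l D^\alpha$ on $\FR$ (Proposition~\ref{Tra}) one computes $\tr_l D^\alpha(\Ext_l^{r,u}g)=\sum_{j,m}\lambda_m^j(g_\alpha)2^{-jl/2}\Phi_m^j=g_\alpha$ for $|\alpha|\le r$ via the reproducing formula of the $u$-Riesz basis $\{\Phi_m^j\}$ on $\R^l$ (Proposition~\ref{waveletre}).

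For the mapping property $\Ext_l^{r,u}g\in\FR$ one has to check that $\sum_{|\alpha|\le r}\|\{\mu_m^{j,\alpha}\}|f_{p,q}(\Z^{\R^n})\|\lesssim\sum_{|\alpha|\le r}\|g_\alpha|F_{p,p}^{s-\frac{n-l}{p}-|\alpha|}(\R^l)\|$. This is a Fubini-type computation: for fixed $x=(y,z)$ only scales $j$ with $|z|\lesssim2^{-j}$ and $\lesssim1$ indices $m$ are active, so integrating the $z$-variables over dyadic shells $|z|\sim2^{-k}$ (of measure $\sim2^{-k(n-l)}$) and applying the discrete Hardy inequality to the partial sums over $j\le k$ converts the $\R^n$-sequence norm into $\bigl(\sum_j 2^{(s-|\alpha|-\frac{n-l}{p})pj}2^{-jl}\sum_m|\lambda_m^j(g_\alpha)|^p\bigr)^{1/p}\sim\|g_\alpha|F_{p,p}^{s-\frac{n-l}{p}-|\alpha|}(\R^l)\|$, the exponent matching precisely because $s-|\alpha|-\frac{n-l}{p}$ is the smoothness of the target space and the parameter $q$ drops out. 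Theorem~\ref{AtomicRepr} and Lemma~\ref{HarmS'-KonvAtom} (convergence in $\Sc'(\R^n)$) then give $\Ext_l^{r,u}g\in\FR$ with the asserted norm bound, and injectivity of $\Ext_l^{r,u}$ is a consequence of \eqref{idprop}.

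For the reinforce statement \eqref{distext} (which for $l_1=l$ also secures $\Ext_l^{r,u}g\in\Frinf[\hyp]$ in the critical case $s-\frac{n-l}{p}=r_1\in\N$) I would differentiate the series term by term. Split $z=(z',z'')\in\R^{l_1-l}\times\R^{n-l_1}$ and $\alpha=(\alpha',\alpha'')$ accordingly, while $\beta=(0,\beta'')$ with $|\beta''|=|\beta|=r_1$; since $\chi$ is a tensor product we have $z^\alpha\chi(2^jz)=\bigl(z'^{\alpha'}\chi_{z'}(2^jz')\bigr)\bigl(z''^{\alpha''}\chi_{z''}(2^jz'')\bigr)$, $D^\beta$ acts only on the second factor, and it does not touch $\Phi_m^j(y)$ at all (no $y$-components in $\beta$). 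The decisive point is that $\chi^*\equiv1$ near $0$ and $|\beta''|=r_1>r\ge|\alpha''|$, so $D_{z''}^{\beta''}\bigl(z''^{\alpha''}\chi_{z''}(2^jz'')\bigr)=D_{z''}^{\beta''}z''^{\alpha''}=0$ wherever $\chi_{z''}(2^jz'')=1$; hence $D^\beta\Phi_m^{j,\alpha}$ vanishes in a neighbourhood of $\R^{l_1}$ and is in fact supported in an annular region on which $|z''|\sim2^{-j}$ (and $|z'|\lesssim2^{-j}$, $|y-2^{-j}m|\lesssim2^{-j}$), where $d_{l_1}(x)=|z''|\sim 2^{-j}$, so the weight $d_{l_1}^{-(n-l_1)}\sim2^{j(n-l_1)}$ is harmless and there is no singularity to fight. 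A pointwise bound $|D^\beta\Phi_m^{j,\alpha}|\lesssim2^{jr_1+jn/2}$ on that region, insertion of the coefficients, the triangle inequality in $\alpha$, and the fact that for each $x$ only $O(1)$ pairs $(j,m)$ are active reduce the weighted $L_p$-integral to $\sum_{|\alpha|\le r}\sum_j 2^{[(r_1-|\alpha|)p-l_1]j}\sum_m|\lambda_m^j(g_\alpha)|^p$, and since $(r_1-|\alpha|)p-l_1=(s-\tfrac{n-l}{p}-|\alpha|)p-l$ this is again $\lesssim\sum_{|\alpha|\le r}\|g_\alpha|F_{p,p}^{s-\frac{n-l}{p}-|\alpha|}(\R^l)\|^p$.

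The main obstacle is the bookkeeping of the second paragraph: turning the $\R^n$-sequence space living on the boundary cubes at $\R^l$ into the correct product of $F_{p,p}(\R^l)$-spaces and checking that $q$ genuinely disappears -- this is standard trace technology but needs care with the discrete Hardy inequality and the shell decomposition. By contrast the reinforce part turns out to be clean once one notices that $\chi\equiv1$ near the origin forces every relevant derivative of the building blocks to live on annuli that are, at the appropriate scale, bounded away from $\R^{l_1}$, so that the logarithmic divergence of $\int|z''|^{-(n-l_1)}\,dz''$ never materialises; there the only thing to verify is the exponent arithmetic, which closes exactly. Routine technicalities -- unconditional convergence of the series, independence of the limit from auxiliary choices, and justifying the interchange of $\tr_l D^\alpha$ with the sum -- are handled exactly as at the end of Corollary~\ref{FolgLocal} and in Remark~\ref{ProdDef}.
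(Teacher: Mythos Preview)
Your proposal is correct and follows essentially the same line as the paper's proof: both rely on recognising the $\Phi_m^{j,\alpha}$ as atoms and, crucially, on the observation that $D^\beta\Phi_m^{j,\alpha}$ vanishes near $\R^{l_1}$ because $\chi\equiv 1$ near the origin and $|\beta''|=r_1>r\ge|\alpha''|$, forcing the support into an annulus with $d_{l_1}(x)\sim 2^{-j}$.

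The one place where your route differs from the paper's is in handling \eqref{distext}. You compute the weighted $L_p$-norm directly on the annular supports and close the exponent arithmetic by hand; the paper instead notes that the annular support makes $2^{-jr_1}D^\beta\Phi_m^{j,\alpha}$ an atom for the refined localization space $F_{p,p}^{\frac{n-l_1}{p},\rloc}(\R^n\setminus\R^{l_1})$ (Definition~\ref{Atomsrloc}, Theorem~\ref{AtomicReprrloc}), and then invokes the equivalent characterisation of Proposition~\ref{rlocequi} to extract the Hardy-type weighted $L_p$-bound. Your direct computation is slightly more elementary and self-contained; the paper's detour through $\Frloc$ has the advantage that the same mechanism is reused verbatim later in Proposition~\ref{Frlocdecomp} and Lemma~\ref{reinfprop}, so the investment pays off structurally. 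For the $\FR$ mapping property and the $q$-independence, the paper simply cites \cite[Theorem~6.19]{Tri08} where you sketch the shell decomposition and discrete Hardy argument explicitly; both amount to the same Frazier--Jawerth trace mechanism.
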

\begin{Proof}
 At first, one shows that $\Ext_l^{r,u}$ maps to $\FR$. This is nearly the same situation as in \cite[Theorem 6.19]{Tri08}. The idea is the extension of wavelets on $\R^l$, which are special kind of atoms, to atoms of $\R^n$. If moment conditions are necessary, they can be accomplished by using moment conditions of $\chi$ (see \eqref{momenter}). Then $\eqref{Ext}$ gives an atomic decomposition of $\Ext_l^{r,u} g$ and hence $\Ext_l^{r,u} g$ belongs to $\FR$. 

Furthermore, to show that $\Ext_l^{r,u} g\in \Frinf[\hyp]$ as well as $\eqref{distext}$ and to show the continuity of the operator, it suffices to prove
\begin{align*}
\int_{(\R^n \setminus \R^{l_1})_{\eps}}  |D^{\beta} \Ext_l^{r,u} g|^p  \frac{ dx}{d^{n-l}(x)} \lesssim \sum_{\underset{|\alpha|\leq r}{\alpha \in \N_{l}^n}} \|g_{\alpha}|F_{p,p}^{s-\frac{n-l}{p}-|\alpha|}(\R^l)\|^p \text{ for } \beta \in \N_{l_1}^n, |\beta|=r_1
\end{align*}
for $s-\frac{n-l_1}{p}=r_1 \in \N$ and $l \leq l_1 <n$, since only then this additional property is required for $\Frinf[\hyp]$. 

But this follows from the equivalent characterization for the refined localization spaces in Proposition \ref{rlocequi} if we show that $D^{\beta} \Ext_l^{r,u} g$ belongs to $F_{p,p}^{\frac{n-l_1}{p},\rloc}(\R^n \setminus \R^{l_1}) \text{ for } \beta \in \N_{l_1}^n, |\beta|=r_1$ and 
\begin{align*}
\|D^{\beta} \Ext_l^{r,u} g|F_{p,p}^{\frac{n-l_1}{p},\rloc}(\R^n \setminus \R^{l_1})\| \lesssim \sum_{\underset{|\alpha|\leq r}{\alpha \in \N_{l}^n}} \|g_{\alpha}|F_{p,p}^{s-\frac{n-l}{p}-|\alpha|}(\R^l)\|^p.
\end{align*}
Let $\beta \in \N_{l_1}^n$, $|\beta|=r_1>0$ and $\beta=(0,\beta') \in \N^l \times \N^{n-l}$. Furthermore, $\beta'=(0,\beta'') \in \N^{l_1-l} \times \N^{n-l_1}$ since $\beta \in \N_{l_1}^n$. Let $z=(z_1,z_2) \in \R^{l_1-l} \times \R^{n-l_1}$ with the modification $z=z_2$ if $l_1=l$. 
The function $\chi$ from \eqref{chi} is a product of 1-dimensional functions $\chi^*$ (equal to $1$ near $0$) and hence constant on the segments $\{z \in \R^{n-l}: z_1=z_0,|z_2| \leq 1 \}$ with a fixed $z_0 \in \R^{l_1-l}$. Then
\begin{align*}
 D^{\beta'} \left( z^{\alpha}\chi(2^j z) \right)= 0 \text{ for } |z_2| \leq 2^{-j} \text{ since } |\alpha|\leq r < s-\frac{n-l}{p}\leq s-\frac{n-l_1}{p}=|\beta'|.
\end{align*}
This leads to 
\begin{align}
\label{extd2}
 supp \ D^{\beta} \Phi_m^{j,\alpha}  \subset \R^{l_1} \times \{z \in \R^{n-l_1}: 2^{-j}\leq |z|\leq 2^{-j+1} \}
\end{align}
and together with the support properties of $\chi$ this shows
\begin{align}
\label{extd}
 \dist(x,\R^{l_1}) \sim 2^{-j} \text{ for } x \in  supp \ D^{\beta} \Phi_m	^{j,\alpha}. 
\end{align}
Thus we proved that $2^{-jr_1} D^{\beta} \Phi_m^{j,\alpha}$ for $|\beta|=r_1$ behave like atoms in  $F_{p,p}^{\frac{n-l_1}{p},\rloc}(\R^n \setminus \R^{l_1})$, see Definition \ref{Atomsrloc} and Theorem \ref{AtomicReprrloc} - here we have $\sigma_{p,q}=0$ and $\frac{n-l_1}{p}>0$. 

\begin{center}
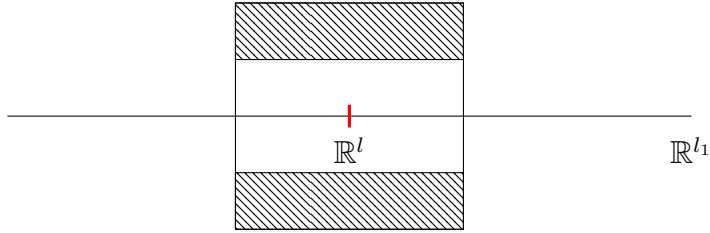

 \begin{tikzpicture}[scale=3]
 \def\u{0.03};
  \draw (-1,0) -- (2,0);
\draw[red,very thick] (0.5,-0.05)--(0.5,0.05);
\draw[very thick] (0.5,-0.15) node {$\R^l$};
\draw (2,-0.15) node {$\R^{l_1}$};
\draw (1,0.5)--(0,0.5)--(0,-0.5,0)--(1,-0.5)--cycle;
\draw[pattern=north west lines] (1,0.25)--(0,0.25)--(0,0.5)--(1,0.5)--cycle;
\draw[pattern=north west lines] (1,-0.25)--(0,-0.25)--(0,-0.5)--(1,-0.5)--cycle;
\end{tikzpicture}
 \captionof{figure}{Support of the derivatives of $\Ext_l^{r,u} g$ of order $r_1$ at $\R^{l_1}$ (shaded)}
   \label{reinfext}
\end{center}

By direct calculation we arrive at 
\begin{align*}
 D^{\beta} \Ext_l^{r,u} g =\sum_{\underset{|\alpha|\leq r}{\alpha \in \N_l^n}} \sum_{j=0}^{\infty}\sum_{m \in \Z^l} \frac{1}{\alpha!} \lambda_m^j(g_{\alpha}) 2^{-j|\alpha|}  2^{jr_1} \cdot 2^{-jr_1} 2^{-\frac{jn}{2}} D^{\beta}\Phi_m^{j,\alpha}(x)
\end{align*}
with ($s=\frac{n-l_1}{p}+r_1$)
\begin{align*}
 \|2^{-j|\alpha|}2^{jr_1}\lambda_m^j(g_{\alpha}) |f_{p,p}^{\frac{n-l_1}{p}}(\Z^{\R^n \setminus \R^{l_1}})\| & \sim  \|2^{-j|\alpha|}\lambda_m^j(g_{\alpha})|f_{p,p}^{s}(\Z^n)\| \\
&\sim \|2^{-j|\alpha|}\lambda_m^j(g_{\alpha})|f_{p,p}^{s-\frac{n-l}{p}}(\Z^l)\| \\
&\lesssim \sum_{\underset{|\alpha|\leq r}{\alpha \in \N_l^n}} \|g_{\alpha}|F_{p,p}^{s-\frac{n-l}{p}-|\alpha|}(\R^l)\|.
\end{align*}
The estimates are a consequence of the known observation that the extension operator maps   
\begin{align*}
\prod_{\underset{|\alpha|\leq r}{\alpha \in \N_l^n}} F_{p,p}^{s-\frac{n-l}{p}-|\alpha|}(\R^l) \rightarrow F_{p,p}^{s}(\R^n).
\end{align*}
This shows 
\begin{align*}
 D^{\beta} \Ext_l^{r,u} g \in F_{p,p}^{\frac{n-l_1}{p},\rloc}(\R^n \setminus \R^{l_1}) \text{ for } \beta \in \N_{l_1}^n, |\beta|=r_1.
\end{align*}
The identity property \eqref{idprop} follows easily from the structure of the extension operator, namely from \eqref{extid} and \eqref{extid2}.

\end{Proof}

\begin{Remark}
 The essential observation of the previous proof is \eqref{extd}. When considering traces and extensions of $\FR$, the $q$-independency of the trace is most remarkable - this can be shown using atomic decompositions and an argument which goes back to Frazier and Jawerth \cite{FrJ90}: One slightly shifts the support of the atoms such that there is only a finite overlap of atoms - resulting in the independency of $q$. 

This is not possible in general for weighted spaces with weights like $d_{l_1}^{-\frac{n-l_1}{p}}$ as we have to consider in our case - but the support condition \eqref{extd2} and its consequence \eqref{extd} enable us to construct a situation very similar to the situation of \cite{FrJ90} for the spaces $\FR$.
\end{Remark}

\section{Decomposition theorems for $\Frinf[\hyp]$ adapted to wavelets}

Our main goal of this section is the proof of the Theorems \ref{Zerleger} and \ref{Zerlegercrit} which are the substitutes for Proposition \ref{decomptri}, which is taken from (6.68) in \cite{Tri08} - from the section called ``A model case''. It can be used later on for the construction of the wavelet bases on cubes, polyhedrons and cellular domains.

A similar observation for the more special $C^{\infty}$-domains is the following proposition, where only traces perpendicular to the boundary $\partial \Om$ are to be considered. Then only the values 
\begin{align*}
  0<s-\frac{1}{p} \notin \N
\end{align*}
 are exceptional.

\begin{Proposition} 
\label{inftydom}
 Let $\Om$ be a bounded $C^{\infty}$-domain in $\R^n$. Let $1\leq p<\infty$, $0<q<\infty$ and 
 \begin{align*}
  0<s-\frac{1}{p} \notin \N.
 \end{align*}
Then
\begin{align*}
 \Ft=\Fo=\{f \in \FO: \tr_{\partial \Om}^{r} f = 0 \}
\end{align*}
\end{Proposition}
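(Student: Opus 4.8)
The plan is to prove the chain of equalities $\Ft=\Fo=\{f \in \FO: \tr_{\partial\Om}^r f = 0\}$, where $r=\lfloor s-\frac{1}{p}\rfloor$ (the largest integer strictly below $s-\frac1p$ in this non-exceptional setting), by combining the trace theory for $C^\infty$-domains with the refined-localization results established above. Observe first that by Proposition \ref{specialrloc} (applied to the $E$-thick, in fact $C^\infty$, domain $\Om$) we have $\Frloc=\Ft$, so $\Fo$ in the statement must be read as $\Frloc$; hence it suffices to show $\Frloc=\{f \in \FO: \tr_{\partial\Om}^r f=0\}$ and to recall $\Frloc=\Ft$. The key structural ingredient is Proposition \ref{rlocequi}: $f\in\Frloc$ if and only if $f\in\FO$ and $\delta^{-s}(\cdot)f\in L_p(\Om)$, where $\delta(x)=\min(\dist(x,\partial\Om),1)$. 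So the task reduces to showing that, for $f\in\FO$ with $0<s-\frac1p\notin\N$, the weighted integrability $\delta^{-s}f\in L_p(\Om)$ is equivalent to the vanishing of all normal traces $\tr_{\partial\Om}D^\gamma f=0$ for $|\gamma|\le r$ (derivatives in the normal direction).

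The forward direction ($\Frloc\subset\{\tr^r=0\}$) I would handle as follows. If $f\in\Frloc$, then by Proposition \ref{rlocDiff} every normal derivative $D^\gamma f$ with $|\gamma|=k\le r$ belongs to $F_{p,q}^{s-k,\rloc}(\Om)$, since $s-r>\sigma_{p,q}$ is guaranteed by $0<s-\frac1p\notin\N$ together with $r=\lfloor s-\frac1p\rfloor$ (so $s-r>\frac1p\ge\sigma_{p,q}$ for $p\ge1$). Applying Proposition \ref{rlocequi} to $D^\gamma f$ gives $\delta^{-(s-k)}D^\gamma f\in L_p(\Om)$. Since $s-k>\frac1p$, the trace theorem (the $C^\infty$-domain analogue of Proposition \ref{Tra}, i.e. Proposition 6.17-type results from \cite{Tri08}) shows $\tr_{\partial\Om}D^\gamma f$ exists in $F_{p,p}^{s-k-1/p}(\partial\Om)$; but a function in $F_{p,q}^{s-k,\rloc}(\Om)$ with the weighted decay $\delta^{-(s-k)}D^\gamma f\in L_p$ near the boundary cannot have a nonzero trace — the standard argument being that $\Frloc=\Ft$ for $s-k>\sigma_{p,q}$ and $\Ft$-functions are $D'(\Om)$-limits of $D(\Om)$-functions, which have zero trace, and the trace operator is continuous on the relevant scale. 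Hence $\tr_{\partial\Om}D^\gamma f=0$ for all such $\gamma$, i.e. $\tr_{\partial\Om}^r f=0$.

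For the reverse inclusion, suppose $f\in\FO$ with $\tr_{\partial\Om}^r f=0$. The natural route is to use the wavelet/boundary-adapted decomposition for $\FO$ on the $C^\infty$-domain $\Om$ as in the second step of the proof of Proposition \ref{rlocequi}: split $f=f_1+f_2$ into a boundary part (wavelets with support touching $\partial\Om$, no moment conditions) and an interior part. The interior part $f_2$ lies in $\Frloc$ automatically. For $f_1$, one expands in boundary wavelets adapted to a collar neighbourhood of $\partial\Om$ parametrized by dyadic distance $2^{-j}$; the vanishing of $\tr_{\partial\Om}^r f$ forces the ``low-order'' coefficients (those corresponding to the normal Taylor polynomial of degree $\le r$) to be absent, so the boundary wavelet coefficients $\lambda_r^{j,1}$ satisfy an extra gain of $2^{-jr}$ — enough, since $s-r<\frac1p<$ the relevant threshold, to conclude $\delta^{-s}f_1\in L_p(\Om)$ by the direct $L_p$-computation $\|\delta^{-s}f_1|L_p\|\sim\|2^{js}\lambda_r^{j,1}|f_{p,p}^0(\Z^\Om)\|$ used in Proposition \ref{rlocequi}. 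Then $\delta^{-s}f=\delta^{-s}f_1+\delta^{-s}f_2\in L_p(\Om)$, and Proposition \ref{rlocequi} yields $f\in\Frloc=\Ft$.

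I expect the main obstacle to be making the ``Taylor polynomial / coefficient gain'' argument precise: one must carefully relate the hypothesis $\tr_{\partial\Om}^r f=0$ (vanishing of iterated normal traces) to quantitative decay of boundary-wavelet coefficients, uniformly in the curved $C^\infty$-boundary. This is where the non-exceptionality $0<s-\frac1p\notin\N$ is essential — it guarantees $r=\lfloor s-\frac1p\rfloor$ is the correct number of trace conditions and that $s-r$ stays strictly between $\sigma_{p,q}$ and $\frac1p$, so no borderline/logarithmic phenomena (of the kind appearing in Proposition \ref{Hardycrit}) intervene. Alternatively, if one wants to avoid the wavelet bookkeeping, one can prove the reverse inclusion by a Hardy-inequality induction on $r$: flatten the boundary locally, reduce to the model case $\hyp$ with $l=n-1$, and invoke Proposition \ref{Hardysubcrit} together with a localization/partition-of-unity argument (pointwise multipliers, Theorem \ref{pointwise}) to patch the local estimates into the global statement on $\Om$. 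Either way the trace-theory input from \cite[Section 5.11]{Tri08} and the equivalence in Proposition \ref{rlocequi} do the heavy lifting.
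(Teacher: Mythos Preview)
The paper does not prove this proposition at all: its entire proof is a two-line citation to \cite[Section 5.24]{Tri01} and \cite[Section 2.4.5]{Tri99}, where the result is established by classical means (characteristic-function multipliers, density arguments, and interpolation). So your approach is genuinely different --- you try to derive the statement from the refined-localization machinery (Propositions \ref{rlocequi}, \ref{rlocDiff}, \ref{specialrloc}) built in this thesis. That is an interesting programme, but there are two concrete problems.

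First, a logical-dependency issue. In the paper's architecture, Proposition \ref{inftydom} is an \emph{input}, not an output: look at the second step of the proof of Theorem \ref{Zerleger}, where the reduction to one dimension ends with ``By Proposition \ref{inftydom} we have $g_y \in \tilde{F\,}\!_{p,p}^{s'}([0,2])$''. Your route (b) --- flatten the boundary, reduce to the model half-space, invoke a Hardy inequality --- is exactly the pattern of Theorem \ref{Zerleger}, and that pattern bottoms out in the one-dimensional interval case of the very proposition you are proving. Moreover, the specific tool you name, Proposition \ref{Hardysubcrit}, covers only $0<s<\frac{n-l}{p}$, i.e.\ $0<s<\frac1p$ when $l=n-1$; here you need $s>\frac1p$. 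To bridge that gap you would have to combine Lemma \ref{HardyZerleger} with the subcritical Hardy inequality applied to $D^\alpha f\in F_{p,q}^{s-r-1}$, but for $\frac1p<s-r\le 1$ (always possible when $p>1$) the derivative $D^\alpha f$ with $|\alpha|=r+1$ lives only in a distribution space $F_{p,q}^{s-r-1}$ with $s-r-1\le 0$, and the weighted $L_p$-estimate makes no immediate sense. So route (b) either becomes circular or leaves a genuine case unhandled.

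Second, route (a) you yourself flag as incomplete: translating ``$\tr_{\partial\Om}^r f=0$'' into a quantitative $2^{-jr}$ gain on boundary-wavelet coefficients is exactly the hard analytic step, and nothing in Sections \ref{section:refined}--\ref{traces} supplies it for curved $C^\infty$-boundaries. (Also, a minor point: $\Fo=\mathring{F}_{p,q}^s(\Om)$ is by convention the closure of $D(\Om)$ in $\FO$; it is not \emph{a priori} the same object as $\Frloc$. The equality $\Fo=\Ft$ is part of the assertion and, while it does follow once you have $\Ft=\Frloc$ together with Remark \ref{rlocDense}, you should say so explicitly rather than declaring that $\Fo$ ``must be read as $\Frloc$''.) In short, your forward inclusion is fine, but the reverse inclusion needs an independent ingredient --- precisely the one the paper imports from \cite{Tri99,Tri01}.
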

\begin{proof}
 These are the observations from \cite[Section 5.24]{Tri01} now in our notation. A proof of these results is given in \cite[Section 2.4.5]{Tri99}. The proof also shows that
 \begin{align*}
  \|f|\Ft\| \sim \|f|\FO\|
 \end{align*}
 for $f \in \Ft$.

\end{proof}
In the following it will be easier to assume $q\geq 1$. For $q$ smaller than $1$ we have to care about the situation for $\Frloc$, since it is only defined for $s>\sigma_{p,q}$. We will give some remarks on these cases in Remark \ref{ZerlegerRem} nevertheless and also try to incorporate the cases $0<q<1$ later.

\subsection{Hardy inequalities using boundary conditions at $\R^l$}

\begin{Lemma}[Hardy inequality]
 \label{HardyZerleger}
Let $n \in \N$, $l \in \N_0$, $l<n$ and $r \in \N$. Let $1\leq p <\infty$, $s>r-1+\frac{n-l}{p}$ and $d(x)$ be the distance of $x=(x',x'') \in \R^l \times \R^{n-l}$ from $\R^l$.
Then there is a constant $c>0$ such that 
\begin{align*}
 \|d^{-s}(\cdot) f|L_p(\R^n)\| \leq c \sum_{\underset{|\alpha|=r}{\alpha \in \N_l^n}} \|d^{-s+r}(\cdot) D^{\alpha}f|L_p(\R^n)\|
\end{align*}
for all $f \in C^r(\R^n)$ with $(D^{\beta}f)(x',0)=0$ for all $x' \in \R^l$ and $\beta \in \N_l^n$ with $|\beta|\leq r-1$.
(In particular, if the left hand side is $\infty$, then also the right hand side.)
\end{Lemma}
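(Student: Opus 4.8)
The statement is a Hardy-type inequality with the distance weight $d(x)=|x''|$ on $\R^n=\R^l\times\R^{n-l}$, valid for $f\in C^r(\R^n)$ whose derivatives $D^\beta f$ in directions perpendicular to $\R^l$ vanish on $\R^l$ up to order $r-1$. The natural approach is to reduce to a one-dimensional Hardy inequality by Fubini and then iterate. First I would fix $x'\in\R^l$ and regard $f^{x'}(x'')=f(x',x'')$ as a function on $\R^{n-l}$ vanishing, together with its derivatives of order $\leq r-1$, at the origin. The case $r=1$ is the heart of the matter: for a function $g$ on $\R^{n-l}$ with $g(0)=0$ one writes $g(z)=\int_0^1 \nabla g(tz)\cdot z\,dt$, so $|g(z)|\leq |z|\int_0^1 |\nabla g(tz)|\,dt$. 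This is the standard route to the pointwise bound needed for the weighted estimate; alternatively one uses the genuine $(n-l)$-dimensional Hardy inequality in the form already invoked in the proof of Proposition~\ref{Hardycrit} (the $(n-l)$-dimensional version of (16.8) in \cite{Tri01}), but applied now at the level of $C^r$ functions rather than function-space norms.

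\textbf{Key steps.} I would proceed as follows. Step 1: Reduce the claim, via the elementary inequality $\sum_{|\alpha|=r,\alpha\in\N_l^n}\|\cdot\| \gtrsim$ (single highest perpendicular derivative), to proving, for each fixed direction, the one-variable statement: if $h\in C^r(\R)$ with $h(0)=h'(0)=\dots=h^{(r-1)}(0)=0$, then $\| t^{-s} h(t)\|_{L_p(\R)} \lesssim \| t^{-s+r} h^{(r)}(t)\|_{L_p(\R)}$ for $s> r-1+\tfrac1p$. Step 2: Prove this one-variable inequality by the classical Hardy inequality $\|t^{-a}\!\int_0^t u(\tau)\,d\tau\|_{L_p}\lesssim \|t^{-a+1}u\|_{L_p}$ valid for $a>\tfrac1p$, applied $r$ times: writing $h(t)=\int_0^t\!\int_0^{t_1}\!\cdots\!\int_0^{t_{r-1}} h^{(r)}(t_r)\,dt_r\cdots dt_1$ using the vanishing of all lower derivatives at $0$, and noting the exponent drops by $1$ at each application, starting from $s$ and ending at $s-r$, which stays admissible as long as $s-k>\tfrac1p-1$ for $k<r$, i.e. $s>r-1+\tfrac1p$. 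Step 3: Pass from the one-variable inequality on rays to $\R^{n-l}$ using $(n-l)$-dimensional polar coordinates $z=\rho\omega$, exactly as in the computation in Proposition~\ref{Hardycrit}: the radial Hardy inequality holds on each ray with a uniform constant and the Jacobian $\rho^{n-l-1}$ is absorbed since $d(x)=|z|=\rho$ appears with the matching power. Step 4: Integrate over $x'\in\R^l$ (plain Tonelli, both sides being $p$-th powers of $L_p$ norms) to recover the $n$-dimensional statement, using $d(x)=|x''|$. The final sentence about the case when the right side is infinite is automatic from the monotone-convergence structure of the argument.

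\textbf{Main obstacle.} The genuinely delicate point is the sharp exponent range $s>r-1+\tfrac{n-l}{p}$: one must check that at every stage of the iterated one-dimensional Hardy inequality the required condition $\text{(current exponent)}>\tfrac1p$ (in one variable) — equivalently $\text{(current exponent)}>\tfrac{n-l}{p}$ after the polar-coordinate bookkeeping — is met, and that the \emph{last} application, with exponent $s-r+1$, is exactly the borderline that forces $s>r-1+\tfrac{n-l}{p}$. A secondary technical care is that we only assume $f\in C^r(\R^n)$ with no decay at infinity, so the integral representations of $h$ in terms of $h^{(r)}$ via Taylor's formula with integral remainder anchored at $z=0$ must be used (not at infinity), and one must argue that finiteness of the right-hand side is what legitimizes interchanging the order of integration; this is the role of the parenthetical remark in the statement. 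None of this requires the machinery of Besov/Triebel--Lizorkin spaces — it is a pointwise/measure-theoretic lemma that will later be combined with the atomic and extension results to handle the spaces $\Frinf[\hyp]$.
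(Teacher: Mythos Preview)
Your approach is correct and essentially the same as the paper's: both fix $x'\in\R^l$, pass to $(n-l)$-dimensional polar coordinates in $x''$, apply the one-dimensional weighted Hardy inequality along each ray, and iterate (the paper phrases this as induction on $r$ at the level of the full $n$-dimensional estimate, you as an $r$-fold Taylor integral remainder on each ray --- these are the same computation), with the sharp condition $s>r-1+\tfrac{n-l}{p}$ arising from the last of the $r$ applications exactly as you identify. The only quibble is that the phrasing of your Step~1 is backwards --- what you actually need on each ray is the pointwise bound $|h^{(r)}(t)|\lesssim \sum_{|\alpha|=r,\,\alpha\in\N_l^n}|D^\alpha f(x',t\omega)|$ (via the chain rule and $|\omega|=1$), not a lower bound on the sum --- but the intended argument is clear and matches the paper.
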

\begin{proof}
 This is an $l$-dimensional observation derived from the classical one-dimensional Hardy inequality, first noted in \cite{Har20}. A very good overview on Hardy inequalities is given in \cite{KMP07}.

At first let us prove this lemma for $r=1$: Let $x=(x',x'') \in \R^l \times \R^{n-l}$. We fix $x',x''$ with $x''\neq 0$ and consider the one-dimensional function
\begin{align*}
 g: \R^+ \rightarrow \C, t \mapsto f\left(x',t \cdot \frac{x''}{|x''|}\right).
\end{align*}
Then $g(0)=f(x',0)=0$ and thus
\begin{align*}
 g(t)=g(t)-g(0)&=\int_0^{t} g'(u) \ du\\
&=\int_0^{t} \sum_{j=n-l+1}^n \frac{x_j}{|x''|} \cdot \frac{\partial f(x'',u \cdot \frac{x''}{|x''|})}{\partial x_j} \ du \\
&\leq \int_0^{t} \left|\nabla_{n-l} f\left(x'',u \cdot \frac{x''}{|x''|}\right)\right| \ du 
\end{align*}
by Cauchy's inequality. Now we apply the Hardy inequality for weighted one-dimensional $L_p$-spaces to the function $g$. For this Hardy inequality and a proof we refer to \cite[Theorem 2, p.\ 23]{KMP07} 
\begin{align}
\label{Hardyone}
 \int_0^{\infty} \left(\frac{|f(x',t \cdot \frac{x''}{|x''|})|}{t}\right)^p \cdot t^{\alpha} \ dt \leq c \int_0^{\infty}  \sum_{j=n-l+1}^n \left|\frac{\partial f(x',t \cdot \frac{x''}{|x''|})}{\partial x_j}\right|^p t^{\alpha} \ dt
\end{align}
for $1\leq p < \infty$ and $p>\alpha + 1$.

Now we integrate with respect to $(n-l)$-dimensional spherical coordinates. It holds
\begin{align}
\label{spherecord}
 \int_{x'' \in \R^{n-l}} |h(x'')|^p \ dx'' = \int_{B} \tau(y) \int_0^{\infty}  t^{n-l-1}|h(ty)|^p \ dy \ dt,
\end{align}
where $B:=\{y \in \R^{n-l}: |y|=1\}$ and $\tau$ is a positive function depending only on the angle of $x$, but independent of the absolute value of $x$.

Let $h(x''):=f(x',x'') \cdot |x''|^{-s}$ for $x=(x',x'') \in \R^l \times \R^{n-l}$. Then the inner integral in \eqref{spherecord} can be estimated using \eqref{Hardyone} for every $x'' \in \R^{n-l}$: Having $t=|x''|$ we get
\begin{align*}
 \int_0^{\infty}  t^{n-l-1}|f(x',ty)|^p t^{-sp} \ dt \leq c \int_0^{\infty}  t^{n-l-1} \sum_{j=n-l+1}^n \left|\frac{\partial f(x',ty)}{\partial x_j}\right|^p t^{(-s+1)p} \ dt
\end{align*}
if $p \geq 1$ and $p>1+n-l-1+(-s+1)p$. The second condition is equivalent to $s>\frac{n-l}{p}$. The constant $c$ does not depend on $x'$ or $x''$. Putting together this pointwise estimate and the calculation of the $L_p$-norm in \eqref{spherecord} we arrive at
\begin{align*}
 \int_{x'' \in \R^{n-l}} |f(x',x'')|^p |x''|^{-sp} \ dx'' \lesssim  \int_{x'' \in \R^{n-l}}  \sum_{j=n-l+1}^n \left|\frac{\partial f(x',x'')}{\partial x_j}\right|^p |x''|^{(-s+1)p} \ dx'',
\end{align*}
where the constants are independent of $x' \in \R^l$. Integrating over $x' \in \R^l$ yields our lemma for $r=1$. 

Until now we only used $f(x',0)=0$ and $s>\frac{n-l}{p}$. The general assertion of our lemma for arbitrary $r \in \N$ follows by mathematical induction using the same arguments for the derivatives $D^{\alpha} f$ instead of $f$ itself. Then we need $(D^{\alpha}f)(x',0)=0$ for $|\alpha|\leq r-1$ and $s>r-1+\frac{n-l}{p}$. These observations finish the proof.
\end{proof}

\begin{Remark}
 Lemma \ref{HardyZerleger} also has an easy to proof version for $p=\infty$. One can argue in the same way, but does not need spherical coordinates. There are no restrictions for the exponent of the weight $s \in \R$.  
\end{Remark}

\begin{Remark}
 Let $1\leq p < \infty, 1 \leq q <\infty$. In Lemma \ref{HardyZerleger} we assumed $f \in C^r(\R^n)$ with $\tr_l^{r-1} f=0$. But this lemma also holds true for $f \in \FR$ with $s=r+\frac{n-l}{p}$ and $\tr_l^{r-1} f=0$. Here is a sketch of the arguments: 
 
 Let $\R^+=\{x \in \R: x>0 \}.$ In the proof we used
 \begin{align}
 \label{inteq}
  g(t)=\int_0^t g'(u) \ du 
 \end{align}
for $g \in C^1(\R^+)$ with $g(0)=0$. At first we want to prove that identity \eqref{inteq} also holds true for $g^* \in \FO[\R^+]$ with $s=1+\frac{1}{p}$ and $\tr_{\{0\}} g^*=0$ (only the trace of $g^*$ itself).   

For an extension $g \in \FO[\R]$ of $g^* \in \FO[\R^+]$ we find a sequence of functions $\varphi_j \in \Sc(\R)$ with $g_j:=g * \varphi_j \rightarrow g$ in $\FO[\R]$. Since $s>1$, both $g$ and its (distributional) derivative $g'$ belong to $L_p(\R)$ and hence $g * \varphi_j \rightarrow g$ and $g_j'=g' * \varphi_j \rightarrow g'$ in $L_p(\R)$. By choosing a subsequence we can assume that both sequences converge almost everywhere. Furthermore, we have $s=1+\frac{1}{p}>\frac{1}{p}$ and hence by Proposition \ref{Tra} the trace operator is continuous. This shows
\begin{align*}
 \tr_{\{0\}} g_j \rightarrow \tr_{\{0\}} g=0.
\end{align*}
Now we arrive at
\begin{align*}
  |g(t)- \int_0^t g'(u) \ du| &\leq |g(t)-g_j(t)| + |g_j(t)- \int_0^t g_j'(u) \ du| + |\int_0^t (g_j'(u) -  g'(u)) \ du| \\
  &\leq |g(t)-g_j(t)| + |\tr_{\{0\}} g_j|+ c_t \|g_j'-g'|L_p(\R)\|.
\end{align*}
For almost every $t$ these three terms converge to $0$ and we have shown the identity. 

So, let now $l$ and $n$ be as in Lemma \ref{HardyZerleger} and (as in the proof) at first $r=1$. Then $f \in \FR$ with $s=1+\frac{n-l}{p}$ and $\tr_l f=0$ (only the trace of $f$ itself). In the proof of Lemma \ref{HardyZerleger} we constructed the function
\begin{align*}
 g_{x',x''}: \R^+ \rightarrow \C: t \mapsto f\left(x',t \cdot \frac{x''}{|x''|}\right). 
\end{align*}
But, if $f \in \FR$ with $s=1+\frac{n-l}{p}$, then $h_{x'}(x''):=f(x',x'') \in \FO[\R^{n-l}]$ for almost every $x'$ in $\R^l$ by the Fubini property \ref{Fubini}. Furthermore, using the properties of the trace operator of $F_{p,q}^{1+\frac{n-l}{p}}(\R^{n-l})$ onto one-dimensional lines (see Proposition \ref{Tra}) we get that
\begin{align*}
 g_{x',x''}: \R^+ \rightarrow \C: t \mapsto h_{x'}\left(t \cdot \frac{x''}{|x''|}\right) \in \Fpp{1+\frac{1}{p}}{\R^+}
\end{align*}
for almost all $x' \in \R^{l}$ and moreover $\tr_{\{0\}} g=0$.

Hence we get
\begin{align*}
 g(t)=\int_0^t g'(u) \ du
\end{align*}
almost everywhere.

The rest of the proof of Lemma \ref{HardyZerleger} (for $r=1$) is a matter of $L_p(\R^{n-l})$-integration - as long as $f$ and $D^{\alpha} f$ are regular distributions in $L_p(\R^n)$, there is no further restriction (surely, the right hand side can be infinity). 

For arbitrary $r \in \N$ we made use of an induction argument. Hence we require not only $f \in \FR$ with $s=1+\frac{n-l}{p}$ and $\tr_l f=0$, but the same also for the derivatives $D^{\alpha} f$ of $f$ with $\alpha \in \N_l^n$ upto order $|\alpha|\leq r-1$. But this is satisfied, if we assume $f \in \FR$ with $s=r+\frac{n-l}{p}$ and $\tr_l^r f=0$.

\end{Remark}
Thus we arrive at:
\begin{Corollary}
\label{HardyZerlegerC}
 Let $n \in \N$, $l \in \N_0$, $l<n$ and $r \in \N$. Let $1\leq p <\infty$, $d(x)$ be the distance of $x=(x',x'')\in \R^l \times \R^{n-l}$ from $\R^l$ and let $s=r+\frac{n-l}{p}$.
Then there is a constant $c>0$ such that 
\begin{align*}
 \|d^{-s}(\cdot) f|L_p(\R^n)\| \leq c \sum_{\underset{|\alpha|=r}{\alpha \in \N_l^n}} \|d^{-s+r}(\cdot) D^{\alpha}f|L_p(\R^n)\|
\end{align*}
for all $f \in \FR$ with $\tr_l^{r-1} f = 0$.

(In particular, if the left hand side is $\infty$, then also the right hand side.)

\end{Corollary}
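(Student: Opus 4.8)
The plan is to obtain Corollary \ref{HardyZerlegerC} directly from Lemma \ref{HardyZerleger} together with the preceding Remark, by checking that its hypotheses are met for the value $s=r+\frac{n-l}{p}$. First I would note that the borderline exponent $s=r+\frac{n-l}{p}$ is excluded from the range $s>r-1+\frac{n-l}{p}$ allowed in Lemma \ref{HardyZerleger} only because that lemma was stated for classical $C^r$-functions; the Remark after Lemma \ref{HardyZerleger} shows that the Hardy inequality persists for $f\in\FR$ with $s=r+\frac{n-l}{p}$ and $\tr_l^{r-1}f=0$, which is exactly the claim of the Corollary. So structurally there is essentially nothing new to prove: the Corollary is a restatement of that Remark, and the only task is to assemble the pieces cleanly.

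Concretely, the argument in the Remark proceeds by reducing to the one-dimensional identity
\begin{align*}
 g(t)=\int_0^t g'(u)\ du
\end{align*}
for $g\in\Fpp{1+\frac1p}{\R^+}$ with $\tr_{\{0\}}g=0$, which is justified by mollification: pick an extension $g\in\FO[\R]$, approximate by $g_j=g*\varphi_j\to g$ in $\FO[\R]$, use that $s=1+\frac1p>1$ forces $g,g'\in L_p(\R)$ so $g_j\to g$ and $g_j'\to g'$ in $L_p$ (pass to an a.e.-convergent subsequence), and use continuity of the trace (Proposition \ref{Tra}) to get $\tr_{\{0\}}g_j\to 0$. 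Then for $r=1$ one applies the Fubini property (Proposition \ref{Fubini}) to see that $x''\mapsto f(x',x'')$ lies in $\FO[\R^{n-l}]$ for a.e.\ $x'\in\R^l$, restricts to one-dimensional rays through the origin using the trace theorem again, applies the one-dimensional weighted Hardy inequality from \cite[Theorem 2, p.\ 23]{KMP07} on each ray with weight exponent $\alpha=n-l-1$ (so the condition $p>1+(n-l-1)+(-s+1)p$ becomes $s>\frac{n-l}{p}$, which holds since $s=1+\frac{n-l}{p}$), integrates in $(n-l)$-dimensional spherical coordinates, and finally integrates over $x'\in\R^l$. For general $r\in\N$ one runs the same scheme on the derivatives $D^\alpha f$, $\alpha\in\N_l^n$, $|\alpha|\le r-1$, which is legitimate precisely because $\tr_l^{r-1}f=0$ supplies $\tr_l D^\alpha f=0$ and because $D^\alpha f\in F_{p,q}^{s-|\alpha|}(\R^n)\hookrightarrow L_p(\R^n)$, so all distributions appearing are regular.

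The main obstacle — and the only genuinely delicate point — is the borderline nature of the exponent: one is working exactly at $s=r+\frac{n-l}{p}$, where the trace $\tr_l^{r-1}$ is still well defined (since $s-(r-1)=1+\frac{n-l}{p}>\frac{n-l}{p}$, the condition of Proposition \ref{Tra}) but the embedding into $C^r$ fails, so the naive pointwise fundamental-theorem-of-calculus step is not immediately available and must be recovered by the density/mollification argument sketched above. Once that identity is in hand for a.e.\ ray, the remaining steps are the routine $L_p(\R^{n-l})$- and $L_p(\R^l)$-integrations already carried out in the proof of Lemma \ref{HardyZerleger}, with no further restriction on $f$ beyond $f$ and its relevant derivatives being regular distributions in $L_p(\R^n)$; in particular the right-hand side may legitimately be $+\infty$, in which case the inequality is trivially true. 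Hence the Corollary follows.
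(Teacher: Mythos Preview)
Your proposal is correct and follows exactly the paper's approach: the Corollary is nothing more than the content of the Remark immediately preceding it, and you have faithfully reproduced that argument (mollification to recover the fundamental-theorem-of-calculus identity on rays, Fubini property, one-dimensional Hardy inequality, spherical integration, induction on $r$).

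One small expository slip: you write that the exponent $s=r+\frac{n-l}{p}$ is ``excluded from the range $s>r-1+\frac{n-l}{p}$'' of Lemma~\ref{HardyZerleger}. It is not excluded --- clearly $r+\frac{n-l}{p}>r-1+\frac{n-l}{p}$. The only obstruction to applying Lemma~\ref{HardyZerleger} directly is the regularity hypothesis $f\in C^r(\R^n)$, not the value of $s$; you identify this correctly in the rest of your argument, so this is just a wording issue.
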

\begin{Remark}
 For the proof of Corollary \ref{HardyZerlegerC} we did not really need $f \in \FR$ with $\tr_l^r f=0$ and $s=r+\frac{n-l}{p}$. It would be enough to assume $s>\max(r,r-1+\frac{n-l}{p})$. Then a counterpart of formula \eqref{inteq} holds, with the same arguments, for $f$ and its derivatives.
\end{Remark}

\subsection{The decomposition theorem for the non-critical cases}

\begin{Theorem}[The non-critical cases]
\label{Zerleger}
 Let $1 \leq p<\infty$ and $1\leq q< \infty$. Let $n \in \N$, $l \in \N_0$ and $l<n$. Let $s>0$,
\begin{align*}
 \quad s-\frac{n-l}{p} \notin \N_0
\end{align*}
and
\begin{align*}
 r= \lfloor{s-\frac{n-l}{p}\rfloor}.
\end{align*}
If $r \in \N_0$, then
\begin{align}
\label{decomp1}
 \Frloc[\hyp]= \left\{f \in \Frinf[\hyp]: \tr_l^{r} f=0\right\}.
\end{align}
If $r=-1$ (hence $s<\frac{n-l}{p}$), then
\begin{align}
\label{decomp2}
 \Frloc[\hyp]=\Frinf[\hyp].
\end{align}
(no trace condition)
\end{Theorem}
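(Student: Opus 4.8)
The plan is to keep in mind throughout that $\Frinf[\hyp]=\FR$ in the situation of the theorem (Definition~\ref{reinforcedhyp}(i), since $s-\frac{n-l}{p}\notin\N_0$), that $1\le p,q<\infty$ forces $\sigma_{p,q}=0$ so that the refined localization, Hardy and trace machinery of the previous sections is available for every $s>0$, and that $|\R^l|=0$ identifies $\FO[\hyp]$ with $\FR$ and makes $\tr_l^r$ meaningful on $\Frloc[\hyp]\subseteq\FO[\hyp]$. The case $r=-1$, i.e.\ $0<s<\frac{n-l}{p}$, is quick: $\Frloc[\hyp]\hookrightarrow\FR$ by the atomic decomposition Theorem~\ref{AtomicReprrloc}, while conversely the subcritical Hardy inequality (Proposition~\ref{Hardysubcrit} with $\varkappa\equiv1$) gives $\int_{(\hyp)_\eps}|f(x)|^p d^{-sp}(x)\,dx\lesssim\|f|\FR\|^p$, and $\delta^{-s}\le\eps^{-s}$ off $(\hyp)_\eps$; hence $\|\delta^{-s}f|L_p(\hyp)\|\lesssim\|f|\FR\|$ and $f\in\Frloc[\hyp]$ by Proposition~\ref{rlocequi}, so $\Frloc[\hyp]=\FR=\Frinf[\hyp]$.

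From now on $r=\lfloor s-\frac{n-l}{p}\rfloor\in\N_0$, so $r+\frac{n-l}{p}<s<r+1+\frac{n-l}{p}$. The inclusion $\Frloc[\hyp]\subseteq\{f\in\FR:\tr_l^r f=0\}$ is soft: $D(\hyp)$ is dense in $\Frloc[\hyp]$ (Remark~\ref{rlocDense}), every $\varphi\in D(\hyp)$ has $\tr_l^r\varphi=0$ because $\operatorname{supp}\varphi$ avoids $\R^l$, and $\tr_l^r$ is continuous on $\FR$ with values in $\prod_{|\alpha|\le r}\Fpp{s-\frac{n-l}{p}-|\alpha|}{\R^l}$ (Proposition~\ref{Tra}); since $\Frloc[\hyp]\hookrightarrow\FR$, a density/continuity limit gives $\tr_l^r f=0$ for every $f\in\Frloc[\hyp]$.

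For the reverse inclusion, fix $f\in\FR$ with $\tr_l^r f=0$; by Proposition~\ref{rlocequi} it suffices to prove $\|\delta^{-s}f|L_p(\hyp)\|\lesssim\|f|\FR\|$, and I would do this by induction on $r$. For each $j\in\{l+1,\dots,n\}$ one has $\partial_j f\in\FR[s-1]$ with $\tr_l^{r-1}(\partial_j f)=0$ and $(s-1)-\frac{n-l}{p}\in(r-1,r)$. If $r\ge1$ the induction hypothesis yields $\partial_j f\in F_{p,q}^{s-1,\rloc}(\hyp)$ and hence $\delta^{-(s-1)}\partial_j f\in L_p(\hyp)$; if $r=0$ and moreover $s>1$ the same conclusion follows directly, since then $0<s-1<\frac{n-l}{p}$ and the subcritical Hardy inequality (Proposition~\ref{Hardysubcrit}) applies to $\partial_j f\in\FR[s-1]$. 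In both of these cases $s>\max(1,\tfrac{n-l}{p})$, so the $\FR$-version of the one-step Hardy inequality (Corollary~\ref{HardyZerlegerC} and the remark following it) gives $\|d^{-s}f|L_p(\R^n)\|\lesssim\sum_{j>l}\|d^{-(s-1)}\partial_j f|L_p(\R^n)\|$; on $(\hyp)_\eps$ the right-hand side is the quantity just estimated, and on the complement $d^{-(s-1)}\le\eps^{-(s-1)}$ while $\partial_j f\in\FR[s-1]\hookrightarrow L_p$ (as $s-1>0$ there), so the whole right-hand side is $\lesssim\|f|\FR\|$; splitting $\|\delta^{-s}f|L_p(\hyp)\|$ over $(\hyp)_\eps$ and its complement closes the induction. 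Note this already settles the theorem completely when $p\le n-l$, since then the range $s\in(\frac{n-l}{p},\frac{n-l}{p}+1)$ belonging to $r=0$ lies entirely in $(1,\infty)$.

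The one case this scheme does not reach --- and the real obstacle --- is $r=0$ together with $\frac{n-l}{p}<s\le1$ (possible only when $p>n-l$): there $\partial_j f$ lands in $\FR[s-1]$ with $s-1\le0$, a space not contained in $L^1_{\mathrm{loc}}$, and the $\FR$-Hardy inequality requires $s>1$, so neither differentiation nor the subcritical Hardy estimate is available, and it is exactly here that an honest argument is needed. The route I would take is to avoid derivatives and use the wavelet description of $\Frloc[\hyp]$ instead: expand $f$ in an orthonormal $u$-wavelet basis of $L_2(\R^n)$ adapted to the flat boundary $\R^l$, isolate the wavelets whose supports meet $(\hyp)_\eps$ at distance $\sim2^{-j}$ from $\R^l$, and use $\tr_l f=0$ --- together with the finite-overlap (Frazier--Jawerth type) redistribution argument already employed in the second step of the proof of Proposition~\ref{rlocequi} --- to see that their coefficients decay enough to be absorbed into an interior $u$-wavelet system for $\hyp$; Theorem~\ref{rlocwavelet} then places $f$ in $\Frloc[\hyp]$, and the Fatou property (Proposition~\ref{Fatou}) secures the norm estimate for the limit. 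Everything outside this low-smoothness corner is essentially bookkeeping with Hardy inequalities and the trace theorem; the corner is where the work sits.
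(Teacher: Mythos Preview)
Your treatment of the easy inclusion $\Frloc[\hyp]\subseteq\{f\in\FR:\tr_l^r f=0\}$ and of the subcritical case $r=-1$ matches the paper's proof. For the hard inclusion when $r\ge0$ your approach is genuinely different: you run an induction on $r$ via the one-step Hardy inequality (Lemma~\ref{HardyZerleger}/Corollary~\ref{HardyZerlegerC}), reducing to $\partial_j f\in F_{p,q}^{s-1,\rloc}(\hyp)$. That argument is correct whenever $s>1$, which covers $r\ge1$ completely and the high-smoothness part of $r=0$. The paper instead handles all $r\ge0$ at once by a Fubini/slicing reduction: fix $x'\in\R^l$, view $g_{x'}=f(x',\cdot)\in F_{p,q}^s(\R^{n-l})$, then use spherical coordinates to reduce to a one-dimensional trace problem on a ray, where the classical result for $C^\infty$-domains (Proposition~\ref{inftydom} together with Proposition~\ref{specialrloc}) is available. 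This avoids any case distinction in $s$ and never differentiates $f$, so it does not run into the obstruction $s-1\le0$.

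The genuine gap in your proposal is exactly the corner you flag, $r=0$ with $\frac{n-l}{p}<s\le1$: your induction cannot start there, and the wavelet sketch you offer is not a proof. The Frazier--Jawerth redistribution argument from Proposition~\ref{rlocequi} gives $q$-independence of the boundary-wavelet sequence norm, not decay of those coefficients; in that proposition the decay is \emph{assumed} via $\delta^{-s}f\in L_p(\Om)$, which is precisely what you are trying to establish. Knowing $\tr_l f=0$ only gives one linear relation among all wavelet coefficients at each boundary location, not control of individual boundary coefficients, so ``their coefficients decay enough'' is the whole theorem restated. The paper's Fubini-to-1D route is the missing idea: it trades the awkward geometry $\R^n\setminus\R^l$ for the interval $(0,2)$, where $\{g\in F_{p,p}^{s'}([0,2]):\tr^0_{\{0,2\}}g=0\}=\tilde F_{p,p}^{s'}([0,2])=F_{p,p}^{s',\rloc}([0,2])$ is already known for $s'=s-\frac{n-l-1}{p}\in(\frac1p,\frac1p+1)\setminus\N_0$, and the Hardy inequality on that interval feeds back into the desired $n$-dimensional estimate. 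Incidentally, the paper remarks (after Theorem~\ref{Zerlegercrit}) that the Hardy-based route you chose ``covers some cases but not all'', which is exactly the limitation you discovered.
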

\begin{proof}

 \textit{First step:} We show that $\Frloc[\hyp]$ is contained in the RHS of \eqref{decomp1} resp.\ \eqref{decomp2}. At first, if $f \in \Frloc$, then $f$ has a wavelet decomposition by Theorem \ref{rlocwavelet}. Since a wavelet decomposition is a special case of an atomic decomposition, $f$ can also be represented by an atomic decomposition and belongs to $\FR$ with
\begin{align}
\label{rlocest}
 \|f|\FR\| \lesssim \|f|\Frloc[\hyp]\|
\end{align}
by the atomic decomposition Theorem \ref{AtomicRepr}. This shows that $\Frloc[\hyp] \hookrightarrow \Frinf[\hyp]=\FR$ since $s-\frac{n-l}{p} \notin \N_0$.

Furthermore, using \eqref{rlocest} and Remark \ref{rlocDense}, which states that $D(\hyp)$ is dense in $\Frloc[\hyp]$, we find a sequence $\{g_j\}_{j \in \N} \subset D(\hyp)$ with
\begin{align*}
 g_j \rightarrow f \text{ in } \FR
\end{align*}
for every $f \in \Frloc[\hyp]$. Hence by the continuity of the trace operator
\begin{align*}
 0=\tr_l(D^{\alpha}g_j) \rightarrow \tr_l(D^{\alpha}f) \text{ in } F_{p,p}^{s-\frac{n-l}{p}-|\alpha|}(\R^l)
\end{align*}
and by definition
\begin{align*}
\tr_l^r f =0. 
\end{align*}

\textit{Second step:} We show that the RHS is contained in $\Frloc[\hyp]$. For $r=-1$, thus $0<s<\frac{n-l}{p}$, this follows from the Hardy inequalities for the subcritical case, see Proposition \ref{Hardysubcrit}, and the equivalent characterization of $\Frloc[\hyp]$ in Proposition \ref{rlocequi}.  

For the other cases ($r \in \N$, i.\,e.\ $s>\frac{n-l}{p}$) we want to give a proof using a dimension-fixing argument very similar to the proof of Lemma \ref{HardyZerleger}. In the remark after this proof we will present an idea for a second proof using the full generality of Theorem 6.23 and (6.68) in \cite{Tri08}, but with some technical issues making the proof a bit complex.

Let $f \in \Frinf[\hyp]=\FR$ with $\tr_l^r f=0$. Let $x=(x',x'') \in \R^l \times \R^{n-l}$. We fix $x'$ and consider $g_{x'}(x'')=f(x',x'')$ as a function mapping from $\R^{n-l}$. By the Fubini property \ref{Fubini} of $\FR$ we get
\begin{align}
\label{fubint}
 \int_{x'\in \R^l} \|g(x',\cdot)|\FO[\R^{n-l}]\|^p \ dx' \lesssim \|f|\FR\|^p
\end{align}
and at least $g_{x'} \in \FO[\R^{n-l}]$ almost everywhere (with respect to $x' \in \R^l$). Furthermore, by $\tr_l^r f=0$ we get $\tr_{\{x''=0\}} D^{\alpha} g_{x'} = 0$ for $\alpha \in \N_l^n$ with $|\alpha|\leq r$ for all $x'$ with $g_{x'} \in \FO[\R^{n-l}]$.

We now have simplified the situation: We look at a function $g \in \FO[\R^{n-l}]$ with traces at the point $x''=0$ instead of traces at an $l$-dimensional plane. If we show our theorem for this special situation, this means if we find a constant $c>0$ such that
\begin{align}
\label{Hardypoint}
 \|d^{-s}(\cdot) g|L_p((\R^{n-l}\setminus \{0\})_{\eps})\| \lesssim \|g|\FO[\R^{n-l}]\|
\end{align}
for $g$ with $\tr_{\{0\}}^r g = 0$ (with the $(n-l)$-dimensional trace), then by integrating this estimate over $\R^l$ and using \eqref{fubint}, we get the desired inequality
\begin{align*}
 \|d^{-s}(\cdot) f|L_p((\hyp)_{\eps})\| \lesssim \|f|\FR\|.
\end{align*}
So, let's assume $f \in \FO[\R^{n-l}]$ and $\tr_{\{0\}}^r f =0$. In this situation it holds $d(x)=|x|$. Using $(n-l)$-dimensional spherical coordinates similar to \eqref{spherecord} we have
\begin{align*}
 \int_{x \in \R^n} |x|^{-sp} \cdot |f(x)|^p \ dx  = \int_B \tau(y) \int_{0}^{\infty} t^{n-l-1-sp} |f(ty)|^p \ dt \ dy,
\end{align*}
where $B:=\{y \in \R^{n-l}: |y|=1\}$ and $\tau$ is a positive function depending only on the angle $y$ of $x$, but which is independent of the absolute value $t$ of $x$.

Thus it suffices to prove
\begin{align*}
 \int_0^{\infty} t^{-(s-\frac{n-l-1}{p})p} |f(ty)|^p \ dt  \leq c \|f|\FO[\R^{n-l}]\|
\end{align*}
with a constant $c$ independent of $y \in B$ and $f \in \FO[\R^{n-l}]$. 

But again, this can be proven using a very special situation of our theorem, already known: If $f \in \FO[\R^{n-l}]$, then the function
\begin{align*}
 f_y: \R^+ \rightarrow \C: t \mapsto f(ty)  
\end{align*}
for $y \in B$ belongs to $F_{p,p}^{s-\frac{n-l-1}{p}}(\R^+)$ and it holds
\begin{align}
\label{traceinequi}
 \|f_y|F_{p,p}^{s-\frac{n-l-1}{p}}(\R^+)\| \leq c \|f|\FO[\R^{n-l}]\|
\end{align}
with a constant $c$ independent of $y \in B$: For $y=(1,0,\ldots,0)$ this follows from the trace theorem Proposition \ref{Tra}. The independency from $y \in B$ is a consequence of the rotational invariance of $\FR$.

Furthermore, if $\tr_{\{0\}}^r f=0$, then $\tr_{\{0\}}^{r} f_y=0$ by the uniqueness of the trace operator.

Let now $f_y \in F_{p,p}^{s'}(\R^+)$ with $\tr_{\{0\}}^{r} f_y=0$ (all possible traces) and $s'=s-\frac{n-l-1}{p}$. Let $\psi \in D(\R^{+})$ be a non-negative function with $\psi(x)=1$ for $0<x\leq 1$ and $\psi(x)=0$ for $|x|\geq 2$. Then $g_y=\psi \cdot f_y \in F_{p,p}^{s'}([0,2])$ with $\tr_{\{0\}}^{r} g_y=\tr_{\{2\}}^{r} g_y=0$. 

Now we are in a one-dimensional situation. By our assumption it holds
\begin{align*}
 s-\frac{n-l}{p} \notin \N_0 \Rightarrow s'-\frac{1}{p} \notin \N_0.
\end{align*}
and also $s'>\frac{1}{p}$ because we assumed $s>\frac{n-l}{p}$). By Proposition \ref{inftydom} we have $g_y \in \tilde{F\,}\!_{p,p}^{s'}([0,2])$ and by Proposition \ref{specialrloc} thus $g_y \in F_{p,p}^{s',\rloc}([0,2])$ with equivalent norms. Using the equivalent characterization of $F_{p,p}^{s',\rloc}([0,2])$ in Proposition \ref{rlocequi} and $\dist(t,\partial([0,2]))=t$ for $t \in (0,1)$ results in
\begin{align*}
 \int_{0}^{1} t^{-s'p} |g_y(t)|^p \ dt \lesssim \|g_y|F_{p,p}^{s',\rloc}([0,2])\| \sim \|g_y|\tilde{F\,}\!_{p,p}^{s'}([0,2])\| \sim \|g_y|F_{p,p}^{s'}([0,2])\|.
\end{align*}
This together with \eqref{traceinequi} and a pointwise multiplier argument lead to
\begin{align*}
\int_{0}^{\infty} t^{-(s-\frac{n-l-1}{p})p} |f(ty)|^p \ dt &= \int_{0}^{1} t^{-s'p} |f(ty)|^p \ dt + \int_{1}^{\infty} t^{-s'p} |f(ty)|^p \ dt \\
&\lesssim \int_{0}^{1} t^{-s'p} |g_y(t)|^p \ dt + \|f_y|L_p(\R^{+})\|  \\
&\lesssim \|g_y|F_{p,p}^{s'}([0,2])\|+\|f_y|F_{p,p}^{s'}(\R^+)\| \\
&\lesssim \|f_y|F_{p,p}^{s'}(\R^+)\| \\
&\lesssim \|f|\FO[\R^{n-l}]\|.
\end{align*}
This was what we wanted to prove.
\end{proof}

\begin{Remark}
We want to give some ideas for a different proof of the previous Theorem \ref{Zerleger}: It suffices to show the theorem for $f \in \Frinf[\hyp]=\FR$ with $\tr_l^r f =0$ and $supp \ f$ compact. Then we are in the situation of Theorem 6.23 of \cite{Tri08} having $f \in \Ft[c \cdot Q_l^n]$ for a suitable constant $c>0$ with $Q_l^n$ as introduced in Section \ref{Zerlegungsth}. Now we take $2^{n-l}$ functions $\varphi_i \in C^{\infty}(\R^n \setminus \{0\})$ - for every quadrant in $\R^{n-l}$ exactly one. For $i=1,\ldots,2^{n-l}$ the support of $\varphi_i$ should be compact and included in $\R^l \times K_i$ where $K_i$ is a cone with origin at $0$ such that a neighbourhood of the $i$-th quadrant of $\R^{n-l}$ around $0$ is included in $K_i$. Furthermore, 
\begin{align*}
 \sum_{i=1}^{2^{n-l}} \varphi_i(x) =1 \text{ for } x \in B\setminus \{0\}.
\end{align*}
Then $\varphi_i \cdot f \in \Ft[S \times K_i]=\Frloc[S \times K_i]$ where $S$ is a compact set in $\R^l$. We can derive a Hardy inequality at the boundary of $S \times K_i$ by Proposition \ref{rlocequi}. Then it should be possible to follow a Hardy inequality of $\varphi_i \circ f$ at $S\times \{0\} \subset S \times K_i$. Putting all functions $\varphi_i \cdot f$ together we could presumably derive a Hardy inequality at $S \subset \R^l$ which would finish the proof. 
\end{Remark}

\subsection{The decomposition theorem for the critical cases}

\begin{Theorem}[The critical cases]
\label{Zerlegercrit}
 Let $1 \leq p<\infty$ and $1 \leq q< \infty$. Let $n \in \N$, $l \in \N_0$ and $l<n$. Let $s>0$ and
\begin{align*}
 r= s-\frac{n-l}{p} \in \N_0.
\end{align*}
If $r \in \N$, then
\begin{align*}
 \Frloc[\hyp]= \left\{f \in \Frinf[\hyp]: \tr_l^{r-1} f=0\right\}.
\end{align*}
If $r=0$ (hence $s=\frac{n-l}{p}$), then
\begin{align*}
 \Frloc[\hyp]=\Frinf[\hyp].
\end{align*}
(no trace condition)
\end{Theorem}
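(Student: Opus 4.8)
The plan is to mimic the strategy used in the non-critical case (Theorem \ref{Zerleger}), reducing everything by the Fubini property to a one-dimensional statement on the half-line, but now feeding in the \emph{critical} Hardy inequality from Proposition \ref{Hardycrit} together with Corollary \ref{HardyZerlegerC} instead of Proposition \ref{Hardysubcrit}. I split the claim into the two usual inclusions.

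\textbf{First step (the easy inclusion).} Let $f \in \Frloc[\hyp]$. By Theorem \ref{rlocwavelet} $f$ has a wavelet decomposition, which is a special atomic decomposition, so by Theorem \ref{AtomicRepr} we have $f \in \FR$ with $\|f|\FR\| \lesssim \|f|\Frloc[\hyp]\|$. To see that $f \in \Frinf[\hyp]$ one has to check the reinforce property $R_l^{r,p}$, i.e.\ that $d^{-(n-l)/p}D^{\alpha}f \in L_p((\hyp)_{\eps})$ for $|\alpha| = r$ with $\alpha \in \N_l^n$. By Remark \ref{rlocDense}, $D(\hyp)$ is dense in $\Frloc[\hyp]$; approximating $f$ by $\{g_j\}\subset D(\hyp)$, the traces $\tr_l(D^{\alpha}g_j)=0$ converge in $F_{p,p}^{s-(n-l)/p-|\alpha|}(\R^l)$, giving $\tr_l^{r-1}f=0$ when $r\geq 1$ (and no trace condition when $r=0$). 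Moreover $\Frloc[\hyp]\hookrightarrow F_{p,p}^{(n-l)/p,\rloc}(\hyp)$ after applying $D^{\alpha}$ for $|\alpha|=r$ by Proposition \ref{rlocDiff} (here $s-r=(n-l)/p>\sigma_{p,q}=0$), and the equivalent characterization of refined localization spaces (Proposition \ref{rlocequi}) furnishes exactly $\|\delta^{-(n-l)/p}D^{\alpha}f|L_p(\hyp)\|\lesssim \|f|\Frloc[\hyp]\|$, which is the reinforce property; hence $f\in\Frinf[\hyp]$ and lies in the right-hand side. For $r=0$ the same argument with $|\alpha|=0$ directly gives the claim and no trace is needed.

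\textbf{Second step (the hard inclusion).} Let $f\in\Frinf[\hyp]$ with $\tr_l^{r-1}f=0$ (for $r\geq 1$; for $r=0$ just $f\in\Frinf[\hyp]=F_{p,q}^{(n-l)/p,\rinf}(\hyp)$). By Proposition \ref{rlocequi} it suffices to show $\|\delta^{-s}f|L_p(\hyp)\|\lesssim \|f|\Frinf[\hyp]\|$ (the $\FO$-part being automatic). By the reinforce property $d^{-(n-l)/p}D^{\alpha}f\in L_p((\hyp)_{\eps})$ for all $\alpha\in\N_l^n$, $|\alpha|=r$. When $r\geq 1$, apply Corollary \ref{HardyZerlegerC} with that $r$ (note $s=r+(n-l)/p$ and $\tr_l^{r-1}f=0$) to get
\begin{align*}
\|d^{-s}f|L_p(\R^n)\| \lesssim \sum_{\underset{|\alpha|=r}{\alpha\in\N_l^n}} \|d^{-(n-l)/p}D^{\alpha}f|L_p(\R^n)\|,
\end{align*}
and the right-hand side is controlled by $\|f|\Frinf[\hyp]\|$ by definition of the norm on $\Frinf[\hyp]$ (using Remark \ref{indeps} to pass between $d$ and $\delta$ and to drop the $\eps$-localization since $D^{\alpha}f\in F_{p,q}^{s-r}(\R^n)\subset L_p(\R^n)$). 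When $r=0$ there is nothing to iterate: the reinforce property \emph{is} the statement $d^{-(n-l)/p}f\in L_p((\hyp)_{\eps})$, which together with $f\in\FR\subset L_p(\R^n)$ (as $s=(n-l)/p\geq\sigma_{p,q}$ by $p\geq1$) gives $\delta^{-s}f\in L_p(\hyp)$ directly. In both cases Proposition \ref{rlocequi} then yields $f\in\Frloc[\hyp]$.

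\textbf{Main obstacle.} The delicate point is that, unlike in Theorem \ref{Zerleger}, the critical case genuinely needs the extra reinforce hypothesis: the naive Hardy inequality at $\R^l$ for $F_{p,q}^{(n-l)/p}$ carries a logarithmic loss (Proposition \ref{Hardycrit}), so one \emph{cannot} bound $\|\delta^{-s}f|L_p\|$ by $\|f|\FR\|$ alone. The role of $\Frinf[\hyp]$ is precisely to supply the stronger bound on the top-order normal derivatives that Corollary \ref{HardyZerlegerC} then propagates down to $f$ itself via the vanishing traces $\tr_l^{r-1}f=0$. Checking that Corollary \ref{HardyZerlegerC} applies for $f\in\FR$ (not merely $C^r$), i.e.\ that the integral identity $g(t)=\int_0^t g'(u)\,du$ survives in the trace sense, is exactly handled by the Remark preceding Corollary \ref{HardyZerlegerC} (via the Fubini property and continuity of the trace, using $s=r+(n-l)/p>r-1+(n-l)/p$); invoking it carefully is the one place where the argument is not completely routine.
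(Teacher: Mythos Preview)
Your proof is correct and follows essentially the same approach as the paper: for the first inclusion you use the wavelet/atomic decomposition to land in $\FR$, density of $D(\hyp)$ for the vanishing traces, and Proposition~\ref{rlocDiff} plus Proposition~\ref{rlocequi} for the reinforce property; for the reverse inclusion you invoke Corollary~\ref{HardyZerlegerC} when $r\geq 1$ and read the reinforce property off directly when $r=0$, just as the paper does. Your opening sentence about ``reducing by Fubini to a one-dimensional statement'' is slightly misleading (that reduction is hidden inside the proof of Corollary~\ref{HardyZerlegerC}, not redone here), and there is a harmless typo ($F_{p,q}$ rather than $F_{p,p}$ after applying $D^{\alpha}$), but the argument itself matches the paper's.
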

\begin{proof}
 \textit{First step:} We show, that $\Frloc[\hyp]$ is contained in the RHS. As in the first step of the proof of Theorem \ref{Zerleger}, if $f \in \Frloc[\hyp]$, then $f \in \FR$ and $\tr_l^{r-1} f =0$. 

Furthermore, by Proposition \ref{rlocDiff} it holds $D^{\alpha} f \in F_{p,q}^{\frac{n-l}{p},\rloc}(\hyp)$ for $\alpha \in \N_l^n$ with $|\alpha|=r$. Hence, by Proposition \ref{rlocequi} we have $\delta^{-\frac{n-l}{p}}(\cdot) D^{\alpha}f \in L_p(\R^n)$. Since $\delta(x) = d(x)$ for $d(x) \leq 1$, it follows $f \in \Frinf[\hyp]$ keeping in mind Definition \ref{reinforcedhyp}.

\textit{Second step:} To show, that the RHS is contained in $\Frloc[\hyp]$ with e\-qui\-va\-lence of norms, we use the equivalent characterization of $\Frloc[\hyp]$ from Proposition \ref{rlocequi}. Hence we have to prove that there is a constant $c>0$ such that
\begin{align}
\label{HardyFrinf}
\|d^{-s}(\cdot) f|L_p((\hyp)_{\eps})\| \leq c \|f|\Frinf[\hyp]\|
 \end{align}
for all $f \in \Frinf[\hyp]$ with $\tr_l^{r-1} f=0$. If $r=0$, hence $s=\frac{n-l}{p}$, there is no trace condition - estimate \eqref{HardyFrinf} is a direct consequence of the definition of $\Frinf[\hyp]$.

If $r>0$, then we make use of the Hardy inequality from Corollary \ref{HardyZerlegerC} emerging from Lemma \ref{HardyZerleger}: By definition of $\Frinf[\hyp]$ and $\FR \subset L_p(\R^n)$ we have $d^{-s+r} (\cdot) D^{\alpha} f \in L_p(\R^n)$ for $\alpha \in \N_l^n$ with $|\alpha|=r$. We get
\begin{align*}
 \|d^{-s}(\cdot) f|L_p(\R^n)\| \leq c \sum_{\underset{|\alpha|=r}{\alpha \in \N_l^n}} \|d^{-s+r}(\cdot) D^{\alpha}f|L_p(\R^n)\|
\end{align*}
and so $f$ belongs to $\Frloc[\hyp]= \{f \in \FR: d^{-s}(\cdot) f \in L_p((\hyp)_{\eps})\}$ by Proposition \ref{rlocequi}.
\end{proof}

\begin{Remark}
 One could try to use the idea of the proof of Theorem \ref{Zerlegercrit} also for the non-critical cases from Theorem \ref{Zerleger}.\ By the Hardy inequality for the subcritical case from Proposition \ref{Hardysubcrit} this works perfectly, if $D^{\alpha}f \in \FR[s']$ for $0<s'<\frac{n-l}{p}$ with $\alpha \in \N_l^n$ and $|\alpha|=r+1$. Hence this idea works for $\Frinf[\hyp]$ if $0<s-(r+1)< \frac{n-l}{p}$ with $r=\lfloor s-\frac{n-l}{p} \rfloor$. This covers some cases but not all. 
\end{Remark}

\begin{Remark}
 The simplications in the first proof of Theorem \ref{Zerleger} can also be used in the proof of the critical cases (Theorem \ref{Zerlegercrit}). Hence we can reduce the necessary steps to a one-dimensional (also critical) situation with the trace at a point $x=0$. So it would suffice to prove the Hardy inequality from Corollary \ref{HardyZerlegerC} for dimension $1$ with the trace at a point $x=0$.  
\end{Remark}

\begin{Remark}
 Theorems \ref{Zerleger} and \ref{Zerlegercrit} are the more general substitutes of (6.68) in \cite{Tri08} with the largest possible $r$ as in Remark 6.22 of \cite{Tri08}. In contrast to Proposition 6.21 of \cite{Tri08} and the following observations now $s-\frac{n-l}{p} \in \N_0$ is allowed. Furthermore, when considering domains $\hyp$ instead of $Q_l^n$ as in \cite{Tri08}, the space $\Frloc[\Om]$ is a natural substitute for $\Ft[\Om]$ when $\Om=\hyp$, see the beginning of Section \ref{section:refined} and Proposition \ref{specialrloc}.
\end{Remark}

\begin{Remark}
\label{ZerlegerRem}
 We want to give some remarks on the validity of Theorems \ref{Zerleger} and \ref{Zerlegercrit} if $0<q<1$:

In the non-critical cases investigated in Theorem \ref{Zerleger} the proof only makes use of $s>\sigma_{p,q}$ - then $\Frloc[\hyp]$ is defined, the Fubini property \ref{Fubini} holds and atoms do not need moment conditions.  

But (at least for the second step) on can also assume $q\geq 1$ first and then incorporate $q<1$ by using 
\begin{align}
 \label{monoto}
F_{p,q_1}^{s}(\R^n) 
\hookrightarrow 
F_{p,q_2}^{s}(\R^n)
\end{align}
for $q_1\leq q_2$.

In the critical cases investigated in Theorem \ref{Zerlegercrit} we used $D^{\alpha} f \in F_{p,q}^{\frac{n-l}{p},\rloc}(\hyp)$ for $\alpha \in \N_l^n$ with $|\alpha|=r$. But then naturally we have to assume $s-r=\frac{n-l}{p}> \sigma_{p,q}=\sigma_{p,q}$ by the parameters in the definition of $\Frloc$. 

But one can circumvent the direct use of $F_{p,q}^{\frac{n-l}{p},\rloc}(\hyp)$ such that it suffices to assume $s>\sigma_{p,q}$: If $f \in \Frloc[\hyp]$ with $s-r=\frac{n-l}{p}$, then by Theorem \ref{rlocwavelet} we have a wavelet decomposition of $f$ with a certain structure at the boundary $\R^l$. Differentiation of this decomposition leads to a certain atomic decomposition of $D^{\alpha} f$ with $\alpha \in \N_l^n$ and $|\alpha|=r$, see the first proof of Proposition \ref{rlocDiff}. But using the structure of the atomic decomposition and arguments as in the proof of Corollary \ref{rlocequi} this gives 
\begin{align*}
 \|\delta^{-\frac{n-l}{p}}(\cdot)D^{\alpha} f|L_p(\hyp)\|  \lesssim \|f|\Frloc[\hyp]\|.
\end{align*}
Having in mind $\delta(x)=\min(1,d(x))$ and $D^{\alpha} f\in L_p(\R^n)$ we arrive at the desired replacement.

For the arguments in the second step it suffices to assume $s=r+\frac{n-l}{p}>\sigma_{p,q}$. We are in the same situation as in the second step of the proof for the non-critical cases - maybe incorporating $q<1$ later as suggested by \eqref{monoto}.

Putting everything together, we can extend Theorems \ref{Zerleger} and \ref{Zerlegercrit} to $q<1$ with the additional assumption $s>\sigma_{p,q}$.  
\end{Remark}

\subsection{Corollaries of the decomposition theorems}

\begin{Remark}
 It is well known, that $D(\R^n)$ is dense in $\Frinf[\hyp]=\FR$ for $1\leq p < \infty$, $0<q <\infty$ and $s-\frac{n-l}{p} \notin \N_0$. We want to proof a substitute of this observation for the spaces $\Frinf[\hyp] \subsetneq \FR$, e.\,g.\ when $s-\frac{n-l}{p} \in \N_0$. This will be done in Proposition \ref{frinfdense}.
\end{Remark}

\begin{Corollary}
 \label{frinfdense_tr}
Let $1\leq p < \infty$, $1\leq q <\infty$, let $s=r+\frac{n-l}{p}$ with $r \in \N_0$. Then for every $f \in \Frinf[\hyp]$ with $\tr_l^{r-1} f=0$ there is a sequence $\{g_j\}_{j \in \N} \subset D(\R^n) \cap \Frinf[\hyp]$ with $\tr_l^{r-1} g_j=0$ such that
\begin{align*}
g_j \rightarrow f \text{ in }\Frinf[\hyp]. 
\end{align*}
\end{Corollary}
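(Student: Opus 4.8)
The plan is to recognise that, for the parameter range in question, the hypothesis on $f$ says exactly that $f\in\Frloc[\hyp]$. Indeed, since $s=r+\frac{n-l}{p}$ with $r\in\N_0$ we are in the critical situation of Theorem~\ref{Zerlegercrit}, which identifies $\Frloc[\hyp]$ with $\{g\in\Frinf[\hyp]:\tr_l^{r-1}g=0\}$ (when $r=0$ this subspace is all of $\Frinf[\hyp]$ and the trace condition $\tr_l^{-1}g=0$ is vacuous). Moreover the two steps of the proof of Theorem~\ref{Zerlegercrit} furnish a two-sided norm estimate, so $\Frloc[\hyp]$ and this subspace of $\Frinf[\hyp]$ carry equivalent norms. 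Hence it suffices to approximate $f$ in the $\Frloc[\hyp]$-norm by functions from $D(\R^n)$ that still satisfy the vanishing trace condition, and then push the convergence down to the $\Frinf[\hyp]$-norm.

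First I would invoke Remark~\ref{rlocDense}: since $1\le p<\infty$ and $1\le q<\infty$, the space $D(\hyp)$ is dense in $\Frloc[\hyp]$, so there is a sequence $\{g_j\}_{j\in\N}\subset D(\hyp)$ with $g_j\to f$ in $\Frloc[\hyp]$. It then remains to check the three required properties of the $g_j$. By definition $D(\hyp)\subset D(\R^n)$. Each $g_j$ has compact support contained in the open set $\hyp=\R^n\setminus\R^l$, hence $\dist(supp\,g_j,\R^l)>0$; consequently every derivative of $g_j$ vanishes on a neighbourhood of $\R^l$, so $\tr_l^m g_j=0$ for all $m$, in particular $\tr_l^{r-1}g_j=0$. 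For the same reason $d^{-\frac{n-l}{p}}D^\alpha g_j$ is a bounded compactly supported function for every $\alpha$, so $g_j\in\Frinf[\hyp]$ (alternatively, $g_j\in D(\hyp)\subset\Frloc[\hyp]\hookrightarrow\Frinf[\hyp]$ by the first step of the proof of Theorem~\ref{Zerlegercrit}).

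Finally, convergence in $\Frloc[\hyp]$ transfers to convergence in $\Frinf[\hyp]$: by the norm equivalence noted above — equivalently, by the boundedness of the embedding $\Frloc[\hyp]\hookrightarrow\Frinf[\hyp]$, which follows from the equivalent characterization in Proposition~\ref{rlocequi} — one has $\|g_j-f\,|\,\Frinf[\hyp]\|\lesssim\|g_j-f\,|\,\Frloc[\hyp]\|\to 0$, which proves the corollary. There is no genuine obstacle here, since all the substance is already contained in Theorem~\ref{Zerlegercrit}; the only point needing a moment of care is that the approximation is first obtained in the a priori stronger $\Frloc[\hyp]$-norm and only afterwards transported to the $\Frinf[\hyp]$-norm, which is legitimate precisely because Theorem~\ref{Zerlegercrit} also supplies the reverse norm estimate.
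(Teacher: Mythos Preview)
Your proof is correct and follows exactly the same approach as the paper: invoke Theorem~\ref{Zerlegercrit} to identify $\{f\in\Frinf[\hyp]:\tr_l^{r-1}f=0\}$ with $\Frloc[\hyp]$, then use the density of $D(\hyp)$ in $\Frloc[\hyp]$ from Remark~\ref{rlocDense}. The paper's proof is just these two lines; your additional remarks on why the $g_j$ lie in $D(\R^n)\cap\Frinf[\hyp]$, satisfy $\tr_l^{r-1}g_j=0$, and why convergence transfers are simply making explicit what the paper leaves to the reader.
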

\begin{proof}
 By Theorem \ref{Zerlegercrit} we have
 \begin{align*}
  \Frloc[\hyp]=\{f \in \Frinf[\hyp]: \tr_l^{r-1} f=0 \}.
 \end{align*}
But by Remark \ref{rlocDense} the set $D(\hyp)$ is dense in $\Frloc[\hyp]$. 
\end{proof}

\begin{Proposition}
\label{frinfdense}
Let $1\leq p < \infty$, $1\leq q <\infty$ and let $s=r+\frac{n-l}{p}$ with $r \in \N_0$. Then for every $f \in \Frinf[\hyp]$ there is a sequence $\{g_j\}_{j \in \N} \subset D(\R^n) \cap \Frinf[\hyp]$ such that 
\begin{align*}
g_j \rightarrow f \text{ in }\Frinf[\hyp]. 
\end{align*}
\end{Proposition}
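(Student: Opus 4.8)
The plan is to reduce the general case $f \in \Frinf[\hyp]$ to the trace-free case already handled in Corollary \ref{frinfdense_tr} by subtracting off a suitable finite-rank correction term built from the wavelet-friendly extension operator of Proposition \ref{extwavelet}. First I would set $r = s - \frac{n-l}{p} \in \N_0$ and, given $f \in \Frinf[\hyp]$, form its trace data $g_\alpha := \tr_l D^\alpha f$ for $\alpha \in \N_l^n$, $|\alpha| \le r-1$. By the trace Proposition \ref{Tra} (with the index shift: here the relevant smoothness is $s - \frac{n-l}{p} - |\alpha| = r - |\alpha| \ge 1$, so the traces exist and lie in $F_{p,p}^{r-|\alpha|}(\R^l)$), these data are well defined and depend continuously on $f$. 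Then I set $h := \Ext_l^{r-1,u}\{g_\alpha\}$, which by Proposition \ref{extwavelet} lies in $\Frinf[\R^n \setminus \R^l]$ (the reinforce property at $\R^l$ is exactly the statement \eqref{distext} with $l_1 = l$, $r_1 = r$), and by the identity property \eqref{idprop} satisfies $\tr_l^{r-1} h = \tr_l^{r-1} f$. Hence $f - h \in \Frinf[\hyp]$ with $\tr_l^{r-1}(f-h) = 0$.

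Next I would apply Corollary \ref{frinfdense_tr} to $f - h$: there is a sequence $\{\tilde g_j\} \subset D(\R^n) \cap \Frinf[\hyp]$ with $\tr_l^{r-1} \tilde g_j = 0$ and $\tilde g_j \to f - h$ in $\Frinf[\hyp]$. It remains to approximate the correction term $h$ by test functions within $\Frinf[\hyp]$. The point here is that $h$ is given by a concrete absolutely (and unconditionally) convergent wavelet-type series $\sum_{|\alpha|\le r-1}\sum_{j,m} c_{j,m}^\alpha \Phi_m^{j,\alpha}$ where each $\Phi_m^{j,\alpha}$ is a $C^u$ function of compact support and the partial sums converge in $\Frinf[\R^n\setminus\R^l]$; equivalently one approximates the trace data $g_\alpha$ in $F_{p,p}^{r-|\alpha|}(\R^l)$ by compactly supported smooth functions $g_\alpha^{(k)}$ (density of $D(\R^l)$ in $F_{p,p}^{\sigma}(\R^l)$ for $p,q<\infty$), and sets $h^{(k)} := \Ext_l^{r-1,u}\{g_\alpha^{(k)}\}$. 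By the continuity of $\Ext_l^{r-1,u}$ into $\Frinf[\R^n\setminus\R^l]$ established in Proposition \ref{extwavelet}, we get $h^{(k)} \to h$ in $\Frinf[\hyp]$. Each $h^{(k)}$ is smooth but only has bounded (not compact) support in the $z$-directions; multiplying by a fixed cut-off $\eta \in D(\R^n)$ that equals $1$ on a large ball and observing via the pointwise-multiplier Theorem \ref{pointwise} together with the Hardy inequality defining the reinforce norm that $\eta h^{(k)} \to h^{(k)}$ in $\Frinf[\hyp]$ as the ball grows (the tails decay in $\FR$ and the weighted $L_p$ integral is over $(\hyp)_\eps$ only), one obtains a genuinely compactly supported smooth approximation. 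Combining, $g_j := \tilde g_j + h_j^{(k(j))} \cdot \eta_{j} \in D(\R^n) \cap \Frinf[\hyp]$ converges to $f$ in $\Frinf[\hyp]$ by the triangle inequality.

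The main obstacle I anticipate is the bookkeeping around the reinforce (weighted $L_p$) part of the norm: one must check that each operation — subtracting $h$, truncating, multiplying by a cut-off, and invoking density of $D(\R^l)$ at the level of trace data — is continuous not just in $\FR$ but in the stronger norm $\|\cdot|\Frinf[\hyp]\|$, i.e. that it controls $\sum_{|\alpha|=r}\bigl(\int_{(\hyp)_\eps}|D^\alpha \cdot|^p\, d^{-(n-l)}\,dx\bigr)^{1/p}$ as well. For the correction term $h$ this is precisely what Proposition \ref{extwavelet} (the norm estimate accompanying \eqref{distext}) delivers; for the truncation step one needs that the cut-off interacts well with the $d^{-(n-l)}$ weight, which is harmless since the weight is only singular along $\R^l$ where the cut-off is identically $1$, and away from a neighbourhood of $\R^l$ the weight is bounded so ordinary $\FR$-multiplier estimates suffice. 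A secondary technical point is the case $r=0$ (so $s=\frac{n-l}{p}$): then there is no trace condition and no correction term is needed, and the statement reduces directly to the $r=0$ case of Corollary \ref{frinfdense_tr} (equivalently, to density of $D(\hyp)$ in $\Frloc[\hyp] = \Frinf[\hyp]$ from Theorem \ref{Zerlegercrit} and Remark \ref{rlocDense}), so that case should be dispatched separately at the start.
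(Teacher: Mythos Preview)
Your overall strategy coincides with the paper's: split $f = (f - h) + h$ with $h = (\Ext_l^{r-1,u}\circ\tr_l^{r-1})f$, apply Corollary~\ref{frinfdense_tr} to the trace-free piece, and then approximate $h$ separately. The case distinction $r=0$ is also handled the same way.

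The gap is in your approximation of $h$. Your claim that $h^{(k)} := \Ext_l^{r-1,u}\{g_\alpha^{(k)}\}$ is smooth when $g_\alpha^{(k)} \in D(\R^l)$ is not correct: the extension operator is built from the $u$-wavelet basis $\{\Phi_m^j\}\subset C^u(\R^l)$, not from $C^\infty$ functions, and even for $g_\alpha^{(k)}\in D(\R^l)$ the wavelet expansion has infinitely many nonzero coefficients. Thus $h^{(k)}$ is an infinite series of $C^u$ tensor-product blocks and there is no reason it should lie in $C^\infty(\R^n)$. Your first option (finite partial sums of the wavelet series for $h$) does converge in $\Frinf[\hyp]$ and gives compactly supported approximants, but these are only $C^u$, so you are still one step short of $D(\R^n)$.

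The paper closes this gap by exploiting the tensor-product structure of each building block
\[
\Phi_m^{j,\alpha}(y,z)=\bigl[2^{j|\alpha|}z^\alpha\chi(2^jz)\,2^{(n-l)j/2}\bigr]\cdot\Phi_m^j(y),
\]
where the $z$-factor already lies in $D(\R^{n-l})$. One approximates only the $y$-factor $\Phi_m^j\in C^u(\R^l)$ by functions in $D(\R^l)$; the resulting tensor product then lies in $D(\R^n)\cap\Frinf[\hyp]$ and converges in $\Frinf[\hyp]$. This sidesteps the difficulty that a naive mollification in all variables would interact badly with the singular weight $d^{-(n-l)}$ near $\R^l$. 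Incidentally, once you take finite partial sums the support is already compact (contained in $\{|z|\le 2\}\times[\text{finite union of }y\text{-supports}]$), so your cut-off step is not needed.
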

\begin{Proof}
We decompose $f \in \Frinf[\hyp]$ into two parts $f_1,f_2 \in \Frinf[\hyp]$ such that $\tr_l^{r-1} f_1 = 0$ and $f_2$ has an easy structure:
\begin{align}
\label{deco}
 f= f_1+f_2:=\left( f - (\Ext_l^{r-1,u} \circ\tr_l^{r-1}) f \right) + (\Ext_l^{r-1,u} \circ\tr_l^{r-1}) f
\end{align}
with $u \in \N$ and $u>s$. The desired properties follow from the observations on the extension operator, see Proposition \ref{extwavelet}. By the previous Corollary \ref{frinfdense_tr} the first summand $f_1$ can be approximated by a sequence $\{g_j\}_{j \in \N} \subset D(\R^n) \cap \Frinf[\hyp]$.

Hence we only have to take care about $f_2$. But now we are in a special situation. We constructed $f_2$ as a sum of wavelet building blocks
\begin{align}
\label{extwaveletblocks}
  \Phi_{m}^{j,\alpha}(x) = 2^{j|\alpha|} z^{\alpha} \chi(2^j z)\,  2^{(n-l)j/2} \, \Phi_m^j(y) \quad \text{for } \alpha \in \N_l^n, |\alpha|\leq r-1
\end{align}
converging in $\Frinf[\hyp]$ for $p,q<\infty$, see the proof of Proposition \ref{extwavelet}. Furthermore, by the construction in \eqref{extwaveletblocks} we have $\Phi_{m}^{j,\alpha} \in C^u(\R^n) \cap \Frinf[\hyp]$ and hence $f_2$ can be approximated by finite linear combinations (in $m$ and $j$) of $\Phi_m^{j,\alpha}$. These finite linear combinations have compact support in $\R^n$.

To finish the proof, we have to approximate the building blocks $\Phi_m^{j,\alpha} \in C^u(\R^n) \cap \Frinf[\hyp]$ by functions $D(\R^n) \cap \Frinf[\hyp]$. But this can be done using the tensorproduct structure of $\Phi_m^{j,\alpha}$: We approximate $\Phi_m^j$ in $D(\R^l)$ and multiply this approximation by the other factors from \eqref{extwaveletblocks} - these belong to $D(\R^{n-l})$. 
\end{Proof}

Now we want to give an replacement for the usual monotonic embedding of $F$-spaces. Negative results were presentend in Remark \ref{Frinfsmaller} for the spaces $F_{p,q}^{r+\frac{n-l}{p}+\sigma}(\R^n)$ and $F_{p,q}^{r+\frac{n-l}{p},\rinf}(\hyp)$.

\begin{Corollary}
 \label{embed_rinf} 
 Let $1\leq p<\infty$, $1\leq q<\infty$. Let $r \in \N_0$ and $0<\sigma<1$. Then $f \in F_{p,q}^{r+\frac{n-l}{p}+\sigma,\rinf}(\hyp)=F_{p,q}^{r+\frac{n-l}{p}+\sigma}(\R^n)$ belongs to $ F_{p,q}^{r+\frac{n-l}{p},\rinf}(\hyp)$ 
 if $\tr_l D^{\alpha}f= 0$ for all $\alpha \in \N_l^n$ with $|\alpha|=r$.
\end{Corollary}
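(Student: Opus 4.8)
\textbf{Proof proposal for Corollary \ref{embed_rinf}.}

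The plan is to reduce the claim to a single Hardy-type estimate at $\R^l$ and then invoke the subcritical Hardy inequality from Proposition \ref{Hardysubcrit}. First note that since $0<\sigma<1$ and $r\in\N_0$, the exponent $r+\frac{n-l}{p}+\sigma$ is not in $\N_0$, so by Definition \ref{reinforcedhyp}(i) we indeed have $F_{p,q}^{r+\frac{n-l}{p}+\sigma,\rinf}(\hyp)=F_{p,q}^{r+\frac{n-l}{p}+\sigma}(\R^n)$; thus the hypothesis is simply $f\in F_{p,q}^{r+\frac{n-l}{p}+\sigma}(\R^n)$ together with $\tr_l D^{\alpha}f=0$ for all $\alpha\in\N_l^n$ with $|\alpha|=r$. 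To show $f\in F_{p,q}^{r+\frac{n-l}{p},\rinf}(\hyp)$ one has to verify, by Definition \ref{reinforcedhyp}(ii), that $d^{-\frac{n-l}{p}}D^{\alpha}f\in L_p((\hyp)_{\eps})$ for all $\alpha\in\N_l^n$ with $|\alpha|=r$; membership $f\in\FR$ with $s=r+\frac{n-l}{p}$ is automatic from the usual monotonicity $F_{p,q}^{r+\frac{n-l}{p}+\sigma}(\R^n)\hookrightarrow F_{p,q}^{r+\frac{n-l}{p}}(\R^n)$.

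So fix $\alpha\in\N_l^n$ with $|\alpha|=r$ and set $g:=D^{\alpha}f$. By classical differentiation properties of Triebel-Lizorkin spaces, $g\in F_{p,q}^{\frac{n-l}{p}+\sigma}(\R^n)$, and by the $q$-independence of traces (Proposition \ref{Tra}), the hypothesis gives $\tr_l g = \tr_l D^{\alpha}f = 0$. The key point is then to write $g$ as a "first-order vanishing" function: because $\tr_l g=0$ and $g\in F_{p,q}^{\frac{n-l}{p}+\sigma}(\R^n)$ with $\frac{n-l}{p}+\sigma>\frac{n-l}{p}>0$, we can apply the Hardy inequality at the $l$-dimensional plane in the form of Corollary \ref{HardyZerlegerC} with $r=1$ — which requires exactly $\tr_l g=0$ and the value $s=1+\frac{n-l}{p}$ — but here our weight exponent is $\frac{n-l}{p}$, one unit less, so Corollary \ref{HardyZerlegerC} is not directly the right tool. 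Instead, the cleaner route is: for each first-order perpendicular derivative $D^{e_k}g$ ($k\in\{l+1,\dots,n\}$) we have $D^{e_k}g\in F_{p,q}^{\frac{n-l}{p}+\sigma-1}(\R^n)$, and since $0<\frac{n-l}{p}+\sigma-1<\frac{n-l}{p}$, Proposition \ref{Hardysubcrit} (with $\varkappa\equiv 1$ and exponent $s'=\frac{n-l}{p}+\sigma-1$) yields
\begin{align*}
 \int_{(\hyp)_{\eps}} |D^{e_k}g(x)|^p \frac{dx}{d^{(\frac{n-l}{p}+\sigma-1)p}(x)} \lesssim \|D^{e_k}g|F_{p,q}^{\frac{n-l}{p}+\sigma-1}(\R^n)\|^p \lesssim \|f|F_{p,q}^{r+\frac{n-l}{p}+\sigma}(\R^n)\|^p.
\end{align*}
Combining this with the one-dimensional Hardy inequality (integrating $g$ along rays perpendicular to $\R^l$ starting from $g|_{\R^l}=0$, exactly as in the proof of Lemma \ref{HardyZerleger}, but now with weight exponent $\frac{n-l}{p}$ and gaining the factor $\sigma$ via the $\frac{n-l}{p}+\sigma-1$-weighted control of $D^{e_k}g$) gives the desired
\begin{align*}
 \int_{(\hyp)_{\eps}} |g(x)|^p \frac{dx}{d^{n-l}(x)} \lesssim \sum_{k=l+1}^{n}\int_{(\hyp)_{\eps}} |D^{e_k}g(x)|^p \frac{dx}{d^{(\frac{n-l}{p}+\sigma-1)p}(x)}+\|g|L_p\|^p \lesssim \|f|F_{p,q}^{r+\frac{n-l}{p}+\sigma}(\R^n)\|^p,
\end{align*}
since $n-l=(\frac{n-l}{p}+\sigma-1)p + (1-\sigma)p$ and the one-dimensional Hardy step converts the weight $d^{-(\frac{n-l}{p}+\sigma-1)p}$ on $D^{e_k}g$ into the weight $d^{-(\frac{n-l}{p}+\sigma)p}=d^{-(n-l)-(1-\sigma)p+\cdots}$... — actually the bookkeeping of exponents must be done carefully, and this is the main obstacle: one needs the one-dimensional weighted Hardy inequality \eqref{Hardyone} with $\alpha/p$-shift chosen so that $g(0)=0$ converts the $L_p(t^{\beta}dt)$-norm of $g'$ into the $L_p(t^{\beta-p}dt)$-norm of $g$, and one must check that the condition $p>\beta+1$ holds for the relevant $\beta$, i.e. that $0<\sigma<1$ is exactly what makes it work (for $\sigma\geq 1$ one is in or past the critical case and an extra $\log$ appears, consistent with the negative results of Remark \ref{Frinfsmaller}).

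The main obstacle, then, is precisely the exponent arithmetic in the one-dimensional Hardy step: making sure the weight passed to $D^{e_k}g$ is admissible (strictly subcritical, i.e. $\frac{n-l}{p}+\sigma-1<\frac{n-l}{p}$, which is $\sigma<1$) while simultaneously yielding the target weight $d^{-(n-l)}$ on $g$ after the gain of one derivative. Everything else — passing from $f$ to $g=D^{\alpha}f$, the $q$-independence of the trace, the Fubini property \ref{Fubini} used to integrate the one-dimensional estimate over $\R^l$ (cf. the reduction in the proof of Theorem \ref{Zerleger}), and reducing $C^r$-type statements to $f\in\FR$ via density (Proposition \ref{frinfdense} or the remarks following Lemma \ref{HardyZerleger}) — is routine given the results already established in this chapter. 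I would organize the write-up as: (1) reduce to verifying the weighted $L_p$-bound for each $D^{\alpha}f$, $|\alpha|=r$; (2) fix such an $\alpha$, set $g=D^{\alpha}f\in F_{p,q}^{\frac{n-l}{p}+\sigma}(\R^n)$ with $\tr_l g=0$; (3) apply the one-dimensional/radial Hardy inequality along rays perpendicular to $\R^l$ combined with Proposition \ref{Hardysubcrit} for the perpendicular first derivatives of $g$; (4) integrate over $\R^l$ using the Fubini property to conclude.
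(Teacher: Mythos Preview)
Your reduction in steps (1)--(2) to $g=D^{\alpha}f\in F_{p,q}^{\frac{n-l}{p}+\sigma}(\R^n)$ with $\tr_l g=0$ is a good move---simpler, in fact, than the paper's route, which keeps $f$ and splits $f=f_1+f_2$ via the wavelet-friendly extension operator $\Ext_l^{r,u}$ (needed there because the lower-order traces $\tr_l D^{\beta}f$, $|\beta|<r$, need not vanish; your passage to $g$ makes them irrelevant). The gap is in step (3). You want to control $d^{-\frac{n-l}{p}}g$ in $L_p$ through the first perpendicular derivatives $D^{e_k}g\in F_{p,q}^{\frac{n-l}{p}+\sigma-1}(\R^n)$ and Proposition \ref{Hardysubcrit}, but that proposition requires strictly positive smoothness; when $\frac{n-l}{p}+\sigma\leq 1$ (e.g.\ $n-l=1$, $p=2$, $\sigma=\tfrac14$) the derivative $D^{e_k}g$ is merely a distribution, not an $L_p$-function, and neither Proposition \ref{Hardysubcrit} nor the subsequent one-dimensional Hardy step makes sense as written. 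Even in the favourable range $\frac{n-l}{p}+\sigma>1$, the radial Hardy step you need is the \emph{unbalanced} one (weight $t^{-1}$ on $g$ versus $t^{(1-\sigma)p-1}$ on $g'$); this does hold for $\sigma\in(0,1)$ via a Muckenhoupt-type argument, but it is \emph{not} Lemma \ref{HardyZerleger}, where the weights on the two sides differ by exactly $t^{p}$ and the condition $s>\frac{n-l}{p}$ excludes precisely the endpoint you would hit.

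The clean fix, staying within your reduction, is to skip the extra differentiation entirely: since $g\in F_{p,q}^{\frac{n-l}{p}+\sigma}(\R^n)$ with $\tr_l g=0$ and $0<\sigma<1$ is non-critical with $\lfloor\sigma\rfloor=0$, Theorem \ref{Zerleger} applies \emph{directly to $g$} and gives $g\in F_{p,q}^{\frac{n-l}{p}+\sigma,\rloc}(\hyp)$. Proposition \ref{rlocequi} then yields $\delta^{-(\frac{n-l}{p}+\sigma)}g\in L_p(\hyp)$, and on $(\hyp)_{\eps}$ one has $d^{-\frac{n-l}{p}}\leq d^{-(\frac{n-l}{p}+\sigma)}$ (since $d<1$, $\sigma>0$), so $d^{-\frac{n-l}{p}}g\in L_p((\hyp)_{\eps})$ as required. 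This is essentially the paper's argument for its piece $f_1$, transplanted from $f$ to $g$; the paper additionally handles $f_2$ via Proposition \ref{extwavelet}, which your reduction avoids altogether.
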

\begin{proof}
 As before in \eqref{deco} we decompose $f$ into $f_1+f_2$. Using $\tr_l D^{\alpha}f= 0$ we get
 \begin{align*}
   f= f_1+f_2:&=\left( f - (\Ext_l^{r-1,u} \circ\tr_l^{r-1}) f \right) + (\Ext_l^{r-1,u} \circ\tr_l^{r-1}) f \\
    &=\left( f - (\Ext_l^{r,u} \circ\tr_l^{r}) f \right) + (\Ext_l^{r,u} \circ\tr_l^{r}) f.
 \end{align*}
 Now 
 \begin{align*}
 f_1 \in \left\{f \in F_{p,q}^{r+\frac{n-l}{p}+\sigma}(\R^n): \tr_l^{r} f=0 \right\} 
 \end{align*} and 
\begin{align*}
 f_2 \in (\Ext_l^{r,u} \circ\tr_l^{r}) \left(F_{p,q}^{r+\frac{n-l}{p}}(\R^n)\right). 
\end{align*}
Hence 
\begin{align*}
f_1 \in F_{p,q}^{r+\frac{n-l}{p}+\sigma,\rloc}(\hyp) \subset F_{p,q}^{r+\frac{n-l}{p},\rloc}(\hyp) 
\end{align*}
by Theorem \ref{Zerleger} and $f_2 \in F_{p,q}^{r+\frac{n-l}{p},\rinf}(\hyp)$ by the construction of the wavelet-friendly extension operator in Proposition \ref{extwavelet}. By the usual arguments (see also Remark \ref{ZerlegerRem} for the case $0<q<1$) we have $d^{-s}(\cdot) D^{\alpha} f_1 \in L_p(\R^n)$ for all $\alpha \in \N_l^n$ with $|\alpha|=r$. Hence 
\begin{align*}
 f= f_1+f_2 \in F_{p,q}^{r+\frac{n-l}{p},\rinf}(\hyp).
\end{align*}

\end{proof}
\begin{Remark}
 For $f \in F_{p,q}^{r+\frac{n-l}{p}+\sigma}(\R^n) \cap C^u(\R^n)$ with $u>r+\frac{n-l}{p}+\sigma$ there is also the counterpart of the last theorem: If $f \in F_{p,q}^{r+\frac{n-l}{p},\rinf}(\hyp)$, then $\tr_l D^{\alpha}f= 0$ for all $\alpha \in \N_l^n$ with $|\alpha|=r$. 
 
 This can be seen as follows: If $\tr_l D^{\alpha} f \neq 0$ for some $\alpha \in \N_l^n$ with $|\alpha|=r$, then by continuity we have $|D^{\alpha} f(x',x'')| \geq c>0$ for a small area of $(x',x'') \in \R^n$. But then
 \begin{align*}
  \|d^{-\frac{n-l}{p}}(\cdot) D^{\alpha} f|L_p(\R^n)\| = \infty
 \end{align*}
by direct calculation.
\end{Remark}

In Proposition \ref{rlocDiff} and the subsequent remark we looked at the behaviour of derivatives of $f \in \Frloc$. Now we can prove a converse assertion of these observations - first for the non-critical cases and then for the critical cases: 
\begin{Corollary}
\label{Diffrloc}
 Let $1\leq p <\infty$, $0<q<\infty$, $k \in \N$ and $s-k>\sigma_{p,q}$. Let 
 \begin{align*}
 s-\frac{n-l}{p} \notin \N_0
 \end{align*}
and
\begin{align*}
 r=\lfloor  s-\frac{n-l}{p} \rfloor.
\end{align*}
 Then $f \in \Frloc[\hyp]$ if and only if
\begin{align*}
 D^{\alpha} f \in F_{p,q}^{s-k,\rloc}(\hyp) \text{ for all } |\alpha|\leq k
\end{align*}
and
\begin{align*}
\tr_l D^{\beta} f = 0  
 \text{ for all } \beta \in \N_l^n \text{ with } r-k<|\beta|<k.
 \end{align*}
\end{Corollary}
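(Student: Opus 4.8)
\textbf{Proof strategy for Corollary \ref{Diffrloc}.}
The plan is to reduce everything to the decomposition theorem for the non-critical cases (Theorem \ref{Zerleger}) applied both to $f$ and to its derivatives $D^\alpha f$, together with the differentiation behaviour of refined localization spaces from Proposition \ref{rlocDiff}. First I would unwind the trace condition: since $s-\frac{n-l}{p}\notin\N_0$ and $r=\lfloor s-\frac{n-l}{p}\rfloor$, Theorem \ref{Zerleger} says that, for $s>\sigma_{p,q}$, membership in $\Frloc[\hyp]$ is equivalent to $f\in\Frinf[\hyp]=\FR$ together with $\tr_l^r f=0$, i.e.\ $\tr_l D^\beta f=0$ for all $\beta\in\N_l^n$ with $|\beta|\le r$. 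So the statement to be proved reads: $f\in\Frloc[\hyp]$ iff $D^\alpha f\in F_{p,q}^{s-k,\rloc}(\hyp)$ for $|\alpha|\le k$ and the trace condition on $\beta$ with $r-k<|\beta|<k$ holds. (Here one has to keep track of the two ranges of indices: $s-k-\frac{n-l}{p}$ has floor $r-k$, so for the spaces $F_{p,q}^{s-k,\rloc}(\hyp)$ the relevant traces are those $D^\gamma$ with $|\gamma|\le r-k$; this is why in the final condition only $r-k<|\beta|<k$ remains genuinely extra.)

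For the ``only if'' direction: assume $f\in\Frloc[\hyp]$. Iterating Proposition \ref{rlocDiff} (which requires $s-k>\sigma_{p,q}$, exactly our hypothesis) gives $D^\alpha f\in F_{p,q}^{s-k,\rloc}(\hyp)$ for all $|\alpha|\le k$. For the trace condition, use the first step of the proof of Theorem \ref{Zerleger}: by density of $D(\hyp)$ in $\Frloc[\hyp]$ (Remark \ref{rlocDense}) and continuity of the trace operator (Proposition \ref{Tra}), one gets $\tr_l^r f=0$, hence $\tr_l D^\beta f=0$ for every $\beta\in\N_l^n$ with $|\beta|\le r$; in particular for $r-k<|\beta|\le r$, which covers $r-k<|\beta|<k$ whenever $k\le r+1$, and for $|\beta|\ge r+1$ one instead applies the same density/trace argument to $D^\alpha f\in F_{p,q}^{s-k,\rloc}(\hyp)$ — but here a cleaner route is: $D^\gamma f\in F_{p,q}^{s-|\gamma|,\rloc}(\hyp)$ for $|\gamma|\le k$ by Proposition \ref{rlocDiff}, and applying Theorem \ref{Zerleger} to this space (whose exceptional floor parameter is $r-|\gamma|$) yields $\tr_l D^\beta(D^\gamma f)=0$ for all admissible $\beta$; combining, $\tr_l D^\beta f=0$ for all $|\beta|<k$, which is more than enough.

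For the ``if'' direction — the harder one — assume $D^\alpha f\in F_{p,q}^{s-k,\rloc}(\hyp)$ for $|\alpha|\le k$ and $\tr_l D^\beta f=0$ for $r-k<|\beta|<k$. From $D^\alpha f\in F_{p,q}^{s-k,\rloc}(\hyp)\hookrightarrow F_{p,q}^{s-k}(\R^n)$ for all $|\alpha|\le k$ one first recovers $f\in\FR=\Frinf[\hyp]$ by the converse differentiation inequality on $\R^n$ (the inequality $\|f|\FR\|\lesssim\sum_{|\alpha|\le k}\|D^\alpha f|\FR[s-k]\|$ recalled in the Remark after Proposition \ref{rlocDiff}). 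It then remains to show $\tr_l^r f=0$ and invoke Theorem \ref{Zerleger}. For the multi-indices $\beta$ with $|\beta|<k$ this is given by hypothesis (for $|\beta|\le r-k$ it follows from $D^\alpha f\in F_{p,q}^{s-k,\rloc}(\hyp)$ via the ``only if'' part of Theorem \ref{Zerleger} applied to $F_{p,q}^{s-k,\rloc}$; for $r-k<|\beta|<k$ it is assumed directly). The possible gap is the range $k\le|\beta|\le r$, which is only non-empty when $k\le r$; there one must produce the vanishing of $\tr_l D^\beta f$ from the fact that $D^\gamma f\in F_{p,q}^{s-k,\rloc}(\hyp)$ for $|\gamma|=k$ has, by Theorem \ref{Zerleger} for the space $F_{p,q}^{s-k,\rloc}(\hyp)$ (exceptional floor $r-k$), vanishing traces $\tr_l D^\mu(D^\gamma f)=0$ for $|\mu|\le r-k$, i.e.\ $\tr_l D^\beta f=0$ for all $\beta$ with $k\le|\beta|\le r$. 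Putting the three ranges together gives $\tr_l D^\beta f=0$ for all $|\beta|\le r$, hence $\tr_l^r f=0$, and Theorem \ref{Zerleger} concludes $f\in\Frloc[\hyp]$.

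\textbf{Main obstacle.} The delicate point is purely bookkeeping of the index ranges — reconciling the floor parameter $r$ for $\FR$ with the floor parameter $r-k$ for $F_{p,q}^{s-k,\rloc}(\hyp)$ and checking that the hypothesis ``$\tr_l D^\beta f=0$ for $r-k<|\beta|<k$'' is exactly what is needed to bridge the two, with the remaining traces supplied for free by Theorem \ref{Zerleger} applied at the two different smoothness levels. The case $0<q<1$ needs the extra assumption $s-k>\sigma_{p,q}$ (already present) and the monotonicity trick of Remark \ref{ZerlegerRem}; no new ideas beyond that.
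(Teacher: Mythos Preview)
Your proposal is correct and follows essentially the same route as the paper's proof: both directions rest on Proposition \ref{rlocDiff} for the forward implication and on the three-range bookkeeping ($|\beta|\le r-k$ from $f\in F_{p,q}^{s-k,\rloc}(\hyp)$, $r-k<|\beta|<k$ by hypothesis, $k\le|\beta|\le r$ from $D^\alpha f\in F_{p,q}^{s-k,\rloc}(\hyp)$ with $\alpha\in\N_l^n$, $|\alpha|=k$) together with Theorem \ref{Zerleger} for the converse. Your write-up is in fact more explicit about the index ranges than the paper's own argument.
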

\begin{proof}
If $f \in \Frloc[\hyp]$, then by Proposition \ref{rlocDiff} we have $D^{\alpha} f \in F_{p,q}^{s-k,\rloc}(\hyp)$ and moreover $\tr_l^r f =0$. 
 
 If $D^{\alpha} f \in F_{p,q}^{s-k,\rloc}(\hyp)$ for $|\alpha|\leq k$, then $D^{\alpha} f \in \FR[s-k]$ and hence $f \in \FR$. Furthermore,  $D^{\alpha} f \in F_{p,q}^{s-k,\rloc}(\hyp)$ for $\alpha \in \N_l^n$ with $|\alpha|=k$. Hence $\tr_l^{r-k} D^{\alpha}f=0$ by Theorem \ref{Zerleger}. Furthermore, we have $\tr_l^{r-k} f=0$. Together with our assumption $\tr_l D^{\beta} f = 0 \text{ for } \beta \in \N_l^n \text{ with } r-k< |\beta| <k$ we get $\tr_l^r f = 0$. By Theorem \ref{Zerleger} this shows $f \in \Frloc[\hyp]$.
\end{proof}

\begin{Corollary}
 Let $1\leq p <\infty$, $0<q<\infty$, $k \in \N$ and $s-k>\sigma_{p,q}$. Let 
 \begin{align*}
  r=s-\frac{n-l}{p} \in \N_0 \text{ and } s-k\geq \frac{n-l}{p}.
 \end{align*} 
 Then $f \in \Frloc[\hyp]$ if and only if
\begin{align*}
 D^{\alpha} f \in F_{p,q}^{s-k,\rloc}(\hyp) \text{ for all } |\alpha|\leq k
\end{align*}
and
\begin{align*}
\tr_l D^{\beta} f = 0  
 \text{ for all } \beta \in \N_l^n \text{ with } r-k\leq |\beta| < k.
 \end{align*}
\end{Corollary}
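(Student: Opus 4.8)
The plan is to mirror the structure of Corollary \ref{Diffrloc}, replacing the appeal to Theorem \ref{Zerleger} (the non-critical decomposition theorem) by Theorem \ref{Zerlegercrit} (the critical one), since here $r = s-\frac{n-l}{p} \in \N_0$. As in the previous corollary, the proof splits into the two implications.

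For the ``only if'' direction, suppose $f \in \Frloc[\hyp]$. Then Proposition \ref{rlocDiff} gives $D^{\alpha} f \in F_{p,q}^{s-k,\rloc}(\hyp)$ for all $|\alpha|\leq k$ (the hypothesis $s-k>\sigma_{p,q}$ is exactly what is needed to iterate Proposition \ref{rlocDiff} $k$ times). Moreover, by Theorem \ref{Zerlegercrit} applied to $f$ we have $\tr_l^{r-1} f = 0$, hence in particular $\tr_l D^{\beta} f = 0$ for all $\beta \in \N_l^n$ with $|\beta|\leq r-1$, which covers the range $r-k\leq |\beta| < k$ as long as $r-1 \geq k-1$, i.e.\ $r\geq k$; when $r<k$ one still has to argue that the remaining derivatives of order between $r$ and $k-1$ have vanishing trace, and this is where the hypothesis $s-k\geq \frac{n-l}{p}$ enters — it guarantees enough smoothness so that $D^{\beta} f$ for $|\beta| = r$ lies in $F_{p,q}^{s-r,\rloc}(\hyp) = F_{p,q}^{\frac{n-l}{p},\rloc}(\hyp)$ with vanishing trace by the second part of Theorem \ref{Zerlegercrit} (here applied at level $s-r$, which is critical again), and then a further differentiation/trace bookkeeping argument as in the proof of Corollary \ref{Diffrloc} closes the gap.

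For the ``if'' direction, assume $D^{\alpha} f \in F_{p,q}^{s-k,\rloc}(\hyp)$ for all $|\alpha|\leq k$ together with the stated trace conditions. First, $D^{\alpha} f \in F_{p,q}^{s-k,\rloc}(\hyp) \hookrightarrow \FR[s-k]$ for $|\alpha|\leq k$ implies, by the classical lifting property of $\FR$ on $\R^n$, that $f \in \FR$. Next, for $\alpha \in \N_l^n$ with $|\alpha| = k$ we have $D^{\alpha} f \in F_{p,q}^{s-k,\rloc}(\hyp)$; since $s-k\geq \frac{n-l}{p}$, two subcases occur — if $s-k>\frac{n-l}{p}$ this is a non-critical level and Theorem \ref{Zerleger} gives $\tr_l^{\,r-k} D^{\alpha} f = 0$ (note $\lfloor (s-k)-\frac{n-l}{p}\rfloor = r-k$), while if $s-k=\frac{n-l}{p}$ (so $r=k$) Theorem \ref{Zerlegercrit} gives $\tr_l^{\,r-k-1}D^\alpha f=\tr_l^{-1}D^\alpha f = 0$, which is the empty condition. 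Combining $\tr_l^{\,r-k} D^{\alpha} f = 0$ for all $\alpha \in \N_l^n$ with $|\alpha|=k$ with the assumption $\tr_l D^{\beta} f = 0$ for $\beta\in\N_l^n$, $r-k\leq|\beta|<k$, and with $\tr_l^{\,\max(r-k,-1)} f = 0$ coming from the $k$-th order traces, one assembles by induction on the order that $\tr_l D^{\gamma} f = 0$ for every $\gamma \in \N_l^n$ with $|\gamma|\leq r-1$, i.e.\ $\tr_l^{r-1} f = 0$. By Theorem \ref{Zerlegercrit} it then remains to check that $f \in \Frinf[\hyp]$, i.e.\ $d^{-\frac{n-l}{p}} D^{\alpha} f \in L_p((\hyp)_\eps)$ for $\alpha\in\N_l^n$, $|\alpha|=r$: but $D^{\alpha} f \in F_{p,q}^{s-k,\rloc}(\hyp)$ for $|\alpha|\le k$, combined with the just-established vanishing traces of the intermediate derivatives, lets us write (via the lifting/Hardy machinery, as in Corollary \ref{HardyZerlegerC} and Proposition \ref{rlocequi}) $D^\alpha f \in F_{p,q}^{\frac{n-l}{p},\rloc}(\hyp)$, hence $d^{-\frac{n-l}{p}}D^\alpha f\in L_p((\hyp)_\eps)$; therefore $f \in \Frloc[\hyp]$.

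The main obstacle I expect is the bookkeeping of trace conditions across the three relevant smoothness levels $s$, $s-r = \frac{n-l}{p}$, and $s-k$, especially making the closed interval $r-k\leq|\beta|<k$ match up exactly with what Theorems \ref{Zerleger} and \ref{Zerlegercrit} produce and consume at the critical versus non-critical levels; the borderline $s-k = \frac{n-l}{p}$ (equivalently $r=k$) needs separate handling because it forces an application of the critical theorem rather than the non-critical one. A secondary technical point is the case $0<q<1$, where $\Frloc$ is only defined for $s>\sigma_{p,q}$; this should be dealt with exactly as in Remark \ref{ZerlegerRem}, circumventing the direct use of $F_{p,q}^{\frac{n-l}{p},\rloc}(\hyp)$ by differentiating a wavelet decomposition and invoking the argument from the first proof of Proposition \ref{rlocDiff}.
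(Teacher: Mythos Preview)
Your overall strategy matches the paper's: mirror Corollary \ref{Diffrloc}, but appeal to Theorem \ref{Zerlegercrit} and additionally verify the reinforce property $d^{-\frac{n-l}{p}} D^{\alpha} f \in L_p$ for $|\alpha|=r$. The paper's proof for the latter is exactly your Proposition \ref{rlocDiff}/Proposition \ref{rlocequi} route (writing $\alpha=\alpha'+\alpha''$ with $|\alpha'|=k$, $|\alpha''|=r-k$, and using $D^{\alpha'}f\in F_{p,q}^{s-k,\rloc}$).

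However, there is a genuine slip in your trace bookkeeping. You claim that when $s-k>\frac{n-l}{p}$ the level $s-k$ is non-critical and Theorem \ref{Zerleger} yields $\tr_l^{\,r-k} D^{\alpha} f = 0$. This is wrong: since $r=s-\frac{n-l}{p}\in\N_0$ and $k\in\N$, one has $(s-k)-\frac{n-l}{p}=r-k$, and the hypothesis $s-k\ge\frac{n-l}{p}$ forces $r-k\in\N_0$. Hence the level $s-k$ is \emph{always} critical, and Theorem \ref{Zerleger} does not apply. (Also note the paper's convention from Definition \ref{Hoelder}: for an integer $m$ one has $\lfloor m\rfloor=m-1$, so your computation $\lfloor(s-k)-\frac{n-l}{p}\rfloor=r-k$ is off by one.) Applying Theorem \ref{Zerlegercrit} correctly at level $s-k$ gives only $\tr_l^{\,r-k-1}D^{\alpha}f=0$, i.e.\ $\tr_l D^{\beta}f=0$ for $k\le|\beta|\le r-1$ (from $|\alpha|=k$) and for $0\le|\beta|\le r-k-1$ (from $|\alpha|=0$). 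Combined with the assumed vanishing for $r-k\le|\beta|<k$, this exactly covers $0\le|\beta|\le r-1$, so $\tr_l^{r-1}f=0$ as needed. Thus the conclusion survives, but the route through Theorem \ref{Zerleger} is illegitimate and should be replaced by Theorem \ref{Zerlegercrit} throughout.

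A minor point: your discussion of the case $r<k$ in the ``only if'' direction is superfluous, since $s-k\ge\frac{n-l}{p}$ already forces $r\ge k$; consequently $\tr_l^{r-1}f=0$ (from Theorem \ref{Zerlegercrit}) immediately covers the full range $r-k\le|\beta|<k\le r-1$.
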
 
\begin{proof}
The proof is nearly the same as the proof of Corollary \ref{Diffrloc}. Instead of showing $f \in \FR$ in the second step we additionally have to show $d^{-\frac{n-l}{p}}(\cdot) D^{\alpha} f \in L_p(\R^n)$ for $\alpha \in \N_l^n$ with $|\alpha|=r$. But this follows from $D^{\alpha} f \in F_{p,q}^{s-k,\rloc}(\hyp)$ for $\alpha \in \N_l^n$ with $|\alpha|=k$ in the same way as in the first step of the proof of Theorem \ref{Zerlegercrit}. Here we needed $s-k\geq \frac{n-l}{p}$.
\end{proof}

	\newpage
 	\chapter{Wavelets for reinforced function spaces on cubes}
	\label{Wavelets}
\section{Reinforced function spaces on cubes}
Let $Q$ be the open unit cube in $\R^n$ for $n \in \N$. We now adopt the strategy from Sections 6.1.5 to 6.1.7 from \cite{Tri08} to find a wavelet basis for the spaces $\Frinf[Q]$ which at first have to be introduced and will be equal to $\FO[Q]$ as long as we are not in a critical case, i.\,e.\ as long as
\begin{align*}
 s>0, s-\frac{k}{p} \notin \N_0 \text{ for } k=1,\ldots, n .
\end{align*}
Then we will decompose the spaces $\Frinf[Q]$ in a similar way as in Theorem 6.28 of \cite{Tri08} where the spaces $\FO[Q]$ were decomposed for the non-critical values. Now we will find similar results for $\Frinf[Q]$ for the non-critical and critical values which therefore include the results from Theorem 6.28 and 6.30 of \cite{Tri08} as special cases.

The boundary $\Gamma=\partial Q$ of $Q$ can be represented as
\begin{align*}
 \Gamma=\bigcup_{l=0}^{n-1} \Gamma_{l} \text{ with } \Gamma_{l} \cap \Gamma_{l'}= \emptyset \text{ for } l\neq l',
\end{align*}
where $\Gamma_{l}=\bigcup_{j=0}^{n_{l}} \Gamma_{l,j}$ consists of all $l$-dimensional (open) faces $\Gamma_{l,j}$ of $Q$, which are $l$-dimensional disjoint cubes. By construction it holds
\begin{align*}
 \overline{\Gamma}_l=\bigcup_{j=0}^{l} \Gamma_j
\end{align*}
for the closure $ \overline{\Gamma}_l$ of $\Gamma_l$ in $\R^n$.
Let
\begin{align*}
 \N_{l,j}^n=\left\{ \alpha=(\alpha_1,\ldots,\alpha_n) \in \N_0^n: \alpha \text{ perpendicular to } \Gamma_{l,j} \right\}
\end{align*}
be the multi-indices (of length $n$) where only directions perpendicular to $\Gamma_{l,j}$ are considered.

Furthermore, let
\begin{align*}
 d_{l,j}(x)=\dist(x,\Gamma_{l,j}) \text{ and } Q_{l,j,\eps}:=\left\{x \in \R^n: d_{l,j}(x)<\eps\right\}.
\end{align*} 

\subsection{Definition of reinforced spaces on cubes}
In Section \ref{Reinf} we introduced reinforced spaces $\Frinf$ for $\Om=\hyp$ with $0\leq l < n $. Roughly speaking, the spaces $\Frinf[\hyp]$ are emerging from the function spaces $\FR$ only that we had to add a additional decay property at the boundary $\R^l$ if we were in a critical case, i.\,e.\ if
\begin{align*}
 s-\frac{n-l}{p} \in \N_0.
\end{align*}
Now, when we are looking at the cube $Q$, we have to deal with faces $\Gamma_l$ of dimension $l \in \{0,\ldots,n-1\}$. The idea is now to introduce the function space $\Frinf[Q]$ as restriction of the function space $\FR$ where we additionally assume a decay property for every face $\Gamma_l$ of dimension $l$ where the value $l$ is critical, i.\,e.\ where
\begin{align*}
 s-\frac{n-l}{p} \in \N_0.
\end{align*}
The other way around, this means that if 
\begin{align*}
 s-\frac{k}{p} \notin \N_0
\end{align*}
for every $k \in\{1,\ldots,n\}$, then we will have $\Frinf[Q]=\FO[Q]$.

\begin{Definition}
Let $n \in \N$ and $l\in \N_0$ with $l<n$. Let $1\leq p < \infty$ and 
\begin{align*}
s-\frac{n-l}{p}=r \in \N_0.
\end{align*}
Then $f \in \FR$ is said to fulfil the reinforce property $R_l^{r,p}$ if, and only if, 
\begin{align*}
 d_{l,j}^{-\frac{n-l}{p}} \cdot D^{\alpha} f \in L_p(Q_{l,j,\eps}) \text{ for all } \alpha \in \N_{l,j}^n, |\alpha|=r \text{ and } j=1,\ldots,n_{l}.
\end{align*}
\end{Definition}
\begin{Remark}
 Roughly speaking, this means that if we are in a critical case for dimension $l$, then we have to take care of the decay at all faces $\Gamma_{l,j}$ of dimension $l$ and for all derivatives of order $r$ with directions perpendicular to $\Gamma_{l,j}$. 
\end{Remark}

\begin{center}
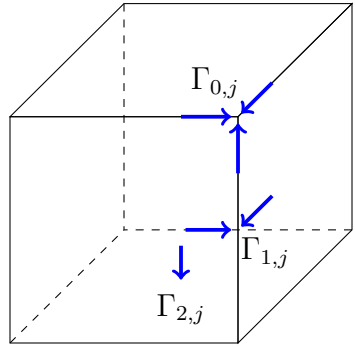

 \begin{tikzpicture}[scale=3]
 \def\u{0.03};
  \draw (0,0) -- (0,1) -- (1,1) -- (1,0) -- cycle;
 \draw (0,1) -- (1,1) -- (1.5,1.5) -- (0.5,1.5) -- cycle;
 \draw (1,0) -- (1.5,0.5) -- (1.5,1.5) -- (1,1) -- cycle;
 \draw[dashed] (0,0)--(0.5,0.5);
 \draw[dashed] (0.5,1.5)--(0.5,0.5);
\draw[dashed] (1.5,0.5)--(0.5,0.5);

\draw[color=blue,line width=0.05cm] [->] (0.75,0.43) to (0.75,0.25+\u);

\draw[color=blue,line width=0.05cm] [->] (0.77,0.5) to (1-\u,0.5);
\draw[color=blue,line width=0.05cm] [->] (1.15,0.65) to (1+\u/2,0.5+\u/2);

\draw[color=blue,line width=0.05cm] [->] (1,0.75) to (1,1-\u);
\draw[color=blue,line width=0.05cm] [->] (0.75,1) to (1-\u,1);
\draw[color=blue,line width=0.05cm] [->] (1.15,1.15) to (1+\u/2,1+\u/2);

\draw (0.9,1.15) node  {$\Gamma_{0,j}$};
\draw (0.75,0.15) node  {$\Gamma_{2,j}$};
\draw (1.12,0.4) node  {$\Gamma_{1,j}$};
\end{tikzpicture}
 \captionof{figure}{Directions of the boundary conditions at the faces of $Q$}
   \label{reinfdirect}
\end{center}

Now we are able to define the reinforced function spaces $\Frinf[Q]$ for the cube $Q$ which we will characterize by a wavelet basis later. 

\begin{Definition}
\label{reinforcedQ}
Let $Q$ be the unit cube and $\Gamma=\partial Q$ its boundary. Let $1\leq p < \infty$, $0<q<\infty$ and $s \in \R$. Then
\begin{multline*}
\Frinf[\R^n \setminus \Gamma]:=\\\{f \in \FR: f \text{ fulfilfs } R_l^{r^l,p} \text{ for all } l\in \{0,\ldots,n-1\} \text{ with } r^l=s-\frac{n-l}{p} \in \N_0 \}.
\end{multline*}
A possible norm $\|f|\Frinf[\R^n\setminus \Gamma]\|$ is given as the sum of $\|f|\FR\|$ and the weighted $L_p$-norms for $D^{\alpha}f$ appearing in the definition of $R_l^{r^l,p}$ for those $l$ where $s-\frac{n-l}{p} \in \N_0$. 

Furthermore, the space $\Frinf[Q]$ is defined by restriction of $\Frinf[\R^n \setminus \Gamma]$ to $Q$, i.\,e.\
\begin{align*}
 \Frinf[Q]:=\{f \in D'(Q): \exists \, g \text{ with } g|Q=f \}
\end{align*}
with the usual quotient space norm
\begin{align*}
 \|f|\Frinf[Q]\|:= \inf_g \|g|\Frinf[\R^n \setminus \Gamma]\|
\end{align*}
where the infimum is taken over all $g$ with $g|Q=f$. 
\end{Definition}
\begin{Remark}
 The definition can be understood as follows: An $f$ belongs to the space $\Frinf[\R^n \setminus \Gamma]$ if it belongs to $\FR$ and additionally for every critical value $l$ we add a reinforce property, this means we assume a decay property of the perpendicular derivatives at the faces of dimension $l$.  
 
 In the definition of $\Frinf[Q]$ we can replace $D'(Q)$ by $\FO[Q]$.
\end{Remark}

\section{The decomposition of $\Frinf[Q]$ into function spaces having wavelet bases}
In \cite[Theorem 6.28]{Tri08} Triebel proved a crucial decomposition of $\FO[Q]$ which paved the way to construct a wavelet basis for $\FO[Q]$ in Theorem 6.30 since every part of the decomposition has a wavelet basis. But then one has to exclude the critical values, i.\,e.\
\begin{align*}
 s-\frac{n-l}{p} \in \N_0 \text{ for } l \in \{0,\ldots,n-1\}. 
\end{align*}
Now we want to find a similar decomposition for the spaces $\Frinf[Q]$ which also incorporates the critical values, but includes Theorem 6.28 as well, since $\Frinf[Q]=\FO[Q]$ for the non-critical values.

\subsection{Traces and wavelet-friendly extension operators at the boundaries of the cube}
We adopt the notation from Section \ref{traces} and \cite[Section 6.1.5]{Tri08}: Let $n \in \N$ and $l\in \N_0$ with $l<n$. For $j=0,\ldots,n_l$ let now $\tr_{l,j}$ be given as the restriction 
\begin{align*}
 \tr_{\Gamma_{l,j}}: f \mapsto f|\Gamma_{l,j}, f \in \FR
\end{align*}
of $f$ to $\Gamma_{l,j}$ (if existing). As before, we collect all traces of derivatives upto order $r$ which are perpendicular to $\Gamma_{l,j}$ and denote this operator by
\begin{align*}
 \tr_{\Gamma_{l,j}}^r: f \mapsto \{\tr_{l,j} D^{\alpha} f: \alpha \in \N_{l,j}^n, |\alpha|\leq r \}.
\end{align*}
Finally we collect all these traces on the $l$-dimensional faces $\Gamma_{l,j}$ together for the composite map
\begin{align*}
 \tr_{\Gamma_{l}}^r: f \mapsto \{\tr_{l,j}^r f : j=0\ldots,n_l\}.
\end{align*}

Furthermore, in analogy to Section \ref{traces} we can construct a wavelet-friendly extension operator, but now for $\Gamma_{l,j}$ instead of $\R^l$, for a fixed $l \in \{0,\ldots,n-1\}$ and all $j=0,\ldots,n_l$. 

Let $1\leq p <\infty$ and $\sigma>0$. We take a function $g \in F_{p,p}^{\sigma,\rloc}(\Gamma_{l,j})=\tilde{F\,}\!_{p,p}^{\sigma}(\Gamma_{l,j})$ (now instead of $F_{p,p}^{\sigma}(\R^l)$) which has by Theorem \ref{rlocwavelet} a decomposition into wavelets adapted to $\Gamma_{l,j}$. This is in principal the same for every $\Gamma_{l,j}$ with fixed $l$. For convenience we will give the construction for the first 
\begin{align*}
\Gamma_{l,0}=\{x \in \R^n: 0<x_m<1 \text{ for } 1\leq m\leq l \text{ and } x_m=0 \text{ for } l<m\leq n \}.
\end{align*}
The extended functions for different $\Gamma_{l,j}$ (fixed $l$) will not interfere.

  Let $u \in \N$ and 
\begin{align*}
 \{ \Phi_m^j: j \in \N_0, m=1,\ldots,N_j \} \subset C^u(\R^l)
\end{align*}
be an interior u-Riesz basis for $F_{p,p}^{\sigma,\rloc}(\Gamma_{l,0})$ by Theorem \ref{rlocwavelet}. Hence, every $g \in F_{p,p}^{\sigma,\rloc}(\Gamma_{l,0})$ can be represented as a wavelet expansion, namely
\begin{align*}
 g= \sum_{j=0}^{\infty}\sum_{m=1}^{N_j} \lambda_m^j(g) 2^{-\frac{jl}{2}}\Phi_m^j
\end{align*}
with
\begin{align*}
  \lambda_m^j(g)= 2^{jl/2} \int_{\R^l} g(y) \Phi_m^j(y) \ dy
\end{align*}
and an isomorphic map
\begin{align*}
 g \mapsto \{\lambda_m^j(g)\} 
\end{align*}
of $F_{p,p}^{\sigma,\rloc}(\Gamma_{l,0})$ onto $f_{p,p}^{\sigma}(\Z^{\Gamma_{l,0}})$ where $f_{p,p}^{\sigma}(\Z^{\Gamma_{l,0}})$ is the interior sequence space introduced in Definition \ref{sequence}. Let 
\begin{align*}
 \chi^* \in D(\R), \quad supp \ \chi^* \subset \left\{z \in \R: |z|\leq \frac{1}{4} \right\}, \quad \chi^*(z)=1 \text{ if } |z|\leq \frac{1}{8}
\end{align*}
and
\begin{align*}
\chi(z)=\chi^*(z_1) \cdot \ldots \cdot \chi^*(z_{n-l}).
\end{align*}
The support of $\chi$ is small enough such that extension operators of opposite faces will not interfere with each other. It is possible to choose $\chi$ such that it fulfils as many moment conditions
\begin{align*}
 \int_{\R^{n-l}} \chi(z) z^{\beta} \ dz = 0 \text{ if } |\beta|\leq L 
\end{align*}
as we want. Now we define $n$-dimensional functions (extensions) by
\begin{align*}
 \Phi_m^{j,\alpha}(x) = 2^{j|\alpha|} z^{\alpha} \chi(2^j z)\,  2^{(n-l)j/2} \, \Phi_m^j(y) \quad \text{for } \alpha \in \N_{l,0}^n.
\end{align*}
with $x=(y,z) \in \R^l \times \R^{n-l}$. It is easy to see that
\begin{align}
\label{extid3}
 \tr_{\Gamma_{l,0}} D^{\alpha}  \Phi_m^{j,\alpha}(y) = 2^{j|\alpha|} \cdot \alpha! \cdot 2^{(n-l)j/2} \Phi_m^j(y)= c_{j,\alpha} \Phi_m^j(y)
\end{align}
and
\begin{align}
\label{extid4}
 \tr_{\Gamma_{l,0}} D^{\alpha}  \Phi_m^{j,\beta}(y) = 0 \quad \text{ for } \alpha \neq \beta \in \N_{l,0}^n	 
\end{align}
for $y \in \R^l$ having in mind the factor $z^{\alpha}$. This is the crucial property giving the possibility to construct the extension operator by
\begin{align*}
 g_0^l&= \Ext_{\Gamma_{l,0}}^{r,u} \{g_{\alpha}: \alpha \in \N_{l,0}^n, |\alpha|\leq r \} \\
  &= \sum_{|\alpha|\leq r} \sum_{j=0}^{\infty}\sum_{m=1}^{N_l} \frac{1}{\alpha!} \lambda_m^j(g_{\alpha}) 2^{-j|\alpha|} 2^{-\frac{jn}{2}}\cdot\Phi_m^{j,\alpha}.
\end{align*}
Furthermore, by construction
\begin{align*}
 supp \,g_0^l \subset [0,1]^l \times \left\{z=(z_1,\ldots,z_{n-l} \in \R^{n-l}:|z_i|\leq \frac{1}{2},\ i=1,\ldots,n-l\right\}
\end{align*}
and traces on different faces of dimension $l$ do not interfere, i.\,e.\
\begin{align*}
 \tr_{\Gamma_{l,j}} D^{\alpha} g_0^l = 0 \text{ for } j=1,\ldots, n_l \text{ and } \alpha \in \N_{l,j}^n.
\end{align*}
This is a result of the specific construction of the extension operator out of the wavelet basis - see the picture below illustrating the maximal support of the functions $ \Phi_m^{j,\alpha}$ arising in the construction of the extension operator.
\begin{center}
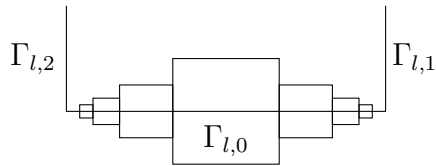

 \begin{tikzpicture}[scale=0.35]
    \draw (0,4)--(0,0) -- (12,0) -- (12,4);
    \draw (4,-2) -- (4,2) -- (8,2) -- (8,-2) --cycle;
    \draw (2,-1) -- (2,1) -- (4,1) -- (4,-1) --cycle;  
    \draw (8,-1) -- (8,1) -- (10,1) -- (10,-1) --cycle; 
    \draw (1,-0.5) -- (1,0.5) -- (2,0.5) -- (2,-0.5) --cycle; 
    \draw (10,-0.5) -- (10,0.5) -- (11,0.5) -- (11,-0.5) --cycle; 
    \draw (0.5,-0.25) -- (0.5,0.25) -- (1,0.25) -- (1,-0.25) --cycle; 
    \draw (11,-0.25) -- (11,0.25) -- (11.5,0.25) -- (11.5,-0.25) --cycle;
    \draw (6,-1.1) node  {$\Gamma_{l,0}$};
    \draw (13.1,2) node  {$\Gamma_{l,1}$};
    \draw (-1.25,2) node  {$\Gamma_{l,2}$};
 \end{tikzpicture}
 \captionof{figure}{Support of extended wavelets near $\Gamma_{l,0}$}
   \label{waveleter}
\end{center}
Now, after constructing the extension operator $\Ext_{\Gamma_{l,0}}^{r,u}$ for the face $\Gamma_{l,0}$ and the observation that different traces do not interfere, we can construct similar extension operators $g_j^l=\Ext_{\Gamma_{l,j}}^{r,u}\{g_{j,\alpha}: \alpha \in \N_{l,j}^n, |\alpha|\leq r \}$  for the faces $\Gamma_{l,j}$. Then we define the extension operator of the whole $l$-dimensional face $\Gamma_l$ as
\begin{align*}
 \Ext_{\Gamma_{l}}^{r,u}\{g_{j,\alpha}: j\in \{0,\ldots,n_l\}, \alpha \in \N_{l,j}^n, |\alpha|\leq r \}:= \sum_{j=0}^{n_l} g_j^l
\end{align*}
with $g_j^l=\Ext_{\Gamma_{l,j}}^{r,u}\{g_{j,\alpha}: \alpha \in \N_{l,j}^n, |\alpha|\leq r \}$.  Using \eqref{extid3} and \eqref{extid4}, the observation that different extension operators do not interfere and the definition of $\tr_{\Gamma_{l}}^r$ we arrive at
\begin{align*}
 \tr_{\Gamma_{l}}^r \circ \Ext_{\Gamma_{l}}^{r,u}=\text{id} 
\end{align*}
on a product space which will be described in the next section.

\subsection{Trace and extension theorems at the boundaries of the cube}
Now we are in the situation to prove a crucial proposition on traces and extension operators of the spaces $\Frloc[\R^n \setminus \overline{\Gamma}_l]$. As in \cite[Theorem 6.28]{Tri08}, one has to observe the behaviour of traces and extension operators on resp.\ of faces of different dimensions $\Gamma_{l_1}$ and $\Gamma_{l_2}$. This is the counterpart of the observation in the proof of Theorem 6.28 in \cite{Tri08} that the trace space of the right hand side of (6.120) is $\tilde{B\,}\!_{p,q}^{s-\frac{n-l}{p}}(\Gamma_l)$. 

The set $\N_l^n$ shall be the collection of all multi-indices (of length $n$) where only directions perpendicular to $\Gamma_{l}$ are considered. This obviously depends on the special face $\Gamma_{l,j}$ we are considering but because of the total decoupling of traces and extension operators of different faces of fixed dimension $l$ we can consider every face of dimension $l$ separately and it is always clear what is meant by $\N_l^n$ at a given boundary point. 

The idea of the proposition is the following: If we have an element $f$ of the function space $\FR$ which has no boundary values at the faces $\Gamma_{l-1}$ of the cube of dimension $l-1$, then the traces at the faces $\Gamma_l$ of the cubes of dimension $l$ vanish at their boundary (which is $\Gamma_{l-1}$) , i.\,e.\ the element $f$ belongs to a refined localization space on $\Gamma_l$.	

\begin{Proposition}
\label{Frlocdecomp}
Let $n \in \N$, $l,r\in \N_0$ with $0<l<n$. Let $1\leq p< \infty$, $0<q<\infty$, $s>\sigma_{p,q}$, $s>\frac{n-l}{p}$, $u>s$ and 
\begin{align*}
r<s-\frac{n-l}{p}. 
\end{align*}
Then
\begin{align*}
 \tr_{\Gamma_l}^r: \Frloc[\R^n \setminus \overline{\Gamma}_{l-1}] \mapsto \prod_{\underset{|\alpha|\leq r}{\alpha \in \N_{l}^n}} F_{p,p}^{s-\frac{n-l}{p}-|\alpha|,\rloc}(\Gamma_{l})
\end{align*}
and
\begin{align*}
 \Ext_{\Gamma_{l}}^{r,u}: \prod_{\underset{|\alpha|\leq r}{\alpha \in \N_{l}^n}} F_{p,p}^{s-\frac{n-l}{p}-|\alpha|,\rloc}(\Gamma_{l}) \mapsto \Frloc[\R^n \setminus \overline{\Gamma}_{l-1}]
\end{align*}
with 
\begin{align*}
 \tr_{\Gamma_l}^r \circ 
 \Ext_{\Gamma_{l}}^{r,u}= \text{id }, \text{ identity on }  \prod_{\underset{|\alpha|\leq r}{\alpha \in \N_{l}^n}} F_{p,p}^{s-\frac{n-l}{p}-|\alpha|,\rloc}(\Gamma_{l})
\end{align*}
\end{Proposition}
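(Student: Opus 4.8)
The strategy is to reduce the statement about $\Gamma_l$ (a union of disjoint $l$-dimensional faces) to the ``model'' situation already treated in Proposition~\ref{extwavelet}, namely extensions from $\R^l$ to $\R^n$ and the reinforce/localization behaviour at intermediate planes $\R^{l_1}$. The crucial geometric observation is that, by the construction of $\chi$ with support in $\{|z_i|\le \tfrac14\}$, the extension operators $\Ext_{\Gamma_{l,j}}^{r,u}$ attached to distinct faces of the same dimension $l$ have disjoint supports near each face, so the whole operator $\Ext_{\Gamma_l}^{r,u}=\sum_j \Ext_{\Gamma_{l,j}}^{r,u}$ behaves like the disjoint sum of its pieces. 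Hence it suffices to work locally, at one face $\Gamma_{l,j}$ and in a neighbourhood of it, where everything can be transported by an affine change of coordinates to the model configuration $\R^l\subset\R^{l_1}\subset\R^n$ of Proposition~\ref{extwavelet}. The identity $\tr_{\Gamma_l}^r\circ\Ext_{\Gamma_l}^{r,u}=\mathrm{id}$ is then immediate from \eqref{extid3}, \eqref{extid4} and the non-interference of faces, exactly as already indicated in the text.

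For the \emph{extension} mapping property one argues as follows. Given $g=\{g_{j,\alpha}\}$ with each $g_{j,\alpha}\in F_{p,p}^{s-\frac{n-l}{p}-|\alpha|,\rloc}(\Gamma_{l})$, first note that by Theorem~\ref{specialrloc} (or rather its cube analogue) such spaces coincide with $\tilde F_{p,p}^{\,\cdot\,}(\Gamma_{l,j})$, so each $g_{j,\alpha}$ has a wavelet expansion adapted to $\Gamma_{l,j}$ by Theorem~\ref{rlocwavelet}. Plugging this into the definition of $\Ext_{\Gamma_{l,j}}^{r,u}$ produces, via the building blocks $\Phi_m^{j,\alpha}$, an atomic decomposition of $\Ext_{\Gamma_l}^{r,u}g$ in $\FR$ (with moment conditions supplied by the moment conditions of $\chi$ when needed), so $\Ext_{\Gamma_l}^{r,u}g\in\FR$ with the right norm bound; this is the content of Proposition~\ref{extwavelet} applied face by face. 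It remains to verify that $\Ext_{\Gamma_l}^{r,u}g$ actually lies in $\Frloc[\R^n\setminus\overline\Gamma_{l-1}]$, i.e.\ that it has a decay of order $s$ at $\overline\Gamma_{l-1}$ in the sense of Proposition~\ref{rlocequi}. Since $\overline\Gamma_{l-1}=\bigcup_{k\le l-1}\Gamma_k$ is a finite union of faces of dimensions $l_1<l$, and near each such face $\Gamma_{l_1}$ the support geometry \eqref{extd2}--\eqref{extd} from the proof of Proposition~\ref{extwavelet} shows $\dist(x,\Gamma_{l_1})\sim 2^{-j}$ on the support of the derivatives $D^\beta\Phi_m^{j,\alpha}$, one gets precisely the $\delta^{-s}$-weighted $L_p$ bound by the same computation as in Proposition~\ref{extwavelet} (the $q$-independence coming from the finite overlap encoded in \eqref{extd2}). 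Patching over the finitely many faces $\Gamma_{l_1}\subset\overline\Gamma_{l-1}$ and invoking Proposition~\ref{rlocequi} gives $\Ext_{\Gamma_l}^{r,u}g\in\Frloc[\R^n\setminus\overline\Gamma_{l-1}]$.

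For the \emph{trace} mapping property, take $f\in\Frloc[\R^n\setminus\overline\Gamma_{l-1}]$. Then $f\in\FR$ by Theorem~\ref{AtomicRepr} (a wavelet decomposition is an atomic one), so by Proposition~\ref{Tra} the traces $\tr_{l,j}D^\alpha f$ exist and belong to $F_{p,p}^{s-\frac{n-l}{p}-|\alpha|}(\Gamma_{l,j})$ for $|\alpha|\le r$. One must improve this to membership in the \emph{refined localization} space on $\Gamma_{l,j}$, i.e.\ show a decay of order $s-\frac{n-l}{p}-|\alpha|$ of $\tr_{l,j}D^\alpha f$ at $\partial\Gamma_{l,j}=\overline\Gamma_{l-1}\cap\overline\Gamma_{l,j}$. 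This is where Proposition~\ref{rlocDense} and a limiting argument come in: $D(\R^n\setminus\overline\Gamma_{l-1})$ is dense in $\Frloc[\R^n\setminus\overline\Gamma_{l-1}]$, and for $g_k\in D(\R^n\setminus\overline\Gamma_{l-1})$ the traces $\tr_{l,j}D^\alpha g_k$ vanish near $\overline\Gamma_{l-1}$, hence trivially lie in the refined localization space; combining the continuity of $\tr_{l,j}D^\alpha$ into $F_{p,p}^{\,\cdot\,}(\Gamma_{l,j})$ with the Fatou-type / closedness property of $F_{p,p}^{\,\cdot\,,\rloc}(\Gamma_{l,j})$ (via its equivalent characterization in Proposition~\ref{rlocequi}, or directly Theorem~\ref{AtomicReprrloc}) shows that the limit $\tr_{l,j}D^\alpha f$ also belongs to $F_{p,p}^{s-\frac{n-l}{p}-|\alpha|,\rloc}(\Gamma_{l,j})$ with the corresponding norm bound. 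Summing over $j$ and $\alpha$ finishes the trace part.

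\textbf{Main obstacle.} The routine parts (the identity $\tr\circ\Ext=\mathrm{id}$, the $\FR$-mapping of $\Ext$) are already essentially done in Proposition~\ref{extwavelet}. The genuine difficulty is the interplay of \emph{two} face dimensions: showing that the trace onto the $l$-dimensional faces of a function that is ``refined-localized away from $\overline\Gamma_{l-1}$'' automatically acquires the correct vanishing/decay at the $(l-1)$-dimensional sub-faces, and conversely that the extension operator does not destroy the refined-localization decay at $\overline\Gamma_{l-1}$. Both hinge on carefully tracking the support geometry \eqref{extd2}--\eqref{extd} near each $\Gamma_{l_1}$ with $l_1<l$, and on the fact that near $\Gamma_{l_1}$ the relevant derivative building blocks are supported in dyadic shells of thickness $\sim 2^{-j}$ — this is precisely the mechanism, borrowed from Frazier--Jawerth, that keeps the estimates $q$-independent and compatible with the weight $\delta^{-s}$. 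Handling the corner structure (a point of $\overline\Gamma_{l-1}$ can lie in the closure of several faces $\Gamma_{l,j}$ and several $\Gamma_{l_1}$ simultaneously) requires the decoupling of same-dimension faces provided by the small support of $\chi$, plus a finite-union argument over the faces making up $\overline\Gamma_{l-1}$.
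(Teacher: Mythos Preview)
Your overall picture is right, and for the extension operator the route via Proposition~\ref{rlocequi} is viable; but you have misidentified the mechanism. The support estimates \eqref{extd2}--\eqref{extd} in Proposition~\ref{extwavelet} concern the case $l_1\ge l$ (they come from $D^{\beta'}(z^\alpha\chi(2^jz))=0$ near $z_2=0$ because $|\beta'|>|\alpha|$), whereas here you need decay at $\overline{\Gamma}_{l-1}$, i.e.\ at faces of dimension $l_1<l$. For those the distance property $\dist(\mathrm{supp}\,\Phi_m^{j,\alpha},\overline{\Gamma}_{l-1})\gtrsim 2^{-j}$ comes from a different source: the wavelets $\Phi_m^j$ on $\Gamma_{l,j}$ are an \emph{interior} basis for $F_{p,p}^{\sigma,\rloc}(\Gamma_{l,j})$, so $\dist(\mathrm{supp}\,\Phi_m^j,\partial\Gamma_{l,j})\gtrsim 2^{-j}$, and the $\chi$-factor only adds a tube of width $\sim 2^{-j}$. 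This is precisely what Figures~\ref{waveleter} and~\ref{waveleter2} encode. Once you use the correct mechanism, the cleanest conclusion is not the weighted $L_p$-bound of Proposition~\ref{rlocequi} but the observation that the $\Phi_m^{j,\alpha}$ are directly $K$-atoms for $\Frloc[\R^n\setminus\overline{\Gamma}_{l-1}]$ in the sense of Definition~\ref{Atomsrloc}, so Theorem~\ref{AtomicReprrloc} gives the mapping at once; this is what the paper does.

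The trace argument has a genuine gap. From $g_k\to f$ in $\Frloc[\R^n\setminus\overline{\Gamma}_{l-1}]$ you only get $\tr_{l,j}D^\alpha g_k\to\tr_{l,j}D^\alpha f$ in $F_{p,p}^{\sigma}(\Gamma_{l,j})$ via Proposition~\ref{Tra}. A Fatou/closedness argument for $F_{p,p}^{\sigma,\rloc}(\Gamma_{l,j})$ would require uniform bounds $\|\tr_{l,j}D^\alpha g_k\,|\,F_{p,p}^{\sigma,\rloc}(\Gamma_{l,j})\|\le C\,\|g_k\,|\,\Frloc[\R^n\setminus\overline{\Gamma}_{l-1}]\|$, and establishing those bounds is exactly the boundedness statement you are trying to prove; ``$g_k$ has compact support away from $\overline{\Gamma}_{l-1}$'' gives membership but no uniform norm control. (Also note $F_{p,p}^{\sigma,\rloc}$ is not a closed subspace of $F_{p,p}^{\sigma}$, and the Fatou property of Proposition~\ref{Fatou} is stated for $\FR$, not for refined localization spaces.) The paper avoids this circularity by taking the wavelet expansion of $f$ in $\Frloc[\R^n\setminus\overline{\Gamma}_{l-1}]$ from Theorem~\ref{rlocwavelet} and observing that the restrictions of the building blocks to $\Gamma_{l,j}$ are atoms for $F_{p,p}^{\sigma,\rloc}(\Gamma_{l,j})$ (again the interior support structure of Figure~\ref{waveleter2}); the sequence-space estimate then follows from the Frazier--Jawerth finite-overlap argument, which also yields the $q$-independence.
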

\begin{proof}
The proof is a consequence of the wavelet decompositions of the spaces $\Frloc[\R^n \setminus \overline{\Gamma}_{l-1}]$ and $F_{p,p}^{\sigma,\rloc}(\Gamma_{l})=\tilde{F\,}\!_{p,p}^{\sigma}(\Gamma_l)$. 
Let $f \in \Frloc[\R^n \setminus \overline{\Gamma}_{l}]$. By Theorem \ref{rlocwavelet} we find a wavelet decomposition
\begin{align*}
  f=\sum_{j=0}^{\infty}\sum_{m=1}^{N_j} \lambda_m^j(f) 2^{-\frac{jn}{2}}\Phi_m^j \text{ on } \R^n
\end{align*}
with $\lambda(f) \in \fO[\Z^{\R^n \setminus \overline{\Gamma}_{l-1}}]$. The support of the wavelets near a face of dimension $l-1$, e.\,g.\ $\Gamma_{l-1,0}$, is illustrated in the picture below (looking from one side of the cube):

\begin{center}
 \begin{tikzpicture}[scale=0.35] 
    \draw (4,-2) -- (4,2) -- (8,2) -- (8,-2) --cycle;
    \draw (2,-1) -- (2,1) -- (4,1) -- (4,-1) --cycle;  
    \draw (8,-1) -- (8,1) -- (10,1) -- (10,-1) --cycle; 
    \draw (1,-0.5) -- (1,0.5) -- (2,0.5) -- (2,-0.5) --cycle; 
    \draw (10,-0.5) -- (10,0.5) -- (11,0.5) -- (11,-0.5) --cycle; 
    \draw (0.5,-0.25) -- (0.5,0.25) -- (1,0.25) -- (1,-0.25) --cycle; 
    \draw (11,-0.25) -- (11,0.25) -- (11.5,0.25) -- (11.5,-0.25) --cycle;
    \draw (0,4)--(0,0) -- (12,0) -- (12,4);
    \draw (-4,-2) -- (-4,2) -- (-8,2) -- (-8,-2) --cycle;
    \draw (-2,-1) -- (-2,1) -- (-4,1) -- (-4,-1) --cycle;  
    \draw (-8,-1) -- (-8,1) -- (-10,1) -- (-10,-1) --cycle; 
    \draw (-1,-0.5) -- (-1,0.5) -- (-2,0.5) -- (-2,-0.5) --cycle; 
    \draw (-10,-0.5) -- (-10,0.5) -- (-11,0.5) -- (-11,-0.5) --cycle; 
    \draw (-0.5,-0.25) -- (-0.5,0.25) -- (-1,0.25) -- (-1,-0.25) --cycle; 
    \draw (-11,-0.25) -- (-11,0.25) -- (-11.5,0.25) -- (-11.5,-0.25) --cycle;
    
    \draw (-2,-4) -- (2,-4) -- (2,-8) -- (-2,-8) --cycle;
    \draw (-1,-2) -- (1,-2) -- (1,-4) -- (-1,-4) --cycle;  
    \draw (-1,-8) -- (1,-8) -- (1,-10) -- (-1,-10) --cycle; 
    \draw (-0.5,-1) -- (0.5,-1) -- (0.5,-2) -- (-0.5,-2) --cycle; 
    \draw (-0.5,-10) -- (0.5,-10) -- (0.5,-11) -- (-0.5,-11) --cycle; 
    \draw (-0.25,-0.5) -- (0.25,-0.5) -- (0.25,-1) -- (-0.25,-1) --cycle; 
    \draw (-0.25,-11) -- (0.25,-11) -- (0.25,-11.5) -- (-0.25,-11.5) --cycle; 
    
    \draw (-2,4) -- (2,4) -- (2,8) -- (-2,8) --cycle;
    \draw (-1,2) -- (1,2) -- (1,4) -- (-1,4) --cycle;  
    \draw (-1,8) -- (1,8) -- (1,10) -- (-1,10) --cycle; 
    \draw (-0.5,1) -- (0.5,1) -- (0.5,2) -- (-0.5,2) --cycle; 
    \draw (-0.5,10) -- (0.5,10) -- (0.5,11) -- (-0.5,11) --cycle; 
    \draw (-0.25,0.5) -- (0.25,0.5) -- (0.25,1) -- (-0.25,1) --cycle; 
    \draw (-0.25,11) -- (0.25,11) -- (0.25,11.5) -- (-0.25,11.5) --cycle;

    \draw[dashed] (0,12)--(0,0) -- (12,0) -- (12,12)--cycle;
    
    \draw (6,-1.1) node  {$\Gamma_{l,0}$};
    \draw (13.1,6) node  {$\Gamma_{l,1}$};
    \draw (-1,6) node  {$\Gamma_{l,2}$};
    \draw (6,13.1) node  {$\Gamma_{l,3}$};
    
 \end{tikzpicture}
   \captionof{figure}{Support of wavelets of $\Frloc[\R^n \setminus \overline{\Gamma}_{l-1}]$ near $\Gamma_{l,0}$}
   \label{waveleter2}
\end{center}
If we look at the trace operator $\tr_{\Gamma_l}^r f$, hence at the restriction of the wavelet decompositions to $\Gamma_{l}$ (more exactly to $\Gamma_{l,j}$), then we see that the remaining functions have the same supports as the wavelets of the wavelet decompositions of $F_{p,p}^{\sigma,\rloc}(\Gamma_{l})$, see Theorem \ref{rlocwavelet}. Hence we have an atomic decomposition of $\tr_{\Gamma_l} D^{\alpha} f$ in $F_{p,p}^{s-\frac{n-l}{p}-|\alpha|,\rloc}(\Gamma_{l})$ as in Theorem \ref{AtomicReprrloc}. 

The independency of $q$ of the trace space for $\Frloc[\R^n \setminus \overline{\Gamma}_{l-1}]$ as well as the exponent $s-\frac{n-l}{p}-|\alpha|$ are well known observations, see \cite[Theorem 5.14, Proposition 6.17]{Tri08} and the foregoing remarks. Using atomic decomposition arguments, the independency of $q$ of the trace space for $F$-spaces goes back to Frazier and Ja\-werth \cite{FrJ90}, especially resulting from Corollary 5.6 there. Since this is a matter of sequence spaces $f_{p,p}^{\sigma}(\Z^{\Gamma_l})$, one can transfer 
the observations made for $\FR$ to $\Frloc[\R^n \setminus \overline{\Gamma}_{l-1}]$ directly.

Putting this together, we obtain an atomic decomposition (as in Theorem \ref{AtomicReprrloc}) of $\tr_{\Gamma_l} D^{\alpha}f$ in $F_{p,p}^{s-\frac{n-l}{p}-|\alpha|,\rloc}(\Gamma_{l})$ with
\begin{align*}
 \|\lambda_{\tr_{\Gamma_l} D^{\alpha}f}|f_{p,p}^{s-\frac{n-l}{p}-|\alpha|}(\Z^{\Gamma_l})\| \lesssim  \|\lambda_f|\fO[\Z^{\R^n \setminus \overline{\Gamma}_{l-1}}]\| \sim \|f|\Frloc[\R^n \setminus \overline{\Gamma}_{l-1}]\|.
\end{align*}
This shows the first part of the proposition.

For the second part we take another look at Figures \ref{waveleter} and \ref{waveleter2}. If we have a wavelet decomposition of functions $g_{\alpha,j} \in F_{p,p}^{s-\frac{n-l}{p}-\alpha,\rloc}(\Gamma_{l})$ for $\alpha \in \N_{l,j}^n$ with $|\alpha|\leq r$, then the support of the wavelet building blocks of the extension operator $\Ext_{\Gamma_{l,j}}^{r,u}\{g_{\alpha}: \alpha \in \N_{l,j}^n, |\alpha|\leq r \}$ has the same structure as the wavelet decomposition of $\Frloc[\R^n \setminus \overline{\Gamma}_{l-1}]$. Hence the extension operator $\Ext_{\Gamma_{l,j}}^{r,u}\{g_{\alpha}: \alpha \in \N_{l,j}^n, |\alpha|\leq r \}$ has an atomic decomposition (as in Theorem \ref{AtomicReprrloc}) in $\Frloc[\R^n \setminus \overline{\Gamma}_{l-1}]$ for every $0<q<\infty$. The independency of $q$ and the subsequent norm estimate follow as in the first step (by the arguments of \cite{FrJ90}).

\noindent
Finally, property
\vspace{-0.2cm}
\begin{align*}
 \tr_{\Gamma_l}^r \circ 
 \Ext_{\Gamma_{l}}^{r,u}= \text{id}, \text{ identity on }  \prod_{\underset{|\alpha|\leq r}{\alpha \in \N_{l}^n}} F_{p,p}^{s-\frac{n-l}{p}-|\alpha|,\rloc}(\Gamma_{l})
\end{align*}
\vspace{-0.4cm}

\noindent
follows from the construction of the extension operator and the non-interference at different faces of fixed dimension $l$.
\end{proof}

We need another lemma which helps us to enable the decomposition of $\Frinf[Q]$ - if $f \in \Frinf[Q]$ needs to fulfil reinforce properties $R_l^{r,p}$, we need to make sure that the reinforce properties $R_l^{r,p}$ are preserved by the trace-extension operator procedure we will use later on.

\begin{Lemma}
\label{reinfprop}
 Let  $n \in \N$, $l,l_1,r\in \N_0$ with $l\leq l_1<n$. Let $1\leq p< \infty$,   
\begin{align*}
 u>s>r+\frac{n-l}{p} \text{ and } r_1=s-\frac{n-l_1}{p} \in \N_0.
\end{align*}
Let 
\begin{align*}
 g_{j,\alpha} \in F_{p,p}^{s-\frac{n-l}{p}-|\alpha|,\rloc}(\Gamma_{l,j}) \text{ for } j\in \{0,\ldots,n_l\}, \alpha \in \N_{l,j}^n, |\alpha|\leq r .
\end{align*}
Then 
\begin{align*}
g&=\Ext_{\Gamma_{l}}^{r,u}\{g_{j,\alpha}: j\in \{0,\ldots,n_l\}, \alpha \in \N_{l,j}^n, |\alpha|\leq r \} \\ &
\end{align*}
\vspace{-1.1cm}

\noindent
fulfils the reinforced property $R_{l_1}^{r_1,p}$.

Furthermore, there is a suitable norm estimate of the weighted $L_p$-norms appearing in the definition of $R_{l_1}^{r_1,p}$ by the sum of $\|g_{\alpha,j}|F_{p,p}^{s-\frac{n-l}{p}-|\alpha|,\rloc}(\Gamma_{l,j})\|$).
\end{Lemma}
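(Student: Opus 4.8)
The plan is to reduce this statement, face by face, to the model situation already treated in Proposition \ref{extwavelet} (the property \eqref{distext}), the geometry of the cube being the only genuinely new ingredient.

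First I would use that the ranges of the extension operators $\Ext_{\Gamma_{l,j}}^{r,u}$ attached to the various $l$-dimensional faces have pairwise disjoint supports (this was built into the construction, see the remarks around Figure \ref{waveleter}), so that it suffices to take a single face, say $\Gamma_{l,0}$, with $g=\Ext_{\Gamma_{l,0}}^{r,u}\{g_{0,\alpha}\}$, and to verify the decay condition at one arbitrary face $\Gamma_{l_1,j'}$ of dimension $l_1$. If $\overline{\Gamma}_{l,0}\cap\overline{\Gamma}_{l_1,j'}=\emptyset$ there is nothing to prove: then $\dist(supp \ g,\Gamma_{l_1,j'})>0$, and since $D^{\beta}g\in F_{p,q}^{s-r_1}(\R^n)\hookrightarrow L_p(\R^n)$ for $|\beta|=r_1$ while $D^{\beta}g\equiv 0$ on $Q_{l_1,j',\eps}$ for $\eps$ small, the relevant weighted $L_p$-norm is finite. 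Otherwise, because $l\leq l_1$, the face lattice of $Q$ forces $\overline{\Gamma}_{l,0}\subset\overline{\Gamma}_{l_1,j'}$, and I would choose affine coordinates near $\overline{\Gamma}_{l,0}$ identifying $\Gamma_{l,0}$ with a bounded piece of $\R^l$, $\Gamma_{l_1,j'}$ with a bounded piece of $\R^{l_1}\supset\R^l$, and $d_{l_1,j'}$ with $\dist(\cdot,\R^{l_1})$ up to equivalence. In these coordinates $\Ext_{\Gamma_{l,0}}^{r,u}$ is exactly of the form \eqref{Ext}, the only difference from Section \ref{traces} being that the wavelets $\Phi_m^j$ now form an interior $u$-Riesz basis of $F_{p,p}^{\sigma,\rloc}(\Gamma_{l,0})$ rather than of $F_{p,p}^{\sigma}(\R^l)$, which changes nothing about the support structure exploited below.

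Next I would run the core estimate of Proposition \ref{extwavelet}. Writing $g$ as the double sum \eqref{Ext} in the building blocks $\Phi_m^{j,\alpha}(x)=2^{j|\alpha|}z^{\alpha}\chi(2^jz)\,2^{(n-l)j/2}\Phi_m^j(y)$ and fixing $\beta\in\N_{l_1,j'}^n\subset\N_{l,0}^n$ with $|\beta|=r_1$, I decompose $\beta=(0,\beta')$, $\beta'=(0,\beta'')\in\N^{l_1-l}\times\N^{n-l_1}$. Here the hypothesis $r<s-\frac{n-l}{p}$ enters: it gives $|\alpha|\leq r<s-\frac{n-l}{p}\leq s-\frac{n-l_1}{p}=|\beta'|$, whence $D^{\beta'}\big(z^{\alpha}\chi(2^jz)\big)=0$ for $\dist(\cdot,\R^{l_1})\leq 2^{-j}$, since $\chi$ is a tensor product of one-dimensional functions equal to $1$ near $0$. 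This yields $\dist(x,\Gamma_{l_1,j'})\sim 2^{-j}$ on $supp \ D^{\beta}\Phi_m^{j,\alpha}$, exactly as in \eqref{extd}, so that $2^{-jr_1}D^{\beta}\Phi_m^{j,\alpha}$ are, up to a fixed constant, normalized $K$-atoms (Definition \ref{Atomsrloc}) for $F_{p,p}^{\frac{n-l_1}{p},\rloc}$ of the complement of the relevant skeleton, located at distance $\sim 2^{-j}$ from $\Gamma_{l_1,j'}$. Differentiating the expansion of $g$ term by term then exhibits $D^{\beta}g$ as such an atomic decomposition with coefficients $2^{-j|\alpha|}2^{jr_1}\lambda_m^j(g_{0,\alpha})$, and from $s=\frac{n-l_1}{p}+r_1$ one gets, by the chain of equivalences in the proof of Proposition \ref{extwavelet},
\begin{align*}
 \|2^{-j|\alpha|}2^{jr_1}\lambda_m^j(g_{0,\alpha})|f_{p,p}^{\frac{n-l_1}{p}}(\Z^{\R^n\setminus\R^{l_1}})\| &\sim \|2^{-j|\alpha|}\lambda_m^j(g_{0,\alpha})|f_{p,p}^{s-\frac{n-l}{p}}(\Z^{\Gamma_{l,0}})\| \\
 &\lesssim \|g_{0,\alpha}|F_{p,p}^{s-\frac{n-l}{p}-|\alpha|,\rloc}(\Gamma_{l,0})\|,
\end{align*}
so by Theorem \ref{AtomicReprrloc} one obtains $D^{\beta}g\in F_{p,p}^{\frac{n-l_1}{p},\rloc}$ with the desired norm control.

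Finally I would translate this membership into the reinforce property via the equivalent characterization of refined localization spaces, Proposition \ref{rlocequi}: it turns $D^{\beta}g\in F_{p,p}^{\frac{n-l_1}{p},\rloc}$ into $\delta^{-\frac{n-l_1}{p}}(\cdot)D^{\beta}g\in L_p$, and near $\Gamma_{l_1,j'}$ (the only region where $supp \ g$ meets $Q_{l_1,j',\eps}$) the distance to the skeleton is comparable to $d_{l_1,j'}$, while elsewhere the weight is bounded and $D^{\beta}g\in L_p$ suffices; hence $d_{l_1,j'}^{-\frac{n-l_1}{p}}D^{\beta}g\in L_p(Q_{l_1,j',\eps})$ with the asserted estimate. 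Summing the finitely many contributions over $j\in\{0,\ldots,n_l\}$ and over the faces $\Gamma_{l_1,j'}$ finishes the argument. The main obstacle, and the only point beyond a near-verbatim repetition of Proposition \ref{extwavelet}, is the combinatorics of the face lattice of $Q$: one must keep track of which $l$-faces lie in the closure of which $l_1$-faces, set up the local coordinate reductions consistently, and check that near such a corner the cube's distance functions $d_{l_1,j'}$ are comparable to the model distance $\dist(\cdot,\R^{l_1})$ and that the supports of the various extended wavelets still overlap only boundedly often, so that the $q$-free Frazier--Jawerth type argument behind the refined-localization atomic theorem continues to apply.
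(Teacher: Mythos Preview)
Your reduction to Proposition \ref{extwavelet} is correct when $\overline{\Gamma}_{l,0}\subset\overline{\Gamma}_{l_1,j'}$, but your case distinction is incomplete: the claim that ``$\overline{\Gamma}_{l,0}\cap\overline{\Gamma}_{l_1,j'}\neq\emptyset$ and $l\leq l_1$ force $\overline{\Gamma}_{l,0}\subset\overline{\Gamma}_{l_1,j'}$'' is false. Two faces of a cube can meet in a proper lower-dimensional face without one lying in the closure of the other. This already happens for $l=l_1$ (two adjacent edges of the square share a vertex), and also for $l<l_1$: in $\R^3$, the edge $\{x_1=x_2=0,\,0<x_3<1\}$ and the face $\{x_3=0,\,0<x_1,x_2<1\}$ meet only at the origin. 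In such cases your trivial branch does not apply either, since $\dist(supp\ g,\Gamma_{l_1,j'})=0$: the interior wavelets on $\Gamma_{l,0}$ approach the shared lower-dimensional face as $j\to\infty$.

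In this missing case the model argument genuinely breaks: the inclusion $\N_{l_1,j'}^n\subset\N_{l,0}^n$ fails (in the $3$D example above $\N_{l_1,j'}^n$ contains only $x_3$-derivatives while $\N_{l,0}^n$ contains only $x_1,x_2$-derivatives), so $D^{\beta}$ hits the wavelet factor $\Phi_m^j(y)$ rather than $z^{\alpha}\chi(2^jz)$, and the vanishing \eqref{extd2} near $\Gamma_{l_1,j'}$ does not occur. The paper handles this case by a separate, simpler argument: because the $\Phi_m^j$ are \emph{interior} wavelets on $\Gamma_{l,0}$, one has $\dist(supp\ \Phi_m^{j,\alpha},\Gamma_{l_1,j'})\gtrsim 2^{-j}$ (cf.\ Figure \ref{waveleter2}), so the extended wavelets are already $K$-atoms for $F_{p,p}^{s,\rloc}(\R^n\setminus\overline{\Gamma}_{l_1,j'})$ and hence $g$ itself lies in this space. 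Then Proposition \ref{rlocDiff} gives $D^{\beta}g\in F_{p,p}^{\frac{n-l_1}{p},\rloc}(\R^n\setminus\overline{\Gamma}_{l_1,j'})$ for $|\beta|=r_1$, and Proposition \ref{rlocequi} yields $R_{l_1}^{r_1,p}$. You need to add this branch to your argument.
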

\begin{proof}
 In Proposition \ref{extwavelet} we showed this property with $\R^l$ instead of $\Gamma_l$ and $\R^{l_1}$ instead of $\Gamma_{l_1}$ (the latter is to be considered in the definition of $R_{l_1}^{r_1,p}$). We look at the different components $\Gamma_{l,j}$ of $\Gamma_{l}$ and $\Gamma_{l_1,j'}$ of $\Gamma_{l_1}$. At first, we can assume that $\Gamma_{l,j}$ and $\Gamma_{l_1,j'}$ are adjacent to each other, otherwise the support of $\Ext_{\Gamma_{l,j}}^{r,u} \{g_{j,\alpha}: \alpha \in \N_{l,j}^n, |\alpha|\leq r \}$ has positive distance to $\Gamma_{l_1,j'}$ and reinforce properties $\Gamma_{l_1,j'}$ are trivial.
 
 In a first step we consider the case, where $\Gamma_{l,j}$ is adjacent to $\Gamma_{l_1,j'}$ and $l_1>l$. This means that $\Gamma_{l,j}$ is part of the boundary of $\Gamma_{l_1,j'}$. Then we can argue in the same way as in the proof of Proposition \ref{extwavelet}: The support of the derivatives of order $r_1$ of the wavelet blocks looks like in Figure \ref{reinfext}. Obviously, the distance of $x \in \R^n$ from $\Gamma_{l_1,j'}$ is not smaller than the distance of $x \in \R^n$ from the $l_1$-dimensional plane which includes $\Gamma_{l_1,j'}$ (which corresponds to $\R^{l_1}$). 
 
 \begin{center}
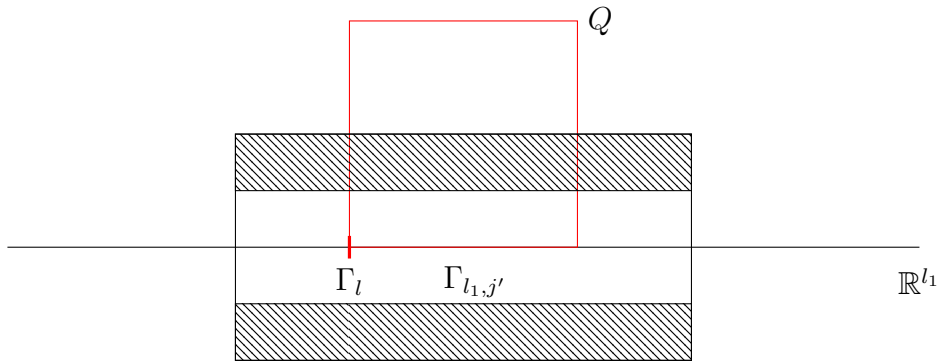

 \begin{tikzpicture}[scale=3]
 \def\u{0.03};
  \draw (-1,0) -- (3,0);
  \draw[red] (0.5,0) -- (1.5,0) -- (1.5,1) -- (0.5,1) -- cycle;
  \draw (1.6,1) node {$Q$};
\draw[red,very thick] (0.5,-0.05)--(0.5,0.05);
\draw[very thick] (0.5,-0.15) node {$\Gamma_l$};
\draw (3,-0.15) node {$\R^{l_1}$};
\draw (1.05,-0.15) node {$\Gamma_{l_1,j'}$};
\draw (2,0.5)--(0,0.5)--(0,-0.5,0)--(2,-0.5)--cycle;
\draw[pattern=north west lines] (2,0.25)--(0,0.25)--(0,0.5)--(2,0.5)--cycle;
\draw[pattern=north west lines] (2,-0.25)--(0,-0.25)--(0,-0.5)--(2,-0.5)--cycle;
\end{tikzpicture}
 \captionof{figure}{Support of the derivatives of order $r_1$ of the extension operator on $\Gamma_{l}$ at the face $\Gamma_{l_1}$}
\end{center}
 Hence we can argue exactly as in Proposition \ref{extwavelet} and show that 
 \begin{align*}
   D^{\beta} \Ext_{\Gamma_{l,j}}^{r,u} g \in F_{p,p}^{\frac{n-l_1}{p},\rloc}(\R^n \setminus \overline{\Gamma}_{l_1,j'}) \text{ for } \beta \in \N_{l_1,j'}^n \text{ with }|\beta|=r_1.
 \end{align*}
 By the equivalent characterization for $F_{p,p}^{\frac{n-l_1}{p},\rloc}(\R^n \setminus \overline{\Gamma}_{l_1,j'})$ from Proposition \ref{rlocequi} this shows that $\Ext_{l,j}^{r,u} g$ fulfils $R_{l_1}^{r_1,p}$.
 
 In the second step we consider the case $l_1=l$. If $j\neq j'$, then we are in a situation as in Figure \ref{waveleter2}, for example take a look at $\Gamma_{l,0}$ and $\Gamma_{l,2}$ there. The extension operator is constructed on $\Gamma_{l,0}$. In this situation we have 
 \begin{align*}
  \dist(supp \ \Phi_m^{j,\alpha}, \Gamma_{l,2}) \gtrsim 2^{-j}.
 \end{align*}
This shows $\Ext_{\Gamma_{l,0}}^{r,u} g \in F_{p,p}^{s,\rloc}(\R^n \setminus \overline{\Gamma}_{l,2})$ and hence by Proposition \ref{rlocDiff} about the derivatives of elements of refined localization spaces we have
\begin{align*}
D^{\beta} \Ext_{\Gamma_{l,0}}^{r,u} g \in F_{p,p}^{s-|\beta|,\rloc}(\R^n \setminus \overline{\Gamma}_{l,2})=F_{p,p}^{\frac{n-l_1}{p},\rloc}(\R^n \setminus \overline{\Gamma}_{l,2}) \text{ for } \beta \in \N_{l,2}^n \text{ with }|\beta|=r_1.
 \end{align*}
As in the first step this shows that $\Ext_{\Gamma_{l,0}}^{r,u} g$ fulfils $R_{l}^{r_1,p}$, this time at $\Gamma_{l,2}$.
 
 If $l=l_1$ and $j=j'$, then we can argue as in the first step ($l_1>l$). This is covered by the arguments in Proposition \ref{extwavelet}. 
 \end{proof}

Before we are ready to prove the decomposition theorem for the cube - the generalization of Theorem 6.28 in \cite{Tri08}, we need a counterpart of the observations in Theorems \ref{Zerleger} and \ref{Zerlegercrit}, now going up dimension by dimension. For convenience we distinguish between the critical and the non-critical cases.

\begin{Lemma}[The non-critical dimensional decomposition]
\label{ZerlegerCube}
 Let $n \in \N$, $l \in \N_0$ with $1\le	 l\leq n-1$. Let $1\leq p < \infty$ and $1\leq q <\infty$. Let $s>0$, 
 \begin{align*}
  s-\frac{n-l}{p} \notin \N_0 \text{ and } r=\lfloor s-\frac{n-l}{p} \rfloor.
 \end{align*}
Then 
\begin{align*}
 \Frloc[\R^n \setminus \overline{\Gamma}_l] = \{f \in  \Frloc[\R^n \setminus \overline{\Gamma}_{l-1}]: \tr_{\Gamma_l}^r f=0 \}
\end{align*}
(no trace for $r=-1$, i.\,e.\ $s<\frac{n-l}{p}$).
\end{Lemma}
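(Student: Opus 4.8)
The statement is a dimension-by-dimension refinement of Theorem~\ref{Zerleger}, now at the faces $\Gamma_l$ of the cube rather than at the single hyperplane $\R^l$, and with refined localization spaces on both sides. The plan is to reduce the claim to the local situation already treated: near a fixed face $\Gamma_{l,j}$, the domain $\R^n\setminus\overline{\Gamma}_{l-1}$ looks (after flattening and a pointwise multiplier cut-off) like $\R^n\setminus\R^l$, and the domain $\R^n\setminus\overline{\Gamma}_l$ looks like $\R^n\setminus\R^{l-1}$ localized away from the higher-dimensional pieces. So the whole argument splits into a localization step plus an application of Theorem~\ref{Zerleger}.

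\textbf{Key steps.} First I would prove the inclusion ``$\subset$''. Let $f\in\Frloc[\R^n\setminus\overline{\Gamma}_l]$. By Theorem~\ref{rlocwavelet} $f$ has a wavelet decomposition, hence an atomic one, so by Theorem~\ref{AtomicRepr} $f\in\FR$ and $\Frloc[\R^n\setminus\overline{\Gamma}_l]\hookrightarrow\Frloc[\R^n\setminus\overline{\Gamma}_{l-1}]$ (the wavelet points for the finer domain are a subset of those for the coarser one, and the sequence-space norms only get larger; alternatively use the density of $D(\R^n\setminus\overline\Gamma_l)$ from Remark~\ref{rlocDense}). For the trace condition: $D(\R^n\setminus\overline{\Gamma}_l)$ is dense in $\Frloc[\R^n\setminus\overline{\Gamma}_l]$ in the norm of $\FR$ (combining Remark~\ref{rlocDense} with the embedding estimate just obtained), and each $g\in D(\R^n\setminus\overline{\Gamma}_l)$ vanishes near $\Gamma_l$, hence $\tr_{\Gamma_l}^r g=0$; continuity of $\tr_{\Gamma_l}^r$ from $\FR$ to $\prod F_{p,p}^{s-\frac{n-l}{p}-|\alpha|}(\Gamma_l)$ (via atomic decompositions, as in Proposition~\ref{Tra} and its analogue for faces) then gives $\tr_{\Gamma_l}^r f=0$. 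This covers $r=-1$ trivially.

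For ``$\supset$'', take $f\in\Frloc[\R^n\setminus\overline{\Gamma}_{l-1}]$ with $\tr_{\Gamma_l}^r f=0$ (or no condition if $r=-1$). Using a smooth partition of unity $\{\psi_j\}$ subordinate to neighbourhoods of the faces $\Gamma_{l,j}$ (with $\sum\psi_j\equiv 1$ near $\Gamma_l$ and the $\psi_j f$ controlled via Theorem~\ref{pointwise}), it suffices to show each $\psi_j f\in\Frloc[\R^n\setminus\overline{\Gamma}_l]$. Fix $j$; on the support of $\psi_j$ the boundary of $\R^n\setminus\overline{\Gamma}_{l-1}$ reduces to $\Gamma_{l,j}$ together with adjacent lower-dimensional faces, all contained in $\overline{\Gamma}_{l-1}$, and $\psi_j f\in\Frloc[\R^n\setminus\overline{\Gamma}_{l-1}]$ has, by Proposition~\ref{rlocequi} and Remark~\ref{rlocDense}, the $\delta^{-s}$-decay against that boundary and is $D$-approximable. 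Flattening $\Gamma_{l,j}$ to a piece of $\R^l$ via a local diffeomorphism (Theorem~\ref{Diffeo}), the situation becomes exactly that of Theorem~\ref{Zerleger} with the plane $\R^l$: from $\tr_l^r(\psi_j f)=0$ (and $f\in\FR$, so that $d^{-s+r}D^\alpha(\psi_j f)\in L_p$ via the subcritical Hardy inequality Proposition~\ref{Hardysubcrit} applied to $D^\alpha(\psi_j f)\in\FR[s-r]$ with $s-r<\frac{n-l}{p}$ — exactly as in the second step of Theorem~\ref{Zerleger}) we obtain $d_{\Gamma_{l,j}}^{-s}\psi_j f\in L_p$, hence $\psi_j f\in\Frloc[\R^n\setminus\overline{\Gamma}_l]$ by Proposition~\ref{rlocequi}. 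Summing over $j$ finishes the argument; equivalence of norms is tracked through each step.

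\textbf{Main obstacle.} The delicate point is the localization/flattening: one must verify that near a face $\Gamma_{l,j}$ the passage from $\R^n\setminus\overline{\Gamma}_{l-1}$ to $\R^n\setminus\overline{\Gamma}_l$ is genuinely the passage from ``no condition at $\R^l$'' to ``$\tr_l^r=0$ at $\R^l$'' of Theorem~\ref{Zerleger}, i.e. that the remaining (lower-dimensional, already-removed) faces adjacent to $\Gamma_{l,j}$ do not interfere. This is where the non-interference of traces and supports at faces of fixed or lower dimension (used repeatedly in Proposition~\ref{Frlocdecomp} and Lemma~\ref{reinfprop}) must be invoked carefully, together with the fact that $\tr_{\Gamma_l}^r f=0$ near $\Gamma_{l-1}$ is automatic because $f\in\Frloc[\R^n\setminus\overline{\Gamma}_{l-1}]$ already has no boundary values there. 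The pointwise-multiplier and diffeomorphism bounds (Theorems~\ref{pointwise} and~\ref{Diffeo}) guarantee that none of these reductions costs more than a constant.
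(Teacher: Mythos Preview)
Your inclusion ``$\subset$'' is fine and matches the paper. For ``$\supset$'' the paper argues differently and more directly. It does \emph{not} introduce a smooth partition of unity and does \emph{not} flatten anything (the faces $\Gamma_{l,j}$ of a cube are already pieces of affine $l$-planes, so invoking Theorem~\ref{Diffeo} is superfluous). Instead it reduces, via Proposition~\ref{rlocequi}, to proving the Hardy inequality $\|d_{l,j}^{-s}f\mid L_p(Q_{l,j,\eps})\|<\infty$ at each face, and then splits the \emph{integration domain} (not the function) into two pieces: a corner region $A_1$ consisting of those $x$ with $\dist(x,\Gamma_{l,j})=\dist(x,\Gamma_{l-1,j'})$ for some $(l-1)$-face in $\partial\Gamma_{l,j}$, and the perpendicular stripe $A_2$ over the open face. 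On $A_1$ the Hardy term at $\Gamma_{l,j}$ equals the one at $\overline{\Gamma}_{l-1}$, which is already controlled because $f\in\Frloc[\R^n\setminus\overline{\Gamma}_{l-1}]$. On $A_2$ one is exactly in the situation of Theorem~\ref{Zerleger}, and the key point exploited is that the proof of that theorem is \emph{local in $x'\in\R^l$} (Fubini plus one-dimensional reduction), so it transfers verbatim from the full plane $\R^l$ to the bounded base $\Gamma_{l,j}$.

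Your parenthetical justification contains an actual error: you write that ``$d^{-s+r}D^\alpha(\psi_jf)\in L_p$ via the subcritical Hardy inequality Proposition~\ref{Hardysubcrit} applied to $D^\alpha(\psi_jf)\in\FR[s-r]$ with $s-r<\frac{n-l}{p}$''. But since $r=\lfloor s-\frac{n-l}{p}\rfloor$ one has $s-r>\frac{n-l}{p}$, not $<$, so Proposition~\ref{Hardysubcrit} does not apply here. This is precisely why the second step of Theorem~\ref{Zerleger} for $r\geq 0$ does \emph{not} proceed via the subcritical Hardy inequality but via the Fubini/spherical-coordinate reduction to the one-dimensional result (Proposition~\ref{inftydom}). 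The paper even remarks after Theorem~\ref{Zerlegercrit} that the subcritical-Hardy route ``covers some cases but not all''. So if you want to rescue your partition-of-unity approach, you must appeal to the full Fubini argument of Theorem~\ref{Zerleger}, not to Proposition~\ref{Hardysubcrit}; once you do that, your localization becomes essentially the paper's $A_1/A_2$ split in disguise, and the smooth cut-offs and Leibniz bookkeeping are avoidable overhead.
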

\begin{Proof}
 We will use the observations from the proof of Theorem \ref{Zerleger}. Clearly, the left-hand side belongs to the right-hand side by the same arguments as there. 
 
 To show the converse, it suffices to show Hardy inequalities at the boundaries $\Gamma_{l,j}$, see Proposition \ref{rlocequi}. Let $f \in  \Frloc[\R^n \setminus \overline{\Gamma}_{l-1}]$ with $\tr_{\Gamma_l}^r f=0$. We decompose $\R^n$ into two segments as in the following figure:
 
\begin{center}
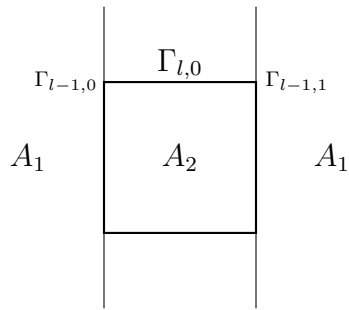

 \begin{tikzpicture}[scale=2] 
    \draw[thick] (0,0) -- (0,1) -- (1,1) -- (1,0) --cycle;
    \draw (0,-0.5) -- (0,1.5);  
    \draw (1,-0.5) -- (1,1.5);
    
    \draw (0.5,1.12) node  {$\Gamma_{l,0}$};
    \draw (-0.25,1) node[font=\footnotesize] {$\Gamma_{l-1,0}$};
    \draw (1.27,1) node[font=\footnotesize]  {$\Gamma_{l-1,1}$};
    \draw (-0.5,0.5) node  {$A_1$};
    \draw (0.5,0.5) node {$A_2$};
    \draw (1.5,0.5) node  {$A_1$};
    
 \end{tikzpicture}
   \captionof{figure}{The situation for the Hardy inequalities at $\Gamma_{l,j}$}
   \label{decompr}
\end{center}
So $A_1$ consists of all points $x \in \R^n$ with $\dist(x,\Gamma_{l,j})=\dist(x,\Gamma_{l-1,j'})$ for some $\Gamma_{l-1,j'}$ which is a boundary face of $\Gamma_{l,j}$. 

Let $d_{l,j}$ be the distance of $x$ to the boundary $\Gamma_{l,j}$ and $Q_{l,j,\eps}=\left\{x \in \R^n: d_{l,j}(x)<\eps\right\}$. Since we assumed $f \in  \Frloc[\R^n \setminus \overline{\Gamma}_{l-1}]$, we have Hardy inequalities at the boundary $\overline{\Gamma}_{l-1,j}$ by Proposition \ref{rlocequi}. This gives
\begin{align*}
 \|d^{-s}_{l,j}(\cdot)f|L_p(Q_{l,j,\eps} \cap A_1)\| \leq \sum_{k=0}^{n_{l-1}} \|d^{-s}_{l-1,k}(\cdot)f|L_p(Q_{l-1,j,\eps}\|.
\end{align*}
Thus we only have to care about $x \in A_2$. But then we are totally in the situation of Theorem $\ref{Zerleger}$ - the only difference is that we are now looking at a stripe (cylinder) constructed on $\Gamma_{l,j}$ instead of $\R^n$ considered over the plane $\R^l$. But this difference makes no problems: In the proof of Theorem \ref{Zerleger} we fixed $x' \in \R^l$ and took care about $x=(x',x'') \in \R^l \times \R^{n-l}$ - using mainly Fubini's Theorem \ref{Fubini} for $\FR$, for details see \eqref{Hardypoint}. We arrived at estimates for every fixed $x'\in \R^l$ and only at the end we integrated over $x' \in \R^l$. So we can transfer the arguments directly to the situation of a stripe instead of the whole $\R^n$ and we derive a Hardy inequality also for $x \in A_2$. 
\end{Proof}

\begin{Remark}
 The observations of the previous lemma also hold for $l=0$ in a adapted way. Then $\Gamma_{-1}:=\emptyset$ and so $\Frloc[\R^n \setminus \overline{\Gamma}_{-1}]=\FR$. Here we are exactly in the same situation as in Theorem \ref{Zerleger}, now considering all the corner points of $Q$ instead of only one point $x=0$ as there. This makes no problems since the Hardy inequalities are local observations - we only look at a neighbourhood of $\Gamma_{l,j}$ of $x \in \R^n$ with $\dist(x,\Gamma_{l,j})<\eps$ for a small $\eps>0$, see Remark \ref{indeps}.
\end{Remark}

\begin{Lemma}[The critical dimensional decomposition]
\label{ZerlegercritCube}
Let $n\in \N$, $l \in \N_0$ with $1\leq l\leq n-1$. Let $1\leq p <\infty$, $1\leq q <\infty$, $s>0$ and
\begin{align*}
 r=s-\frac{n-l}{p} \in \N_0.
\end{align*}
Then
\begin{align*}
 \Frloc[\R^n \setminus \overline{\Gamma}_l] = \{f \in  \Frloc[\R^n \setminus \overline{\Gamma}_{l-1}]: \tr_{\Gamma_l}^{r-1} f=0 \text{ and } f \text{ fulfils } R_l^{r,p}\}.
\end{align*}
(no trace for $r=0$, i.\,e.\ $s=\frac{n-l}{p}$).
\end{Lemma}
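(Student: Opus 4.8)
The plan is to reduce this critical dimensional decomposition to the critical decomposition theorem for $\hyp$ (Theorem~\ref{Zerlegercrit}) together with the Hardy inequalities of Section~\ref{Reinf}, in exactly the same spirit as the proof of the non-critical Lemma~\ref{ZerlegerCube} was reduced to Theorem~\ref{Zerleger}. First I would handle the easy inclusion ``$\subseteq$'': if $f \in \Frloc[\R^n \setminus \overline{\Gamma}_l]$, then since $\overline{\Gamma}_{l-1} \subset \overline{\Gamma}_l$ one has $f \in \Frloc[\R^n \setminus \overline{\Gamma}_{l-1}]$; applying Theorem~\ref{rlocwavelet}, $f$ has a wavelet decomposition which is in particular an atomic decomposition, so $f \in \FR$ and, by the density of $D(\R^n \setminus \overline{\Gamma}_l)$ in $\Frloc[\R^n \setminus \overline{\Gamma}_l]$ (Remark~\ref{rlocDense}) together with continuity of the trace operators on each $\Gamma_{l,j}$, we get $\tr_{\Gamma_l}^{r-1} f = 0$. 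The reinforce property $R_l^{r,p}$ follows from Proposition~\ref{rlocDiff}: for $\alpha \in \N_{l,j}^n$ with $|\alpha|=r$ we have $D^{\alpha} f \in F_{p,q}^{\frac{n-l}{p},\rloc}(\R^n \setminus \overline{\Gamma}_l)$, and Proposition~\ref{rlocequi} yields $d_{l,j}^{-\frac{n-l}{p}} D^{\alpha} f \in L_p(Q_{l,j,\eps})$, which is precisely $R_l^{r,p}$ (using $\delta = d$ near the boundary). For $r=0$ there is no trace condition and $R_l^{0,p}$ is vacuous.

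For the reverse inclusion ``$\supseteq$'' I would use the equivalent characterization of $\Frloc[\R^n \setminus \overline{\Gamma}_l]$ from Proposition~\ref{rlocequi}: it suffices to show, for $f \in \Frloc[\R^n \setminus \overline{\Gamma}_{l-1}]$ with $\tr_{\Gamma_l}^{r-1} f = 0$ and $f$ fulfilling $R_l^{r,p}$, a Hardy inequality $\|d_{l,j}^{-s}(\cdot) f | L_p(Q_{l,j,\eps})\| \lesssim \|f|\Frloc[\R^n \setminus \overline{\Gamma}_{l-1}]\|$ at each face $\Gamma_{l,j}$. As in the proof of Lemma~\ref{ZerlegerCube}, I would split $\R^n$ locally near $\Gamma_{l,j}$ into a region $A_1$ where $d_{l,j}(x) = d_{l-1,j'}(x)$ for some boundary face $\Gamma_{l-1,j'}$ of $\Gamma_{l,j}$, and a region $A_2$ (the ``cylinder'' over $\Gamma_{l,j}$). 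On $A_1$ the Hardy inequality at $\Gamma_{l,j}$ is dominated by the Hardy inequalities at the faces $\overline{\Gamma}_{l-1,j'}$, which hold because $f \in \Frloc[\R^n \setminus \overline{\Gamma}_{l-1}]$ and Proposition~\ref{rlocequi}. On $A_2$, after fixing the coordinates along $\Gamma_{l,j}$ and using the Fubini property (Proposition~\ref{Fubini}) exactly as in the proof of Theorem~\ref{Zerlegercrit}, the problem reduces to the situation of a cylinder built over the $l$-plane containing $\Gamma_{l,j}$; there one invokes Corollary~\ref{HardyZerlegerC} (the Hardy inequality using the vanishing traces $\tr_l^{r-1} f = 0$ to control $d^{-s} f$ by $\sum_{|\alpha|=r} d^{-s+r} D^{\alpha} f$) together with the reinforce property $R_l^{r,p}$, which provides exactly $d_{l,j}^{-\frac{n-l}{p}} D^{\alpha} f = d_{l,j}^{-(s-r)} D^{\alpha} f \in L_p$. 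Combining $A_1$ and $A_2$ gives the desired Hardy inequality, hence $f \in \Frloc[\R^n \setminus \overline{\Gamma}_l]$.

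The main obstacle I anticipate is bookkeeping at the interfaces: one must be careful that, near a face $\Gamma_{l,j}$, the distance function $d_{l,j}$ and the relevant perpendicular multi-indices $\N_{l,j}^n$ are consistently matched, and that the decomposition into $A_1 \cup A_2$ genuinely covers the $\eps$-neighborhood of $\Gamma_{l,j}$ while the pieces glue without double counting. A second subtlety, as in Theorem~\ref{Zerlegercrit}, is that to apply Corollary~\ref{HardyZerlegerC} one needs $f$ and its perpendicular derivatives up to order $r$ to be regular distributions in $L_p$; this is guaranteed by $f \in \FR \hookrightarrow L_p(\R^n)$ (Sobolev embedding, since $s > 0$) together with the reinforce property supplying the weighted integrability of the order-$r$ derivatives. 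Finally, for the borderline $r = 0$ the argument collapses: there is no trace condition and $R_l^{0,p}$ is trivial, so the statement $\Frloc[\R^n \setminus \overline{\Gamma}_l] = \Frloc[\R^n \setminus \overline{\Gamma}_{l-1}]$ (after restricting to the neighborhood of $\Gamma_l$) follows directly from the $r=0$ case of Theorem~\ref{Zerlegercrit} applied face by face, exactly as noted in the remark following Lemma~\ref{ZerlegerCube}. The case $0 < q < 1$ can be incorporated afterwards by the monotonicity embedding and the wavelet-decomposition circumvention described in Remark~\ref{ZerlegerRem}, under the additional assumption $s > \sigma_{p,q}$.
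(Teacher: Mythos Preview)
Your overall strategy matches the paper's proof closely: the paper also splits into the two inclusions, uses Proposition~\ref{rlocDiff} and Proposition~\ref{rlocequi} for the reinforce property in the first step, and for the reverse inclusion decomposes a neighbourhood of each $\Gamma_{l,j}$ into regions $A_1$ and $A_2$ exactly as in Lemma~\ref{ZerlegerCube}, invoking the local Hardy argument from Theorem~\ref{Zerlegercrit} (via Corollary~\ref{HardyZerlegerC}) on $A_2$.

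There is, however, a genuine error in your treatment of $r=0$. The reinforce property $R_l^{0,p}$ is \emph{not} vacuous: for $|\alpha|=0$ it is precisely the condition $d_{l,j}^{-\frac{n-l}{p}} f \in L_p(Q_{l,j,\eps})$, i.e.\ the Hardy inequality for $f$ itself at $\Gamma_{l,j}$. This is exactly what is needed (via Proposition~\ref{rlocequi}) to pass from $\Frloc[\R^n\setminus\overline{\Gamma}_{l-1}]$ to $\Frloc[\R^n\setminus\overline{\Gamma}_l]$, and it does \emph{not} follow automatically from membership in $\Frloc[\R^n\setminus\overline{\Gamma}_{l-1}]$. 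Consequently your claim that the lemma reduces to $\Frloc[\R^n\setminus\overline{\Gamma}_l] = \Frloc[\R^n\setminus\overline{\Gamma}_{l-1}]$ when $r=0$ is false; the correct statement retains the condition $R_l^{0,p}$ on the right-hand side (with no trace condition), and this is exactly how Theorem~\ref{Zerlegercrit} handles $r=0$ as well. Once you keep $R_l^{0,p}$ in play, the $r=0$ case is not a collapse but simply the situation where Corollary~\ref{HardyZerlegerC} is not needed and the reinforce property directly supplies the Hardy estimate on $A_2$.
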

\begin{Proof}
 The proof is nearly the same as the proof of the previous lemma for the non-critical case, we only have to take care about the reinforce property $R_l^{r,p}$. At first, if $f \in 
 \Frloc[\R^n \setminus \overline{\Gamma}_l]$, then $f \in  \Frloc[\R^n \setminus \overline{\Gamma}_{l-1}]$ and $\tr_{\Gamma_l}^{r-1} f=0$ as before. Furthermore, by Proposition \ref{rlocDiff} we have
 \begin{align*}
  D^{\beta}f \in F_{p,q}^{\frac{n-l}{p},\rloc}(\R^n \setminus \overline{\Gamma}_{l}) \text{ for } \beta \in \N_{l}^n \text{ with }|\beta|=r.
 \end{align*}
Hence $f$ fulfils the reinforce property $R_l^{r,p}$ by Proposition $\ref{rlocequi}$.

To show that the right-hand side is contained in $\Frloc[\R^n \setminus \overline{\Gamma}_l]$, we have to show a Hardy inequality at $\Gamma_l$. We can argue as in the proof of the previous lemma: If $x \in A_1$ from Figure \ref{decompr}, then the Hardy inequality follows as before. 

For $x \in A_2$ we now use the arguments from Theorem \ref{Zerlegercrit} instead of Theorem \ref{Zerleger}. Since the arguments for proving the underlying Hardy inequality in Lemma \ref{HardyZerleger} were local (we fixed $x' \in \R^l$), we can transfer them from $\R^n$ (with the basement $\R^l$) to the stripe $A_2$ (with the basement $\Gamma_{l,j}$). Here we need the reinforce property $R_l^{r,p}$ at the components $\Gamma_{l,j}$ of $\Gamma_l$ as in Theorem \ref{Zerlegercrit}. 
\end{Proof}
\begin{Remark}
 The observations of the previous lemma also hold for $l=0$ in an adapted way. Then $\Gamma_{-1}:=\emptyset$ and so $\{f \in \Frloc[\R^n \setminus \Gamma_{-1}]: f \text{ fulfils } R_0^{r,p}\}=\{f \in \FR: f \text{ fulfils } R_0^{r,p}\}$. Here we are exactly in the same situation as in Theorem \ref{Zerlegercrit}, now considering all the corner points of $Q$ instead of only one point $x=0$ as there. This makes no problems since the Hardy inequalities are local observations.   
\end{Remark}

\subsection{The decomposition theorem for $\Frinf[Q]$}
Our goal is now the decomposition of $\Frinf[Q]$ in a similar way as in Theorem 6.28 of \cite{Tri08}, more exactly (6.117), where $\FO[Q]$ was decomposed in the non-critical cases. With the following theorem we will incorporate the non-critical values (where $\Frinf[Q]=\FO[Q]$) as well as the critical values. Hence the theorem will be a generalization of Theorem 6.28 in \cite{Tri08}.

We proved Proposition \ref{Frlocdecomp} for $r \in \N_0$ with $r<s-\frac{n-l}{p}$ and for every dimension $l$ separately. Now we will choose $r$ as large as possible for every dimension $l$ which is suggested by Theorem \ref{Zerleger} and Theorem \ref{Zerlegercrit}: Depending on the smoothness parameter $s$ and the parameter $p$, it is possible that we don't need to care about traces of small dimension (as they will not exist), see \cite[Section 6.1.5]{Tri08}. 

Let $s>0$. If $s>\frac{n}{p}$, then we define the starting dimension as $l_0=0$. Otherwise we define the starting dimension to be the value $l_0 \in \{1,\ldots,n\}$ such that
\begin{align}
\label{l0def}
 0< s-\frac{n-l_0}{p}\leq \frac{1}{p}
\end{align}
In particular, if $0<s\leq \frac{1}{p}$, then $l_0=n$. In this situation there are no traces at the boundary to be considered. 
Furthermore, for all dimensions $l$ bigger or equal to the starting dimension, i.\,e.\ $l_0\leq l\leq n-1$ we choose
\begin{align}
 \label{rldef}
r^l=\begin{cases} s-\frac{n-l}{p}-1&, s-\frac{n-l}{p} \in \N\\
\lfloor s-\frac{n-l}{p} \rfloor&, \text{otherwise} 
\end{cases}.
\end{align}
By definition of $l_0$ we always have $r^l\geq 0$. In the following theorem $r^l$ will stand for the maximal order of derivatives of traces on faces of dimension $l$, compare with Lemma \ref{ZerlegerCube} and Lemma \ref{ZerlegercritCube} - for convenience now with $r^l$ instead of $r^l-1$ in the critical cases. We will consider
\begin{align*}
 \tr_{\Gamma_l}^r: f \mapsto \{\tr_l D^{\alpha} f: \alpha \in \N_{l}^n, |\alpha|\leq r^l \}
\end{align*}
and the suitable extension operator $\Ext_{\Gamma_{l}}^{r^l,u}$ for all $l\in \N_0$ with $l_0\leq l \leq n-1$. This is the main idea to connect the results from Lemma \ref{ZerlegerCube}, Lemma \ref{ZerlegercritCube} and Proposition \ref{Frlocdecomp} to get a decomposition of $\Frinf[Q]$, using the decomposition idea of \cite[Theorem 6.28]{Tri08}.

\begin{Remark}
 We need to give some remarks about the situation if $l=0$. Then $\Gamma_l$ consists of isolated points. The spaces $
 F_{p,q}^{\sigma}(\Gamma_{0})$ and the reinforce property $R_0^{r,p}$ are well-defined. The space $F_{p,q}^{\sigma,\rloc}(\Gamma_{0})$ can be associated with the functional values at the points of $\Gamma_0$ - this space has a trivial wavelet basis consisting of functions equal to $1$ in one point and equal to $0$ in the other points of $\Gamma_0$. In the following it makes no problems just to define (or assume)
 \begin{align*}
 F_{p,q}^{\sigma}(\Gamma_{0})=\tilde{F\,}\!_{p,q}^{\sigma}(\Gamma_{0})=F_{p,q}^{\sigma,\rloc}(\Gamma_{0}).
\end{align*}
For more details for the case $l=0$ and the construction of the extension operator see \cite[Section 5.2.3]{Tri08}.
\end{Remark}

\begin{Theorem}
\label{Zerlegerwav}
Let $1\leq p <\infty$, $1\leq q<\infty$ and $0<s<u \in \N$. Let $n \in \N$, $l_0 \in \N_0$ with $0\leq l_0\leq n$ defined as in \eqref{l0def} and $r^l$ for $l_0\leq l\leq n-1$ defined as in \eqref{rldef}.\ Then it holds
\begin{align}
\label{ZerlegerFrinf}
\Frinf[Q]=\Frloc[Q] \times \prod_{l=l_0}^{n-1} \Ext_{\Gamma_{l}}^{r^{l},u} \prod_{\underset{|\alpha|\leq r^{l}}{\alpha \in \N_{l}^n}} F_{p,p}^{s-\frac{n-l}{p}-|\alpha|,\rloc}(\Gamma_{l}).
\end{align}
(complemented subspaces)
\end{Theorem}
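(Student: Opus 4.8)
The plan is to prove the decomposition \eqref{ZerlegerFrinf} by iterating the dimensional decomposition Lemmata \ref{ZerlegerCube} and \ref{ZerlegercritCube} from dimension $l=l_0$ up to $l=n-1$, and at each step peeling off a trace space which, by Proposition \ref{Frlocdecomp}, splits off as a complemented summand via the wavelet-friendly extension operator $\Ext_{\Gamma_l}^{r^l,u}$. First I would fix notation: write $X_l:=\Frloc[\R^n\setminus\overline{\Gamma}_l]$ for $l_0-1\le l\le n-1$, with the conventions $\overline{\Gamma}_{l_0-1}$ being the appropriate lower-dimensional skeleton (empty if $l_0=0$, so $X_{l_0-1}=\FR$ restricted suitably; for $l_0>0$ one uses that $s-\frac{n-(l_0-1)}{p}\le 0$ so no conditions survive below $\Gamma_{l_0-1}$). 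The base of the induction is the identification, at the bottom dimension, of $\Frinf[Q]$ restricted to $\R^n\setminus\overline{\Gamma}_{l_0-1}$ with $X_{l_0-1}$ decorated by the reinforce properties $R_l^{r^l,p}$ for the critical $l$; here one invokes Definition \ref{reinforcedQ} and, where $s-\frac{n}{p}>0$ (so $l_0=0$), the fact that $\Frloc=\FR$ has no interior condition.

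The inductive step is the heart of the argument. Suppose after handling dimensions $l_0,\dots,l-1$ we have realised the relevant space as $X_{l-1}$ (intersected with the reinforce properties $R_{l'}^{r^{l'},p}$ for critical $l'\ge l$). I would apply Lemma \ref{ZerlegerCube} in the non-critical case $s-\frac{n-l}{p}\notin\N_0$, and Lemma \ref{ZerlegercritCube} in the critical case, to split
\begin{align*}
X_{l-1}\cap(\text{reinforce for }l'\ge l) = X_l\cap(\text{reinforce for }l'>l) \;\oplus\; \Ext_{\Gamma_l}^{r^l,u}\Big(\prod_{\underset{|\alpha|\le r^l}{\alpha\in\N_l^n}}F_{p,p}^{s-\frac{n-l}{p}-|\alpha|,\rloc}(\Gamma_l)\Big),
\end{align*}
where the complementation is witnessed by $P_l:=\Ext_{\Gamma_l}^{r^l,u}\circ\tr_{\Gamma_l}^{r^l}$, which is a bounded projection by Proposition \ref{Frlocdecomp} (boundedness of both operators and the identity $\tr_{\Gamma_l}^{r^l}\circ\Ext_{\Gamma_l}^{r^l,u}=\mathrm{id}$), and $f-P_l f$ lands in the kernel of $\tr_{\Gamma_l}^{r^l}$, i.e. in $X_l$ by the two Lemmata (with the convention that in the critical case one keeps $r^l$ traces, not $r^l-1$, by the remark preceding Theorem \ref{Zerlegerwav}). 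Crucially, the image of $\Ext_{\Gamma_l}^{r^l,u}$ automatically satisfies all reinforce properties $R_{l_1}^{r_1,p}$ for $l_1>l$ by Lemma \ref{reinfprop}, and it satisfies $R_l^{r^l,p}$ itself in the critical case by the same lemma (the $l_1=l$ case there), so the factor split off at stage $l$ carries exactly the required structure and needs no further decoration; meanwhile $f-P_l f\in X_l$ already encodes, through Proposition \ref{rlocDiff} and Proposition \ref{rlocequi}, the reinforce property at dimension $l$ if it was critical, so the bookkeeping of reinforce conditions is consistent down the induction. Iterating from $l=l_0$ to $l=n-1$ and noting $X_{n-1}\cap(\text{nothing left})=\Frloc[Q]$ after restriction to $Q$ (using that $\R^n\setminus\overline{\Gamma}_{n-1}$ restricted to $Q$ is $\Frloc[Q]$, and $\overline{\Gamma}_{n-1}=\Gamma=\partial Q$) gives the finite product \eqref{ZerlegerFrinf}, with all summands mutually complemented since the projections $P_{l_0},\dots,P_{n-1}$ are pairwise "orthogonal" — each $P_{l'}$ annihilates the image of $P_l$ for $l'\ne l$ because traces on $\Gamma_{l'}$ of functions supported (after extension) near $\Gamma_l$ vanish, by the non-interference of faces built into the construction in the previous subsection and by \eqref{extid3}, \eqref{extid4}.

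The main obstacle I expect is the careful handling of the restriction from $\R^n\setminus\Gamma$-type spaces to $Q$ and the matching of the reinforce-condition bookkeeping with the chain of Lemmata: Definition \ref{reinforcedQ} phrases $\Frinf[Q]$ as a restriction of $\Frinf[\R^n\setminus\Gamma]$, whereas Lemmata \ref{ZerlegerCube} and \ref{ZerlegercritCube} are stated on $\R^n\setminus\overline{\Gamma}_l$, so I would need to verify that at each stage the restriction to $Q$ commutes with the splitting — this uses that $\Frloc[\R^n\setminus\overline{\Gamma}_{l-1}]$ restricted to $Q$ is again a refined localization space on $Q$ minus a boundary skeleton (by Proposition \ref{specialrloc}, as $Q$ is $E$-thick, or directly from the Whitney-based definition), and that the extension operators produce functions supported in a fixed neighbourhood of the relevant face inside $\overline{Q}$ so their restrictions behave well. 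A secondary but routine point is confirming that the parameter ranges ($1\le p<\infty$, $1\le q<\infty$, $u>s$, the definition \eqref{rldef} of $r^l$ and \eqref{l0def} of $l_0$) are precisely what is needed to apply Propositions \ref{Tra}, \ref{extwavelet}, \ref{Frlocdecomp} and Lemma \ref{reinfprop} at every dimension $l$ with $l_0\le l\le n-1$ simultaneously — in particular that $s>\frac{n-l}{p}$ and $r^l<s-\frac{n-l}{p}$ (or $=$ in the critical case with the shifted convention) hold for all such $l$, which is exactly the content of the choice of $l_0$.
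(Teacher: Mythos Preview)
Your proposal is correct and follows essentially the same route as the paper: start from an extension $f\in\Frinf[\R^n\setminus\Gamma]$, use Lemmata \ref{ZerlegerCube}/\ref{ZerlegercritCube} to land in $\Frloc[\R^n\setminus\overline{\Gamma}_{l_0-1}]$, then iteratively split via the projections $P_l=\Ext_{\Gamma_l}^{r^l,u}\circ\tr_{\Gamma_l}^{r^l}$ (Proposition \ref{Frlocdecomp}), with Lemma \ref{reinfprop} carrying the reinforce properties through, and finally restrict to $Q$. One small inaccuracy: your closing claim that the $P_l$ are pairwise ``orthogonal'' is false in one direction --- extensions from $\Gamma_l$ \emph{do} have nonzero traces on $\Gamma_{l'}$ for $l'>l$ (this is exactly why the paper builds the all-dimensional operator $\Ext_{\Gamma}^{\overline{r},u}$ recursively in the remark following the proof) --- but this does not affect your argument, since the complementation already follows from the \emph{sequential} splitting, each $P_l$ acting on the remainder in $X_{l-1}$ rather than on the original $f$.
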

\begin{Proof}
 The proof follows the steps in the proof of Theorem 6.28 in \cite{Tri08}. We use an induction argument on the dimension $l$. Let $\tilde{f} \in \Frinf[Q]$ and let $f \in \Frinf[\R^n \setminus \Gamma]$ be a continuation of $\tilde{f}$ onto $\R^n$, hence $f|{Q}=\tilde{f}$ (see Definition \ref{reinforcedQ}).
 
 First we start with dimension $l_0$. Let $l_0\geq 1$ (hence $s\leq \frac{n}{p}$). By definition of $l_0$ it holds
 \begin{align*}
 0<s-\frac{n-l_0}{p}\leq \frac{1}{p} \Rightarrow  \frac{n-l_0}{p}<s\leq \frac{n-(l_0-1)}{p}<\frac{n-(l_0-2)}{p}<\ldots < \frac{n}{p}. 
\end{align*}
If $s<\frac{n-(l_0-1)}{p}$, then by repeated application of Lemma \ref{ZerlegerCube} with no trace conditions (since $r^{k}=-1$ for $k=0,\ldots,l_0-1$) we have
\begin{align*}
 \FR= \Frloc[\R^n \setminus \overline{\Gamma}_0]=\ldots=\Frloc[\R^n \setminus \overline{\Gamma}_{l_0-1}]. 
\end{align*}
Here we are completely in the non-critical situation.

If $s=\frac{n-(l_0-1)}{p}$, then we repeatedly apply Lemma \ref{ZerlegerCube} and once Lemma \ref{ZerlegercritCube} for the critical case 
\begin{align*}
 r^{l_0-1}=0=s-\frac{n-(l_0-1)}{p}
\end{align*}
to get
\begin{align*}
 \{f\in \FR: f \text{ fulfils } R_{l_0-1}^{0,p}\}&=\ldots= \{f\in \Frloc[\R^n \setminus\overline{\Gamma}_{l_0-2}]: f \text{ fulfils } R_{l_0-1}^{0,p}\}\\
 &=\Frloc[\R^n \setminus \overline{\Gamma}_{l_0-1}].
\end{align*}
Here are also no trace conditions necessary.
Packing both cases together, we obtain
\begin{align*}
 \Frinf[\R^n \setminus \Gamma] \subset \Frloc[\R^n \setminus \overline{\Gamma}_{l_0-1}]
\end{align*}
since it is possible that $f \in \Frinf[\R^n \setminus \Gamma]$ may fulfil more reinforce properties $R_l^{r,p}$ for larger dimensions $l$.
Trivially, this inclusion is also true for $l_0=0$ with $\FR$ instead of $\Frloc[\R^n \setminus \overline{\Gamma}_{l_0-1}]$.

The crucial idea is now the decomposition
\begin{align*}
 f=f_1+f_2=\left(f- \left(\Ext_{\Gamma_{l_0}}^{r^{l_0},u} \circ \tr_{\Gamma_{l_0}}^{r^{l_0}}\right) f\right)+\left( \Ext_{\Gamma_{l_0}}^{r^{l_0},u} \circ \tr_{\Gamma_{l_0}}^{r^{l_0}}\right)  f.
\end{align*}
By construction of the extension operator we have 
\begin{align*} 
 \tr_{\Gamma_{l_0}}^{r^{l_0}} f_1= 0.
\end{align*}
Furthermore, if $f$ fulfils a reinforce property $R_{l}^{r^{l}+1,p}$ (occuring in the definition of $\Frinf[\R^n \setminus \Gamma]$) for some $l \in \N$ with $l_0\leq l<n$, then also $f_1=f-f_2$ fulfils this reinforce property $R_{l}^{r^{l}+1,p}$ by Lemma \ref{reinfprop} - here we now have to consider $r^l+1$ instead of $r^l$ having in mind the definition in \eqref{rldef}. 

In particular we have $f_1 \in \Frinf[\R^n \setminus \Gamma] \subset \Frloc[\R^n \setminus \overline{\Gamma}_{l_0-1}]$ with $\tr_{\Gamma_{l_0}}^{r^{l_0}} f_1= 0$. Hence we are in the situation of the dimensional decomposition Lemma \ref{ZerlegerCube} resp.\ for the critical cases Lemma \ref{ZerlegercritCube}. This shows
\begin{align*}
 f_1 \in \Frloc[\R^n \setminus \overline{\Gamma}_{l_0}].
\end{align*}
By construction it holds 
\begin{align*}
f_2 \in 
\Ext_{\Gamma_{l_0}}^{r^{l_0},u} \prod_{\underset{|\alpha|\leq r^{l_0}}{\alpha \in \N_{l_0}^n}} F_{p,p}^{s-\frac{n-l_0}{p}-|\alpha|,\rloc}(\Gamma_{l_0}). 
\end{align*}
Putting these two observations together we arrive at
\begin{align}
\label{decomprinf}
 \Frinf[\R^n \setminus \Gamma] \hookrightarrow \Frloc[\R^n \setminus \overline{\Gamma}_{l_0}] \times \Ext_{\Gamma_{l_0}}^{r^{l_0},u} \prod_{\underset{|\alpha|\leq r^{l_0}}{\alpha \in \N_{l_0}^n}} F_{p,p}^{s-\frac{n-l_0}{p}-|\alpha|,\rloc}(\Gamma_{l_0}).
\end{align}
The operator $P_{l_0}^{r^{l_0},u}=\Ext_{\Gamma_{l_0}}^{r^{l_0},u} \circ \tr_{\Gamma_{l_0}}^{r^{l_0}}$ is a linear, bounded projection ($P^2=P$) of
\begin{align*}
 \Frloc[\R^n \setminus \overline{\Gamma}_{l_0-1}] \text{ onto } \Ext_{\Gamma_{l_0}}^{r^{l_0},u} \prod_{\underset{|\alpha|\leq r^{l_0}}{\alpha \in \N_{l_0}^n}} F_{p,p}^{s-\frac{n-l_0}{p}-|\alpha|,\rloc}(\Gamma_{l_0})
\end{align*}
with $P_{l_0}^{r^{l_0},u}f=0$ for $f \in \Frloc[\R^n \setminus \overline{\Gamma}_{l_0}]$. Hence the spaces on the right-hand side of \eqref{decomprinf} are complemented to each other, i.\,e.\ the intersection only consists of $f\equiv 0$.

Now we argue by induction: As in the first step, where we decomposed $f \in \Frloc[\R^n \setminus \overline{\Gamma}_{l_0-1}]$, we now decompose $f_1 \in \Frloc[\R^n \setminus \overline{\Gamma}_{l_0}]$ in the same way using the trace operator $\tr_{\Gamma_{l_0+1}}^{r^{l_0+1}}$ and the wavelet-friendly extension operator $\Ext_{\Gamma_{l_0+1}}^{r^{l_0+1},u}$. The reinforce properties  $R_{l}^{r^{l}+1,p}$ of order $l_0+1\leq l < n$ remain true by Lemma \ref{reinfprop}. We have
\begin{align}
\label{Zerleg2}
 f_1=f_{11}+f_{12}=\left(f_1- \left(\Ext_{\Gamma_{l_0+1}}^{r^{l_0+1},u} \circ \tr_{\Gamma_{l_0+1}}^{r^{l_0+1}}\right) f_1\right)+\left(\Ext_{\Gamma_{l_0+1}}^{r^{l_0+1},u} \circ \tr_{\Gamma_{l_0+1}}^{r^{l_0+1}}\right) f_1
\end{align}
with $f_{11} \in \Frloc[\R^n \setminus \overline{\Gamma}_{l_0}]$, $f_{11}$ fulfils reinforce properties $R_{l}^{r^{l}+1,p}$ of order $l_0+1\leq l < n$ if $f$ did and $f_{11}$ fulfils reinforce properties $R_{l}^{r^{l}+1,p}$ of order $0\leq l\leq l_0$ if we are in a critical case. The last observation follows as in the first step of the proof of Theorem \ref{Zerlegercrit} as a property of the space $\Frloc[\R^n \setminus \overline{\Gamma}_{l_0}]$. Furthermore, we have $\tr_{\Gamma_{l_0+1}}^{r^{l_0+1}} f_{11} = 0$. 

Lemmata \ref{ZerlegerCube} resp.\ \ref{ZerlegercritCube} show that $f_{11} \in \Frloc[\R^n \setminus \overline{\Gamma}_{l_0+1}]$ and hence
\begin{align*}
  \Frinf[\R^n \setminus \Gamma] \hookrightarrow \Frloc[\R^n \setminus \overline{\Gamma}_{l_0+1}] \times \prod_{l=l_0}^{l_0+1} \Ext_{\Gamma_{l}}^{r^{l},u} \prod_{\underset{|\alpha|\leq r^{l}}{\alpha \in \N_{l}^n}} F_{p,p}^{s-\frac{n-l}{p}-|\alpha|,\rloc}(\Gamma_{l}).
\end{align*}
As before the the spaces on the right-hand side are complemented subspaces since $P_{l_0+1}^{r^{l_0+1},u}=\Ext_{\Gamma_{l_0+1}}^{r^{l_0+1},u} \circ \tr_{\Gamma_{l_0+1}}^{r^{l_0+1}}$ is a bounded projection.

Going on in this way, in the end we arrive at dimension $l=n-1$ and, using the trivial observation $\overline{\Gamma}_{n-1}=\Gamma$, we end up at
\begin{align*}
  \Frinf[\R^n \setminus \Gamma] \hookrightarrow \Frloc[\R^n \setminus \Gamma] \times \prod_{l=l_0}^{n-1} \Ext_{\Gamma_{l}}^{r^{l},u} \prod_{\underset{|\alpha|\leq r^{l}}{\alpha \in \N_{l}^n}} F_{p,p}^{s-\frac{n-l}{p}-|\alpha|,\rloc}(\Gamma_{l})
\end{align*}
with complemented subspaces on the right-hand side.

We showed that $f \in \Frinf[\R^n \setminus \Gamma]$ belongs to the right-hand side. If we restrict $f$ and its decomposition to $Q$ (which is $\tilde{f}$), we get
\begin{align*}
 \tilde{f} \in \Frloc[Q] \times \prod_{l=l_0}^{n-1} \Ext_{\Gamma_{l}}^{r^{l},u} \prod_{\underset{|\alpha|\leq r^{l}}{\alpha \in \N_{l}^n}} F_{p,p}^{s-\frac{n-l}{p}-|\alpha|,\rloc}(\Gamma_{l}).
\end{align*}
This follows from the fact that the wavelet decomposition of $\Frloc[\R^n \setminus \Gamma]$ is divided into two totally decoupled decomposition parts - one part outside and one part inside the cube $Q$. So we have shown
\begin{align*}
 \Frinf[Q] \hookrightarrow \Frloc[Q] \times \prod_{l=l_0}^{n-1} \Ext_{\Gamma_{l}}^{r^{l},u} \prod_{\underset{|\alpha|\leq r^{l}}{\alpha \in \N_{l}^n}} F_{p,p}^{s-\frac{n-l}{p}-|\alpha|,\rloc}(\Gamma_{l})
\end{align*}
with complemented subspaces on the right-hand side. 

At the end of the proof we want to show that our constructed spaces on the right-hand side belong to $\Frinf[Q]$. We have $\Frloc[Q] \subset \Frinf[Q]$: On the one hand side we always have $\Frloc[Q] \subset \FO[Q]$. On the other hand side $f \in \Frloc[Q]$ always fulfils reinforce properties $R_l^{r,p}$ for every dimension $0\leq l<n$ for which they are necessary, for an argument see the first step of the proof of Theorem \ref{Zerlegercrit}. 

Furthermore, as we decomposed $f=f_1+f_2$ we showed that $f_1$ (and hence also $f_2$) fulfil reinforce properties $R_{l}^{r^l+1,p}$ for all $l \in \N_0$ with $0\leq l\leq n-1$ for which $f$ did. Since by Proposition \ref{Frlocdecomp} the restriction of the extension operators $\Ext_{\Gamma_{l}}^{r^{l},u}$ to the cube $Q$ always maps into at least $\FO[Q]$, we have $f_1,f_2 \in \Frinf[Q]$. If we now go on by induction as in the construction before, we can show that all elements of the decomposition of $f$ belong to $\Frinf[Q]$. These observations finish the proof and we get
\begin{align*}
 \Frinf[Q] = \Frloc[Q] \times \prod_{l=l_0}^{n-1} \Ext_{\Gamma_{l}}^{r^{l},u} \prod_{\underset{|\alpha|\leq r^{l}}{\alpha \in \N_{l}^n}} F_{p,p}^{s-\frac{n-l}{p}-|\alpha|,\rloc}(\Gamma_{l})
\end{align*}
with complemented subspaces on the right-hand side. 
\end{Proof}
\begin{Remark}
 We slightly modified the definition of the wavelet-friendly extension operator in comparison to \cite[Theorem 6.28]{Tri08}. Triebel took the extension operators $\Ext_{\Gamma_{l}}^{r^{l},u}$ for fixed dimension $l \in \{l_0,\ldots,n-1\}$ and constructed one extension operator $\Ext_{\Gamma}^{\overline{r},u}$ altogether. But in Remark 6.27 and Theorem 6.28 in \cite{Tri08} there were some inaccuracies emerging from the interplay of the traces and extension operators at different dimensions $l$. 
 
 Now we want to give a construction of an extension operator $\Ext_{\Gamma}^{\overline{r},u}$ for all dimensions $l \in \{l_0,\ldots,n-1\}$ at once - here $r=\{r^{l_0},\ldots,r^{n-1}\}$: Let
 \begin{align*}
  \{g_{l,j,\alpha}: l_0\leq l \leq n-1, j \in \{0,\ldots,n_l\}, \alpha \in \N_{l,j}^n, |\alpha|\leq r \} \in  \prod_{l=l_0}^{n-1} \prod_{\underset{|\alpha|\leq r^{l}}{\alpha \in \N_{l}^n}} F_{p,p}^{s-\frac{n-l}{p}-|\alpha|,\rloc}(\Gamma_{l})   
 \end{align*}
 be given. We want to have the typical identity property of the extension operator, namely
 \begin{multline*}
  \left(\tr_{\Gamma_l}^{r^l} \circ \Ext_{\Gamma}^{\overline{r},u}\right)  \{g_{l,j,\alpha}: l_0\leq l \leq n-1, j \in \{0,\ldots,n_l\}, \alpha \in \N_{l,j}^n, |\alpha|\leq r \} \\ 
  =   \{g_{l,j,\alpha}: j \in \{0,\ldots,n_l\}, \alpha \in \N_{l,j}^n, |\alpha|\leq r \}
 \end{multline*}
for every $l \in \{l_0,\ldots,n-1\}$.
 
 At first, by the construction of the extension operator $\Ext_{\Gamma_{l}}^{r^{l},u}$ and its mapping properties - see Proposition \ref{Frlocdecomp} - we have
 \begin{align*}
  \left(\tr_{\Gamma_{l}}^{r^{l}} \circ \Ext_{\Gamma_{l_1}}^{r^{l_1},u}\right)  \{g_{l_1,j,\alpha}: j \in \{0,\ldots,n_{l}\}, \alpha \in \N_{l_1,j}^n, |\alpha|\leq r \} = 0
 \end{align*}
for $l_0\leq l<l_1\leq n-1$. This means that higher dimensional extensions do not give traces at boundaries $\Gamma_l$ of smaller dimension. But extension operators of smaller dimension will influence the traces at boundaries of higher dimension. 

The construction of the all-dimensional extension operator is adapted by the decomposition in the proof of Theorem \ref{Zerlegerwav} and goes as follows: We start with the extension at the boundary $\Gamma_{l_0}$ of lowest dimension
\begin{align*}
 f_{l_0}= \Ext_{\Gamma_{l_0}}^{r^{l_0},u} \{g_{l_0,j,\alpha}: j \in \{0,\ldots,n_{l_0}\}, \alpha \in \N_{l_0,j}^n, |\alpha|\leq r \} \in \Frinf[Q].
\end{align*}
The observation $f_{l_0} \in \Frinf[Q]$ follows from Lemma \ref{reinfprop} and the assumption $g_{l_0,j,\alpha} \in F_{p,p}^{s-\frac{n-l_0}{p}-|\alpha|,\rloc}(\Gamma_{l_0})$ - one can use the same arguments as in the proof of Theorem \ref{Zerlegerwav}, namely directly after \eqref{Zerleg2}. We have
\begin{align*}
 \tr_{\Gamma_{l_0}}^{r^{l_0}} f_{l_0}= \{g_{l_0,j,\alpha}: j \in \{0,\ldots,n_{l_0}\}, \alpha \in \N_{l_0,j}^n, |\alpha|\leq r \}. 
\end{align*}
In the second step we consider the extensions at the boundary $\Gamma_{l_0+1}$. But now we have to be careful about the influence of $f_{l_0}$ at $\Gamma_{l_0+1}$. We construct
\begin{align*}
 f_{l_0+1}&= \Ext_{\Gamma_{l_0+1}}^{r^{l_0+1},u} \{g_{l_0+1,j,\alpha}: j \in \{0,\ldots,n_{l_0+1}\}, \alpha \in \N_{l_0+1,j}^n, |\alpha|\leq r \} \\
   & - \left( \Ext_{\Gamma_{l_0+1}}^{r^{l_0+1},u} \circ  \tr_{\Gamma_{l_0+1}}^{r^{l_0+1}}\right) f_{l_0}.
\end{align*}
Since the extension operator $\Ext_{\Gamma_{l_0+1}}^{r^{l_0+1},u}$ at the boundary $\Gamma_{l_0+1}$ has no influence on the trace $\tr_{\Gamma_{l_0}}^{r^{l_0}}$ at the boundary $\Gamma_{l_0}$ as mentioned before, we get
\begin{align*}
 \tr_{\Gamma_{l_0}}^{r^{l_0}} \left( f_{l_0} + f_{l_0+1} \right) =  \tr_{\Gamma_{l_0}}^{r^{l_0}} f_0= \{g_{l_0,j,\alpha}: j \in \{0,\ldots,n_{l_0}\}, \alpha \in \N_{l_0,j}^n, |\alpha|\leq r \}
\end{align*}
and
\begin{align*}
 \tr_{\Gamma_{l_0+1}}^{r^{l_0+1}} \left( f_{l_0} + f_{l_1} \right) &= \{g_{l_0+1,j,\alpha}: j \in \{0,\ldots,n_{l_0+1}\}, \alpha \in \N_{l_0+1,j}^n, |\alpha|\leq r \} \\
 &- \left( \tr_{\Gamma_{l_0+1}}^{r^{l_0+1}} \circ \Ext_{\Gamma_{l_0+1}}^{r^{l_0+1},u} \circ  \tr_{\Gamma_{l_0+1}}^{r^{l_0+1}}\right) f_{l_0} \\
 &+ \tr_{\Gamma_{l_0+1}}^{r^{l_0+1}}  f_{l_0} \\
 &= \{g_{l_0+1,j,\alpha}: j \in \{0,\ldots,n_{l_0+1}\}, \alpha \in \N_{l_0+1,j}^n, |\alpha|\leq r \}.
\end{align*}
The rest of the construction can be done by inductively. At the end of the construction we get a function $f=f_{l_0}+\ldots + f_{n-1} \in \Frinf[Q]$ with 
\begin{align*}
 \tr_{\Gamma_{l}}^{r^{l}} f =  \tr_{\Gamma_{l}}^{r^{l}} \left(f_{l_0}+\ldots+f_{l} \right)
  = \{g_{l,j,\alpha}: j \in \{0,\ldots,n_{l}\}, \alpha \in \N_{l,j}^n, |\alpha|\leq r \}.
\end{align*}
Hence we have constructed a (linear and bounded) extension operator $\Ext_{\Gamma}^{\overline{r},u}$ mapping from
\begin{align*}
 \prod_{l=l_0}^{n-1} \prod_{\underset{|\alpha|\leq r^{l}}{\alpha \in \N_{l}^n}} F_{p,p}^{s-\frac{n-l}{p}-|\alpha|,\rloc}(\Gamma_{l}) \text{ 
into } \Frinf[Q]
\end{align*} 
with 
\begin{align*}
 (\tr_{\Gamma_{l_0}}^{r^{l_0}},\ldots,\tr_{\Gamma_{n-1}}^{r^{n-1}}) \circ \Ext_{\Gamma}^{\overline{r},u}= \text{id}, \text{ identity on }  \prod_{l=l_0}^{n-1} \prod_{\underset{|\alpha|\leq r^{l}}{\alpha \in \N_{l}^n}} F_{p,p}^{s-\frac{n-l}{p}-|\alpha|,\rloc}(\Gamma_{l}).   
\end{align*}
Looking directly into the construction in the proof of Theorem \ref{Zerlegerwav}, we can reformulate this theorem with the newly constructed extension operator:
\end{Remark}
\begin{Theorem}
 Let $1\leq p <\infty$, $1\leq q<\infty$ and $0<s<u \in \N$. Let $n \in \N$, $l_0 \in \N_0$ with $0\leq l_0\leq n$ defined as in \eqref{l0def}, $r^l$ for $l_0\leq l\leq n-1$ defined as in \eqref{rldef} and $\overline{r}=\{r^{l_0},\ldots,r^{n-1}\}$.  Then it holds
\begin{align*}
\Frinf[Q]=\Frloc[Q] \times \Ext_{\Gamma}^{\overline{r},u} \prod_{l=l_0}^{n-1}  \prod_{\underset{|\alpha|\leq r^{l}}{\alpha \in \N_{l}^n}} F_{p,p}^{s-\frac{n-l}{p}-|\alpha|,\rloc}(\Gamma_{l}).
\end{align*}
(complemented subspaces)
\end{Theorem}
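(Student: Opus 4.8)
The plan is to deduce this theorem directly from the already-established Theorem \ref{Zerlegerwav} together with the construction of the all-dimensional extension operator $\Ext_{\Gamma}^{\overline{r},u}$ carried out in the preceding remark. Recall that Theorem \ref{Zerlegerwav} gives the decomposition
\begin{align*}
\Frinf[Q]=\Frloc[Q] \times \prod_{l=l_0}^{n-1} \Ext_{\Gamma_{l}}^{r^{l},u} \prod_{\underset{|\alpha|\leq r^{l}}{\alpha \in \N_{l}^n}} F_{p,p}^{s-\frac{n-l}{p}-|\alpha|,\rloc}(\Gamma_{l})
\end{align*}
into complemented subspaces, where the individual extension operators $\Ext_{\Gamma_l}^{r^l,u}$ are used face-by-face. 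The only thing that changes here is that one replaces the product of the individual extension operators by the single operator $\Ext_{\Gamma}^{\overline{r},u}$ built in the remark, which maps $\prod_{l=l_0}^{n-1}  \prod_{|\alpha|\leq r^{l},\alpha \in \N_{l}^n} F_{p,p}^{s-\frac{n-l}{p}-|\alpha|,\rloc}(\Gamma_{l})$ into $\Frinf[Q]$ and satisfies $(\tr_{\Gamma_{l_0}}^{r^{l_0}},\ldots,\tr_{\Gamma_{n-1}}^{r^{n-1}}) \circ \Ext_{\Gamma}^{\overline{r},u}= \text{id}$.

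First I would argue that the two images coincide as subspaces of $\Frinf[Q]$. Both
\begin{align*}
\prod_{l=l_0}^{n-1} \Ext_{\Gamma_{l}}^{r^{l},u} \prod_{\underset{|\alpha|\leq r^{l}}{\alpha \in \N_{l}^n}} F_{p,p}^{s-\frac{n-l}{p}-|\alpha|,\rloc}(\Gamma_{l})
\quad\text{and}\quad
\Ext_{\Gamma}^{\overline{r},u} \prod_{l=l_0}^{n-1}  \prod_{\underset{|\alpha|\leq r^{l}}{\alpha \in \N_{l}^n}} F_{p,p}^{s-\frac{n-l}{p}-|\alpha|,\rloc}(\Gamma_{l})
\end{align*}
are exactly the set of all finite sums $f_{l_0}+\ldots+f_{n-1}$ arising in the inductive construction: this is precisely how $\Ext_\Gamma^{\overline r,u}$ was defined in the remark (as $f=f_{l_0}+\ldots+f_{n-1}$ with $f_l$ a corrected single-face extension), and it is also the form of the decomposition produced in the proof of Theorem \ref{Zerlegerwav} (after \eqref{Zerleg2}). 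The complementarity statement transfers verbatim: $P_{l}^{r^{l},u}=\Ext_{\Gamma_{l}}^{r^{l},u} \circ \tr_{\Gamma_{l}}^{r^{l}}$ remain bounded projections, and the composed map built from $\Ext_\Gamma^{\overline r,u}$ and the trace tuple is a bounded projection onto the second factor annihilating $\Frloc[Q]$, so the intersection of the two factors is trivial.

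Concretely the steps are: (1) quote Theorem \ref{Zerlegerwav} to get the claimed direct-sum decomposition with the face-wise extension operators and complemented summands; (2) invoke the construction of $\Ext_\Gamma^{\overline r,u}$ from the remark, recording its boundedness, its mapping into $\Frinf[Q]$, and the identity property $(\tr_{\Gamma_{l_0}}^{r^{l_0}},\ldots,\tr_{\Gamma_{n-1}}^{r^{n-1}}) \circ \Ext_{\Gamma}^{\overline{r},u}= \text{id}$; (3) observe that the range of $\Ext_\Gamma^{\overline r,u}$ equals $\prod_{l} \Ext_{\Gamma_l}^{r^l,u}(\cdots)$ since both are the span of the same building blocks $\Phi_m^{j,\alpha}$ (for $l_0\le l\le n-1$), so the two versions of the decomposition have identical second factor; (4) conclude that $\Ext_\Gamma^{\overline r,u}(\cdots)$ is complemented in $\Frinf[Q]$ with complement $\Frloc[Q]$, using that $\text{id}-\Ext_\Gamma^{\overline r,u}\circ(\tr_{\Gamma_{l_0}}^{r^{l_0}},\ldots,\tr_{\Gamma_{n-1}}^{r^{n-1}})$ is the complementary projection, which lands in $\Frloc[Q]$ by Lemmata \ref{ZerlegerCube} and \ref{ZerlegercritCube} exactly as in the proof of Theorem \ref{Zerlegerwav}. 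The main (and essentially only) obstacle is bookkeeping: one must be careful that the "correction terms" in the definition of $\Ext_\Gamma^{\overline r,u}$ do not enlarge the range beyond the span of the $\Phi_m^{j,\alpha}$ and do not destroy the reinforce properties $R_l^{r^l,p}$ — but this was already checked in the remark via Lemma \ref{reinfprop} and the non-interference of traces/extensions at faces of different dimension. Hence no genuinely new argument is needed beyond reassembling what has been proved.
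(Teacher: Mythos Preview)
Your proposal is correct and follows essentially the same route as the paper. The paper's proof is even shorter: it skips your step (3) entirely and goes straight to the decomposition $f=f_1+f_2$ with $f_2=\Ext_{\Gamma}^{\overline{r},u}\circ(\tr_{\Gamma_{l_0}}^{r^{l_0}},\ldots,\tr_{\Gamma_{n-1}}^{r^{n-1}})f$, then observes that this is literally the same decomposition produced in Theorem \ref{Zerlegerwav}, so $\tr_{\Gamma_l}^{r^l}f_1=0$ for all $l$ and $f_1\in\Frloc[Q]$ by the inductive use of Lemmata \ref{ZerlegerCube} and \ref{ZerlegercritCube}. Your step (3) about matching ranges via the building blocks $\Phi_m^{j,\alpha}$ is not wrong, but it is redundant once you have step (4), which is precisely the paper's argument.
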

\begin{Proof}
 The proof is nearly the same as the proof of Theorem \ref{Zerlegerwav}, even simpler. But now we directly start with the decomposition
 \begin{align*}
  f=f_1+f_2 = \left(f - \Ext_{\Gamma}^{\overline{r},u} \circ (\tr_{\Gamma_{l_0}}^{r^{l_0}},\ldots,\tr_{\Gamma_{n-1}}^{r^{n-1}}) \right) + \Ext_{\Gamma}^{\overline{r},u} \circ (\tr_{\Gamma_{l_0}}^{r^{l_0}},\ldots,\tr_{\Gamma_{n-1}}^{r^{n-1}}).
 \end{align*}
 By construction this is the final decomposition and it is exactly the same decomposition of $f$ as in Theorem \ref{Zerlegerwav}: We have $\tr_{\Gamma_{l}}^{r^{l}} f_1 =0$ for all $l \in \{l_0,\ldots,n-1\}$. Hence, arguing as there (inductively) we have $f_1 \in \Frloc[Q]$ and $f_2$ originates from the trace spaces $F_{p,p}^{s-\frac{n-l}{p}-|\alpha|,\rloc}(\Gamma_{l})$. Furthermore, $f_2$ and hence also $f_1$ belong to $\Frinf[Q]$. These two observations finish the proof.
\end{Proof}

\section{Riesz bases for $\Frinf[Q]$}
\subsection{The main theorem for Riesz bases on $\Frinf[Q]$}
Now we are in the same situation as in \cite[Section 6.1.6]{Tri08}. We have decomposed $\Frinf[Q]$ in Theorem \ref{Zerlegerwav} into spaces having orthonormal $u$-wavelet bases (by Definition \ref{u-basis}) which are at the same time $u$-Riesz bases by Definition \ref{u-Riesz}:
\begin{align*}
\Frinf[Q]=\Frloc[Q] \times \prod_{l=l_0}^{n-1} \Ext_{\Gamma_{l}}^{r^{l},u} \prod_{\underset{|\alpha|\leq r^{l}}{\alpha \in \N_{l}^n}} F_{p,p}^{s-\frac{n-l}{p}-|\alpha|,\rloc}(\Gamma_{l}).
\end{align*}
The existence of a wavelet basis for the spaces on the right-hand side follows from Theorem \ref{rlocwavelet} where we investigated the spaces $\Frloc$ for arbitrary domains $\Om \subset \R^n$. The wavelet-friendly extension operator in the decomposition transfers the wavelet blocks of the spaces $F_{p,p}^{\sigma,\rloc}(\Gamma_{l})$ on the boundary of dimension $l$ to functions on the cube $Q$ which behave like atoms and are totally covered by the Definition \ref{u-waveletr} of oscillating $u$-wavelet systems. This shows that we can construct an oscillating $u$-Riesz basis (by Definition \ref{u-Riesz}) for $\Frinf[Q]$ which is a $u$-wavelet system on $Q$.

\begin{Theorem}
\label{waveletdecomp}
 Let $Q$ be the unit cube in $\R^n$ for $n\geq 2$. Let 
 \begin{align*}
  s>0, \quad 1\leq p <\infty \quad \text{and} \quad 1\leq q<\infty.
 \end{align*}
 Then $\Frinf[Q]$ has an oscillating $u$-Riesz basis for any $u \in \N_0$ with $u>s$. The related sequence space is $\fO[\overline{Q}]$ introduced in Definition \ref{extsequence}. 
\end{Theorem}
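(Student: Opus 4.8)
The plan is to assemble the Riesz basis from the pieces provided by the decomposition theorem (Theorem \ref{Zerlegerwav}, equivalently its reformulation with $\Ext_\Gamma^{\overline r,u}$). Recall that $u>s$ together with $1\le p<\infty$, $1\le q<\infty$ gives $u>\max(s,\sigma_{p,q}-s)$ since $\sigma_{p,q}=0$ for $q\ge 1$, so Theorem \ref{rlocwavelet} (and Proposition \ref{waveletre} for the $\R^l$-situation) applies and produces, for each factor appearing on the right-hand side, an interior orthonormal $u$-wavelet basis which is simultaneously an interior $u$-Riesz basis for that factor with the corresponding interior sequence space $f^{\bullet}_{p,q}(\Z^{\bullet})$. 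Concretely: $\Frloc[Q]$ has such a basis with sequence space $\fO[\Z^Q]$ by Theorem \ref{rlocwavelet}; each $F_{p,p}^{s-\frac{n-l}{p}-|\alpha|,\rloc}(\Gamma_l)$ has one with sequence space $f_{p,p}^{s-\frac{n-l}{p}-|\alpha|}(\Z^{\Gamma_l})$, again by Theorem \ref{rlocwavelet} applied on the lower-dimensional cube $\Gamma_l$ (the case $l=0$ being the trivial point-evaluation basis discussed in the remark preceding the theorem).

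The next step is to push the boundary wavelets into the cube via the wavelet-friendly extension operator. For a fixed face dimension $l$, multi-index $\alpha\in\N_l^n$ with $|\alpha|\le r^l$, and boundary wavelet $\Phi_m^j$ on $\Gamma_{l,j}$, the building block $\Phi_m^{j,\alpha}(x)=2^{j|\alpha|}z^{\alpha}\chi(2^j z)\,2^{(n-l)j/2}\Phi_m^j(y)$ (up to the normalizing constant $\frac{1}{\alpha!}2^{-j|\alpha|}2^{-jn/2}$ in \eqref{Ext}) has support in a ball of radius $\sim 2^{-j}$ around a point $x_m^{j,\alpha}\in Q$, satisfies $|D^\beta\Phi_m^{j,\alpha}(x)|\lesssim 2^{jn/2+j|\beta|}$ for $|\beta|\le u$ because $\chi\in D(\R^{n-l})$ and $\Phi_m^j\in C^u$, and inherits moment conditions from the many moment conditions we imposed on $\chi$ in \eqref{momenter} (with $L$ chosen large in dependence of $u$). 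Hence the totality $\{\,2^{-jn/2}$-normalized interior wavelets of $\Frloc[Q]\,\}\cup\{\,$extended boundary blocks $\Phi_m^{j,\alpha}$, $l_0\le l\le n-1$, $|\alpha|\le r^l$, $j,m\,\}$ is an oscillating $u$-wavelet system on $Q$ in the sense of Definition \ref{u-wavelet}, adapted to a point collection $\Z^{\overline Q}$ as in Definition \ref{extsequence}: the interior wavelets contribute points with $\dist(\cdot,\Gamma)\gtrsim 2^{-j}$, and the extended blocks contribute points near (or, for $l=0$, on) $\Gamma$, so balls on the boundary are genuinely needed — this is exactly why the sequence space is $\fO[\overline Q]$ rather than $\fO[\Z^Q]$. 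The separation condition $|x_r^j-x_{r'}^j|\ge c_1 2^{-j}$ holds after, if necessary, splitting each level into finitely many subfamilies and rescaling constants, using the finite-overlap properties of the Whitney-type construction.

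It remains to check the three Riesz-basis properties of Definition \ref{u-Riesz} for this system on $\Frinf[Q]$. Property 1 (characterization by series $f=\sum\lambda_r^j(f)2^{-jn/2}\Phi_r^j$ with $\lambda\in\fO[\overline Q]$, unconditional convergence) follows by transporting, through the isomorphism of Theorem \ref{Zerlegerwav}, the corresponding property of each factor: $f\in\Frinf[Q]$ decomposes uniquely as $f_1+\sum_{l}\Ext_{\Gamma_l}^{r^l,u}(\dots)$ with $f_1\in\Frloc[Q]$, each summand is expanded in its own Riesz basis, and the sequence norms combine — the combined sequence norm is equivalent to $\|\lambda|\fO[\overline Q]\|$ precisely because for the boundary parts the $f_{p,q}$-norm over the ``flat'' collection on $\Gamma_l$ matches, up to the shift $s\mapsto s-\frac{n-l}{p}-|\alpha|$ and the factor $2^{-j|\alpha|}$, the contribution of the extended points sitting at distance $\sim 2^{-j}$ from $\Gamma_l$ inside $Q$; this is the same $q$-independence-type computation done in the proof of Proposition \ref{Frlocdecomp} and in Proposition \ref{rlocequi}, going back to the Frazier–Jawerth argument. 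Property 2 (uniqueness and continuity of the coefficient functionals) and Property 3 (the coefficient map is an isomorphism onto $\fO[\overline Q]$) are then immediate from the fact that the decomposition of Theorem \ref{Zerlegerwav} is into complemented subspaces, each equipped with a coefficient isomorphism, together with the projections $P_l^{r^l,u}=\Ext_{\Gamma_l}^{r^l,u}\circ\tr_{\Gamma_l}^{r^l}$ being bounded.

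The main obstacle I expect is bookkeeping rather than a deep new estimate: one has to verify carefully that the union of all these wavelet families, after the dimension-by-dimension extension, really is a single oscillating $u$-wavelet system with a well-defined admissible point collection $\Z^{\overline Q}$ and matching sequence space, i.e. that the normalizations $2^{-jn/2}$, the extra factors $2^{-j|\alpha|}$ in \eqref{Ext}, and the shifts in the smoothness indices $s-\frac{n-l}{p}-|\alpha|$ conspire so that the product of the $f_{p,q}^{s-\frac{n-l}{p}-|\alpha|}(\Z^{\Gamma_l})$-norms is genuinely equivalent to $\|\lambda|\fO[\overline Q]\|$ on the corresponding index subset, uniformly, and that distinct faces/multi-indices produce disjoint (or only boundedly overlapping) index sets thanks to the non-interference of the extension operators established around \eqref{extid3}–\eqref{extid4} and in Lemma \ref{reinfprop}. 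Once this matching of sequence spaces is pinned down, the theorem follows by combining Theorem \ref{Zerlegerwav}, Theorem \ref{rlocwavelet} and Proposition \ref{waveletre}, with Definitions \ref{u-wavelet}, \ref{u-Riesz} and \ref{extsequence} supplying the language.
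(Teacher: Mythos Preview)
Your proposal is correct and follows essentially the same approach as the paper: the paper's proof simply invokes the decomposition \eqref{ZerlegerFrinf} from Theorem \ref{Zerlegerwav}, notes that each factor on the right-hand side has an interior $u$-Riesz basis (via Theorem \ref{rlocwavelet}), and that the wavelet-friendly extension operator carries boundary wavelets to boundary components of an oscillating $u$-wavelet system on $Q$, yielding the sequence space $\fO[\overline{Q}]$. Your write-up spells out in more detail the bookkeeping (normalizations, moment conditions from $\chi$, matching of sequence-space norms) that the paper delegates to the reference \cite[Theorem 6.30]{Tri08}, but the route is the same.
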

\begin{proof}
 The proof is the same as the proof of Theorem 6.30 in \cite{Tri08} now with the spaces $\Frinf[Q]$ instead of $\AO[Q]$. The essential idea is decomposition \eqref{ZerlegerFrinf} - the spaces on the right-hand side admit interior $u$-Riesz bases. The wavelet bases on the boundary spaces $F_{p,p}^{\sigma,\rloc}(\Gamma_{l})$ are extended to boundary components of a $u$-wavelet system by the wavelet-friendly extension operator. Hence we now need to use the sequence space $\fO[\overline{\Om}]$ incorporating the boundary values.
\end{proof}

\subsection{Further remarks and generalizations}
\begin{Remark}
 In the decomposition Theorem \ref{Zerlegerwav} and the $u$-Riesz basis Theorem \ref{waveletdecomp} we assumed $1\leq q< \infty$. The exclusion of $q=\infty$ is natural. But we can include the values $q<1$ under additional conditions. The first ideas in this direction are collected in Remark \ref{ZerlegerRem}. In the proof of Theorem \ref{Zerlegerwav} we essentially used Proposition \ref{Frlocdecomp}, Lemma \ref{reinfprop} and Lemma \ref{ZerlegerCube} resp.\ Lemma \ref{ZerlegercritCube}. Lemma $\ref{reinfprop}$ does not depend on $q$ since the reinforce properties $R_l^{r,p}$ do not depend on $q$. Taking a short look into the proof of Proposition \ref{Frlocdecomp} it also holds true if $0<q<1$ with the additional property $s>\sigma_{p,q}$. Lemma \ref{reinfprop} and Lemma \ref{ZerlegerCube} resp.\ Lemma \ref{ZerlegercritCube} mainly depend on the ideas of Theorem \ref{Zerleger} resp.\ Theorem \ref{Zerlegercrit}. In Remark \ref{ZerlegerRem} we discussed that these two theorems are also valid for $0<q<1$ with 
$s>\
\sigma_{p,q}$. 
 
 Hence also Theorem \ref{Zerlegerwav} and Theorem \ref{waveletdecomp} are valid for $0<q<1$ if $s>\sigma_{p,q}$: Every space on the right-hand side of the decomposition of $\Frinf[Q]$ in \eqref{ZerlegerFrinf} has a wavelet basis by Theorem \ref{rlocwavelet} if $0<q<\infty$ and $s>\sigma_{p,q}$.
\end{Remark}
\begin{Corollary}
 Let $Q$ be the unit cube in $\R^n$ for $n\geq 2$. Let 
 \begin{align*}
  1\leq p <\infty, \quad 0<q<\infty \quad \text{and} \quad s>\sigma_{p,q}.
 \end{align*}
 Then $\Frinf[Q]$ has an oscillating $u$-Riesz basis for any $u \in \N_0$ with $u>s$. The related sequence space is $\fO[\overline{Q}]$ introduced in Definition \ref{extsequence}. 
\end{Corollary}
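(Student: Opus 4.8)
The plan is to run the proof of Theorem \ref{waveletdecomp} again, checking at every step that the restriction $1\le q<\infty$ may be relaxed to $0<q<\infty$ as soon as the hypothesis $s>\sigma_{p,q}$ is imposed. The backbone of that proof is the decomposition \eqref{ZerlegerFrinf},
\begin{align*}
\Frinf[Q]=\Frloc[Q] \times \prod_{l=l_0}^{n-1} \Ext_{\Gamma_{l}}^{r^{l},u} \prod_{\underset{|\alpha|\leq r^{l}}{\alpha \in \N_{l}^n}} F_{p,p}^{s-\frac{n-l}{p}-|\alpha|,\rloc}(\Gamma_{l}),
\end{align*}
together with the fact that each factor on the right-hand side carries an interior $u$-Riesz wavelet basis: the wavelet-friendly extension operators $\Ext_{\Gamma_l}^{r^l,u}$ turn the face wavelets into boundary components of an oscillating $u$-wavelet system on $Q$, while the interior wavelets of $\Frloc[Q]$ supply the interior components. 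So the first thing I would do is establish that \eqref{ZerlegerFrinf} remains valid for $0<q<1$ provided $s>\sigma_{p,q}$.

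For this I would revisit the ingredients of Theorem \ref{Zerlegerwav}, namely Proposition \ref{Frlocdecomp}, Lemma \ref{reinfprop} and the dimensional decomposition Lemmata \ref{ZerlegerCube} and \ref{ZerlegercritCube}, along the lines already sketched in Remark \ref{ZerlegerRem}. Lemma \ref{reinfprop} is insensitive to $q$ because the reinforce properties $R_l^{r,p}$ do not involve $q$ at all. Proposition \ref{Frlocdecomp} carries over verbatim once $s>\sigma_{p,q}$: then $\Frloc$ is defined, the Fubini property (Proposition \ref{Fubini}) is available, atoms require no moment conditions, and the $q$-independence of the trace spaces (the argument going back to \cite{FrJ90}) is a statement about the sequence spaces $f_{p,p}^{\sigma}(\Z^{\Gamma_l})$ only. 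For Lemmata \ref{ZerlegerCube} and \ref{ZerlegercritCube} I would use, as in Remark \ref{ZerlegerRem}, that Theorems \ref{Zerleger} and \ref{Zerlegercrit} themselves extend to $0<q<1$ under $s>\sigma_{p,q}$: in the critical case one avoids the direct use of $F_{p,q}^{\frac{n-l}{p},\rloc}$ by differentiating the wavelet decomposition of the relevant refined localization space and arguing as in the proof of Proposition \ref{rlocequi}, and for $0<q<1$ one may in addition invoke the monotonicity $F_{p,q_1}^{s}(\R^n)\hookrightarrow F_{p,q_2}^{s}(\R^n)$ for $q_1\le q_2$.

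Once \eqref{ZerlegerFrinf} is secured for all $0<q<\infty$ with $s>\sigma_{p,q}$, the conclusion is immediate. By Theorem \ref{rlocwavelet}, applied with parameters $(p,q,s)$ on $Q$ and with parameters $(p,p,\,s-\frac{n-l}{p}-|\alpha|)$ on each face $\Gamma_l$ (note that $s-\frac{n-l}{p}-|\alpha|>0$ by the choice of $l_0$ and $r^l$ in \eqref{l0def}, \eqref{rldef}, while $\sigma_{p,p}=0$ because $p\ge1$), every factor has an orthonormal $u$-wavelet basis which is simultaneously an interior $u$-Riesz basis. Transporting the face wavelets through $\Ext_{\Gamma_l}^{r^l,u}$ produces functions on $Q$ meeting the support, derivative and substitute moment estimates of Definition \ref{u-waveletr}, hence boundary components of an oscillating $u$-wavelet system; combined with the interior wavelets of $\Frloc[Q]$ this yields an oscillating $u$-Riesz basis for $\Frinf[Q]$ for every $u\in\N_0$ with $u>s$, the natural sequence space being $\fO[\overline{Q}]$ of Definition \ref{extsequence}, which accommodates the wavelets supported near and on $\Gamma$. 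I expect the only genuine work to lie in the first two paragraphs — the careful bookkeeping to confirm that none of the auxiliary statements used in the proof of Theorem \ref{Zerlegerwav} secretly requires $q\ge1$ beyond what the assumption $s>\sigma_{p,q}$ repairs; the passage from the decomposition to the Riesz basis is then exactly as in Theorem \ref{waveletdecomp}.
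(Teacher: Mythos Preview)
Your proposal is correct and follows essentially the same route as the paper: the Corollary is presented there as an immediate consequence of the preceding Remark, which checks—exactly as you do—that Proposition \ref{Frlocdecomp}, Lemma \ref{reinfprop} and Lemmata \ref{ZerlegerCube}/\ref{ZerlegercritCube} (via Remark \ref{ZerlegerRem}) remain valid for $0<q<1$ under $s>\sigma_{p,q}$, so that Theorems \ref{Zerlegerwav} and \ref{waveletdecomp} carry over unchanged.
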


\begin{Remark}
 As in \cite[Corollary 6.31]{Tri08} one can extend the construction of a $u$-Riesz basis from spaces $\Frinf[Q]$ on a cube $Q$ to $\Frinf[P]$ on a polyhedron. The intersecting angle of two different faces must not be zero. The definition of $\Frinf[P]$ can be given analogously to the definition of $\Frinf[Q]$.
\end{Remark}

\begin{Remark}
We have derived a decomposition of $\Frinf[Q]$ into spaces which have wavelet bases in Theorem \ref{Zerlegerwav} - in the same way as in \cite[Theorem 6.28]{Tri08}. In Definition \ref{reinforcedQ} we defined $\Frinf[Q]$ a bit unnaturely - first we asked for reinforce properties $R_l^{r^l,p}$ for a continuation of $f$ onto the whole $\R^n$ such that reinforce properties were assumed from both sides of the boundary $\Gamma_l$ - from outside as well as from inside the cube. Hence we cannot decide whether $f \in \Frinf[Q]$ just by the intrinsic situation, i.\,e.\ knowing the behaviour of $f$ inside the cube $Q$.

But there is a natural way to ask for reinforce properties only inside the cube $Q$ without using the restriction of a continuation onto $\R^n$. We will give a modified definition of a reinforced space and will 	explain the relation of both.
\end{Remark}
\begin{Definition}
Let $Q$ be the unit cube and $\Gamma=\partial Q$ its boundary. As introduced at the beginning of this chapter let
\begin{align*}
 d_{l,j}(x)=\dist(x,\Gamma_{l,j}) \text{ and } Q_{l,j,\eps}:=\left\{x \in \R^n: d_{l,j}(x)<\eps\right\}.
\end{align*} 
We define
 \begin{align*}
  Q_{l,j,\eps}^* = Q_{l,j,\eps} \cap Q.
 \end{align*}
Let $n \in \N$ and $l\in \N_0$ with $l<n$. Let $1\leq p < \infty$, $0<q<\infty$ and 
\begin{align*}
s-\frac{n-l}{p}=r \in \N_0.
\end{align*}
Then $f \in \FR$ is said to fulfil the interior reinforce property $R_l^{r,p,*}$ if, and only if, 
\begin{align*}
 d_{l,j}^{-\frac{n-l}{p}} \cdot D^{\alpha} f \in L_p(Q_{l,j,\eps}^*) \text{ for all } \alpha \in \N_{l,j}^n, |\alpha|=r \text{ and } j=1,\ldots,n_{l}.
\end{align*}
 Furthermore, let $s \in \R$. Then 
\begin{multline*}
\Frinf[Q]^*:=\\\{f \in \FO[Q]: f \text{ fulfilfs } R_l^{r^l,p,*} \text{ for all } l\in \{0,\ldots,n-1\} \text{ with } r^l=s-\frac{n-l}{p} \in \N_0 \}.
\end{multline*}
\end{Definition}
\begin{Remark}
\label{NewDef}
 We now introduced $\Frinf[Q]^*$ assuming reinforce properties which are totally intrinsic (interior). This means that we only look for the decay of $f \in \Frinf[Q]^*$ at the boundary $\Gamma_l$ from the inside of the cube $Q$. 
 
 If $f \in \Frinf[Q]$, then there is by definition a continuation $\tilde{f}$ of $f$ which belongs to $\Frinf[\R^n \setminus \Gamma]$. Then $\tilde{f}$ fulfils reinforce properties at $\Gamma_l$ (if necessary) and surely also its restriction $f$ fulfils interior reinforce properties at $\Gamma_l$. Hence we have
 \begin{align*}
  \Frinf[Q] \hookrightarrow \Frinf[Q]^*.
 \end{align*}
Thus the following statement would be nice to have.
\end{Remark}
\begin{Conjecture}
\label{Conj}
 Let $Q$ be the unit cube in $\R^n$ for $n\geq 2$. Let 
 \begin{align*}
  1\leq p <\infty, \quad 0<q<\infty \quad \text{and} \quad s>\sigma_{p,q}.
 \end{align*}
 Then 
 \begin{align*}
  \Frinf[Q] = \Frinf[Q]^*.
 \end{align*}
\end{Conjecture}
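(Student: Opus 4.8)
The plan is to show that $\Frinf[Q]^*$ admits exactly the same decomposition into complemented subspaces as $\Frinf[Q]$ does in Theorem \ref{Zerlegerwav}, namely
\begin{align*}
\Frinf[Q]^* = \Frloc[Q] \times \prod_{l=l_0}^{n-1} \Ext_{\Gamma_{l}}^{r^{l},u} \prod_{\underset{|\alpha|\leq r^{l}}{\alpha \in \N_{l}^n}} F_{p,p}^{s-\frac{n-l}{p}-|\alpha|,\rloc}(\Gamma_{l}).
\end{align*}
Since $\Frinf[Q] \hookrightarrow \Frinf[Q]^*$ by Remark \ref{NewDef}, and the right-hand side above is, by Theorem \ref{Zerlegerwav}, precisely $\Frinf[Q]$, the two spaces must then coincide. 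Concretely, it suffices to establish the inclusion $\Frinf[Q]^* \hookrightarrow \Frloc[Q] \times \prod_l \Ext_{\Gamma_l}^{r^l,u}(\cdots)$; the reverse inclusion follows because that product equals $\Frinf[Q]$ by Theorem \ref{Zerlegerwav} and $\Frinf[Q] \subset \Frinf[Q]^*$.

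For that inclusion I would run the induction from the proof of Theorem \ref{Zerlegerwav}, but entirely intrinsically on $Q$. Given $f \in \Frinf[Q]^*$, put as there
\begin{align*}
f = f_1 + f_2 := \Big( f - \big(\Ext_{\Gamma_{l_0}}^{r^{l_0},u} \circ \tr_{\Gamma_{l_0}}^{r^{l_0}}\big) f \Big) + \big(\Ext_{\Gamma_{l_0}}^{r^{l_0},u} \circ \tr_{\Gamma_{l_0}}^{r^{l_0}}\big) f ,
\end{align*}
where the $\tr_{\Gamma_l}^{r^l}$ are the interior traces of $f$, which exist for $l \ge l_0$ by Proposition \ref{Tra}. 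The key point is that $\Ext_{\Gamma_{l_0}}^{r^{l_0},u}$ produces a genuinely global function on $\R^n$ assembled from the wavelet blocks $\Phi_m^{j,\alpha}$, and by Lemma \ref{reinfprop} this global $f_2$ fulfils the two-sided reinforce properties $R_l^{r^l,p}$ at every critical dimension; hence $f_2 \in \Frinf[\R^n \setminus \Gamma]$ and its restriction lies in $\Frinf[Q]$. Consequently $f_1 = f - f_2 \in \Frinf[Q]^*$ again, now with $\tr_{\Gamma_{l_0}}^{r^{l_0}} f_1 = 0$ and still carrying all interior reinforce properties. Iterating over $l = l_0, \dots, n-1$ produces at the last step a function $f^{(n-1)} \in \Frinf[Q]^*$ with $\tr_{\Gamma_l}^{r^l} f^{(n-1)} = 0$ for every $l$, together with finitely many extension pieces, all of which lie in $\Frinf[Q]$. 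It therefore remains to prove the $Q$-intrinsic statement
\begin{align*}
\Big\{ f \in \Frinf[Q]^* : \tr_{\Gamma_l}^{r^l} f = 0 \text{ for } l_0 \le l \le n-1 \Big\} \hookrightarrow \Frloc[Q] ,
\end{align*}
which is exactly a one-sided analogue, internal to $Q$, of the chain of Lemmas \ref{ZerlegerCube} and \ref{ZerlegercritCube}, carried out dimension by dimension.

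To prove this last step I would reprove Lemmas \ref{ZerlegerCube} and \ref{ZerlegercritCube} with the sets $\R^n \setminus \overline{\Gamma}_l$ replaced by the corresponding neighbourhoods \emph{inside} $Q$, and the two-sided property $R_l^{r,p}$ replaced by the interior one $R_l^{r,p,*}$. The $A_1/A_2$ splitting of Figure \ref{decompr} and the one-sided Hardy inequalities of Lemma \ref{HardyZerleger}, Corollary \ref{HardyZerlegerC} and Proposition \ref{Hardysubcrit} are already local in the transverse variable and were obtained by fixing the tangential coordinates and integrating along a ray issuing from the face $\Gamma_{l,j}$; near a face of $Q$ that ray stays in $Q$, so the estimates transfer. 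This upgrades the conclusion of each step to membership in the $\Frloc$-space of the relevant subdomain of $Q$, and after $n-1-l_0$ steps to $\Frloc[Q]$ (recalling $\Frloc[Q] = \Ft[Q]$ from Proposition \ref{specialrloc}). Finally, any $g \in \Frloc[Q]$ extended by zero lies in $\FR$ (since $\Frloc[Q] = \Ft[Q] = \FtBar[Q]$) and fulfils the reinforce properties from outside trivially and from inside by Proposition \ref{rlocDiff}; so $g \in \Frinf[Q]$, which both closes the reverse inclusion and pins down the $\Frloc[Q]$ factor.

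The main obstacle is precisely this passage from the two-sided situation around a hyperplane $\Gamma_l \subset \R^n$ to the one-sided situation inside the cube near the lower-dimensional faces $\Gamma_{l,j}$, where $Q$ locally looks like a wedge $\Gamma_{l,j} \times (\text{positive quadrant of } \R^{n-l})$ rather than a half-space. One has to verify (i) that the ray-based Hardy inequalities still close on such wedges — which they do, since rays from $\Gamma_{l,j}$ into $Q$ remain in $Q$ in a neighbourhood — and (ii) that the bookkeeping for the interplay of traces and extension operators at faces of different dimensions (Lemma \ref{reinfprop}, and the $A_1$-part estimate which uses the reinforce behaviour at $\overline{\Gamma}_{l-1}$) goes through with the interior reinforce conditions alone. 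Once these local checks are in place, the inductive skeleton of Theorem \ref{Zerlegerwav} runs unchanged and yields $\Frinf[Q]^* = \Frinf[Q]$.
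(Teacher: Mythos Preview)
The paper does not prove this statement: it is explicitly labelled a \emph{Conjecture}, and the remark immediately following it only sketches two possible strategies without carrying either out. Section~\ref{Wrreinf} ends with ``This problem remains open'', and Chapter~\ref{Open} lists it among the unsolved problems. There is thus no ``paper's own proof'' to compare against.

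Your proposal is exactly the first of the two strategies the paper suggests --- run the decomposition of Theorem~\ref{Zerlegerwav} intrinsically on $Q$ --- but you have not actually closed it. The phrase ``Once these local checks are in place'' marks precisely the gap. Concretely: the inductive chain in Lemmas~\ref{ZerlegerCube} and~\ref{ZerlegercritCube} runs through the spaces $\Frloc[\R^n\setminus\overline{\Gamma}_{l-1}]$, and the $A_1$-part of each step uses the Hardy inequality at $\overline{\Gamma}_{l-1}$ supplied by Proposition~\ref{rlocequi} for the \emph{previous} space in that chain. For an intrinsic version you need analogous intermediate spaces on $Q$ alone; but $Q\setminus\overline{\Gamma}_{l-1}=Q$, so the obvious candidates collapse. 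You would have to introduce and characterise something like ``$\Frloc$ on $Q$ relative to only the portion $\overline{\Gamma}_{l-1}$ of the boundary'', establish the Proposition~\ref{rlocequi}-type equivalence for it, and verify that the trace/extension bookkeeping of Proposition~\ref{Frlocdecomp} and Lemma~\ref{reinfprop} still works between these hybrid spaces. Your proposal asserts this goes through because ``rays stay in $Q$'', but near a vertex or low-dimensional edge the local model of $Q$ is a product of a cube face with a \emph{full orthant} in $\R^{n-l}$, not a half-space, and it is exactly here --- where several one-sided Hardy inequalities in different transverse directions must be combined and fed into the $A_1$-estimate of the next step --- that the argument is not written. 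The author, having built the entire framework, declined to claim this step; absent the actual verification, your proposal remains a plausible outline rather than a proof.
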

\begin{Remark}
 One idea to proof this conjecture is to show that $\Frinf[Q]^*$ admits the same decomposition as $\Frinf[Q]$ from Theorem \ref{Zerlegerwav}, refined using the all-dimensional extension operator getting 
 \begin{align*}
\Frinf[Q]=\Frloc[Q] \times \Ext_{\Gamma}^{\overline{r},u} \prod_{l=l_0}^{n-1}  \prod_{\underset{|\alpha|\leq r^{l}}{\alpha \in \N_{l}^n}} F_{p,p}^{s-\frac{n-l}{p}-|\alpha|,\rloc}(\Gamma_{l})
\end{align*}
 in \eqref{ZerlegerFrinf}. 
 
 A second possibility would be the following: Construct or use an existing (linear, bounded) extension operator 
 \begin{align*} 
  \Ext_Q: \FO[Q] \rightarrow \FR
 \end{align*}
with $\Ext_Q f|Q=f$ and show that $\Ext_Q f$ fulfils $R_l^{r^l,p}$ if $f$ fulfils $R_l^{r^l,p,*}$ (with a suitable norm estimate). This would proof the conjecture.
 
\end{Remark}

\section{An example - the reinforced function space $W_2^{1,\rinf}(Q)$}
\label{Wrreinf}
Let $Q$ be the unit cube in dimension $n\geq 2$. The most promiment example which is an exceptional space on the cube $Q$ in our sense is the Sobolev space $W_2^{1}(Q)$. By classical observations it holds
\begin{align*}
 F_{2,2}^1(Q)=W_2^{1}(Q)=\{f \in L_p(Q): \|f|W_2^1(Q)\| <\infty \}
\end{align*}
with
\begin{align*}
 \|f|W_2^1(Q)\| = \sum_{|\alpha|\leq 1} \|D^{\alpha} f |L_2(Q)\|.
\end{align*}
Here we have (in our notation)
\begin{align*}
 s-\frac{n-(n-2)}{p}= 1-\frac{2}{2}=0
\end{align*}
and
\begin{align*}
 s-\frac{n-k}{p} \notin \N_0 \text{ for } k \in \{1,3,4,\ldots,n\}.
\end{align*}
For these exceptional spaces one cannot use the method of \cite[Section 6.15]{Tri08} where one decomposes the space $\FO[Q]$ into spaces with interior wavelet basis and spaces emerging from the traces of $f$ at the boundaries $\Gamma_l$. This is observed more deeply in \cite[Remark 5.50]{Tri08} and goes back to observations in \cite{Gri85} and \cite{Gri92} at least for dimension $n=2$: Let $\Gamma=\partial \Om= I_1 \cup I_2 \cup I_3 \cup I_4$, the four sides of the cube. Then the trace space $\tr_{\Gamma} W_2^1(Q)$ is the collection of all tuples $g=(g_1,g_2,g_3,g_4)$ with
\begin{align*}
 g_{l} \in H^{\frac{1}{2}}(I_{l}), \quad l=1,2,3,4
\end{align*}
and 
\begin{align*}
 \int_0^{1/2} \frac{|g_1(t)-g_2(t)|^2}{t} \ dt <\infty, \text{ etc.\ }
\end{align*} 
Hence the traces at different faces of the same dimension interfere with each other which makes the construction procedure in \cite[Section 6.15]{Tri08} and also in our Theorem \ref{Zerlegerwav} impossible - there is no total decoupling of the traces. Until now there seem to be no construction given for a $u$-Riesz basis for $W_2^{1}(Q)$, at least using Definition \ref{u-Riesz} of a $u$-Riesz basis.

In \cite[Section 6.2.4]{Tri08} Triebel suggested the modification of the Triebel-Lizorkin spaces by desired Hardy inequalities at the boundaries. This was the starting point of the observations in this chapter. As an example Triebel observed the behaviour of the space $W_2^{1,\rinf}(Q):=F_{2,2}^{1,\rinf}(Q)$ for dimension $n=2$. This is now a special case of Theorem \ref{Zerlegerwav}:
\begin{Theorem}
  Let $Q$ be the unit cube in $\R^2$. Then $W_2^{1,\rinf}(Q)$ has an oscillating $u$-Riesz basis for any $u \in \N_0$ with $u>1$.
\end{Theorem}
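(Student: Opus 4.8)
The plan is to recognize the theorem as a direct instance of the general decomposition and Riesz basis results established in this chapter, specialized to $s=1$, $p=q=2$, $n=2$. First I would verify the parameter situation: with $s=1$, $p=2$, $n=2$, the only critical dimension is $l=0$ (the four vertices of the square), since $s-\frac{n-l}{p}=1-\frac{2-l}{2}$ equals $0\in\N_0$ precisely when $l=0$, while for $l=1$ one gets $\frac12\notin\N_0$, so $l=1$ is non-critical. Hence $\Frinf[Q]$ with these parameters is exactly $F_{2,2}^{1,\rinf}(Q)=W_2^{1,\rinf}(Q)$, and it genuinely differs from $W_2^1(Q)$ only through the reinforce property $R_0^{0,2}$ at the vertices. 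Next I would check the admissibility of the parameters for Theorem \ref{waveletdecomp}: we need $n\geq 2$ (here $n=2$), $s>0$ (here $s=1$), $1\leq p<\infty$ (here $p=2$), $1\leq q<\infty$ (here $q=2$), all of which hold. Any $u\in\N_0$ with $u>s=1$ is then admissible.

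Then the argument is simply to invoke Theorem \ref{waveletdecomp}: under exactly these hypotheses it asserts that $\Frinf[Q]$ has an oscillating $u$-Riesz basis for every $u\in\N_0$ with $u>s$, with related sequence space $\fO[\overline{Q}]$ from Definition \ref{extsequence}. Substituting $s=1$ gives the claim: $W_2^{1,\rinf}(Q)=F_{2,2}^{1,\rinf}(Q)$ has an oscillating $u$-Riesz basis for any $u\in\N_0$ with $u>1$. I would also recall, for the reader's orientation, that the underlying mechanism is the decomposition \eqref{ZerlegerFrinf} from Theorem \ref{Zerlegerwav}, which here (with $l_0$ determined by \eqref{l0def} — since $0<s=1\leq\frac{n}{p}=1$ we are in the boundary case, and one computes the relevant $r^l$ from \eqref{rldef}) writes $W_2^{1,\rinf}(Q)$ as a complemented sum of $F_{2,2}^{1,\rloc}(Q)$ and the extension-operator images of refined localization spaces on the edges $\Gamma_1$ (and, if relevant, the vertex contribution $\Gamma_0$), each of which carries a $u$-wavelet / $u$-Riesz basis by Theorem \ref{rlocwavelet}.

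There is essentially no obstacle here: the theorem is a corollary, and all the genuine work — the construction and mapping properties of the wavelet-friendly extension operators (Proposition \ref{extwavelet}, Proposition \ref{Frlocdecomp}, Lemma \ref{reinfprop}), the critical and non-critical decomposition lemmas (Lemma \ref{ZerlegerCube}, Lemma \ref{ZerlegercritCube}), the Hardy inequalities at lower-dimensional faces (Corollary \ref{HardyZerlegerC}), and the assembly into Theorem \ref{Zerlegerwav} and Theorem \ref{waveletdecomp} — has already been carried out in full generality. The only thing worth a sentence of care is making explicit that $W_2^1(Q)=F_{2,2}^1(Q)$ is indeed one of the exceptional cases (so that $W_2^{1,\rinf}(Q)\subsetneq W_2^1(Q)$ and the theorem is not vacuous), which follows from the classical identification of $F_{2,2}^1$ with the Sobolev space and the computation $s-\frac{n-l}{p}=0$ at $l=0$ above; this is precisely the point highlighted in \cite[Remark 5.50]{Tri08}, \cite{Gri85}, \cite{Gri92}. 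Hence the proof reduces to: \emph{this is Theorem \ref{waveletdecomp} with $n=2$, $p=q=2$, $s=1$}.
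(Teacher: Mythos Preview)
Your proposal is correct and takes essentially the same approach as the paper: the paper simply states that this theorem is a special case of Theorem~\ref{Zerlegerwav} (and hence Theorem~\ref{waveletdecomp}) with $n=2$, $p=q=2$, $s=1$, without spelling out the parameter verification. Your additional explicit check that $l=0$ is the unique critical dimension and that the hypotheses of Theorem~\ref{waveletdecomp} are satisfied is exactly what is implicit in the paper's one-line justification.
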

But the definition of $W_2^{1,\rinf}(Q)$ is not intrinsic - one has to care about the reinforce properties of a function extended from $Q$ to $\R^n$. The definition
\begin{align*}
 W_2^{1,\rinf}(Q)^*:=\left\{f \in W_2^{1}(Q): \int_Q |f(x)|^2 \frac{ dx }{d_0(x)^2}<\infty \right\}
\end{align*}
would be more natural. Here $d_0(x)$ is the distance of $x$ to the corner points, i.\,e.\ $d_0(x)=\dist(x,\Gamma_0)$. The reinforce property is now demanded of $f$ itself, not of its derivatives since $r^{l_0}=0$. 

The following would be desirable:
\textit{ Let $Q$ be the unit cube in $\R^2$. Then $W_2^{1,\rinf}(Q)^*$ has an oscillating $u$-Riesz basis for any $u \in \N_0$ with $u>1$.}

But this is not covered by Theorem \ref{waveletdecomp}. The definition of $W_2^{1,\rinf}(Q)^*$ slightly differs from our definition of reinforced function spaces in Definition \ref{reinforcedQ} and coincide with the Definition of $\Frinf[Q]^*$ for $s=1,p=2,q=2$ in the previous Remark \ref{NewDef}. To use Theorem \ref{waveletdecomp} we would have to show
 \begin{align*}
  W_2^{1,\rinf}(Q)^* = F_{2,2}^{1,\rinf}(Q)
 \end{align*}
 which is a special case of Conjecture \ref{Conj}. Looking into the definition of both spaces we have to show the following: Let $f \in W_2^{1,\rinf}(Q)^*$, i.\,e.\ $f$ fulfils reinforce properties at $\Gamma_0$. Then there is a continuation $\tilde{f}$ of $f$ onto the whole $\R^2$ such that 
\begin{align*}
 \|\tilde{f} | F_{2,2}^{1,\rinf}(\R^n \setminus \Gamma)\| \sim \|f|  W_2^{1,\rinf}(Q) \|. 
\end{align*}
This problem in connection with function spaces is usually called the extension problem, see \cite[Chapter 4]{Tri08} and \cite[Section 1.11.5]{Tri06}. There is an extension operator $\Ext_Q$ for $W_2^{1}(Q)$ by direct construnction, see \cite[Chapter VI]{Ste70}. Hence we have
\begin{align*}
 \|\Ext_Q f |W_2^{1}(\R^n)\| \lesssim \|f|W_2^1(Q)\| \text{ and } \Ext_Q f|Q=f.
\end{align*}
The question is: Does $\Ext_Q f$ fulfil reinforce properties $R_l^{r,p}$ at $\Gamma_l$ from inside and outside the cube $Q$ if $f$ fulfils reinforce properties $R_l^{r,p}$ at $\Gamma_l$ from the inside of $Q$? This problem remains open.

	\newpage
 	\chapter{Discussion and open problems}
	\label{Open}

In the following we want to discuss some of the observations made in the previous chapter and talk about some problems which remain unsolved. The first unsolved problems were discussed in Remark \ref{NewDef} and Section \ref{Wrreinf}. This chapter is some kind of summary of the things one could look at in the future.

\section{Necessity of reinforced properties}

Let $n \in \N$ with $n\geq 2$ and $Q$ be the unit cube in $\R^n$. So far we constructed $u$-Riesz bases for the spaces $\Frinf[Q]$. For the non-exceptional values these spaces coincide with the usual spaces $\FO[Q]$ but in the critical cases, i.\,e.\ when $s-\frac{k}{p} \in \N_0$ for $k \in \{1,\ldots,n\}$ these spaces are defined as subsets of $\FO[Q]$. 

The first natural question we have to ask is whether these additional reinforce properties $R_l^{r,p}$ are really necessary. At first this means we have to show that there exists a function $f \in \FO[Q]$ such that $f \notin \Frinf[Q]$. But this can be obtained in the same way as the results in Remark \ref{Frinfsmaller}. For instance, we trivially have
\begin{align*}
 f \equiv 1 \in \FO[Q]
\end{align*}
for all $s>0$, $1\leq p <\infty$ and $0<q<\infty$. On the other hand
\begin{align*}
 f \notin \Frinf[Q] \text{ for } s=\frac{1}{p},\ldots, \frac{n}{p}.
\end{align*}
This can be proven in the same way as the observations in Remark \ref{Frinfsmaller}. Here we only need to consider one of the faces, say for convenience
\begin{align*}
\Gamma_{n-1,0}=\{(x_1,\ldots,x_n)\in\R^n: 0\leq x_m \leq 1 \text{ for } 1\leq m \leq n-1 \text{ and } x_n=0\}.
\end{align*}
 By taking $f(x)=x_n^r$ with $r \in \N_0$ where $x=(x_1,\ldots,x_n)$ we can prove
\begin{align*}
 F_{p,q}^{r+\frac{k}{p},\rinf}(Q) \subsetneq F_{p,q}^{r+\frac{k}{p}}(Q)
\end{align*}
for every $k \in \{1,\ldots,n\}$. 85We showed that in every exceptional case the space $\Frinf[Q]$ is a proper subset of $\FO[Q]$.

The second question is whether all reinforced properties are always necessary - if say $s-\frac{n-l_1}{p}=r^{l_1} \in \N_0$ and $s-\frac{n-l_2}{p}=r^{l_2} \in \N_0$, are both reinforced properties $R_{l_1}^{r^{l_1},p}$ and $R_{l_2}^{r^{l_2},p}$ necessary to require in the definition of $\Frinf[Q]$? The answer is probably yes but a proof is rather tricky. The problem is that we now have to deal with more than one face of dimension $l_2$ such that there are a lot of different directions of derivatives to consider. It is not easy to show that the counterexamples from Remark \ref{Frinfsmaller} are only counterexamples for explicitly one reinforced property $R_{l_1}^{r^{l_1},p}$ and fulfil the others.

\section{The incorporation of Haar wavelets and of Besov spaces}
Let $1\leq p<\infty$, $1\leq q<\infty$ and
\begin{align*}
 0<s< \min\left(\frac{1}{p},\frac{1}{q}\right).
\end{align*}
In \cite[Theorem 2.26]{Tri10} Triebel showed that the Haar wavelet basis of $L_p(Q)$ is an interior $0$-Riesz basis for $\FO[Q]$ with the related sequence space $\fO[Q]$. It is possible to incorporate the Haar wavelet in Theorem \ref{waveletdecomp} for $1\leq p<\infty$, $1\leq q<\infty$ and $s< \min(\frac{1}{p},\frac{1}{q})$. Since $s<\frac{1}{p}$ there are no values on the boundary and hence also our constructed $u$-Riesz basis is interior. 

For $s\leq 0$ the spaces $\Frinf[Q]=\FO[Q]$ have interior $u$-Riesz basis, even for $0<p<\infty$ and $0<q<\infty$, with $u$ sufficiently large in dependence of $s$, $p$ and $q$. This follows from \cite[Theorem 5.43]{Tri08}. Hence together with Theorem \ref{waveletdecomp} we can construct $u$-Riesz basis for $\Frinf[Q]$ for all $s \in \R$, at least assuming $1\leq p<\infty$ and $1\leq q < \infty$.

Another question is whether a similar theorem for the cube $Q$ incorporating the exceptional values can be proven for the Besov spaces - the question is how to reinforce Besov spaces. In Theorem 6.28 and Theorem 6.30  of \cite{Tri08} Triebel proved the wavelet decomposition for the $B$- and $F$-spaces with the same exceptional values $s-\frac{k}{p} \in \N_0$. There are sharp Hardy inequalities also for the critical Besov spaces, see \cite[Theorem 16.2]{Tri01}, which depend on $q$ in contrast to the Triebel-Lizorkin spaces.

Furthermore, one cannot define the refined localization spaces for $B$-spaces as in Definition \ref{FrlocDef} for the $F$-spaces whenever $p \neq q$ - if $p=q$, then $\BR=\FR$ and there are no problems. But instead of $\Frloc[\Om]$ one can also use $\Ft[\Om]$. There is a counterpart of the crucial equivalent Characterization \ref{rlocequi} for the $B$-spaces, see \cite[Section 5.12]{Tri01}, at least for bounded $C^{\infty}$-domains. One gets for $1<p<\infty$, $1\leq q \leq \infty$ and $s>0$ 
\begin{align*}
 \|f|\Bt\| \sim \|f| \BO \| + \left(\int_0^{\infty} t^{-sq} \left(\int_{\Om^t} |f(x)|^p \ dx \right)^{\frac{q}{p}} \ \frac{dt}{t} \right)^{\frac{1}{q}}
\end{align*}
with
\begin{align*}
 \Om^t=\{x \in \Om: d(x) <t \}.
\end{align*}
The situation for $B$-spaces is far away from being satisfactory solved. There is a lot of work to do to arrive at a similar theorem as Theorem \ref{waveletdecomp}. Until now, it is not even clear to the author how the reinforced properties for the $B$-spaces would look like. 

\section{The situation for general domains - the domain problem}
\subsection{Known results}
The typical idea to construct a wavelet basis for function spaces on a general domain $\Om$ is to decompose the domain into simpler standard domains - this is usually called the domain problem. The starting point for such decompositions where the papers of Ciesielski and Figiel \cite{CiF83A}, \cite{CiF83B} and \cite{Cie84} dealing with spline bases for spaces of differentiable functions as well as classical Sobolev and Besov spaces on compact $C^{\infty}$ manifolds. Similar approaches and extensions were given in \cite{Dah97}, \cite{DaS99}, \cite{Dah01}, \cite{Coh03}, \cite{HaS04}, \cite{JoK07} and \cite{FoG08}.

A different approach was given in \cite{Tri08} which is the point of departure of this thesis. On the one hand he construced wavelet (Riesz) frames for spaces $\FO[\Om]$ for $C^{\infty}$-domains $\Om$ with the natural exceptional values $s-\frac{1}{p} \in \N_0$ in \cite[Theorem 5.27]{Tri08}. But he was not able to show that there is a Riesz basis for general dimensions and general smoothness parameter $s$ - see Theorem 5.35 for small dimensions.

On the other hand he constructed $u$-Riesz basis for $\FO[\Om]$ where $\Om$ is an $n$-dimensional ball with the exceptional values $s-\frac{k}{p} \notin \N_0$ for $k\in \{1,\ldots,n-1\}$, see \cite[Theorem 5.38]{Tri08}, and for $\FO[\Om]$ where $\Om$ is an $n$-dimensional cellular domain with the exceptional values $s-\frac{k}{p} \notin \N_0$ for $k\in \{1,\ldots,n\}$, see \cite[Theorem 6.30]{Tri08}. The definition of cellular domains can be found in \cite[Definition 5.40]{Tri08}. Roughly speaking, cellular domains are unions of diffeomorphic images of cubes. His construction of $u$-Riesz bases on cellular domains is based on the $u$-Riesz bases for the unit cube $Q$. In the next section we discuss if we can use this procedure also for reinforced function spaces $\Frinf[Q]$ to get $u$-Riesz bases also on cellular domains for the exceptional cases. 

Until now there seems to be not too much known how the shape of the domains influence the exceptional values for the existence of $u$-Riesz basis in the sense of Definition \ref{u-Riesz}. Sufficient conditions exist for $C^{\infty}$-domains, balls and cellular domains but it is nearly totally unclear when they are also necessary. 

\subsection{Extension of reinforced function spaces to cellular domains}
\label{cell}
So far we managed to construct $u$-Riesz basis for reinforced function spaces $\Frinf[Q]$ on the cube $Q$, or more general on an $n$-dimension polyhedron. It is clear that one can transfer $u$-Riesz basis to reinforced function spaces (suitably defined) on diffeomorphic images of a cube resp.\ of a polyhedron. 

Triebel defined cellular domains in \cite[Definition 6.9]{Tri08} using the notation of Lipschitz domains from \cite[Definition 3.4(iii)]{Tri08}:
\begin{Definition}
 A domain $\Om \subset \R^n$ is called cellular if it is a bounded Lipschitz domain which can be represented as
\begin{align*}
 \Om=\left(\bigcup_{l=1}^L \bar{\Om}_{l}\right)^{\circ} \text{ with } \Om_{l}\cap \Om_{l'}=\emptyset \text{ if }  l \neq l',
\end{align*}
such that each $\Om_{l}$ is diffeomorphic to an n-dimensional polyhedron (cube).
\end{Definition}

In \cite[Theorem 6.32]{Tri08} Triebel extended the constructed $u$-Riesz basis for the cube $Q$ and the non-exceptional values to $\FO[\Om]$ where $\Om$ is a cellular domain. Every $C^{\infty}$-domain, so for instance the unit ball $B$ in $\R^n$, is a cellular domain. The idea for the construction of $u$-Riesz bases on cellular domains is simple: Essentially a cellular domain is a union of cubes which have wavelet bases by \cite[Theorem 6.30]{Tri08}. One has to take care about the faces which can belong to more than one cube of the decomposition. But this does not make any problems for the non-exceptional cases.

If we now want to apply the same decomposition to a reinforced function space on a cellular domain and we need to fulfil a reinforce property $R_l^{r,p}$ at a boundary $\Gamma_l$, then it can and will happen that the function $f \in \Frinf[\Om]$ needs to fulfil reinforce properties $R_l^{r,p}$ at a boundary inside the domain which is very unnatural. Even worse when we consider that the decomposition of a cellular domain won't be unique in general. We could define reinforced Triebel Lizorkin spaces $\Frinf[\Om]$ in dependency on the decomposition of $\Om$ and construct $u$-Riesz bases for these space but it would not make too much sense since the reinforced properties $R_l^{r,p}$ at boundaries $\Gamma_l$ inside the domain are unnatural conditions.
\begin{center}
 \includegraphics[scale=0.3]{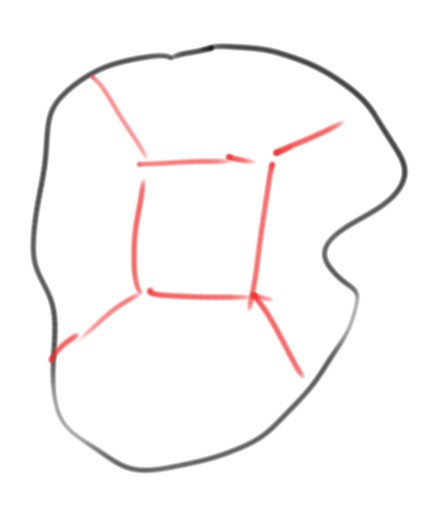}
 \captionof{figure}{A $C^{\infty}$-domain decomposed into diffeomorphic images of a cube}
\end{center}

\subsection{An example - decomposition of the unit ball and unit sphere}
In Theorem 5.37 and Theorem 5.38 Triebel constructed a $u$-wavelet system which is a $u$-Riesz basis for the function space $\FO[B^n]$ where $B^n$ is the unit ball in dimension $n$. Essentially he used the following decomposition of the unit sphere $\partial B^n=S^{n-1}=\{x \in \R^n: |x|=1\}$
\begin{align*}
 S^{n-1}=S^{n-1}_- \cup S^{n-1}_+ \cup \{x=(x_1,\ldots,x_n) \in S^{n-1}: x_n=0 \}.
\end{align*}
with $S^{n-1}_+=\{x \in S^{n-1}: x_n>0\}.$ One decomposes the unit sphere into the northern half, the southern half and the equator. 

Now one can argue by induction: The third set on the right hand side (the equator) is isomorphic to $S^{n-2}$. The unit sphere $S^{1}$ in dimension $n=2$ is a torus and hence $\FO[S^1]$ has a wavelet basis, see also \cite[Theorem 1.37]{Tri08}. This construction produces exceptional values where it cannot be applied - $s-\frac{k}{p} \notin \N_0$ for $k\in \{1,\ldots,n-1\}$. Hence the question is whether we can add reinforce properties $R_l^{r,p}$ to ensure the construction in the exceptional cases. But this is not possible - at least in a desirable way. Otherwise, by the construction, one would require reinforce properties at the equator 
$\{x=(x_1,\ldots,x_n) \in S^{n-1}: x_n=0 \}$ of derivatives perpendicular to the equator - this means inside the domain $B^n$. Furthermore, the choice of the equator is not canonical. Hence we are in the same situation as in the previous Section \ref{cell}. Right now, we have no idea how to introduce natural reinforce properties to ensure wavelet decompositions for a subset of $\FO[B^n]$.

\bibliographystyle{amsalpha}
\bibliography{ben}

\newpage
\chapter*{Ehrenwörtliche Erklärung}
\thispagestyle{empty}
\pagestyle{empty}
    
Hiermit erkläre ich,

\begin{itemize}
 \item dass mir die Promotionsordnung der Fakultät bekannt ist, \\
 \item dass ich die Dissertation selbst angefertigt habe, keine Textabschnitte oder Ergebnisse eines Dritten oder eigenen Prüfungsarbeiten ohne Kennzeichnung übernommen und alle von mir benutzten Hilfsmittel, persönliche Mitteilungen und Quellen in meiner Arbeit angegeben habe, \\
 \item dass ich die Hilfe eines Promotionsberaters nicht in Anspruch genommen habe und dass Dritte weder unmittelbar noch mittelbar geldwerte Leistungen von mir für Arbeiten erhalten haben, die im Zusammenhang mit dem Inhalt der vorgelegten Dissertation stehen, \\
 \item dass ich die Dissertation noch nicht als Prüfungsarbeit für eine staatliche oder andere wissenschaftliche Prüfung eingereicht habe. \\
 \item dass ich die gleiche, eine in wesentlichen Teilen ähnliche bzw.\ eine andere Abhandlung bei keiner anderen Hochschule als Dissertation eingereicht habe.
\end{itemize}

\vspace{5cm}

\begin{center}
 Ort, Datum \hfill Benjamin Scharf
\end{center}

\newpage
\thispagestyle{empty}
\section*{Benjamin Scharf -- Lebenslauf}
\subsubsection*{Kontaktinformationen}
\begin{tabular}{p{.23\textwidth}p{.02\textwidth}p{.7\textwidth}}
Büro	& &  JenTower 18N01, Leutragraben 1, 07740 Jena \\
E-Mail & &     \url{benjamin.scharf@uni-jena.de} \\
Homepage & & \url{http://users.minet.uni-jena.de/~benscha}\\
\end{tabular}

\subsubsection*{Persönliche Informationen}
\begin{tabular}{p{.23\textwidth}p{.02\textwidth}p{.7\textwidth}}
Geboren & & 29. November 1985 in Jena, Deutschland\\
Nationalität & &  Deutsch \\
\end{tabular}

\vspace{-0.3cm}
\subsubsection*{Forschungsinteressen}
\begin{tabular}{p{.23\textwidth}p{.02\textwidth}p{.7\textwidth}}&&Harmonische Analysis, Funktionalanalysis, Funktionenräume, Waveletcharakterisierungen, Hochdimensionale Analysis
\end{tabular}

\vspace{-0.5cm}
\subsubsection*{Ausbildung}
\begin{tabular}{p{.23\textwidth}p{.02\textwidth}p{.7\textwidth}}

09/1992 -- 05/2004  	& & Schule, Abitur am Carl-Zeiss-Gymnasium Jena (Spezialschulteil), Note 1,0 \\

10/2004 -- 03/2009	& & Mathematikstudium mit Nebenfach Informatik an der Friedrich-Schiller-Universität Jena \\

02/2007 -- 03/2009	& & Stipendium der Studienstiftung des Deutschen Volkes \\

03/2009  & &	Diplom Mathematik mit Auszeichnung, Note 1,0 \newline
	        Diplomarbeit: \textit{Atomare Charakterisierungen vektorwertiger Funktionenräume} \newline
		Betreuer: Hans-Jürgen Schmeißer\\
10/2010 &	&Ausgezeichnet mit dem Examenspreis des Dekans \\

08/2009 -- 02/2013  & & Promotionsstudent in der Forschungsgruppe \textit{Funktionenräume} am Mathematischen Institut der Friedrich-Schiller-Universität Jena \newline
		Promotionsstipendiat der Studienstiftung des Deutschen Volkes \\
		
02/2013 & 	& Promotion in Mathematik zum Dr. rer. nat.  \newline	
Promotionsthema: \textit{Wavelets in Funktionenräumen auf zellulären Gebieten und Mannigfaltigkeiten} \newline
		Betreuer: Hans-Jürgen Schmeißer und Hans Triebel \\
\end{tabular}

\subsubsection*{Publikationen}
\begin{itemize}
\item[3] B. Scharf, \textit{Atomic representations in function spaces and applications to pointwise multipliers and diffeomorphisms, a new approach}, Math. Nachr. \textbf{286} (2013), no. 2--3, 283--305.	
\item[2] B. Scharf, H.-J. Schmei\ss er, and W. Sickel, \textit{Traces of vector-valued Sobolev Spaces}, Math. Nachr. \textbf{285} (2012), no. 8--9, 1082--1106.
 \item[1] B. Scharf, \textit{Local means and atoms in vector-valued function spaces}, Jenaer
Schriften zur Mathematik und Informatik (2010), Math/Inf/05/10.
\end{itemize}

\subsubsection*{Vorträge}
\begin{itemize}
 \item[3] \textit{Wavelets for reinforced function spaces on cellular domains} 
  \newline Workshop Applied Coorbit theory, 2012, Wien, Österreich.\newline International Conference Function Spaces X, 2012, Poznan, Polen.
 \newline Mathematisches Seminar der Beijing Normal University, 2011, Peking, China.
 \item[2] \textit{Pointwise multipliers and diffeomorphisms in function spaces} \newline  8th International Conference FSDONA, 2011, Tabarz, Deutschland.
 \newline Mathematisches Seminar des Steklov Mathematical Institute, 2010, Moskau, Russland.	
 \item[1] \textit{Equivalent norms and characterizations for
vector-valued function spaces} \newline Workshop on Smoothness, Approximation, and Function Spaces, 2010, Oppurg, Deutschland.
\end{itemize}
\subsubsection*{Lehre}
\begin{tabular}{p{.23\textwidth}p{.02\textwidth}p{.7\textwidth}}

10/2007 --  07/2008 & & Übungsleiter \textit{Höhere Analysis 1} und \textit{Höhere Analysis 2} bei Prof. Hans-Gerd Leopold \\

10/2008 --  07/2009 & & Übungsleiter \textit{Analysis 1} und \textit{Analysis 2} bei Prof. Hans-J\"urgen Schmei\ss er \\

10/2009 --  07/2010 & & Übungsleiter \textit{Analysis 1} und \textit{Analysis 2} bei Prof. Bernd Carl \\

10/2010 --  07/2011 & & Übungsleiter \textit{Höhere Analysis 1} und \textit{Höhere Analysis 2} bei Prof. Dorothee D. Haroske \\

10/2011 --  07/2012 & & Übungsleiter \textit{Algebra 1} und \textit{Algebra 2} bei Prof. Klaus Haberland \\

10/2012 --  02/2012 & & Übungsleiter \textit{Analysis 3} bei Prof. Hans-J\"urgen Schmei\ss er
\end{tabular}

\subsubsection*{Sonstiges}
\begin{tabular}{p{.23\textwidth}p{.02\textwidth}p{.7\textwidth}}

Sprachenkenntnisse	& & Deutsch, Muttersprache  \newline
	Englisch, sicher in Wort und Schrift \newline
	Latein, Schulkenntnisse 6 Jahre \newline
	Russisch, Grundkenntnisse \\

IT-Kenntnisse & & Mathematische Software (Matlab, Scilab, Gnuplot) \newline
		 Office (Word, Excel, Powerpoint, LaTeX)  \newline
		 Programmiersprachen (Java, Oberon, C++) \newline
		 Linux, Windows \\	
	
Aktivitäten und Interessen	& & 
			   Betreuer und Redakteur der Zeitschrift \textit{Die Wurzel}\newline
Basketball als Trainer, Sportler, Schiedsrichter und Zuschauer \newline
Finanzielle Bildung
 \end{tabular}
\newline

\vspace{0.4cm}
Jena, 08.02.2013

\end{document}